\newtheorem{theorem}{Theorem}[section]
\newtheorem{corollary}[theorem]{Corollary}
\newtheorem{proposition}[theorem]{Proposition}
\newtheorem{lemma}[theorem]{Lemma}
\numberwithin{equation}{section}
\theoremstyle{definition}
\theoremstyle{remark}
\newtheorem{remark}[theorem]{Remark}
\newtheorem{remarks}[theorem]{Remarks}
\newtheorem*{remark*}{Remark}
\newcommand{\1}[1]{{\mathbf 1}{\{#1\}}}
\newcommand{\2}[1]{{\mathbf 1}{(#1)}}
\newcommand{\R}{{\mathbb R}}
\newcommand{\Q}{{\mathbb Q}}
\newcommand{\N}{{\mathbb N}}
\newcommand{\ZP}{{\mathbb Z}_+}
\newcommand{\RP}{{\mathbb R}_+}
\newcommand{\Sp}[1]{{\mathbb S}^{#1}}
\newcommand{\fA}{{\mathfrak A}}
\newcommand{\X}{{\mathbb X}}
\DeclareMathOperator{\Exp}{\mathbb{E}}
\renewcommand{\Pr}{{\mathbb P}}
\DeclareMathOperator{\sign}{sgn} 
\DeclareMathOperator{\trace}{tr}
\DeclareMathOperator{\divg}{div}
\DeclareMathOperator{\grad}{grad}
\def\diag#1{\mathop{\mathrm{diag}}\left( #1 \right)}
\newcommand{\tra}{{\scalebox{0.6}{$\top$}}}
\newcommand{\eps}{\varepsilon}
\newcommand{\la}{\lambda}
\newcommand{\La}{\Lambda}
\newcommand{\re}{{\mathrm{e}}}
\newcommand{\ud}{{\mathrm d}}
\newcommand{\cC}{{\mathcal C}}
\newcommand{\cD}{{\mathcal D}}
\newcommand{\cE}{{\mathcal E}}
\newcommand{\cF}{{\mathcal F}}
\newcommand{\cG}{{\mathcal G}}
\newcommand{\tX}{\widetilde X}
\renewcommand{\tt}{\varphi}
\newcommand{\as}{\ \text{a.s.}}
\newcommand{\toP}{\overset{\mathrm{P}}{\longrightarrow}}
\newcommand{\bx}{{\mathbf{x}}}
\newcommand{\by}{{\mathbf{y}}}
\newcommand{\bz}{{\mathbf{z}}}
\newcommand{\bu}{{\mathbf{u}}}
\newcommand{\bv}{{\mathbf{v}}}
\newcommand{\be}{{\mathbf{e}}}
\newcommand{\bc}{{\mathbf{c}}}
\newcommand{\bp}{{\mathbf{p}}}
\newcommand{\0}{{\mathbf{0}}}
\newcommand{\bra}{\langle}
\newcommand{\ket}{\rangle}
\newcommand{\Besq}{{\mathrm{BESQ}}}
\newcommand{\Bes}{{\mathrm{BES}}}
\newcommand{\ssym}{\sigma_{\mathrm{sy}}}
\newcommand{\x}{{\mathcal{X}}}
\newcommand{\y}{{\mathcal{Y}}}
\def\namedlabel#1#2{\begingroup  
    (#2)%
    \def\@currentlabel{#2}%
    \phantomsection\label{#1}\endgroup
}
\begin{document}
\title[Invariance principle for non-homogeneous random walks]{Invariance principle for non-homogeneous 
random walks} 
\author{Nicholas Georgiou}
\address{Department of Mathematical Sciences, Durham University, UK}
\email{nicholas.georgiou@durham.ac.uk}

\author{Aleksandar Mijatovi\'c}
\address{Department of Mathematics, KCL, \& The Alan Turing Institute, UK}
\email{aleksandar.mijatovic@kcl.ac.uk}

\author{Andrew R.\ Wade}
\address{Department of Mathematical Sciences, Durham University, UK}
\email{andrew.wade@durham.ac.uk}


\keywords{Non-homogeneous random walk; invariance principle; diffusion limits; excursions; skew product; rapid spinning; recurrence; transience.}

\subjclass[2010]{Primary 60J05, 60J60; Secondary 60F17, 58J65, 60J55}   

\begin{abstract}
We prove an invariance principle for a class of zero-drift spatially non-homogeneous 
random walks in $\R^d$, which may be recurrent in any dimension.
The limit $\x$ is an elliptic martingale diffusion, which may be point-recurrent at the origin
for any $d\geq2$.
  To characterise $\x$,
we introduce a (non-Euclidean) Riemannian metric on the unit sphere in $\R^d$ and use it 
to express a related spherical diffusion as a Brownian motion with drift. 
This representation 
allows us to establish the skew-product decomposition of the excursions of $\x$ 
and thus develop the excursion theory of $\x$ without appealing to the strong Markov property.
This leads to the uniqueness in law of the stochastic differential equation
for $\x$ in $\R^d$, whose coefficients are discontinuous at the origin.
Using the Riemannian metric we can also detect whether the angular component 
of the excursions of $\x$ is time-reversible. If so, the excursions of $\x$  in $\R^d$ generalise 
the classical Pitman--Yor splitting-at-the-maximum property of Bessel excursions.
\end{abstract}

\maketitle

\section[Introduction]{Introduction}
\label{sec:intro}
A large class of spatially 
non-homogeneous zero-mean random walks on $\R^d$ ($d\geq2$),
which may be  
recurrent for $d\geq3$ and transient for $d=2$,
is introduced and analysed in~\cite{gmmw}.
These walks are martingales 
with  uniformly non-degenerate increments
(see assumptions~\eqref{ass:zero_drift}--\eqref{ass:unif_ellip} below).
It turns out that the information for the transience/recurrence classification is contained in
the limiting covariance structure of their increments, 
described by a matrix-valued function 
$\sigma^2:\Sp{d-1}\to\R^d\otimes \R^d $
on the unit sphere $\Sp{d-1}$ in $\R^d$ (see assumptions \eqref{ass:cov_limit}--\eqref{ass:cov_form} below).

This paper studies \emph{scaling limits} of these
random walks. We prove that under diffusive scaling,
the random walk converges weakly to a diffusion
process $\x = (\x_t, t \in\RP)$ whose law is determined uniquely by 
$\sigma^2$ via the stochastic differential equation (SDE)
\begin{equation}
\label{eqn:x-SDE}
\ud \x_t = \sigma (\hat \x_t) \ud W_t,\qquad \x_0=\bx_0\in\R^d.
\end{equation}
Here
$\hat \bx$ is the radial projection onto $\Sp{d-1}$ of any $\bx\in\R^d$ (with an arbitrary choice $\hat \0\in\Sp{d-1}$
for the origin $\0$),
$(W_t, t \geq 0)$ denotes a standard Brownian motion (BM) on $\R^d$, $\sigma:\Sp{d-1}\to\R^d\otimes\R^d$ 
is a \emph{square root} of $\sigma^2$ 
(i.e., $\sigma(\bu)\sigma^\tra(\bu) = \sigma^2(\bu)$ for all $\bu \in \Sp{d-1}$)
and $\bx_0$ a non-random point.

\begin{theorem}
\label{t:well-posedness}
Let the positive-definite symmetric
matrix-valued function $\sigma^2:\Sp{d-1}\to\R^d\otimes\R^d $ satisfy
\eqref{ass:cov_form}--\eqref{ass:radial-evec} below. 
Then, for any starting point $\x_0=\bx_0$ in $\R^d$, weak existence and uniqueness in law
hold for SDE~\eqref{eqn:x-SDE}
and the strong Markov property is satisfied. 
Moreover, 
the law of $\x$ does not depend on the choices of the
square-root $\sigma$ 
and 
$\hat\0\in\Sp{d-1}$.
\end{theorem}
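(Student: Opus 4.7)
My plan is to split the proof into weak existence, which is soft, and uniqueness in law together with the strong Markov property, which comprise the substantive content and rest on the skew-product decomposition of excursions advertised in the abstract. The map $\bx \mapsto \sigma(\hat\bx)$ is bounded and measurable on $\R^d$ with its only discontinuity at $\0$, so weak existence is straightforward: one may either invoke the invariance principle for the random walks $\{X^{(n)}\}$ proved later in the paper (whose diffusive limit must solve the martingale problem for $\sigma^2$), or smooth $\sigma$ to obtain Lipschitz $\sigma_n$, solve those SDEs strongly, and extract a subsequential weak limit via Burkholder--Davis--Gundy estimates and Skorokhod's representation. Either route produces a process solving the martingale problem associated with~\eqref{eqn:x-SDE}.

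Away from $\0$ the coefficient $\bx \mapsto \sigma(\hat\bx)$ is continuous and uniformly elliptic, so standard Stroock--Varadhan theory gives uniqueness in law for $\x$ stopped on entry into any small ball around the origin; in particular each excursion of $\x$ away from $\0$ has a well-defined law and the only ambiguity is how $\x$ restarts from $\0$. To pin this down I would apply It\^o's formula to the radial part $R_t = |\x_t|$ and the angular part $\theta_t = \hat\x_t$: the eigenvector condition~\eqref{ass:radial-evec} decouples the radial direction from $\sigma^2$, producing a scalar SDE for $R_t$ driven by a one-dimensional Brownian motion, together with an SDE for $\theta_t$ on $\Sp{d-1}$ which, when read through the Riemannian metric constructed from $\sigma^2$, becomes a Brownian motion with drift on the sphere, time-changed by $\int_0^{\,\cdot} R_s^{-2}\,\ud s$. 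This representation expresses the joint law of $(R_t,\theta_t)$ during each excursion explicitly in terms of $\sigma^2$.

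Because $\sigma^2$ is uniformly positive-definite, the occupation time of $\{\0\}$ under any solution is null, so the It\^o integral in~\eqref{eqn:x-SDE} is insensitive to the choice of $\hat\0$. The skew-product characterisation then determines the It\^o excursion measure of $\x$ away from $\0$ uniquely in terms of $\sigma^2$; pasting excursions reconstructs $\x$ on $[0,\infty)$ and yields both uniqueness in law and the strong Markov property, since at any stopping time $\tau$ the law of $(\x_{\tau+t})_{t\geq 0}$ is read off from $\x_\tau$ through the same skew-product recipe. Independence of the choice of square root $\sigma$ follows from the usual enlargement argument with an auxiliary Brownian motion, since all solutions produce the same quadratic-variation tensor $\sigma\sigma^\tra$.

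The main obstacle is that classical excursion theory is built \emph{on top of} the strong Markov property, whereas here the implication has to run in the reverse direction. For $d=2$ the origin may be point-recurrent, so there are countably many excursions in any time interval whose closure contains $\0$, and these must be reassembled from the skew-product description without presupposing the very Markov property one is trying to prove. Handling this requires a sample-path level treatment of the excursion-pasting procedure, characterising each excursion solely by its skew-product data rather than by the usual strong-Markov functional calculus, and this is where I expect the bulk of the technical work to lie.
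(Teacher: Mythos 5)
Your architecture matches the paper's: reduce to the symmetric square root, identify the radial part as a Bessel process via~\eqref{ass:cov_form} and~\eqref{ass:radial-evec}, read the angular part as a Brownian motion with drift on $(\Sp{d-1},g)$ with $g_\bx(v_1,v_2)=\bra\sigma^{-2}(\bx)v_1,v_2\ket$, time-change by $\int_0^{\cdot}R_s^{-2}\ud s$, and build the excursion theory from this skew product without presupposing the strong Markov property. You also correctly flag that the hard direction is excursion theory $\Rightarrow$ strong Markov rather than the classical reverse. Two points, however, need repair.

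First, one of your two routes to weak existence is circular. The invariance principle (Theorem~\ref{thm:invariance}) is \emph{stated} in terms of the unique weak solution of~\eqref{eqn:x-SDE} and its proof (via Theorem~\ref{thm:conditional_invariance}) requires the $\cC_d$ martingale problem for $(G,v)$ to be well-posed as a hypothesis, so you cannot feed it back to establish Theorem~\ref{t:well-posedness}. Your alternative mollification route would work, but the cleaner path used in the paper is that $\bx\mapsto\ssym(\hat\bx)$ is bounded and uniformly elliptic (Lemma~\ref{lem:s_sym_uniformly_ell}), so weak existence is a direct citation of Krylov, and the strong Markov property follows from uniqueness in law via Stroock--Varadhan.

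Second, and more substantively: ``the skew-product characterisation then determines the It\^o excursion measure uniquely'' skips the step that actually does the work. The time-change $\varrho(t)=\int^t R_u^{-2}\ud u$ diverges at \emph{both} ends of every excursion (Lemma~\ref{lem:angular_clock}), so the angular process on an excursion cannot be anchored at its start, and its initial law is not a free parameter that you can vary. The key mechanism is rapid spinning: because an arbitrarily large amount of ``angular time'' separates any interior instant of an excursion from its start, uniform ergodicity~\eqref{eqn:unif-ergod} of the spherical diffusion forces the angular component at every interior time to be distributed according to the stationary law $\mu$ and to be independent of the radial path (Lemma~\ref{l:independent_angle_transient} in the polar case, Lemma~\ref{prop:excursion-angular} in the recurrent case). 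This is what pins the excursion measure down: it is the Bessel excursion measure, marked independently by the two-sided stationary process $\Pr_\Psi$ from Prop.~\ref{lem:sphere-in-Rd-SDE}, mapped by anchoring the time-change at an interior cutoff time $a>0$ and checking consistency over $a$ (Prop.~\ref{prop:beta_Psi}). Without identifying that the angular marginal is \emph{stationary} and the anchoring has to be at an interior time, ``characterising each excursion by its skew-product data'' does not determine the law, and the pasting step you propose cannot get off the ground. One minor point: the point-recurrence of $\0$ is governed by $V/U\in(1,2)$ for any $d\geq 2$, not specifically by $d=2$.
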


The process $\x$ possesses certain universal properties, in some aspects resembling those of a BM on $\R^d$. 
The key difference is that, due to the possible recurrence of the random walk in any dimension $d\geq2$, 
the scaling limit $\x$ may visit the origin infinitely often. Since the diffusion coefficient is discontinuous
at $\0$, the proof of the uniqueness in law requires the development of the excursion theory of $\x$ before the
strong Markov property can be established. 
This step constitutes the main technical contribution of the paper (see Section~\ref{sec:excursions}
below) and provides an insight into the structure of the excursion of $\x$. 
It rests on the introduction of a (non-Euclidean) Riemannian metric
on $\Sp{d-1}$ (Section~\ref{subsubsec:RiemannianGeom} below), 
yielding a skew-product decomposition of the excursions of $\x$, which in turn
entails a generalisation of Stroock's representation of the spherical BM~\cite[p.~83]{hsu} (see~\eqref{sde-on-sphere} below).
The new geometry on the sphere also yields a multi-dimensional generalisation of the 
splitting-at-the-maximum property of 
Bessel excursions~\cite{py}. 
Furthermore, the choice of the square root of $\sigma^2$
turns out to be relevant for the pathwise uniqueness of SDE~\eqref{eqn:x-SDE}, which may fail, thus generalising 
to higher dimensions the example of Stroock and Yor~\cite{sy} for the complex BM. 
These and other features of the law of $\x$ are described in more detail in 
Section~\ref{subsec:Limit_comments}  below.
The proof of 
Theorem~\ref{t:well-posedness} is in  
Section~\ref{sec:diff_proc} 
with overview in Section~\ref{sec:overview}. 

Having characterised the scaling limit, we state  
our invariance principle. 
For a discrete-time process 
$X=(X_m,m\in\ZP)$,  
any $n \in \N$ and $t \in \RP$, define $\lfloor nt \rfloor:=\max\{k\in\ZP:k\leq nt\}$ and 
\begin{equation}
\label{eq:scaled_walk}
\tX_n (t) := n^{-1/2} X_{\lfloor nt \rfloor} .\end{equation} 
The paths of $\tX_n=(\tX_n(t),t\in\RP)$ are in  the  Skorohod space
$\cD_d = \cD(\RP ; \R^d)$ of right-continuous functions with left limits,
endowed with the Skorohod metric (see e.g.~\cite[\S 3.5]{ek}).

\begin{theorem}
\label{thm:invariance}
Let \eqref{ass:moments}--\eqref{ass:radial-evec} below hold for the random walk $X$.
Let $\x$ be the unique (weak) solution of~\eqref{eqn:x-SDE}
with $\x_0 = \0$. 
Then, as $n \uparrow \infty$,
the weak convergence 
$\tX_n \Rightarrow  \x$
on $\cD_d$ holds. 
\end{theorem}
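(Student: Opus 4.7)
My plan is to follow the classical recipe for diffusion approximations: prove tightness of $\{\tX_n\}_{n\geq1}$ in $\cD_d$, identify any subsequential weak limit as a solution of the martingale problem associated with~\eqref{eqn:x-SDE}, and invoke the uniqueness in law from Theorem~\ref{t:well-posedness} to identify the full limit with $\x$. Tightness is the easier ingredient. Since $X$ is a zero-drift martingale with $(2+\delta)$-moments on its increments uniformly in the state, by~\eqref{ass:moments}, a direct Burkholder--Davis--Gundy estimate should yield
\[
\Exp \bigl\| \tX_n(t) - \tX_n(s) \bigr\|^{2+\delta} \leq C \bigl( (t-s) + n^{-1} \bigr)^{1+\delta/2}
\]
uniformly in $n$ and $0\leq s<t\leq T$. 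Combined with $\tX_n(0)=\0$ and jumps of size $O(n^{-1/2})$, this Kolmogorov-type estimate gives C-tightness in the Skorohod topology on $\cD_d$.

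The core step is the identification. Let $\x^*$ be any subsequential weak limit; by C-tightness its paths are continuous and $\x^*_0=\0$. For each $f \in C^2_b(\R^d)$ one has the discrete-time martingale
\[
M^{f,n}_t := f(\tX_n(t)) - f(\0) - \sum_{k=0}^{\lfloor nt \rfloor-1} \Exp\bigl[ f(\tX_n(\tfrac{k+1}{n})) - f(\tX_n(\tfrac{k}{n})) \bigm| \cF_k \bigr].
\]
Taylor-expanding the conditional increment, killing the first-order term by~\eqref{ass:zero_drift} and replacing the conditional covariance matrix by $\frac{1}{n}\sigma^2(\hat X_k)$ via~\eqref{ass:cov_limit}, with the remainder controlled by the $(2+\delta)$-moment bound, one shows that the compensator converges in probability to $\int_0^t Lf(\x^*_s)\,\ud s$, where $Lf(\bx):=\tfrac12\trace(\sigma^2(\hat\bx)\nabla^2 f(\bx))$. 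Passing to the limit in the martingale identity then gives that $f(\x^*_t)-f(\0)-\int_0^t Lf(\x^*_s)\,\ud s$ is a martingale in the limit filtration, i.e.\ $\x^*$ solves the martingale problem for $L$.

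The main obstacle is that $\bx\mapsto\sigma^2(\hat\bx)$ is discontinuous at $\0$, so the Riemann-sum convergence above needs justification: the continuous mapping theorem does not apply at the origin, which the limit process may visit infinitely often. My approach would be first to carry out the identification only for $f\in C^2_c(\R^d\setminus\{\0\})$, where on the support of $f$ the coefficient is continuous and the convergence of the Riemann sum follows from Skorohod representation along the subsequence, using a uniform modulus of continuity of $\sigma^2\circ\hat\cdot$ on annuli $\{\eps\leq|\bx|\leq R\}$. To then extend the martingale identity to all $f\in C^2_b(\R^d)$ one needs $\int_0^t \1{\x^*_s=\0}\,\ud s=0$ almost surely, so that $Lf(\x^*_s)$ is well-defined Lebesgue-a.e.\ along the paths. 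This zero-occupation-at-the-origin property can be established either by transferring through the Skorohod coupling the Lyapunov-type bounds of~\cite{gmmw} that control $\sum_{k\leq nt}\1{|X_k|\leq \eps_n\sqrt n}$ for $\eps_n\downarrow 0$, or read off directly from the excursion-theoretic construction of $\x$ developed earlier in the paper. Together with Theorem~\ref{t:well-posedness}, this forces every subsequential limit to equal $\x$ in law, and the tight sequence converges: $\tX_n\Rightarrow\x$ on $\cD_d$.
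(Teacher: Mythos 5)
Your proposal shares the paper's overall architecture: tightness, identification of subsequential limits as solutions of the martingale problem for the generator with coefficient $\sigma^2(\hat\bx)$, and then uniqueness in law from Theorem~\ref{t:well-posedness}. The paper packages the identification step into Theorem~\ref{thm:conditional_invariance}, an invariance principle for martingales with coefficients discontinuous at a single point, but the underlying content is the same. Your idea of first using test functions in $C^2_c(\R^d\setminus\{\0\})$ and then extending by a zero-occupation argument at $\0$ is a reasonable decomposition; the paper instead splits $a_{ij}$ into a piece supported away from $\0$ (handled by the continuous mapping theorem) and a piece supported near $\0$ (handled by the occupation times formula), which is morally the same move.

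The gap is in how you justify the zero-occupation step. You offer two routes. Route (b), \emph{reading the property off the excursion theory of $\x$}, is circular: the excursion theory was developed for solutions of the SDE, and at this point in the argument the subsequential limit $\x^*$ has not yet been identified with $\x$ or shown to solve the SDE --- the whole purpose of the argument is to close that loop. Route (a), \emph{transferring Lyapunov-type bounds from~\cite{gmmw} controlling $\sum_{k \leq nt}\1{\|X_k\|\leq \eps_n\sqrt{n}}$}, is where the real content would be, but~\cite{gmmw} establishes transience/recurrence classification and does not contain occupation-time estimates of this form. What actually closes this gap in the paper --- and is conspicuously absent from your proposal --- is the \emph{radial invariance principle} of~\cite{gmw} (Lemma~\ref{lem:radial_invariance} in the paper): the weak convergence $\|\tX_n\|\Rightarrow\Bes^V(0)$ proved under~\eqref{ass:moments}--\eqref{ass:cov_form} alone, without any angular regularity. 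This pins down the law of the radial component of any subsequential limit as a Bessel process; the paper then feeds the bi-continuity of the Bessel local time into the occupation times formula (Lemma~\ref{lem:a_integral_convergence}) to bound, uniformly over the approximating sequence, the time spent in shrinking annuli around $\0$. Without some independent identification of $\|\x^*\|$ as a semimartingale whose local time at $0$ is controlled, the claim that $\int_0^t\1{\x^*_s=\0}\,\ud s=0$ is not obtainable, and your extension to $f\in C^2_b(\R^d)$ cannot be completed. So your outline is sound in shape, but it is missing the one non-routine input --- the radial invariance principle and the resulting local-time control --- that makes the argument go through for a coefficient discontinuous at a point the limit visits infinitely often.
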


The class of random walks satisfying~\eqref{ass:moments}--\eqref{ass:radial-evec} 
consists of $\R^d$-valued Markov chains with an asymptotically stable increment covariance structure.
Thus
Theorem~\ref{thm:invariance} may be viewed as a  multi-dimensional
generalisation of the classical  invariance principle of Lamperti~\cite{lamp2}
for $\RP$-valued Markov chains with asymptotically constant variance of the increments. 
The proof of Theorem~\ref{thm:invariance}  
hinges on the radial invariance principle 
in~\cite{gmw}
and a $d$-dimensional invariance principle for martingale diffusions
with discontinuous coefficients given in Theorem~\ref{thm:conditional_invariance} below.
Invariance principles with continuous coefficients,
such as~\cite[Thm~7.4.1, p.~354]{ek}, do not apply in our setting
(both formally and) because,  
by Corollary~\ref{thm:law-of-excursions} below, the process $\x$
may hit the discontinuity point $\0$ infinitely many times. 
In order to deal with the point-recurrence of $\x$, 
it is necessary to control the amount of time $\x$ spends near $\0$. 
This is achieved via the occupation times formula 
and the analysis of the local time of the 
radial component of $\x$ (see proof of Lemma~\ref{lem:a_integral_convergence} below). 
Note that
neither
the specific form of the law of the radial component 
nor the fact that $\x$  has no drift are crucial for the validity of Theorem~\ref{thm:conditional_invariance}. 
Some consequences of
Theorem~\ref{thm:invariance} for random walks are in Section~\ref{sec:consequences} below.
Its proof 
is in Section~\ref{sec:invariance} below.

\subsection{The diffusion limit}
\label{subsec:Limit_comments}
A natural ellipticity condition for 
$\sigma^2:\Sp{d-1}\to\R^d\otimes\R^d$ in~\cite{gmmw} 
(see~\eqref{ass:cov_form} below)
requires constant total  
$\trace \sigma^2(\bu)  = V$ 
and radial
$\bra \bu ,\sigma^2(\bu) \bu \ket = U$
instantaneous variances for all
$\bu \in \Sp{d-1}$
and some positive reals $U<V$.
Further assumptions on $\sigma^2$ 
in Theorem~\ref{t:well-posedness}
are smoothness~\eqref{ass:sigma_smooth}
and a structural condition
$\sigma^2 (\bu)\bu=U\bu$ for all $ \bu \in \Sp{d-1}$ (\eqref{ass:radial-evec} below),
which ensures the existence of a skew-product decomposition
of excursions of $\x$. 


\subsubsection*{$\x$ is a self-similar Markov process on $\R^d$ (with Brownian scaling).}
The process $\|\x\|/\sqrt{U}$ is Bessel of dimension $V/U > 1$ (see Lemma~\ref{l:radial-bessel} below).
Hence, if $V/U \in(1, 2]$ (resp. $V/U>2$), then  $\liminf_{t \to \infty} \| \x_t\| = 0$ 
(resp.  $\lim_{t \to \infty} \| \x_t \| = \infty$) and
the origin~$\0$ is recurrent for $\x$ if and only if $V/U < 2$.
(The Foster--Lyapunov criteria~\cite[Thm~6.2.1]{pinsky} 
do not apply,
even if Theorem~\ref{t:well-posedness} has been established, 
since $\bx\mapsto\sigma^2(\hat \bx)$ is discontinuous.) 
Let $\Pr_{\bx_0}$ be the law of $\x$ started at 
$\x_0 = \bx_0\in\R^d$. Define $\y=(\y_t,t\geq0)$, $\y_t :=  c\x_{c^{-1/2}t}$, for some constant $c >0$. Then 
the scale invariance of  
$\bx \mapsto \sigma(\hat \bx)$ and $W$ in~\eqref{eqn:x-SDE}
imply that 
$\y$ solves SDE~\eqref{eqn:x-SDE} with $\y_0 = c\bx_0$. 
By Theorem~\ref{t:well-posedness},
the law of $\y$ equals $\Pr_{c\bx_0}$, making $\x$ a globally defined self-similar Markov process on $\R^d$,
which may hit $\0$ infinitely many times. 

\subsubsection*{A stationary diffusion $\psi$ on $\Sp{d-1}$}
Consider the following Stratonovich SDE on $\Sp{d-1}$,
\begin{equation}
\label{sde-on-sphere}
\ud \phi_t = (\ssym ( \phi_t) - \phi_t \phi_t^\tra ) \circ \ud W_t 
-(I - \phi_t \phi_t^\tra ) A_0(\phi_t) \ud t, 
\end{equation}
where $W$ is a standard BM on $\R^d$,
$\ssym$
is the unique positive-definite square root of $\sigma^2$, which is hence smooth by 
Lemma~\ref{lem:s_sym_uniformly_ell} below, and the vector field $A_0$
is a linear combination of the derivatives of the columns of $\ssym$ defined in Section~\ref{sec:Sp-diffusion} 
below. By Lemma~\ref{lem:Rd-not-0-SDE} below, SDE~\eqref{sde-on-sphere} has a unique strong solution 
on $\Sp{d-1}$. In the case  
$\sigma^2=\ssym=I$,
SDE~\eqref{sde-on-sphere}
clearly reduces to Stroock's representation of the BM on $\Sp{d-1}$ with the Riemannian metric induced by
the ambient Euclidean space~\cite[p.~83]{hsu}
($\x$ in this case is a BM on $\R^d$).

The key ingredient of the excursion measure of $\x$
is the stationary distribution $\mu$ on $\Sp{d-1}$ of the solution 
$\phi$ of~\eqref{sde-on-sphere}. 
In order to analyse $\phi$ and characterise
$\mu$, it turns out to be essential to modify the geometry on 
$\Sp{d-1}$
via the Riemannian metric
$g_\bx(v_1,v_2):=\bra \sigma^{-2}(\bx)v_1,v_2\ket$,
where $\bx\in\Sp{d-1}$,  $v_1,v_2\in\R^d$ are in the tangent space of 
$\Sp{d-1}$
at $\bx$
and $\bra\cdot,\cdot\ket$ is the inner product on $\R^d$.
On the Riemannian manifold 
$(\Sp{d-1},g)$,
by 
Lemma~\ref{lem:Rd-not-0-SDE},
$\phi$
is a BM with drift, generated by  
$\cG=(1/2)\Delta_g +V_0$,
where
$\Delta_g$ is the Laplace-Beltrami operator
and $V_0$ is a tangential vector field on $\Sp{d-1}$, explicit in $\sigma^2$ and
its derivatives of order one.
Prop.~\ref{lem:sphere-in-Rd-SDE} states that
the stationary measure $\mu$ is unique. Its proof shows that 
in fact $\mu (\ud \bx ) = p(\bx) \ud_g \bx$,
where
$p:\Sp{d-1}\to\R$ 
is a strictly positive density with respect to the Riemannian volume element 
$\ud_g \bx$ on $(\Sp{d-1}, g)$ (see e.g.~\cite[p.~291]{iw} for definition), 
uniquely determined by the PDE $\cG^*p=0$ 
with $\cG^*$ denoting the  adjoint of $\cG$ on $L^2(\Sp{d-1};\ud_g \bx)$.
Recall that for any vector field $V$ on $\Sp{d-1}$,
$\divg V$
is the trace of the endomorphism of the tangent space given by the 
directional derivatives of $V$ via the 
Levi-Civita connection 
and,
for any smooth $f$ on $\Sp{d-1}$,
we have
$\Delta_g f = \divg(\grad(f))$ (see Sec.~\ref{subsubsec:RiemannianGeom} below). 
Integration by parts
implies that 
$p$ is the unique positive solution of the PDE 
\begin{equation}
\label{eq:mu-density} \frac{1}{2} \Delta_g p - \divg ( p V_0 ) = 0,
\qquad\text{ satisfying $\int_{\Sp{d-1}} p (\bx) \ud_g \bx=1$.}
\end{equation}

We can now define a stationary solution $\psi$ of~\eqref{sde-on-sphere},
indexed by $\R$, with law $\Pr_\Psi$
(see Prop.~\ref{lem:sphere-in-Rd-SDE} below). 
Assuming
$V_0 = \grad F_0$
for a smooth $F_0:\Sp{d-1}\to\R$,
the definition of 
$\grad F_0$
on 
$(\Sp{d-1}, g)$
in
Section~\ref{subsubsec:RiemannianGeom} below
implies that $p := \exp(2F_0)/\int_{\Sp{d-1}}\exp(2F_0(\bx))\ud_g \bx$
is the unique solution of~\eqref{eq:mu-density}.
Moreover,
by~\cite[Thms~4.2 \&~6.1]{kent}, SDE~\eqref{sde-on-sphere}
is time reversible:
for any random time $T\in\R$, independent $\psi$, 
the process $(\psi_{T-t},t\in\RP)$ 
solves~\eqref{sde-on-sphere} started according to the law $\mu$.
In particular,
if 
$F_0\equiv0$, then 
$\psi$ is the standard stationary spherical BM
and the measure $\mu$ is uniform. 

\subsubsection*{Transient case: skew-product decomposition of $\x$}
Suppose that $2 < V/U$. If $\x_0 \neq \0$, 
a Bessel process $r/\sqrt{U}$ of dimension $V/U$ 
(with $r_0=\|\x_0\|$)
is strictly positive and we may define  
$\rho_s  (t) = \int_s^t r_u^{-2} \ud u$ for $t, s \geq 0$. 
Then the process 
$(r_t \phi_{\rho_0 (t)},t\in\RP)$,
where 
the solution $\phi$ of SDE~\eqref{sde-on-sphere},
started at 
$\phi_0 = \hat \x_0$,
and
$r$ are independent,
has the same law as $\x$ (see 
Section~\ref{sec:transient_case} below).

The relevant case for Theorem~\ref{thm:invariance}
is $\x_0 = \0$. 
As $\x$ starts from $\0$ and never returns, a natural description of its law
is via a family of entrance laws at positive times $s$ and the subsequent evolution. 
The latter is given in terms of a Bessel process and a time-changed angular process
solving~\eqref{sde-on-sphere}
as above: 
$(r_t \phi_{\rho_s (t)},t\geq s)$ with $\phi_0:=\hat \x_s$.
The random vector $\hat \x_s$ 
is forced to be 
\emph{independent} of $r_s$
and distributed according to
the \emph{stationary} law $\mu$ of $\phi$, due to the \emph{rapid spinning}
of the process $\x$ as it leaves $\0$: 
$\rho_s (t) \to \infty$ as $s \downarrow 0$ for fixed $t>0$ (see Lemma~\ref{l:independent_angle_transient} below). 
As
$\rho_s(t)=\rho_s(1)+\rho_1(t)$ for any $s,t>0$,
the processes 
$(r_t \psi_{\rho_1 (t)},t>0)$
and
$(\x_t,t>0)$
are equal in law, 
where $\psi$ 
and $r$ are independent. 
The analogy with the classical case of the skew product of BM on $\R^d$ 
in both cases
$\x_0\neq \0$ and $\x_0=\0$ 
(see~\cite[\S IV.35, p.~73]{rw2} and~\cite[p.~276]{itomckean}) is clear.  
Moreover,
in the polar case 
$V/U=2$, the skew product of $\x$ is analogous to the one in the transient case.

\subsubsection*{Point-recurrent case: skew-product decomposition of excursions of $\x$}
Assume $V/U\in(1,2)$ and $\x_0=\0$.
The process $\x$ returns to $\0$ infinitely often since $\|\x\|/\sqrt{U}$ is Bessel 
of dimension $V/U$. 
As the excursions of $\x$ turn out to exhibit the rapid spinning behaviour at each end, 
its excursion measure may be constructed as follows. 
Mark each Bessel excursion 
by an independent draw from the law 
$\Pr_\Psi$ on $\cC ( \R, \Sp{d-1} )$ given in Prop.~\ref{lem:sphere-in-Rd-SDE} below.
Since, due to rapid spinning at the beginning of each excursion of $\x$,
the angular component of the excursion is distributed according to the stationary 
measure $\mu$ of SDE~\eqref{sde-on-sphere} at all times, we need to map the marked Bessel excursion
by time-changing the mark $\psi$ via an additive functional of the Bessel excursion, see Section~\ref{subsub:Marked_Bessel_Ex}
below for details. 
Note that the mapping has to be defined for Bessel excursions lasting longer than $a$ (for any fixed $a>0$), since
the time-change can only be ``anchored'' at a pre-specified time during the life time of the excursion. 
Although this causes some technical difficulties, the mapped Poisson point processes can be interpreted consistently
(for all $a>0$). Its excursion measure turns out to be that of $\x$.

We stress that this construction of the excursion measure 
depends only on $\sigma^2$, which specifies the dimension of the Bessel process and hence its excursion measure
and determines the marks via SDE~\eqref{sde-on-sphere} (the mapping uses only the information
contained in the Bessel excursion). 
Moreover,  
the local time at $\0$ of $\x$
can be defined 
as that of $\|\x\|$ at $0$,
without a reference to the strong Markov property of $\x$.
Hence, once the excursion measure has been constructed (Section~\ref{subsub:Marked_Bessel_Ex} below),
the key step in the proof of Theorem~\ref{t:well-posedness} consists of establishing that (without the strong
Markov property) the point process of excursions of $\x$ is the Poisson point process with the excursion measure 
described above.  The details are in Section~\ref{subsub:Proof_of_weak_uniquenes_rec} below.

In the case $\x_0\neq\0$, 
up to the first hitting time of $\0$, 
the skew product of excursions coincides with
the generalised Lamperti representation for self-similar Markov processes
on $\R^d\setminus\{\0\}$~\cite{acgz}, 
where the L\'evy process is a scalar BM with drift and the angular component equals
the diffusion on $\Sp{d-1}$ in~\eqref{sde-on-sphere} started at $\hat\x_0$. 
Note also that there is a literature (see e.g.~\cite{vuolle1} and the reference therein) 
on the extensions of strong Markov processes on $\R^d\setminus\{\0\}$
with skew-product decomposition beyond the first hitting time of the origin,
of which $\x$ is an example.

\subsubsection*{Splitting excursions at the maximum: a generalised Pitman--Yor representation} 
If the vector field $V_0$ in~\eqref{eq:mu-density} has a potential, the excursions of $\x$
provide a multi-dimensional generalisation of the famous
Pitman--Yor~\cite{py} representation of the Bessel excursions 
with dimension $\delta=V/U \in (1,2)$. 
Let $U=1$
and recall from~\cite{py} that
the unique maximum $M$ of the Bessel excursion $e^r$
is drawn from the $\sigma$-finite density $m\mapsto m^{\delta-3}$ on the interval $(0,\infty)$.
Then,
conditional on $M$, the excursion $e^r$ is obtained by joining back to back 
two independent Bessel processes $\beta$ and $\beta'$ of dimension $4-\delta$, both
started at $0$ and run until the first times ($T_M$ and $T'_M$ respectively)  they hit $M$:
$e^r(t)=\1{t\in(0,T_M]}\beta_t+\1{t\in(T_M,T_M+T'_M)}\beta'_{T_M+T'_M-t}$.
A trivial (but crucial) observation is that when the maximum is reached, 
the process is neither at the beginning nor the end of the excursion. 
Hence, due to rapid spinning,  the angular component $\hat e^\x(T_M)$ of 
the corresponding excursion $e^\x$ of $\x$ at $T_M$ must 
follow the stationary law $\mu$ of SDE~\eqref{sde-on-sphere}.
As SDE~\eqref{sde-on-sphere} is time-reversible (see paragraph 
after~\eqref{eq:mu-density} above), the excursion 
$e^\x$ equals 
\begin{equation}
\label{eq:split_at_max}
e^\x(t)=\1{t\in(0,T_M]}\beta_t\phi_{\rho (T_M-t)}+\1{t\in(T_M,T_M+T'_M)}\beta'_{T_M+T'_M-t}\phi'_{\rho'(t-T_M)},
\end{equation}
where
$\phi,\phi'$
are solutions of SDE~\eqref{sde-on-sphere} with the same initial condition $\phi_0=\phi'_0$,
distributed according to $\mu$, and driven by independent BMs. The time-changes
$\rho (t) = \int_0^t \beta_{T_M-s}^{-2} \ud s$, $t \in (0,T_M]$,
and $\rho' (t) = \int_0^t \beta'^{-2}_{T'_M-s} \ud s$, $t \in [0,T'_M)$,
satisfy
$\lim_{t\downarrow0}\rho (T_M-t)=\lim_{t\uparrow T'_M}\rho' (t)=\infty$. 

In the limit as $U\uparrow V$, which is excluded from our results, 
the angular motion degenerates to a constant as the trace of $\sigma^2$ 
equals the radial eigenvalue. 
The radial part becomes the modulus of the scalar BM,
while rapid spinning and~\eqref{eq:split_at_max} suggest that 
the singular diffusion in the limit changes the ray it lives on every time it hits the origin 
according to a law on $\Sp{d-1}$, which is the limit of the stationary measures 
of SDE~\eqref{sde-on-sphere} as $V/U\downarrow 1$.
It hence appears that the liming singular diffusion is a generalisation of the Walsh BM
(or Brownian spider)~\cite{bpy} to $\R^d$.

\subsubsection*{Smooth square roots and pathwise uniqueness: the Stroock--Yor phenomenon}
SDE~\eqref{eqn:x-SDE} need not (but clearly could) possess pathwise uniqueness even if $\sigma^2$ is the identity
(consider $\sigma(\bu)=\diag{\sign(u_1),\ldots,\sign(u_d)}$
and recall the scalar Tanaka SDE~\cite[\S IX.1,~Ex.(1.19)]{ry}). 
This behaviour persists even for smooth square roots $\sigma$.
Below we give a generalisation of the SDE for complex Brownian 
motion in~\cite[Thm~3.12]{sy},  with the property that 
the failure of pathwise uniqueness occurs precisely when
the solution starts from (or visits) $\0$.

Note first that a simple application of the occupation times formula
and the fact that $\x_0=\0$ if and only if $\|\x_t\|=0$ imply
that 
if $\x$ solves SDE~\eqref{eqn:x-SDE} for a given
choice of $\hat\0$, then it also solves the SDE for any other choice
$\hat\0\in\Sp{d-1}$.
If a square root $\sigma$ satisfies (I)
$P\sigma(\bu)=\sigma(P\bu)$ for all $\bu\in\Sp{d-1}$,
where
$P\in SO(d)\setminus\{I\}$
\footnote{$SO(d)$ is the group of orientation-preserving orthogonal matrices in 
$\R^d\otimes\R^d$ and $I$ is the identity matrix.},
then It\^o's formula and the remark above imply
that for any solution $(\x,W)$ of~\eqref{eqn:x-SDE} started from $\0$,
the process $(\y,W)$, where $\y:=P\x$, is also a solution. 
By Theorem~\ref{t:well-posedness}, $\x$ and $\y$ have the same law but are clearly not equal. 
If, in addition, $\sigma$ satisfies 
(II) $\bu=\sigma(\bu)\bc$ for all $\bu\in\Sp{d-1}$
and some 
$\bc\in\Sp{d-1}$, 
the Brownian motion driving the process $\|\x\|$
equals $\bc^\tra W$ (Lemma~\ref{l:radial-bessel} below),
making $\|\x\|$ 
adapted to $W$.
Moreover, assuming $\x$ never visits $\0$,
the BM driving the angular component via
SDE~\eqref{sde-on-sphere} 
is a time-change of 
$\int_0^\cdot \|\x_s\|^{-1}\ud W_s$
(see~\eqref{eqn:tilde-W} and Proposition~\ref{prop:transient-away-from-0} below).
Hence the skew product $\|\x_t\| \phi_{\rho_0(t)}$, $t\in\RP$, 
where 
$\rho_0  (t) = \int_s^t \|\x_u\|^{-2} \ud u$, 
makes $\x$ a strong solution of~\eqref{eqn:x-SDE}.

It remains to exhibit a smooth $\sigma$ satisfying (I) and (II) above. Note first that~(I)
may only hold in even dimensions. We rely on the Lie group structure of the spheres 
in dimensions $d\in\{2,4\}$ 
for our examples. 
Pick a positive-definite $A\in\R^d\otimes\R^d$
and let $\sigma(\bu)=R(\bu)A$, where $R:\Sp{d-1}\to SO(d)$ is smooth.
For $d=4$, view $\Sp{3}$ as unit quaternions and define $R$ by
$R(\bu)\bv:=\bu\bullet \bv$, 
where 
$\bu\bullet \bv$ denotes the multiplication of quaternions 
$\bv\in\R^4$ 
and $\bu$
(see e.g.~\cite[p.~229]{rw2}).
It is easy to check that 
$R(\bu)\in SO(4)$ 
and
$R(\bu)\be_1=\bu$
for all 
$\bu\in\Sp{3}$,
where 
$\be_1$ is the first standard basis element of $\R^4$, i.e. the real quaternion. 
If in addition $A\be_1=\be_1$, then (II) holds.
Moreover, $\sigma(\bu)$ is a smooth square root of $\sigma^2(\bu)=R(\bu)A^2 R(\bu)^{-1}$.
Pick a unit quaternion $\bp\in\Sp{3}\setminus\{\be_1\}$ and define
$P:=R(p)\in SO(4)$.
The associativity of the product $\bullet$
yields the matrix identity $PR(\bu)=R(P\bu)$ for $\bu\in\Sp{3}$, 
implying (I).
Hence pathwise uniqueness fails when $\x_0=\0$.
Since $\sigma^2(\bu)\bu=\bu$, the process $\x$ hits $\0$ if and only if 
$\trace(\sigma^2(\bu))=\trace(A^2)\in(1,2)$ and we may choose independently a different 
rotation $P$ for each excursion, exhibiting uncountably many solutions of~\eqref{eqn:x-SDE}
for a fixed BM $W$. 
The complex case is analogous:
a BM in~\cite[Thm~3.12]{sy} solves~\eqref{eqn:x-SDE} with 
$\sigma(\bu)=R(\bu)$
a multiplication by $\bu\in\Sp{1}$.

\subsection{Angular convergence and the first exit out of large balls of the random walk.}
\label{sec:consequences}
We now describe the behaviour of the angular component of the random walk $X$ and  
its asymptotic law 
at $\tau_a^n:=\inf\{m\in\ZP:\|X_m\|\geq a\sqrt{n}\}$ its first exit 
out of the ball centred at $\0$ with radius $a\sqrt{n}$ (for some $a>0$).
Both statements are easy consequences of Theorem~\ref{thm:invariance}. 

Let $r$ be a Bessel process of dimension $\delta>1$, $r_0=0$,
and $\tau_a:=\inf\{t\in\RP:r_t=a\}$  (thus $\tau_a<\infty$ a.s). 
Recall that $\Pr[r_1\leq x]=\int_0^{x^2/2}z^{\alpha -1} \re^{-z} \ud z /\Gamma(\delta/2)$ for all $x\in\RP$~\cite[Cor.~XI.1.4]{ry},
where $\Gamma$ denotes the gamma function,  and 
$\Exp[\exp(-\lambda\tau_a)]=(a\sqrt{2\lambda})^\nu/(2^\nu \Gamma(\nu+1)I_\nu(a\sqrt{2\lambda}))$,
for any
$\lambda>0$, where $I_\nu$ denotes the modified Bessel function of the first kind of order $\nu:=(\delta-2)/2$
(see~\cite{kent_eigen} for a series expansion of the density of $\tau_a$ in terms of the zeros of Bessel functions).

\begin{corollary}
\label{cor:marginal}
Let the random walk $X$ satisfy the  assumptions of Theorem~\ref{thm:invariance}
with $U=1$
and 
define $\delta:=V$. 
Let the random vector
$\theta$
with the law $\mu$ on 
$\Sp{d-1}$, whose density satisfies~\eqref{eq:mu-density},
be independent of $r$. 
Then, as $n\to \infty$, the following weak limits hold:
\[ n^{-1/2} X_n \Rightarrow r_1 \theta \quad\text{(and hence $\hat X_n\Rightarrow \theta$)}
\qquad\text{and}\qquad 
(\tau_a^n/n,  n^{-1/2}X_{\tau_a^n})\Rightarrow (\tau_a,a\theta).\]
\end{corollary}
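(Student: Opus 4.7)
By Theorem~\ref{thm:invariance}, $\tX_n \Rightarrow \x$ in $\cD_d$, where $\x$ solves~\eqref{eqn:x-SDE} with $\x_0=\0$. I would deduce both claims by applying the continuous mapping theorem to suitable path functionals, followed by an identification of the resulting joint laws via the skew-product/rapid-spinning picture summarised in Section~\ref{subsec:Limit_comments}. Two facts underlie the identifications. First, $U=1$ combined with Lemma~\ref{l:radial-bessel} makes $\|\x\|$ a Bessel process of dimension $\delta=V$. Second, at any positive deterministic time $s$, the angular component $\hat\x_s$ is distributed according to $\mu$ and is independent of $\|\x_s\|$; in the transient/polar case this is Lemma~\ref{l:independent_angle_transient}, and in the recurrent case $\delta\in(1,2)$ it follows from the excursion construction of Section~\ref{subsub:Marked_Bessel_Ex} and the stationarity of the angular mark under $\mu$.

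For the first claim, the evaluation $\omega\mapsto\omega(1)$ is continuous on $\cD_d$ at every path continuous at $t=1$, a condition satisfied almost surely by $\x$. Hence $n^{-1/2}X_n=\tX_n(1)\Rightarrow\x_1$, and the two facts above identify $\x_1\eqd r_1\theta$. Composing with $\bx\mapsto\hat\bx$, which is continuous on $\R^d\setminus\{\0\}$, and noting that $\Pr[r_1=0]=0$ (as $\delta>1$), the continuous mapping theorem also gives $\hat X_n\Rightarrow\theta$.

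For the second claim, define the first-passage functional $T_a(\omega):=\inf\{t\geq 0:\|\omega(t)\|\geq a\}$ on $\cD_d$. Because of the floor function in the definition~\eqref{eq:scaled_walk} of $\tX_n$, the exact identities $T_a(\tX_n)=\tau_a^n/n$ and $\tX_n(\tau_a^n/n)=n^{-1/2}X_{\tau_a^n}$ hold, so no approximation is needed to pass from discrete to scaled continuous time. It is standard that $\omega\mapsto(T_a(\omega),\omega(T_a(\omega)))$ is continuous in the Skorohod topology at every continuous path $\omega$ with $T_a(\omega)<\infty$ that does not graze level $a$ at $T_a(\omega)$, i.e.\ satisfies $\sup_{T_a(\omega)<s<T_a(\omega)+\eps}\|\omega(s)\|>a$ for every $\eps>0$; the sample paths of the regular Bessel diffusion $\|\x\|$ have this property almost surely by the strong Markov property at $T_a$ and Blumenthal's zero-one law. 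Continuous mapping then yields $(\tau_a^n/n,n^{-1/2}X_{\tau_a^n})\Rightarrow(\tau_a,\x_{\tau_a})$, and $\x_{\tau_a}=a\hat\x_{\tau_a}$ by continuity. Finally, since $\tau_a$ is measurable with respect to the driving Bessel process $r$ (respectively, with respect to the Bessel excursion of $\x$ first reaching radius $a$ in the recurrent case), while the angular component $\psi$ solving~\eqref{sde-on-sphere} is independent of $r$ and stationary under $\mu$, one concludes $\hat\x_{\tau_a}\eqd\theta$ independently of $\tau_a$, giving the limit $(\tau_a,a\theta)$.

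The main obstacle I expect is verifying Skorohod-continuity of the first-passage pair $(T_a,\omega(T_a))$ on a set of full $\x$-measure, particularly in the recurrent case $\delta\in(1,2)$ where $\|\x\|$ returns to $0$ infinitely often; this reduces to the non-grazing of level $a$ by the Bessel diffusion $\|\x\|$ at its first hitting time, which is standard. The identification of the joint law of $(\tau_a,\hat\x_{\tau_a})$ in the recurrent regime is also slightly more delicate than in the transient one, since it is carried out within the first excursion of $\x$ reaching radius $a$; rapid spinning guarantees the stationary angular law at the deterministic (conditional on the Bessel excursion) time $\tau_a$ within that excursion.
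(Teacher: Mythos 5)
Your proposal is correct and follows essentially the same route as the paper's proof: pass $\tX_n\Rightarrow\x$ from Theorem~\ref{thm:invariance} through the evaluation and first-passage functionals via the continuous-mapping theorem, then identify the limit laws $\x_1\eqd r_1\theta$ and $(\tau^a(\x),\x_{\tau^a(\x)})\eqd(\tau_a,a\theta)$ using the skew-product/rapid-spinning structure (Lemma~\ref{l:independent_angle_transient} in the polar case, Prop.~\ref{prop:time_excurion} in the recurrent case). The only cosmetic difference is that you supply a direct Blumenthal-type non-grazing argument for Skorohod-continuity of $\omega\mapsto(T_a(\omega),\omega(T_a(\omega)))$, whereas the paper invokes its own Lemma~\ref{lem:continuity_of_mapa} (and its Remark~(a)), which encodes the same fact.
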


For a  continuous $f:\Sp{d-1}\to\R$, 
Cor.~\ref{cor:marginal} and~\cite[Thm~2.1]{bill}
imply
$\lim_{n \uparrow \infty} \Exp [ f(\hat X_n) ] = \int_{\Sp{d-1}}f\ud\mu$.
However, the ergodic average
$\frac{1}{n} \sum_{k=0}^{n-1} f(\hat X_k )$ cannot in general 
converge in probability to the constant $\int_{\Sp{d-1}}f\ud\mu$,
since
by
Theorem~\ref{thm:invariance}, an analogous argument to the one in the proof of 
Lemma~\ref{lem:a_integral_convergence} below
and~\eqref{eq:scaled_walk}, 
the average converges weakly
to a non-degenerate limit (for a non-constant function $f$): 
$\frac{1}{n} \sum_{k=0}^{n-1} f(\hat X_k )= \int_0^1 f(\hat\tX_n(t))\ud t\Rightarrow \int_0^1 f(\hat \x_t) \ud t$.

\begin{proof}
By~\eqref{eq:scaled_walk} and Theorem~\ref{thm:invariance}
we have 
$n^{-1/2} X_n=\tX_n(1)\Rightarrow \x_1$.
Since $\x_0=\0$, the skew product structure 
(Lem.~\ref{l:independent_angle_transient} (polar case) and Prop.~\ref{prop:time_excurion} (point-recurrent case)) 
yields the first limit. The mapping theorem~\cite[Thm.~5.1]{bill} implies the second
($\bx\mapsto  \hat\bx$ is continuous on $\R^d\setminus\{\0\}$ and $\Pr[\x_1=\0]=0$). 
Note that $\tau_a^n=\tau^a(\tX_n)$ and $\tau_a=\tau^a(r)$, where  $\tau^a(x)$, $x\in \cD_d$, is defined in~\eqref{eq:contact_time_def}.
As $r$ reaches new maxima immediately after $\tau_a$, $\lim_{b\to a}\tau^b(r)=\tau^a(r)$ holds a.s. 
By Lemma~\ref{lem:continuity_of_mapa}, Remark~(a) just after it, Theorem~\ref{thm:invariance} 
and~\cite[Thm.~5.1]{bill} the final limit holds. 
\end{proof}

\section{Assumptions}
\label{sec:rws}
Let $\{\be_1, \ldots, \be_d\}$ be the standard orthonormal basis 
in $\R^d$ ($d\geq2$)
with respect to 
the Euclidean inner product 
$\bra \cdot , \cdot \ket$
on $\R^d$,
and
$\Sp{d-1} := \{ \bu \in \R^d : \| \bu \| = 1\}$ the unit sphere in $\R^d$,
where
$\| \cdot \|$ is the Euclidean norm. 
For $\bx \in \R^d \setminus \{ \0 \}$
and the origin $\0$, 
let $\hat \bx := \bx / \| \bx \|$
and
$\hat \0 := \be_1$, respectively.  

Let $X=(X_n , n \in \ZP)$ be 
a discrete-time, time-homogeneous Markov process on an unbounded Borel subset $\X$ of $\R^d$.
Suppose $X_0$ is a non-random point in $\X$.
Denote the increments of $X$ by 
$\Delta_n := X_{n+1} - X_n$. 
Since the law of $\Delta_n$ depends only on 
$X_n$,
we often take  $n=0$ and write $\Delta$ for $\Delta_0$.
Let $\Pr_\bx [ \, \cdot \, ] = \Pr [ \, \cdot \,  \mid X_0 = \bx]$
and $\Exp_\bx [ \, \cdot \, ] = \Exp [ \, \cdot \,  \mid X_0 = \bx]$
denote the probabilities and expectations when the walk is started from $\bx \in \X$. 
We make the following assumptions.
\begin{description}
\item[\namedlabel{ass:moments}{A0}] Suppose that $\sup_{\bx \in \X} \Exp_\bx [ \| \Delta \|^4  ] < \infty$.
\end{description}
By~\eqref{ass:moments}, 
the mean $\mu(\bx) := \Exp_\bx [ \Delta ]$
and the covariance matrix
$M (\bx) := \Exp_\bx [ \Delta \Delta^{\!\tra} ]$
exist $\forall \bx\in\X$. 
\begin{description}
\item[\namedlabel{ass:zero_drift}{A1}] Suppose that $\mu(\bx) = \0$ for all $\bx \in \X$.
\end{description}
The next assumption ensures that $\Delta$ is \emph{uniformly non-degenerate}. 
\begin{description}
\item[\namedlabel{ass:unif_ellip}{A2}] There exists $v > 0$ such that $\trace M(\bx) = \Exp_\bx[ \| \Delta \|^2 ] \geq v$ for all $\bx \in \X$.
\end{description}
For a matrix $M\in\R^d\otimes \R^d $ define the norm
$\| M \| := \sup_{\bu \in \Sp{d-1}} \| M \bu \|$.
Throughout the paper, let $\sigma^2(\bu)$ be a positive-definite matrix for all  $\bu\in\Sp{d-1}$.
\begin{description}
\item[\namedlabel{ass:cov_limit}{A3}] Suppose that, as $r \to \infty$,
we have
$\eps(r) := \sup_{\bx \in \X : \| \bx \| \geq r} \| M( \bx ) - \sigma^2 ( \hat\bx ) \| \to 0$.
\end{description}
\begin{description}
\item[\namedlabel{ass:cov_form}{A4}] 
Suppose that there exist constants $U, V$ with $0 < U < V < \infty$ such that, for all
$\bu \in \Sp{d-1}$, $\bra \bu ,\sigma^2(\bu) \bu \ket = U$ and $\trace \sigma^2(\bu)  = V$. In the case $2U=V$, suppose in addition 
that $\eps(r)$
as defined in~\eqref{ass:cov_limit} satisfies $\eps(r) = O (r^{-\delta})$ for some $\delta>0$.
\end{description}
Examples of walks satisfying~\eqref{ass:moments}--\eqref{ass:cov_form} are given in~\cite{gmmw},
where it is proved that they are transient if and only if $2U<V$. 
Under~\eqref{ass:moments}--\eqref{ass:cov_form},
an invariance principle for the radial component $\|X\|$ holds~\cite{gmw}.
The full invariance principle requires additional structure on the limiting covariance
matrix $\sigma^2$ to ensure that the angular part is a suitably well-behaved process on the sphere.

\begin{description}
\item[\namedlabel{ass:sigma_smooth}{A5}] 
Suppose that $\sigma^2:\Sp{d-1}\to\R^d\otimes\R^d$ is a $\cC^\infty$-function.
\end{description}
Controlling the dependence between  the radial and angular components requires the following.
\begin{description}
\item[\namedlabel{ass:radial-evec}{A6}] Suppose that $\bu$ is an eigenvector of $\sigma^2(\bu)$ for all $\bu \in \Sp{d-1}$.
\end{description}

\section{The diffusion limit}
\label{sec:diff_proc}

\subsection{Overview}
\label{sec:overview}
Let 
$\ssym:\Sp{d-1} \to\R^d\otimes\R^d $
be the unique positive-definite matrix-valued function satisfying 
$\ssym \ssym^\tra = \sigma^2$, 
i.e. $\ssym$ is the unique \emph{symmetric square root} of $\sigma^2$.
Pick any measurable square root 
$\sigma:\Sp{d-1} \to\R^d\otimes\R^d $
of $\sigma^2$ 
and note that, since $\sigma^2$ and $\ssym$ commute,  
the matrix
$\ssym^{-1}(\bu)\sigma(\bu)$ is orthogonal 
for all $\bu\in\Sp{d-1}$.
By L\'evy's characterisation of Brownian motion, it is 
hence sufficient to prove 
Theorem~\ref{t:well-posedness} 
for the SDE
\begin{equation}
\label{eqn:x-SDE-sym}
\ud \x_t = \ssym (\hat \x_t) \ud W_t,\qquad \x_0=\bx_0\in\R^d.
\end{equation}


The next step is to establish weak existence for SDE~\eqref{eqn:x-SDE-sym}. We start with a simple lemma. 
\begin{lemma}
\label{lem:s_sym_uniformly_ell}
Under~\eqref{ass:cov_form} and~\eqref{ass:sigma_smooth},  
$\ssym$ is uniformly elliptic in the following sense: there exists a constant $\lambda > 0$ such that 
$\bra \bv, \ssym(\bu) \bv\ket \geq \lambda$ for all $\bu,\bv \in \Sp{d-1}$.  
\end{lemma}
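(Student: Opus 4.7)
The plan is a compactness argument that bypasses any need to establish continuity of $\ssym$ in its argument. The key observation is that the map $(\bu, \bv) \mapsto \bra \bv, \sigma^2(\bu) \bv \ket$ is jointly continuous on the compact product $\Sp{d-1} \times \Sp{d-1}$ -- continuity in $\bu$ is supplied by~\eqref{ass:sigma_smooth} -- and is strictly positive on that set, since $\sigma^2(\bu)$ is positive-definite at every $\bu$ by the standing hypothesis of Section~\ref{sec:rws}. Hence this map attains a positive minimum $c > 0$, which via the Courant--Fischer characterisation yields $\lambda_{\min}(\sigma^2(\bu)) \geq c$ for every $\bu \in \Sp{d-1}$.

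Next I would use that $\ssym(\bu)$, being the unique symmetric positive-definite square root of $\sigma^2(\bu)$, shares an orthonormal eigenbasis with $\sigma^2(\bu)$, with eigenvalues equal to the positive square roots of those of $\sigma^2(\bu)$. Consequently $\lambda_{\min}(\ssym(\bu)) = \sqrt{\lambda_{\min}(\sigma^2(\bu))} \geq \sqrt{c}$ uniformly in $\bu$. A second application of Courant--Fischer then yields, for every $\bu, \bv \in \Sp{d-1}$,
\[
\bra \bv, \ssym(\bu)\bv\ket \;\geq\; \lambda_{\min}(\ssym(\bu)) \;\geq\; \sqrt{c} \;=:\; \lambda > 0,
\]
which is the desired bound.

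I do not expect a real obstacle here: the argument uses only compactness of $\Sp{d-1}$ and the pointwise positive-definiteness of $\sigma^2$. The point worth flagging is that routing the bound through $\sigma^2$ rather than $\ssym$ keeps the proof entirely elementary, so one need not invoke continuity (let alone smoothness) of $\bu \mapsto \ssym(\bu)$ -- a fact that does hold, since the positive-definite square root is smooth on the open cone of symmetric positive-definite matrices, but that plays no role in this lemma. Assumption~\eqref{ass:cov_form} likewise is not directly used beyond being part of the standing hypothesis set.
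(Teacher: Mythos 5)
Your argument is correct, and it takes a modestly different route from the paper's. The paper also relies on compactness of $\Sp{d-1}$ and positive-definiteness of $\sigma^2$, but extracts the eigenvalue bound through the determinant: smoothness~\eqref{ass:sigma_smooth} and compactness give $\det(\sigma^2(\bu)) > \varepsilon > 0$ uniformly, and then the trace condition $\trace\sigma^2(\bu) = V$ from~\eqref{ass:cov_form} is used to bound all other eigenvalues above by $V$, yielding $\lambda_{\min}(\sigma^2(\bu)) > \varepsilon/V^{d-1}$ and finally $\lambda := (\varepsilon/V^{d-1})^{1/2}$. You instead minimise the Rayleigh quotient $(\bu,\bv)\mapsto \bra\bv,\sigma^2(\bu)\bv\ket$ directly on the compact product $\Sp{d-1}\times\Sp{d-1}$, which gives the uniform lower bound on $\lambda_{\min}(\sigma^2(\bu))$ in one step. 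Your remark that~\eqref{ass:cov_form} is thereby rendered unnecessary is correct and is a genuine simplification: the paper needs the trace bound precisely because it routes through the determinant, whereas your direct minimisation sidesteps that. Both proofs then pass from $\sigma^2$ to $\ssym$ identically, via the square-rooted spectrum. Your version is slightly more economical in hypotheses; the paper's has the small advantage of producing an explicit constant $\lambda$ in terms of $\varepsilon$ and $V$.
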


\begin{proof}
Since $\sigma^2$ is positive-definite,
by~\eqref{ass:sigma_smooth} and the compactness of $\Sp{d-1}$ there exists
$\varepsilon>0$ such that 
$\det(\sigma^2)>\varepsilon$ on $\Sp{d-1}$. By~\eqref{ass:cov_form} we have 
$\trace \sigma^2(\bu)=V$. 
Hence the smallest eigenvalue 
$\lambda_\text{min}(\bu)$ of $\sigma^2(\bu)$
satisfies 
$\varepsilon<\lambda_\text{min}(\bu) V^{d-1}$ for all
$\bu \in \Sp{d-1}$. 
Since $\ssym$ is symmetric and non-degenerate, its eigenvalues are positive
and the smallest one is equal to $\sqrt{\lambda_\text{min}(\bu)}$. Hence the inequality
in the lemma holds for the constant
$\lambda:=(\varepsilon/ V^{d-1})^{1/2}$.
\end{proof}

Since the function 
$\bx\mapsto\ssym(\hat \bx)$
is bounded 
and uniformly elliptic by Lemma~\ref{lem:s_sym_uniformly_ell},
\cite[\S 2.6, Thm~1]{krylov} implies that weak existence holds for SDE~\eqref{eqn:x-SDE-sym}. 
Once  uniqueness in law for SDE~\eqref{eqn:x-SDE-sym} is established,  
the strong Markov property (and hence Theorem~\ref{t:well-posedness}) follows by~\cite[Thm~6.2.2]{sv}.



The proof of uniqueness in law proceeds as follows. 
Throughout Section~\ref{sec:diff_proc}, assume $U=1$ in~\eqref{ass:cov_form}.
In Section~\ref{sec:radial_process} we prove that the radial component of any
solution of~\eqref{eqn:x-SDE-sym} is  Bessel of dimension $V > 1$.  
Section~\ref{subsubsec:RiemannianGeom} introduces the Riemannian structure on the sphere, 
needed 
in Section~\ref{sec:Sp-diffusion}
to characterise the law of a stationary diffusion on $\Sp{d-1}$ indexed by
$\R$. 
This process is a key ingredient in the description of the projection of the 
path of the solution $\x$ of SDE~\eqref{eqn:x-SDE-sym} (away from $\0$)   onto $\Sp{d-1}$. In
Section~\ref{sec:transient_case} we analyse the case when $0$ is polar for the
radial process ($V \geq2$). We prove that any solution has a skew-product
decomposition constructed using the components from Sections~\ref{sec:radial_process}
and~\ref{sec:Sp-diffusion} that are unique in law.  In Section~\ref{sec:excursions} we
consider the recurrent case ($1< V < 2$). We develop the excursion theory (away from $\0$) of the solution
$\x$ of~\eqref{eqn:x-SDE-sym} without reference to the strong Markov property of $\x$.
We characterise the excursion measure in terms of the excursion measure of the radial part, given in~\cite{py},
and the law of the diffusion on  $\Sp{d-1}$ from Section~\ref{sec:Sp-diffusion}.
This implies the uniqueness in law for SDE~\eqref{eqn:x-SDE-sym}.

\subsection{The radial process}
\label{sec:radial_process}
Let
$r := \| \x \|$
be the radial part 
of a solution $\x$ of SDE~\eqref{eqn:x-SDE-sym}.

\begin{lemma}
\label{l:radial-bessel}
Let~\eqref{ass:cov_form} hold and $\sigma^2:\Sp{d-1}\to\R^d\otimes\R^d $ be measurable. 
For any solution  $(\x,W)$ of SDE~\eqref{eqn:x-SDE-sym},
adapted to a filtration 
$(\cF_t, t \geq 0)$, the process
$y=(y_t , t \geq 0)$,
$y_t:=  \| \x_t \|^2$,
is the unique strong solution of SDE
\begin{equation}
 \label{eqn:y-SDE-global}
y_t = \|\x_0\|^2 + 2\int_0^t \sqrt{y_s} \ud Z_s + V t, \quad t \geq 0,
\end{equation}
where $(Z_t, t \geq 0)$ is an $(\cF_t)$-Brownian motion given by
$Z_t := \int_0^t \hat \x_s^\tra \ssym(\hat \x_s) \ud W_s$.
In particular, the law of $r=\sqrt{y}$ is $\Bes^V\!\big(\| \x_0 \|\big)$.
\end{lemma}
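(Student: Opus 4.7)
The plan is a direct Itô computation on $y_t=\|\x_t\|^2$, followed by L\'evy's characterisation to identify the driving Brownian motion $Z$, and then invocation of the classical well-posedness theorem for the squared Bessel SDE to identify the law.

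First I would apply It\^o's formula to $y_t=\sum_{i=1}^d(\x_t^{(i)})^2$. Since $\ud\x_t=\ssym(\hat\x_t)\ud W_t$, the quadratic covariations are $\ud\langle\x^{(i)},\x^{(k)}\rangle_t=(\ssym\ssym^\tra)_{ik}(\hat\x_t)\ud t=\sigma^2(\hat\x_t)_{ik}\ud t$, giving
\[
\ud y_t = 2\x_t^\tra\ssym(\hat\x_t)\ud W_t+\trace\sigma^2(\hat\x_t)\ud t.
\]
The drift is $V\ud t$ by~\eqref{ass:cov_form}. For the martingale part, observe that $\x_t=\sqrt{y_t}\,\hat\x_t$ whenever $\x_t\neq\0$, while on $\{\x_t=\0\}$ both $\x_t$ and $\sqrt{y_t}$ vanish, so the identity $\x_s^\tra=\sqrt{y_s}\,\hat\x_s^\tra$ holds as integrands for any measurable choice of $\hat\0$; thus the martingale part equals $\int_0^t 2\sqrt{y_s}\,\hat\x_s^\tra\ssym(\hat\x_s)\ud W_s=\int_0^t 2\sqrt{y_s}\,\ud Z_s$ with $Z$ as in the statement.

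Next I would invoke L\'evy's characterisation to show $Z$ is a standard $(\cF_t)$-Brownian motion: it is a continuous local martingale with $Z_0=0$, and its bracket is
\[
\langle Z\rangle_t=\int_0^t\hat\x_s^\tra\ssym(\hat\x_s)\ssym^\tra(\hat\x_s)\hat\x_s\,\ud s=\int_0^t\bra\hat\x_s,\sigma^2(\hat\x_s)\hat\x_s\ket\ud s=\int_0^t U\,\ud s=t,
\]
using~\eqref{ass:cov_form} and the standing assumption $U=1$. Substituting back, $y$ solves~\eqref{eqn:y-SDE-global}. Pathwise uniqueness for this SDE is classical via Yamada--Watanabe, since the diffusion coefficient $x\mapsto 2\sqrt{x}$ is $\tfrac12$-H\"older on $\RP$ and the drift is constant; together with existence this yields a unique strong solution (see e.g.\ \cite{ry}). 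By definition, $r=\sqrt{y}$ is then the Bessel process $\Bes^V(\|\x_0\|)$.

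The only subtle point, and the main thing to get right, is handling the possible set of times $\{s:\x_s=\0\}$ when rewriting $\x_s^\tra\ssym(\hat\x_s)=\sqrt{y_s}\,\hat\x_s^\tra\ssym(\hat\x_s)$ and thereby defining $Z$; this requires recording that both sides vanish there (independently of the convention $\hat\0\in\Sp{d-1}$), so that the identification of the martingale part with $\int 2\sqrt{y_s}\,\ud Z_s$ is unambiguous. Everything else is essentially bookkeeping with It\^o's formula and L\'evy's theorem.
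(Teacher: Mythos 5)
Your proof is correct and takes essentially the same route as the paper: apply It\^o's formula to $\|\x_t\|^2$, use \eqref{ass:cov_form} (with $U=1$) to identify the drift as $V$ and the bracket of $Z$ as $t$ via L\'evy's characterisation, and conclude by weak existence and pathwise uniqueness for the $\Besq^V$ SDE. The one subtlety you flag — the well-definedness of the integrand at $\{\x_s=\0\}$ under the convention $\hat\0\in\Sp{d-1}$ — is precisely the point the paper addresses in the remark immediately following the lemma.
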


\begin{remark}
A solution $\x$ of SDE~\eqref{eqn:x-SDE-sym} is continuous and hence predictable (see~\cite[Sec.~IV.5]{ry}). Since 
$\bx\mapsto \ssym(\hat\bx)\hat\bx$ is measurable on $\R^d$ (recall that we defined $\hat\0:=\be_1$),
the integrand in the definition of $Z$ is a bounded predictable process. Hence the stochastic integral $Z$ is well defined,
even though (due to rapid spinning, see Section~\ref{sec:excursions} below) its integrand is far from continuous. Moreover,
the integrand does not in general have paths in $\cD_d$ (defined in Section~\ref{sec:invariance_conditional} below). 
\end{remark}

\begin{remark}
Assuming~\eqref{ass:radial-evec}, the Brownian motion $Z$  in Lemma~\ref{l:radial-bessel} can be expressed as
\begin{equation}\label{eqn:Z-def-rad}
Z_t = \int_0^t \hat \x_u^\tra \ud W_u.
\end{equation}
\end{remark}

\begin{proof}[Proof of Lemma~\ref{l:radial-bessel}]
For any solution $(\x,W)$ of \eqref{eqn:x-SDE-sym}, the processes $y$ and $Z$ 
defined in the lemma are $(\cF_t)$-adapted. It\^o's formula and the
assumption~\eqref{ass:cov_form} imply that 
equation~\eqref{eqn:y-SDE-global} holds.  The process $Z$ is a
Brownian motion by L\'evy's characterisation,~\eqref{ass:cov_form} and assumption $U=1$. 
Since SDE~\eqref{eqn:y-SDE-global} has weak existence and pathwise uniqueness, 
the law of $y$ is
$\Besq^V\!\big(\|\x_0\|^2\big)$. 
\end{proof}

\subsection{A Riemannian structure on $\Sp{d-1}$}
\label{subsubsec:RiemannianGeom}
This section introduces a Riemannian metric $g$ on $\Sp{d-1}$, gives an explicit description of its inverse tensor 
in local coordinates and relates it to the Laplace-Beltrami operator corresponding to $g$ (see~\cite{jost} as reference
on Riemannian geometry).

Identify the tangent space $T_\bx\Sp{d-1}$  at
$\bx\in\Sp{d-1}$ with the $(d-1)$-dimensional linear subspace 
$\{v\in\R^d:\bra v,\bx\ket=0\}$ of $\R^d$
and let  the cotangent space $T^*_\bx\Sp{d-1}$ 
be the vector space dual of 
$T_\bx\Sp{d-1}$.
Denote by
$T\Sp{d-1}$
and 
$T^*\Sp{d-1}$
the tangent and cotangent~\cite[Def.~2.1.9]{jost} bundles over 
$\Sp{d-1}$,
respectively. 
Any smooth section of the vector bundle 
$T^*\Sp{d-1}\otimes T^*\Sp{d-1}$, defined in~\cite[Def.~2.1.10]{jost},
is known as a $(0,2)$-tensor field. 
Let
\begin{equation}
\label{eq:Metric}
g_\bx(v_1,v_2):=\bra \sigma^{-2}(\bx)v_1,v_2\ket\qquad\text{for any $\bx\in\Sp{d-1}$ and $v_1,v_2\in T_\bx\Sp{d-1}$.}
\end{equation}
By~\eqref{ass:sigma_smooth},
$g$ is a symmetric positive-definite $(0,2)$-tensor field, 
i.e., a Riemmanian metric on the smooth manifold $\Sp{d-1}$. 
The metric $g$ provides a \textit{canonical} way of  identifying tangent and cotangent vectors:
the map
$\tilde g : T\Sp{d-1}\to T^*\Sp{d-1}$ given by
$\tilde g_\bx(v):T_\bx\Sp{d-1}\to\R$, where 
$\tilde g_\bx(v)(u):=g_\bx(v,u)$ for any $\bx\in\Sp{d-1}$, $v,u\in T_\bx\Sp{d-1}$,
is a bundle isomorphism~\cite[Def.~2.1.6]{jost}.
For any $f\in\cC^\infty(\Sp{d-1},\R)$, 
there exists a unique smooth section 
$df$
of the cotangent bundle
$T^*\Sp{d-1}$, representing the action of the derivative of $f$ on each tangent space~\cite[Sec.~1.2]{jost}.
A vector field on the sphere is an element in the module 
$\Gamma(T\Sp{d-1})$
(over the ring $\cC^\infty(\Sp{d-1},\R)$)
of smooth sections of 
$T\Sp{d-1}$ 
\cite[Def~2.1.3]{jost}.
Let the \textit{gradient} of $f$ 
be
$\grad f:=\tilde g^{-1} (df)$.
Hence
$\grad f$ is the unique vector field
satisfying the identity
$g(\grad f,X)=dfX$ 
for all $X\in\Gamma(T\Sp{d-1})$.
Moreover,
the operator
$\grad:\cC^\infty(\Sp{d-1},\R) \to \Gamma(T\Sp{d-1})$
is defined in a coordinate free fashion. 

There exists a unique connection (the Levi-Civita connection)~\cite[Def.~4.1.1]{jost} 
$\nabla:T\Sp{d-1}\times \Gamma(T\Sp{d-1})\to T\Sp{d-1}$
on $(\Sp{d-1},g)$,
which is metric and torsion-free~\cite[Thm~4.3.1]{jost}. 
In short, the connection $\nabla$ allows us to compare tangent vectors in near-by tangent spaces
in a way that is compatible with the geometry induced by the metric $g$, cf.~\cite[Secs~4.1 \& 4.2]{jost}.
In particular, 
a vector field $X\in\Gamma(T\Sp{d-1})$ gives rise to a linear endomorphism
$(\nabla X)_\bx:T_\bx\Sp{d-1}\to T_\bx\Sp{d-1}$
for any $\bx \in\Sp{d-1}$
\cite[Def.~4.1.1]{jost}. 
Put differently, $\nabla_v X$ is the derivative of the vector field $X$
at $\bx$ in the direction $v\in T_\bx\Sp{d-1}$.
Define the \textit{divergence} of the vector field
$X$ to be the trace of this linear endomorphism, 
$(\divg X)(\bx):=\trace (\nabla X)_\bx$. 
This yields 
a coordinate free definition of the
divergence operator
$\divg:\Gamma(T\Sp{d-1})\to \cC^\infty(\Sp{d-1},\R)$.
The \textit{Laplace-Beltrami operator} $\Delta_g:\cC^\infty(\Sp{d-1},\R)\to \cC^\infty(\Sp{d-1},\R)$ 
on the Riemannian manifold
$(\Sp{d-1},g)$
can now also be defined in a coordinate-free way as 
$\Delta_g f := \divg(\grad f)$
for any 
$f\in\cC^\infty(\Sp{d-1},\R)$.

We now introduce local coordinates on $\Sp{d-1}$
in order to identify the bundle isomorphism $\tilde g^{-1}:T^*\Sp{d-1}\to T\Sp{d-1}$. 
For each 
$q \in \{1,\ldots,d\}$,
define
$[q]:=\{1,\ldots,d\}\setminus\{q\}$
and,  
throughout this section,  
identify $\R^{d-1}$ with the 
linear subspace of $\R^d$ spanned by $\{\be_i;i\in[q]\}$.
Consider an atlas
of charts
$\bz_{q}:H_q^\pm\to B^{d-1}$
on $\Sp{d-1}$, 
where 
$\pm$ is either $+$ or $-$,  
$H_q^\pm:=\{ \bx=(x_1,\ldots,x_d)^\tra \in \Sp{d-1} : \pm x_q > 0\}$
is a hemisphere, $B^{d-1}$ is the open unit ball in $\R^{d-1}$
and 
$\bz_{q}(\bx):=\sum_{i\in[q]}x_i\be_i$.
The derivative of the smooth
inverse  
$\bz_{q}^{-1}:B^{d-1}\to H_q^\pm$
induces a linear isomorphism
$d \bz_{q}^{-1}(z):T_zB^{d-1}\to T_{\bz_q^{-1}(z)} H_q^\pm$
for each $z\in B^{d-1}$.
Using the canonical identification 
$T_zB^{d-1}\equiv \R^{d-1}$ for all $z\in B^{d-1}$,
at each $\bx\in H_q^\pm$
we obtain the basis
$\mathcal{B}_\bx:=\{E_i:=d \bz_{q}^{-1}(\bz_{q}(\bx))\be_i;i\in[q]\}$ 
of $T_\bx \Sp{d-1}$ and dual basis $\mathcal{B}^*_\bx := \{E^*_i; i\in[q]\}$ 
of $T^*_\bx \Sp{d-1}$, defined by $E^*_i(E_j) = \delta_{ij}$ for $i,j \in [q]$, where
$\delta_{ij}$ is the Kronecker delta.
We interpret the tangent vector $E_i$ as a linear map
$E_i:\cC^\infty(H_q^\pm,\R)\to\cC^\infty(H_q^\pm,\R)$ satisfying the Leibniz rule,
$E_i(f): \bx \mapsto \partial_i(f\circ \bz_q^{-1})(\bz_q(\bx))$,
where $\partial_i$ is the partial derivative in the $i$-th component~\cite[p.~247]{iw}.

\begin{lemma}
\label{lem:inverse_metric}
Assume \eqref{ass:cov_form}--\eqref{ass:radial-evec}.
For $\bx\in H_q^\pm$, the matrix 
$(g^{ij}(\bx))_{i,j\in[q]}$
corresponding to the linear isomorphism
$\tilde g_\bx^{-1}:T^*_\bx\Sp{d-1}\to T_\bx\Sp{d-1}$
in terms of the bases
$\mathcal{B}^*_\bx$
and
$\mathcal{B}_\bx$,
equals
$g^{ij}(\bx) = \sigma^2_{ij}(\bx) - x_ix_j$ for any $i,j \in [q]$.
The inverse matrix
$(g_{ij}(\bx))_{i,j\in[q]}$, 
corresponding to the isomorphism 
$\tilde g_\bx:T_\bx\Sp{d-1}\to T^*_\bx\Sp{d-1}$, is given by
$g_{ij}(\bx) = \sigma^{-2}_{ij}(\bx) +
\sigma^{-2}_{qq}(\bx)x_ix_j/\bra \bx,\be_q\ket^2-(\sigma^{-2}_{qi}(\bx)x_j+\sigma^{-2}_{qj}(\bx)x_i)/\bra
\bx,\be_q\ket$,
for any $i,j \in [q]$.
Moreover, 
in the coordinates on $H_q^\pm$, 
$\Delta_g$ 
equals
$$\Delta_g f  = \sum_{i,j\in [q]} g^{ij} \big(E_i (E_j(f))
-\sum_{k\in[q]}\Gamma_{ij}^k E_k( f)\big),\qquad 
\text{for any $f\in\cC^\infty(H_q^\pm,\R)$,}
$$
where $\Gamma_{ij}^k:=\frac{1}{2}\sum_{\ell\in[q]}g^{k\ell}(E_i(g_{j\ell})+ E_j( g_{i\ell}) -E_\ell( g_{ij}))$ for $i,j,k\in[q]$.
\end{lemma}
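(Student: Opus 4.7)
The plan is to work in the chart $\bz_q$, compute all objects in terms of the ambient Euclidean coordinates, and then invoke the standard Christoffel formula for $\Delta_g$. First I make the coordinate vector fields explicit: from $\bz_q^{-1}(z) = \sum_{i \in [q]} z_i \be_i + \varepsilon \sqrt{1 - \|z\|^2} \, \be_q$ with $\varepsilon \in \{+1,-1\}$ fixing the hemisphere, differentiating in $z_i$ gives
\begin{equation*}
E_i(\bx) = \be_i - \frac{x_i}{x_q} \be_q, \qquad i \in [q],
\end{equation*}
where $x_q = \bra \bx, \be_q \ket \neq 0$; in particular $\bra E_i, \bx \ket = 0$ confirms $E_i \in T_\bx \Sp{d-1}$. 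Substituting into $g_{ij} = \bra \sigma^{-2}(\bx) E_i, E_j \ket$ and expanding bilinearly, using the symmetry of $\sigma^{-2}$, yields the claimed expression for $g_{ij}$ directly.

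For the inverse tensor I would avoid inverting the $(d-1)\times(d-1)$ matrix $(g_{ij})$ and instead exploit the radial-eigenvector assumption \eqref{ass:radial-evec} (which with $U=1$ gives $\sigma^2(\bx)\bx = \bx$ and hence $\sigma^{-2}(\bx)\bx = \bx$). The covector $E_i^* \in T_\bx^*\Sp{d-1}$ is canonically represented, via the ambient Euclidean inner product, by the tangent vector $\be_i - x_i \bx$, the orthogonal projection of $\be_i$ onto $T_\bx \Sp{d-1}$: indeed $\bra \be_i - x_i \bx, E_j \ket = \delta_{ij}$ for $i,j \in [q]$. The vector $v := \sigma^2(\bx)\be_i - x_i \bx$ is itself tangent, since $\bra v, \bx\ket = \bra \be_i, \sigma^2(\bx)\bx\ket - x_i = 0$, and satisfies $\sigma^{-2}(\bx) v = \be_i - x_i \bx$ (applying $\sigma^{-2}$ and using $\sigma^{-2}(\bx)\bx = \bx$). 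Consequently $g_\bx(v, u) = \bra \be_i - x_i \bx, u \ket = E_i^*(u)$ for every tangent $u$, so $\tilde g_\bx^{-1}(E_i^*) = v$. Since the $E_j$-coefficient of any tangent vector $w = \sum_k c^k E_k$ is simply $w_j = \bra w, \be_j\ket$ (from the explicit form of $E_i$), one reads off $g^{ij}(\bx) = \sigma^2_{ij}(\bx) - x_i x_j$.

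The Laplace--Beltrami formula is then a standard local-coordinate computation. From $\Delta_g f = \divg(\grad f)$ and the Levi--Civita relation $\nabla_{E_i} E_j = \sum_k \Gamma_{ij}^k E_k$, one has
\begin{equation*}
\mathrm{Hess}_g f(E_i, E_j) = E_i(E_j(f)) - (\nabla_{E_i} E_j)(f) = E_i(E_j(f)) - \sum_{k \in [q]} \Gamma_{ij}^k E_k(f),
\end{equation*}
and tracing with $g^{ij}$, i.e.\ $\Delta_g f = \sum_{i,j \in [q]} g^{ij} \mathrm{Hess}_g f(E_i, E_j)$, yields the stated formula; the expression for $\Gamma_{ij}^k$ is the Koszul identity for the unique metric, torsion-free connection. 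The main subtlety lies in the derivation of $g^{ij}$: assumption \eqref{ass:radial-evec} is essential, being precisely the condition that $\sigma^2(\bx)$ preserves $T_\bx\Sp{d-1}$, which is what allows the inverse metric to take the clean rank-one-correction form $\sigma^2_{ij}(\bx) - x_i x_j$ without any explicit matrix inversion.
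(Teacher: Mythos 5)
Your proof is correct, and for the inverse tensor $g^{ij}$ it takes a genuinely different (and in my view cleaner) route than the paper's. The paper derives the formula for $g_{ij}$ the same way you do (by plugging the coordinate frame $E_i = \be_i - (x_i/x_q)\be_q$ into the bilinear form), but it then establishes $g^{ij} = \sigma^2_{ij}(\bx) - x_i x_j$ by writing out $(d-1)\times(d-1)$ block matrices $G$, $G^-$ and verifying $GG^- = I$ through a direct computation that uses the identities extracted from $\sigma^{-2}\sigma^2 = I$ and from~\eqref{ass:radial-evec}. You instead identify, for each $i \in [q]$, the metric dual of $E_i^*$ outright: the radial-eigenvector condition (with $U=1$, assumed throughout Section~\ref{sec:diff_proc}) is exactly what makes $v := \sigma^2(\bx)\be_i - x_i\bx$ tangent and gives $\sigma^{-2}(\bx)v = \be_i - x_i\bx$, so $g_\bx(v,\cdot) = E_i^*$ and hence $\tilde g_\bx^{-1}(E_i^*) = v$; reading off the $E_j$-components via $\bra v, \be_j\ket$ (valid since $\bra E_k, \be_j\ket = \delta_{kj}$ for $j,k\in[q]$) then yields $g^{ij} = \sigma^2_{ij}(\bx)-x_ix_j$ with no matrix inversion at all. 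This makes transparent exactly \emph{why} assumption~\eqref{ass:radial-evec} produces the clean rank-one-correction form, whereas the paper's verification hides this under algebra. One small caveat: for the $\Delta_g$ formula you invoke $\Delta_g f = \sum_{i,j}g^{ij}\,\mathrm{Hess}_g f(E_i,E_j)$, i.e.\ that $\divg(\grad f)$ equals the $g$-trace of the Hessian, calling it a ``standard local-coordinate computation.'' It is indeed standard Riemannian geometry, but the paper found it worth proving in Appendix~\ref{sec:manifolds} (Lemma~\ref{lem:BL-G-in-lc}) precisely because it could not locate a reference; a pointer to a text where $\Delta_g = \trace_g\mathrm{Hess}_g$ is established would close that step, though this is presentational rather than a substantive gap.
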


\begin{proof}
Recall that
$B^{d-1}\subset\R^{d-1}\equiv\mathrm{Lin}\{\be_i;i\in[q]\}\subset\R^d$.
For any point $z\in B^{d-1}$ and tangent vector $u\in\R^{d-1}$
we have $d \bz_q^{-1}(z)u=u-\be_q \bra z, u\ket/\bra \bz_q^{-1}(z),\be_q\ket$.  Since
 $g_{ij}(\bx)= g_\bx(d \bz_{q}^{-1}(\bz_{q}(\bx))\be_i, d \bz_{q}^{-1}(\bz_{q}(\bx))\be_j)$
for any 
$i,j\in[q]$, 
the formula for $g_{ij}(\bx)$ follows by~\eqref{eq:Metric}. 

We now prove that 
$(g^{ij}(\bx))_{i,j\in[q]}$, defined in the lemma, is the inverse of
$(g_{ij}(\bx))_{i,j\in[q]}$.
Define $(d-1)$-dimensional square matrices $S^-$ and $S$ as follows:
$S^-_{ij}:=\sigma^{-2}_{ij}(\bx)$ 
and 
$S_{ij}:=\sigma^2_{ij}(\bx)$
for any $i,j\in[q]$.
Define $(d-1)$-dimensional vectors $S^-_q, S_q$
by
$S^-_{q,i}:=\sigma^{-2}_{qi}(\bx)$ 
and
$S_{q,i}:=\sigma^2_{qi}(\bx)$ 
for $i\in[q]$.
Let
$s:=\sigma^2_{qq}(\bx)$
and 
$s^-:=\sigma^{-2}_{qq}(\bx)$.
Since 
$\sigma^{-2}(\bx)\sigma^2(\bx)$
is the identity on $\R^d$,
we have
\begin{equation}
\label{eq:inverse_equalities}
S^-S + S^-_q S^\tra_q = I,\qquad S^- S_q = -sS_q^-,\qquad
S S^-_q = - s^-S_q,
\end{equation}
where $I$
denotes the identity matrix on $\R^{d-1}$. 
Denote
$z:= \bz_{q}(\bx)$, and $D:=\pm \sqrt{1-\|z\|^2}$.
Since 
$\bx= z  + D \be_q\in\Sp{d-1}$, 
the assumption in~\eqref{ass:radial-evec} 
implies 
$\sigma^{-2}(\bx) (z+ D \be_q) = z+ D \be_q$
(recall $U=1$).
Hence the following identities hold,
\begin{equation}
\label{eq:A6_consequances}
S^{-}z_q= z_q - D S^{-}_q,\qquad z_q^\tra S^{-}_q = (1-s^{-}) D,\qquad Sz_q=z_q- D S_q,
\end{equation}
where $z_q$ denotes the $(d-1)$-tuple of coordinates of $z$ 
expressed in the basis 
$\{\be_i;i\in[q]\}$ of 
$\R^{d-1}$.
Define $(d-1)$-dimensional square matrices 
$G,G^-$ as follows:
$$
G^-:=S-z_qz_q^\tra,\qquad G:=S^-+s^- z_qz_q^\tra/D^2 -(z_qS^{-\tra}_q+S^-_qz_q^\tra)/D.
$$
A direct calculation, using identities in~\eqref{eq:inverse_equalities}--\eqref{eq:A6_consequances} and the fact
that $S=S^\tra$ and $S^-=S^{-\tra}$, yields 
$GG^-=I$. 
It remains to note that $G^-_{ij}=g^{ij}(\bx)$ and $G_{ij}=g_{ij}(\bx)$ for all $i,j\in[q]$.

The Laplace-Beltrami operator $\Delta_g$ on any Riemannian manifold
can be expresses in local coordinates  in terms of the Christoffel symbols $\Gamma^k_{ij}$ 
as above, cf.~\cite[Ch.~V, Eqs. (4.19) and (4.32)]{iw}.
This formula is key in the proof 
of Lemma~\ref{lem:Rd-not-0-SDE}\eqref{Rd-not-0-SDE:V0}
below and hence of
Theorem~\ref{t:well-posedness}. 
We could not find a reference for it so we prove it in
Appendix~\ref{sec:manifolds} below
(see Lemma~\ref{lem:BL-G-in-lc}). 
\end{proof}

\subsection{A stationary diffusion on $\Sp{d-1}$}
\label{sec:Sp-diffusion}
Define 
$A:\R^d\setminus\{\0\}\to\R^d\otimes\R^d$ by $A(\by) := \ssym(\hat \by)$, $\by\in\R^d\setminus\{\0\}$,
and note that it is an extension of  
$\ssym:\Sp{d-1}\to\R^d\otimes\R^d$. 
For any $j \in\{1,\dots,d\}$, define $A_j: \R^d\setminus\{\0\} \to \R^d$ by
$A_j(\by)=A(\by)\be_j$ 
and note that 
its derivative $D A_j(\by)$ 
at
$\by\in\R^d\setminus\{\0\}$  
(i.e.
a linear endomorphism of $\R^d$ satisfying
$(A_j(\by+\mathbf{h}) -A_j(\by) - D A_j(\by)\mathbf{h})/\|\mathbf{h}\|\to \0$ as
$\|\mathbf{h}\|\to 0$)  
exists since, by Lemma~\ref{lem:s_sym_uniformly_ell},
$\ssym$ can be expressed as an absolutely convergent power series in $\sigma^2$, which is
smooth by~\eqref{ass:sigma_smooth}.  
Let $A_0: \R^d \setminus \{\0\} \to \R^d$ be given by $A_0(\by) := \frac{1}{2}\sum_{j=1}^d D A_j(\by)A_j(\by)$
for any 
$\by\in\R^d\setminus\{\0\}$.  

Let $S_0,S_j:\Sp{d-1}\to \R^d$ be $S_0(\bx) := -(I - \bx\bx^\tra)A_0(\bx)$ and
$S_j(\bx) := (\ssym(\bx) - \bx \bx^\tra)\be_j$ for any $\bx\in\Sp{d-1}$
and $j \in\{1,\dots, d\}$. 
Let
$\cC(\RP,\Sp{d-1})$
be equipped with 
the Borel $\sigma$-algebra 
generated by
the compact-open topology~\cite[Sec.~XII.1]{dugundji}, 
which coincides with the $\sigma$-algebra generated by the projections at any time $t\in\R$, cf.~\cite[p.~57]{bill}.

\begin{lemma}
\label{lem:Rd-not-0-SDE}
Assume \eqref{ass:cov_form}--\eqref{ass:radial-evec}. Then the following statements hold.
\begin{enumerate}[(a)]
\item 
\label{Rd-not-0-SDE:(a)}
$S_0(\bx),\ldots,S_d(\bx)\in T_\bx\Sp{d-1}$ 
for all $\bx\in\Sp{d-1}$
and  the vector fields $S_0,\ldots,S_d$ are
in $\Gamma(T\Sp{d-1})$. 
\item
\label{Rd-not-0-SDE:(b)}
Let $W$ be a standard Brownian motion on $\R^d$.
The Stratonovich SDE on $\Sp{d-1}$, given by 
\begin{equation}
\label{eq:Strat_SDE}
\ud X_t = S_0(X_t)\ud t+\sum_{j=1}^d S_j(X_t)\circ \ud W^j_t, \qquad X_0=\bx\in\Sp{d-1},
\end{equation}
has a unique strong solution in the sense of~\cite[Ch.~V, Def~1.1 \& Thm~1.1]{iw}.
\item
\label{Rd-not-0-SDE:(c)}
Let $\Pr_\bx$ denote the law of the solution of~\eqref{eq:Strat_SDE} on $\cC(\RP,\Sp{d-1})$. 
Then 
$\{\Pr_\bx,\bx\in\Sp{d-1}\}$
is a strongly Markovian system~\cite[p.~204]{iw}, 
determined uniquely by its generator 
$\cG$, 
\begin{equation*}
\cG f := S_0(f) + \frac{1}{2}\sum_{i=1}^d S_i(S_i(f))\qquad \text{for any $f\in\cC^\infty(\Sp{d-1},\R)$,}
\end{equation*}
where the vector fields $S_i$, $i\in\{0,\ldots,d\}$, are viewed as linear (over $\R$) maps 
$\cC^\infty(\Sp{d-1},\R)\to \cC^\infty(\Sp{d-1},\R)$ 
satisfying the Leibniz rule. 
\item\label{Rd-not-0-SDE:V0}
$V_0:=\cG  - \frac{1}{2} \Delta_g $ is a vector field in $\Gamma(T\Sp{d-1})$, making the solution of~\eqref{eq:Strat_SDE} 
a Brownian motion with drift on the Riemannian manifold $(\Sp{d-1},g)$
with generator
$ \frac{1}{2} \Delta_g + V_0$.
\item
\label{Rd-not-0-SDE:Ito}
Any solution $(X,W)$ 
of the It\^o SDE 
\begin{align}
\label{eqn:sphere-SDE-two-drivers}
\ud X_t & = ( \ssym(\hat X_t) - \hat X_t \hat X_t^\tra ) \ud W_t  -
             \frac{V-1}{2} \frac{ \hat X_t }{\|X_t\|} \ud t,  \quad X_0 = \bx \in\Sp{d-1} 
\end{align}
satisfies $\|X_t\|=1$ for all $t\in\RP$ and is a solution of SDE~\eqref{eq:Strat_SDE}. 
\end{enumerate}
\end{lemma}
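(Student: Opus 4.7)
The plan is to prove parts \textbf{(a)}--\textbf{(e)} in order, with parts (d) and (e) carrying the technical load.

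For \textbf{(a)}, assumption~\eqref{ass:radial-evec} together with $U=1$ gives $\sigma^2(\bx)\bx=\bx$ on $\Sp{d-1}$, and diagonalising yields the identity $\ssym(\bx)\bx=\bx$. Hence $\bx^\tra S_j(\bx) = \bx^\tra\ssym(\bx)\be_j - (\bx^\tra\bx)(\bx^\tra\be_j) = x_j - x_j = 0$ and $\bx^\tra S_0(\bx) = -\bx^\tra(I-\bx\bx^\tra)A_0(\bx) = 0$, so each $S_i(\bx) \in T_\bx\Sp{d-1}$. Smoothness follows from~\eqref{ass:sigma_smooth} and smoothness of $\ssym$ (obtained from the absolutely convergent power-series representation used implicitly in Lemma~\ref{lem:s_sym_uniformly_ell}), which in turn makes $A_j$ and $A_0$ smooth on $\R^d\setminus\{\0\}$. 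For \textbf{(b)} and \textbf{(c)}, since $\Sp{d-1}$ is compact and $S_0,\ldots,S_d$ are smooth, \cite[Ch.~V, Thm~1.1]{iw} applies directly, yielding global strong existence, pathwise uniqueness, the strongly Markovian family $\{\Pr_\bx\}$, and the stated generator $\cG$ in its standard Stratonovich form.

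For \textbf{(d)}, both $\cG$ and $\tfrac{1}{2}\Delta_g$ annihilate constants, so it suffices to match their second-order (principal) symbols in a local chart; then $V_0 := \cG - \tfrac{1}{2}\Delta_g$ is a first-order derivation on $\cC^\infty(\Sp{d-1},\R)$, i.e.\ a smooth vector field. Work in the chart $\bz_q\colon H_q^\pm \to B^{d-1}$ of Section~\ref{subsubsec:RiemannianGeom}; the coordinate fields $E_i$ ($i\in[q]$) commute, so the second-order part of $\tfrac{1}{2}\sum_j S_j S_j$ in this chart is $\tfrac{1}{2}\sum_{i,\ell\in[q]}\bigl(\sum_j S_j^i S_j^\ell\bigr) E_i E_\ell$, where $S_j = \sum_{k\in[q]} S_j^k E_k$. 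Using $E_k = \be_k - (x_k/x_q)\be_q$ and matching the $\be_k$-coefficient identifies $S_j^k(\bx) = \ssym_{kj}(\bx) - x_k x_j$ for $k\in[q]$. Using $\sum_j \ssym_{ij}\ssym_{\ell j} = \sigma^2_{i\ell}$, the identity $\sum_j x_j \ssym_{ij} = (\ssym(\bx)\bx)_i = x_i$, and $\|\bx\|^2=1$, a direct expansion gives
\[
\sum_{j=1}^d S_j^i(\bx)\, S_j^\ell(\bx) = \sigma^2_{i\ell}(\bx) - x_i x_\ell = g^{i\ell}(\bx),
\]
the last equality by Lemma~\ref{lem:inverse_metric}. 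Thus the second-order parts agree and the interpretation of the solution as a BM with drift $V_0$ on $(\Sp{d-1},g)$ is by definition.

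For \textbf{(e)}, first apply It\^o's formula to $\|X_t\|^2$ using~\eqref{eqn:sphere-SDE-two-drivers}. The quadratic-variation term contributes $\trace\bigl((\ssym(\hat X) - \hat X \hat X^\tra)^2\bigr)\,\ud t = \trace(\sigma^2(\hat X) - \hat X \hat X^\tra)\,\ud t = (V-1)\,\ud t$, using~\eqref{ass:cov_form}, $\ssym(\hat X)\hat X = \hat X$, and $\|\hat X\|=1$. The stochastic integrand $2X^\tra(\ssym(\hat X) - \hat X\hat X^\tra) = 2\|X\|\,\hat X^\tra(\ssym(\hat X) - \hat X\hat X^\tra) = 0$, and the drift term contributes $-(V-1)(X^\tra\hat X/\|X\|)\,\ud t = -(V-1)\,\ud t$, cancelling the quadratic variation; hence $\ud\|X_t\|^2 = 0$ and $\|X_t\|\equiv 1$. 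With $X$ confined to $\Sp{d-1}$, convert the reduced It\^o SDE $\ud X = (\ssym(X) - XX^\tra)\,\ud W - \tfrac{V-1}{2}X\,\ud t$ to Stratonovich form using the extension $\tilde\sigma_j(\by) = (\ssym(\hat\by) - \hat\by\hat\by^\tra)\be_j$. Differentiating the manifold identity $\ssym(\gamma(t))\gamma(t) = \gamma(t)$ along curves $\gamma$ in $\Sp{d-1}$ at $\bx$ yields $D\ssym(\bx)[\bh]\bx = (I-\ssym(\bx))\bh$ for $\bh\in T_\bx\Sp{d-1}$; combined with the symmetry of $\ssym$ this gives
\[
\tfrac{1}{2}\sum_{j=1}^d D\tilde\sigma_j(\bx)\,\sigma_j(\bx) = A_0(\bx) - \tfrac{1}{2}(\trace\ssym(\bx) - 1)\bx
\quad\text{and}\quad
\bx^\tra A_0(\bx) = \tfrac{1}{2}(\trace\ssym(\bx) - V),
\]
so the Stratonovich drift $-\tfrac{V-1}{2}\bx - \tfrac{1}{2}\sum_j D\tilde\sigma_j\,\sigma_j$ simplifies to $-(I-\bx\bx^\tra)A_0(\bx) = S_0(\bx)$, matching~\eqref{eq:Strat_SDE}. \emph{The main obstacle} is the bookkeeping in (e): correctly computing the It\^o--Stratonovich correction for the non-intrinsic extension $\tilde\sigma_j$ and reducing it, via the identity $\bx^\tra A_0(\bx) = \tfrac{1}{2}(\trace\ssym(\bx) - V)$ derived from $\ssym\bx=\bx$ on $\Sp{d-1}$, to the intrinsic drift $S_0$.
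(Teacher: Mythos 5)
Your proof is essentially correct and follows the paper's structure: parts (a)--(c) cite the Ikeda--Watanabe theory; part (d) matches the second-order symbol of $\cG$ to $\tfrac12\Delta_g$ in the chart $\bz_q$ via the identity $\sum_{j} S_j^i S_j^\ell = g^{i\ell}$ (which you derive exactly as the paper does, from $\sum_j \ssym_{ij}\ssym_{\ell j}=\sigma^2_{i\ell}$ and $\ssym(\bx)\bx=\bx$); and part (e) goes via the It\^o--Stratonovich correction with the crucial identity $\bra A_0(\bx),\bx\ket = \tfrac12(\trace\ssym(\bx)-V)$. The one genuine difference is in (e): the paper establishes the correction formula (eqn:R-T) for all $\by \in \R^d\setminus\{\0\}$ by differentiating $A(\by)\by=\by$ globally and then restricting to the sphere, whereas you first prove $\|X_t\|\equiv 1$ and then compute the correction term intrinsically on $\Sp{d-1}$ by differentiating $\ssym(\gamma(t))\gamma(t)=\gamma(t)$ along curves in the sphere, yielding $D\ssym(\bx)[\bh]\bx=(I-\ssym(\bx))\bh$ for tangential $\bh$. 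This buys a slightly leaner calculation (no need to track $\|\by\|$ and $\hat\by$ separately), at the cost of having to justify that the It\^o--Stratonovich correction for the extension $\tilde\sigma_j$ is insensitive to the normal direction; this holds because $\tilde\sigma_j$ depends only on $\hat\by$ and $\tilde\sigma_j(\bx)\in T_\bx\Sp{d-1}$, which you implicitly use, and which is worth stating explicitly. Both routes are sound and lead to the same cancellation $-\tfrac{V-1}{2}\bx - R(\bx) = S_0(\bx)$.
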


\begin{proof}
The vector fields $S_j$,
$j\in\{0,\dots,d\}$, 
are tangential to $\Sp{d-1}$ by~\eqref{ass:radial-evec}
and smooth by~\eqref{ass:sigma_smooth}.
Hence~\eqref{Rd-not-0-SDE:(a)} holds.
Moreover,
we may interpret $S_j$ as a linear map on $\cC^\infty(\Sp{d-1},\R)$ satisfying the Leibniz rule~\cite[p.~248]{iw}
(see e.g.~\eqref{eq:Tan_to_der} below).
Hence part~\eqref{Rd-not-0-SDE:(b)} of the lemma follows from~\cite[Ch.~V, Thm~1.1]{iw}.
The family of laws 
$\{\Pr_\bx,\bx\in\Sp{d-1}\}$
is a strongly Markovian system generated by the second order differential operator $\cG$
by~\cite[Ch.~V, Thm~1.2]{iw}, which establishes part \eqref{Rd-not-0-SDE:(c)}.

To establish part \eqref{Rd-not-0-SDE:V0}, consider a chart 
$\bz_q:H_q^\pm\to B^{d-1}$ (for some $q\in\{1,\ldots,d\}$) and the corresponding frame field $\{E_i,i\in[q]\}$,
defined in the paragraph preceding Lemma~\ref{lem:inverse_metric}.
Then we can express the vector field $S_j$ on $H_q^\pm$ as a linear mapping from 
$\cC^\infty(H_q^\pm,\R)\to\cC^\infty(H_q^\pm,\R)$, satisfying the Leibniz rule, as
follows: for any $\bx\in H_q^{\pm}$ and $j \in [q]$ we have
\begin{equation}
\label{eq:Tan_to_der}
S_j(f)(\bx) =(D\bz_q(\bx)S_j(\bx))^\tra\sum_{i\in[q]} E_i(f)(\bx) \be_i=\sum_{i\in[q]} S_j^i(\bx) E_i(f)(\bx),
\end{equation}
where the second equality holds by $D\bz_q = \bz_q$, and where $S^i_j(\bx) = \bra S_j(\bx),
\be_i \ket$.
This implies $S_j(S_j(f))=\sum_{i,k\in[q]}S_j^i S_j^k E_i(E_k(f)) + \sum_{k\in[q]} \bar V_{k,j}
E_k(f)$ for some functions $\bar V_{k,j}\in\cC^\infty(H_q^\pm,\R)$, $k,j \in[q]$, and 
all $f\in\cC^\infty(H_q^\pm,\R)$. The definition of $S_j$ above, \eqref{ass:cov_form}, \eqref{ass:radial-evec} and Lemma~\ref{lem:inverse_metric}
imply 
$
\sum_{j=1}^d S_j^i(\bx) S_j^k(\bx) = g^{ik}(\bx)
$
for all $\bx\in H_q^\pm$ and $i,k\in[q]$.  Hence, by the definition of $\cG$ in the lemma and the expression for
$\Delta_g$ in the local coordinates on $H_q^\pm$ in Lemma~\ref{lem:inverse_metric},
the equality
$V_0(f) = \sum_{i\in[q]} V_{0,i} E_i(f)$
holds
for some functions $V_{0,i}\in\cC^\infty(H_q^\pm,\R)$,  $i\in[q]$.
Since such an equality holds for every 
$q\in\{1,\ldots,d\}$
and choice of $\pm$ (i.e. for every chart in our atlas),
$V_0$ satisfies the Leibniz rule and is hence an element of $\Gamma(T\Sp{d-1})$, implying~\eqref{Rd-not-0-SDE:V0}.

Extend the vector fields $S_0,S_1,\dots,S_d$ to $\R^d \setminus \{\0\}$ by defining
$\bar S_0(\by) := -(I - \hat \by \hat \by^\tra)A_0(\by)$ and $\bar S_j(\by) := (A(\by) - \hat \by \hat \by^\tra)\be_j$, $j\in\{1,\dots,d\}$,
for any
$\by \in\R^d \setminus \{\0\}$.  
Define a function
$R: \R^d \setminus \{\0\} \to \R^d$ by $R(\by) := \frac12 \sum_{j=1}^d D\bar S_j(\by) \bar S_j(\by)$.
To prove~\eqref{Rd-not-0-SDE:Ito}, we establish the following formula
\begin{equation}\label{eqn:R-T}
R(\by) = (I - \hat \by \hat \by^\tra) A_0(\by) -\frac{V-1}{2}\frac{\hat\by}{\|\by\|}
\qquad\text{for all $\by \in \R^d \setminus \{\0\}$.}
\end{equation}

Let $G(\by):=\hat \by$ for any
$\by\in\R^d\setminus\{\0\}$
and note that
$A=A \circ G$ and $DG(\by)=(I-\hat \by\hat \by^\tra)/\|\by\|$,
implying 
$DG(\by)\by= \0$,
$DG(\by)^\tra=DG(\by)$  
and
$DA_j(\by)\by = DA_j(\hat \by) DG(\by)\by= \0$ for all  
$j\in\{1,\dots,d\}$.
Since $\bar S_j(\by) = A_j(\by) - \hat \by \bra \hat \by,\be_j\ket$,
we get
$D\bar S_j(\by) = DA_j(\by) - (\hat\by^\tra \be_j I + \hat\by \be_j^\tra)
 DG(\by)$
by the product rule,
where $I$ is the identity matrix on $\R^d$.  
Hence,  using the fact that $A(\by)\by =\by$,
we get  $D\bar S_j(\by)\bar S_j(\by) = DA_j(\by) A_j(\by)
- (\hat \by^\tra \be_j I + \hat\by\be_j^\tra)(A(\by) -
\hat\by\hat\by^\tra)\be_j/\|\by\|$.   
Summing over $j\in\{1,\dots,d\}$ yields the identity
$2R(\by) = 2A_0(\by) - \trace(A(\by) - \hat\by\hat\by^\tra)\hat\by/\|\by\|$.  
Differentiating the identity $A(\by)\by = \by$ (in $\by$) yields
$I = A(\by) + \sum_{j=1}^d\bra \by,\be_j\ket DA_j(\by)$,
and hence
$A(\by) = A^2(\by) + \sum_{j=1}^d\bra \by,\be_j\ket DA_j(\by) A(\by)$, 
for all $\by\in\R^d\setminus\{\0\}$. 
Since
$A$ is symmetric 
we have
$DA_j(\by)^\tra \be_i=DA_i(\by)^\tra \be_j$
for all
$i,j\in\{1,\ldots,d\}$.
Hence we have
$2 \bra A_0(\by),\be_j \ket = \sum_{i=1}^d \bra A_i(\by), DA_i(\by)^\tra \be_j\ket  = \trace( DA_j(\by)A(\by))$.
Together with~\eqref{ass:cov_form}, this implies
$\trace A(\by) = V + 2\bra A_0(\by), \by \ket$ and \eqref{eqn:R-T} follows.

Let $(X,W)$ be a solution of~\eqref{eqn:sphere-SDE-two-drivers}.
A simple application of It\^o's formula yields $\ud \|X_t\|^2 =0$, 
implying the first statement in~\eqref{Rd-not-0-SDE:Ito}.
By~\eqref{eqn:R-T} it follows that $X$ in fact satisfies the SDE
$\ud X_t = (\bar S_0(X_t)+R(X_t))\ud t+\sum_{j=1}^d \bar S_j(X_t) \ud W^j_t$,
where $\bar S_j$, $j\in\{1,\ldots,d\}$, are defined above~\eqref{eqn:R-T}.
By the definition of the Stratonovich integral on $\R^d$~\cite[Ch.~III, Sec.~1, Eq.~(1.10)]{iw},
it follows that 
$\ud X_t = \bar S_0(X_t)\ud t+\sum_{j=1}^d \bar S_j(X_t) \circ \ud W^j_t$.
Since $S_j=\bar S_j$, $j\in\{0,\ldots,d\}$, on $\Sp{d-1}$ 
and $X$ stays on the sphere for all time,
SDE~\eqref{eq:Strat_SDE} holds for $X$ (see~\cite[Ch.~V, Rem.~1.1]{iw}).
\end{proof}

By Lemma~\ref{lem:Rd-not-0-SDE}\eqref{Rd-not-0-SDE:(c)}, 
the map $\bx\mapsto \Pr_\bx[A]$ on $\Sp{d-1}$ is Borel measurable for any 
Borel measurable set $A$ in $\cC(\RP,\Sp{d-1})$.
We can hence define 
a transition function on $\Sp{d-1}$, 
$P_t(\bx,\cdot) := \Pr_\bx[\phi_t \in \cdot]$,
where $(t,\bx)\in\RP\times\Sp{d-1}$ and
$(\phi_u,u \in \RP)$
is the coordinate process on 
$\cC(\RP,\Sp{d-1})$.
In particular, 
the law $\Pr$ of
the solution of~\eqref{eq:Strat_SDE}, started according to a probability measure 
$\nu$ on $\Sp{d-1}$, equals
$\Pr[\cdot]=\int_{\Sp{d-1}}\nu(\ud \bx) \Pr_\bx[\cdot]$.

\begin{proposition}
\label{lem:sphere-in-Rd-SDE}
Let~\eqref{ass:cov_form}--\eqref{ass:radial-evec} hold.  
There exists a unique probability measure $\mu$ on $\Sp{d-1}$ with full support,
such that
$\mu(\cdot) =\int_{\Sp{d-1}}\mu(\ud \bx) P_t(\bx,\cdot)$ for all $t\in\RP$
and the transition function 
$P_t(\bx,\cdot)$ converges to its stationary measure $\mu$ in the following sense:\footnote{Recall that 
$\|\nu_1(\cdot)-\nu_2(\cdot)\|_{\mathrm{TV}}:=\sup_{\fA\subset\Sp{d-1}}|\mu_1(\fA)-\nu_2(\fA)|$
for 
probability measures 
$\nu_1$ and $\nu_2$ on $\Sp{d-1}$.}
\begin{equation}\label{eqn:unif-ergod}
\lim_{t \to \infty} \sup_{\bx \in \Sp{d-1}} 
\| P_t(\bx,\cdot) - \mu(\cdot) \|_{\mathrm{TV}} = 0.
\end{equation}
Furthermore, 
there exists a unique law $\Pr_\Psi[\cdot]$ on the Borel sets
of $\cC(\R,\Sp{d-1})$ with compact-open topology, 
satisfying 
$\Pr_\Psi[\psi_s\in\cdot]=\mu(\cdot)$
and
$\Pr_\Psi[\psi_{s+t} \in \cdot \mid \psi_s ]=P_t(\psi_s,\cdot)$ for all $(s,t) \in \R\times\RP$, 
where $(\psi_u,u \in \R)$ denotes the coordinate process on $\cC(\R,\Sp{d-1})$.  
\end{proposition}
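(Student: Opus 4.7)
The plan is to build $\mu$ as the normalised density of the unique positive solution of $\cG^* p = 0$, to derive the uniform bound~\eqref{eqn:unif-ergod} from a Doeblin-type lower bound on the transition density of $P_t$, and then to assemble $\Pr_\Psi$ by Kolmogorov extension together with a path-continuity transfer.

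\textbf{Existence and uniqueness of $\mu$.} By Lemma~\ref{lem:Rd-not-0-SDE}\eqref{Rd-not-0-SDE:V0}, $\cG = \tfrac{1}{2}\Delta_g + V_0$ is a second-order, uniformly elliptic operator on the compact smooth manifold $\Sp{d-1}$ with smooth coefficients (by~\eqref{ass:sigma_smooth} and Lemma~\ref{lem:s_sym_uniformly_ell}), and the same properties hold for its formal $L^2(\Sp{d-1};\ud_g\bx)$-adjoint $\cG^*$. I would first argue that $\ker\cG$ (on $\cC^\infty$) consists of constants only, using connectedness of $\Sp{d-1}$ and the strong maximum principle for $\cG$; the Fredholm alternative on the compact manifold then forces $\dim\ker\cG^* = 1$. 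Applying the strong maximum principle to a suitably signed generator of $\ker\cG^*$ produces a strictly positive smooth $p$; set $\mu(\ud\bx) := p(\bx)\,\ud_g\bx$ after normalisation. Standard duality between $P_t$ and $P_t^*$ (the latter propagating densities according to $\partial_t q = \cG^* q$) gives $P_t^* p = p$ for all $t$, so $\mu$ is $P_t$-invariant. Full support follows from positivity of $p$; uniqueness of an invariant probability measure follows since any such measure has a positive density in $\ker\cG^*$, which is one-dimensional.

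\textbf{Uniform ergodicity.} By parabolic regularity for $\partial_t - \cG$ on the compact smooth manifold $\Sp{d-1}$, the kernel $P_t$ admits a smooth density $p_t(\bx,\by)$ (w.r.t.\ $\ud_g\by$) for every $t > 0$, and by the parabolic strong maximum principle $p_t(\bx,\by) > 0$ everywhere. Fixing any $t_0 > 0$, compactness of $\Sp{d-1}\times\Sp{d-1}$ and continuity of $p_{t_0}$ give $\eta_0 := \inf_{\bx,\by} p_{t_0}(\bx,\by) > 0$; with $M := \max_\bx p(\bx)$ this yields the Doeblin minorisation $P_{t_0}(\bx,A) \geq (\eta_0/M)\,\mu(A)$ for every $\bx\in\Sp{d-1}$ and every Borel $A\subset\Sp{d-1}$. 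A standard coupling/contraction argument then furnishes $\sup_{\bx}\|P_t(\bx,\cdot) - \mu\|_{\mathrm{TV}} \leq (1 - \eta_0/M)^{\lfloor t/t_0 \rfloor}$ for all $t \geq 0$, which implies~\eqref{eqn:unif-ergod}.

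\textbf{Construction of $\Pr_\Psi$ and main obstacle.} For any finite $\{s_1 < \cdots < s_n\} \subset \R$, define the finite-dimensional distribution on $(\Sp{d-1})^n$ by
\[
\mu(\ud\bx_1)\,P_{s_2-s_1}(\bx_1,\ud\bx_2)\cdots P_{s_n-s_{n-1}}(\bx_{n-1},\ud\bx_n).
\]
Chapman-Kolmogorov for $P_t$ (a consequence of Lemma~\ref{lem:Rd-not-0-SDE}\eqref{Rd-not-0-SDE:(c)}) together with invariance of $\mu$ yield Kolmogorov consistency, and the extension theorem produces a unique probability on $(\Sp{d-1})^\R$. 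For each $N \in \N$, the shifted process $(\psi_{-N+u},u\in[0,2N])$ has the same finite-dimensional marginals as the Markov family of Lemma~\ref{lem:Rd-not-0-SDE}\eqref{Rd-not-0-SDE:(b)} started from $\psi_{-N}\sim\mu$, which has continuous paths almost surely; intersecting over $N \in \N$ realises $\Pr_\Psi$ on $\cC(\R,\Sp{d-1})$, and uniqueness is automatic since FDDs determine the law on $\cC(\R,\Sp{d-1})$. The hardest step, I expect, is the uniform positive lower bound on $p_{t_0}$: while natural given compactness and the strong maximum principle, a careful invocation of heat-kernel existence, smoothness, and strict positivity for the \emph{non-self-adjoint} operator $\cG$ on the Riemannian manifold $(\Sp{d-1},g)$ is what turns the whole argument quantitative.
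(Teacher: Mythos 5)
Your proof is correct in substance but takes a genuinely different route from the paper's. The paper discharges the first half of the proposition by citing Pinsky \cite[Ch.~4, Thms~8.6, 9.9, 11.1]{pinsky}: Thm~11.1 gives the unique normalised positive $h$ with $\cG^\star h=0$ and that all $\cG$-harmonic functions are constant; Thm~8.6 gives uniqueness of the stationary measure; Thm~9.9 gives the uniform total-variation convergence~\eqref{eqn:unif-ergod}; and the ``Furthermore'' part about $\Pr_\Psi$ is handled in Remark~\ref{rem:stat-meas}(c) by Kolmogorov's extension theorem. You instead re-derive these facts from first principles: maximum principle plus Fredholm on a closed manifold to get $\dim\ker\cG^*=1$, parabolic regularity and strict positivity of the heat kernel to obtain a Doeblin minorisation, a coupling/contraction bound for the geometric TV rate, and an explicit realisation of the extension on $\cC(\R,\Sp{d-1})$ by matching each finite window $[-N,N]$ against the continuous solution of~\eqref{eq:Strat_SDE} started from $\mu$ at time $-N$. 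This buys transparency: the minorisation, which the paper hides inside the Pinsky citations, is made quantitative; and your path-continuity transfer for $\Pr_\Psi$ is in fact more careful than Remark~\ref{rem:stat-meas}(c) (Kolmogorov extension alone produces a law on $(\Sp{d-1})^\R$, not on $\cC(\R,\Sp{d-1})$).

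One step needs tightening. You cannot apply the classical strong maximum principle directly to a ``suitably signed generator of $\ker\cG^*$'': the adjoint is $\cG^*p=\tfrac12\Delta_g p - \divg(pV_0) = \tfrac12\Delta_g p - V_0(p) - p\,\divg V_0$, and the zeroth-order coefficient $-\divg V_0$ has no sign in general, so the usual hypotheses fail. The standard fix routes through the positive transition density you already produce one paragraph later. Existence of an invariant probability measure follows from Krylov--Bogolyubov on the compact Feller system; parabolic regularity makes its density $p$ smooth; and $p(\by)=\int_{\Sp{d-1}} p(\bx)\,p_{t_0}(\bx,\by)\,\ud_g \bx > 0$ for every $\by$ because $p_{t_0}>0$ everywhere. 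One-dimensionality of $\ker\cG^*$ (which you correctly obtain from Fredholm plus $\ker\cG$ = constants) then pins down $p$ up to normalisation and gives uniqueness of $\mu$. With this repair the argument is complete and reaches the same conclusion as the paper.
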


\begin{remarks}\label{rem:stat-meas}
\noindent (a) The unique stationary measure $\mu$ exists and has full support
essentially because the vector fields $S_1,\ldots,S_d$
in Lemma~\ref{lem:Rd-not-0-SDE}\eqref{Rd-not-0-SDE:(a)} 
span $T_\bx\Sp{d-1}$ at every 
$\bx\in \Sp{d-1}$. 
The proof uses the representation 
in Lemma~\ref{lem:Rd-not-0-SDE}\eqref{Rd-not-0-SDE:V0}
of the process as a Brownian motion with drift 
and applies the
well-known results for the stability of elliptic diffusions on compact Riemannian manifolds~\cite{pinsky}.

\noindent (b) The  geometry introduced in 
Section~\ref{subsubsec:RiemannianGeom}
allows us to characterise the time-reversibility  of the diffusion $X$ satisfying SDE~\eqref{eq:Strat_SDE}. 
This leads to an explicit description, 
given in~\eqref{eq:split_at_max} of Section~\ref{subsec:Limit_comments} above,
of the excursions of the process 
$\x$ appearing in Theorem~\ref{t:well-posedness}.  

\noindent (c) 
Kolmogorov's extension theorem~\cite[\S~III.1,~Thm~(1.5)]{ry}
and the first statement in Prop.~\ref{lem:sphere-in-Rd-SDE} 
imply that $\Pr_\Psi[\cdot]$  exists and is unique: 
for $t_1<\dots<t_k$  in $\R$ and measurable sets $\fA_i\subset \Sp{d-1}$, $i=1,\ldots,k$,
the fdd is  
$\int_{\fA_1}\mu(\ud \bx_1)\int_{\fA_2}P_{t_2-t_1}(\bx_1,\ud \bx_2) \ldots\int_{\fA_k}P_{t_k-t_{k-1}}(\bx_{k-1},\ud \bx_k)$,
cf.~\cite[\S~XII.4]{ry}.
\end{remarks}

\begin{proof}
By Lemma~\ref{lem:Rd-not-0-SDE}\eqref{Rd-not-0-SDE:V0},
the generator of the strong Markov process satisfying SDE~\eqref{eq:Strat_SDE}
takes the form $\cG = \frac12 \Delta_g + V_0$. 
The volume element 
$\ud_g \bx$ on the Riemannian manifold $(\Sp{d-1},g)$  
is a $(d-1)$-dimensional form, 
given in local coordinates 
on $H_q^\pm$ by $\sqrt{\det{G}}\prod_{i \in [q]}\ud x_i$, where $G = (g_{ij}(\bx))_{i,j \in [q]}$
(see~\cite[p.~291]{iw} and Lemma~\ref{lem:inverse_metric} above).
Let 
$\cG^\star$ be the adjoint 
of $\cG$
with respect to the measure $\ud_g \bx$. 
Assumptions of~\cite[Ch.~4, Thm~11.1]{pinsky} are satisfied for the generator $\cG$
since its second order term is the Laplace-Beltrami operator 
and the vector field $V_0$ is smooth by~\eqref{ass:sigma_smooth}. 
Hence by~\cite[Ch.~4, Thm~11.1]{pinsky},
all harmonic functions for $\cG$ are constant and
there exists a unique positive function 
$h\in\cC^2(\Sp{d-1},\R)$ 
satisfying
$\cG^\star h=0$ and 
$\int_{\Sp{d-1}} h(\bx) \ud_g \bx = 1$.
Moreover, by~\cite[Ch.~4, Thm~11.1(ix)]{pinsky},
the assumptions of~\cite[Ch.~4, Thm~8.6]{pinsky} 
for the Riemannian manifold $(\Sp{d-1},g)$
and the operator $\cG$
are satisfied, implying that 
$\mu(\ud \bx) = h(\bx) \ud_g \bx$ is the unique stationary probability measure for the
transition function $P_t(\bx,\ud \by)$. 
Again, by~\cite[Ch.~4, Thm~11.1(ix)]{pinsky},
the assumptions of~\cite[Ch.~4, Thm~9.9]{pinsky} for 
$(\Sp{d-1},g)$
and $\cG$ are satisfied.  Hence, as  $\Sp{d-1}$ is compact, \cite[Ch.~4, Thm~9.9]{pinsky} 
implies the convergence in total variation in~\eqref{eqn:unif-ergod}.
\end{proof}

\subsection{Proof of Theorem~\ref{t:well-posedness} when 0 is polar for the radial process }
\label{sec:transient_case}
Assume throughout this section that $V \geq 2$ (and $U=1$)
and let 
$(\x,W)$ be any solution  to \eqref{eqn:x-SDE-sym}, adapted to $(\cF_t, t \geq 0)$,
on a probability space that supports a one-dimensional $(\cF_t)$-Brownian motion, independent of $(\x,W)$.
By Lemma~\ref{l:radial-bessel}, $0$ is polar for $r=\|\x\|$. 

\begin{lemma}
\label{l:transient-time-change} 
Let~\eqref{ass:cov_form} hold. 
If  either (i) $s >0$; or (ii) $\x_0 \neq \0$ and $s=0$, define
\begin{equation}
\label{eqn:time-change-started-at-s}
\rho_s(t) := \int_s^t r^{-2}_u \ud u , \quad t \geq s.
\end{equation}
Then, almost surely,  $\rho_s : [s, \infty) \to \RP$  is  continuously increasing and  $\lim_{t \uparrow \infty} \rho_s(t)=\infty$. 
Its continuous inverse
$c_s: \RP \to [s, \infty)$ is 
$c_s(t) := \inf\{ u \geq s : \rho_s(u) = t \}$. In particular, $c_s(0)=s$.
\end{lemma}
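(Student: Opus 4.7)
The plan is to establish the four assertions (finiteness of $\rho_s(t)$ for each $t$, continuous monotone increase, divergence at infinity, and the form of the inverse) in order, relying on Lemma~\ref{l:radial-bessel}, which identifies $r=\|\x\|$ as a Bessel process of dimension $V\geq 2$ started from $\|\x_0\|$, together with standard properties of such processes (e.g.~\cite[Ch.~XI]{ry}).

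First I would verify that $r_u>0$ for every $u$ in the relevant interval, so $r_u^{-2}$ is well defined. Since $V\geq 2$, the origin is polar for $\Bes^V$: started from a positive point the process never hits $0$, and started from $0$ it leaves immediately and does not return. In case (i), $s>0$, this gives $r_u>0$ on $[s,\infty)$ a.s.; in case (ii), $\x_0\neq\0$ and $s=0$, the same conclusion holds on $[0,\infty)$. Continuity of $r$ then forces $\min_{u\in[s,t]}r_u>0$ a.s.\ on every compact $[s,t]$, whence $\int_s^t r_u^{-2}\,\ud u<\infty$ for each finite $t\geq s$. Thus $\rho_s$ is a primitive of the continuous, strictly positive function $u\mapsto r_u^{-2}$, hence continuously strictly increasing on $[s,\infty)$ with $\rho_s(s)=0$.

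The only non-trivial point is showing $\rho_s(\infty):=\lim_{t\uparrow\infty}\rho_s(t)=\infty$ a.s. I would apply It\^o's formula to $\log r$ on $[s,\infty)$: by~\eqref{eqn:y-SDE-global} and It\^o's formula applied to the square root, $r$ satisfies $\ud r_t=\ud Z_t+\tfrac{V-1}{2}r_t^{-1}\ud t$, whence
\begin{equation*}
\log r_t-\log r_s=\int_s^t r_u^{-1}\,\ud Z_u+\frac{V-2}{2}\int_s^t r_u^{-2}\,\ud u.
\end{equation*}
Argue by contradiction: if $\Pr[\rho_s(\infty)<\infty]>0$, then on that event the quadratic variation of the local martingale equals $\int_s^\infty r_u^{-2}\,\ud u<\infty$, so the local martingale converges a.s.\ there, and the drift term also has a finite limit. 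Consequently $\log r_t$ would converge a.s.\ to a finite limit on an event of positive probability. When $V=2$ this contradicts the neighborhood recurrence of $\Bes^2$ on $(0,\infty)$ (its logarithm satisfies $\limsup_{t\to\infty}\log r_t=+\infty$ and $\liminf_{t\to\infty}\log r_t=-\infty$ a.s.); when $V>2$ it contradicts the transience $r_t\to\infty$ a.s. Hence $\rho_s(\infty)=\infty$ a.s.

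Finally, $\rho_s:[s,\infty)\to[0,\infty)$ is a continuous strictly increasing surjection with $\rho_s(s)=0$, so it admits a continuous strictly increasing inverse, which by monotonicity agrees with $c_s(t):=\inf\{u\geq s:\rho_s(u)=t\}$ and satisfies $c_s(0)=s$. The divergence step is the only real obstacle; the remainder is routine sample-path analysis.
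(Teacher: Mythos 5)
Your proof is correct, but the divergence step takes a genuinely different route from the paper. The paper deduces Lemma~\ref{l:transient-time-change} as an immediate corollary of Lemma~\ref{lem:angular_clock} (applied with $m=0$ and $x=r_s>0$), which in turn is established by citing a 0--1 law of Cherny for integral functionals $\int_0^{\tau_m}f(\beta_u)\,\ud u$ of one-dimensional diffusions: the criterion reduces the question to the non-integrability of $y\mapsto |y-m|f_m(y)$ near $m$, and for $f_m(y)=(m-y)^{-2}$ this fails, giving divergence uniformly in $\delta\geq 2$. You instead apply It\^o's formula to $\log r$, observe that the local-martingale part has quadratic variation exactly $\rho_s(t)$, and argue by contradiction using the fact that a continuous local martingale with a.s.\ finite quadratic variation on an event converges a.s.\ on that event, which would force $\log r_t$ to converge and thus contradict either transience ($V>2$) or neighbourhood recurrence with $\limsup=\infty$, $\liminf=-\infty$ ($V=2$). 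Both arguments are sound; yours is more self-contained and elementary, avoiding the appeal to Cherny's theorem, whereas the paper's formulation of Lemma~\ref{lem:angular_clock} is purposefully more general so it can be reused elsewhere --- in particular the case $x=m=0$ is needed later (Proposition~\ref{prop:beta_Psi}) and is not covered by your argument. Your handling of the preliminary points (finiteness of $\rho_s(t)$ for finite $t$ via polarity of $0$ and continuity of $r$, and the construction of the inverse $c_s$) matches what the paper leaves implicit.
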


Lemma~\ref{l:transient-time-change} is a direct consequence of the next lemma. 


\begin{lemma}
\label{lem:angular_clock}
Pick $x,m\in\RP$ and $\delta\geq2$. Let $\beta = (\beta_t, t \geq 0)$ be $\Bes^\delta(x)$, 
$\tau_m:=\inf\{t\geq0:\beta_t=m\}$ (with $\inf\emptyset=\infty$) and $f_m(y):=(m-y)^{-2}$. 
If $m>x$ or $x>0=m$, then $\int_0^{\tau_m}f_m(\beta_u)\ud u=\infty$ a.s.
If $x=m=0$, then for any $t>0$ it holds that $\int_0^tf_0(\beta_u)\ud u=\infty$ a.s.
\end{lemma}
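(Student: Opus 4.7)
I plan to treat the three cases by two distinct methods: It\^o's formula applied to a logarithm of the distance to the singular endpoint handles the cases $m > x$ and $x > 0 = m$, while Brownian scaling handles $x = m = 0$.

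For $m > x \geq 0$, assume first $x > 0$, so $\beta_v > 0$ for all $v \geq 0$ and $\tau_m < \infty$ a.s.\ (the Bessel process is recurrent when $\delta = 2$ and $\beta_t\to\infty$ when $\delta > 2$, in both cases forcing $\beta$ to reach $m$). Apply It\^o on $[0,u]$ with $u < \tau_m$ to $v\mapsto\log(m-\beta_v)$, using the Bessel SDE $\ud\beta_v = \ud B_v + ((\delta-1)/(2\beta_v))\,\ud v$, to obtain
\[
\log\frac{m-x}{m-\beta_u} = \int_0^u\frac{\ud B_v}{m-\beta_v} + \int_0^u\frac{\delta-1}{2\beta_v(m-\beta_v)}\,\ud v + \frac12\int_0^u f_m(\beta_v)\,\ud v.
\]
Suppose for contradiction $\int_0^{\tau_m} f_m(\beta_v)\,\ud v < \infty$. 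Then the bracket $\int_0^u(m-\beta_v)^{-2}\,\ud v$ of the stochastic integral is a.s.\ bounded on $[0,\tau_m]$, so by Dambis--Dubins--Schwarz the local martingale has an a.s.\ finite limit as $u\uparrow\tau_m$. Cauchy--Schwarz bounds the drift integral by $\frac{\delta-1}{2\min_{v\leq\tau_m}\beta_v}\bigl(\tau_m\int_0^{\tau_m}f_m(\beta_v)\,\ud v\bigr)^{1/2}$, finite a.s.\ since $\min\beta > 0$ a.s.\ for $x > 0$. So the RHS has an a.s.\ finite limit, contradicting the divergence of the LHS. When $x = 0$, split at the stopping time $\eta:=\inf\{u:\beta_u = m/2\}\in(0,\tau_m)$ a.s.: the integrand is bounded by $4/m^2$ on $[0,\eta]$, and the integral over $[\eta,\tau_m]$ is a.s.\ infinite by the strong Markov property and the $x > 0$ case applied to the Bessel process restarted at $m/2$. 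The case $x > 0 = m$ (so $\tau_m = \infty$ a.s.) is analogous with $\log\beta_v$ in place of $\log(m-\beta_v)$: finiteness of $\int_0^\infty\beta_v^{-2}\,\ud v$ would give $\log\beta_t$ an a.s.\ finite limit, contradicting $\liminf\beta_t = 0$ when $\delta = 2$ or $\beta_t\to\infty$ when $\delta > 2$.

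For $x = m = 0$, Brownian scaling of $\Bes^\delta(0)$ yields $X := \int_0^t\beta_u^{-2}\,\ud u \eqd \int_0^{t/c}\beta_u^{-2}\,\ud u =: Y_c$ for every $c > 0$. With $Z_c := \int_{t/c}^t\beta_u^{-2}\,\ud u$ we have $X = Y_c + Z_c \geq Y_c$ and $X \eqd Y_c$, so the pointwise inequality $\min(X,K)\geq\min(Y_c,K)$ combined with $\Exp[\min(X,K)] = \Exp[\min(Y_c,K)]$ forces $Z_c = 0$ a.s.\ on $\{Y_c < K\}$; letting $K\uparrow\infty$ gives $Z_c = 0$ a.s.\ on $\{Y_c < \infty\}$. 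Since $Z_c > 0$ a.s., we conclude $\Pr(Y_c < \infty) = 0$, i.e., $X = \infty$ a.s.

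The most delicate point is the passage to the limit $u\uparrow\tau_m$ in the It\^o expansion for case $m > x$, where all three integrands blow up at the limit: one must trap the divergence in the quadratic-variation term $\frac12\int f_m(\beta_v)\,\ud v$ and control the stochastic integral via its bounded bracket and the drift via Cauchy--Schwarz. The Markov restart at $\eta$ when $x = 0$ sidesteps the secondary singularity of $1/\beta_v$ at $v = 0$ in the drift integral.
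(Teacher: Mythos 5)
Your proof is correct, but it takes a genuinely different route from the paper's in all three cases. For $m>x$ and $x>0=m$ the paper simply invokes a general Engelbert--Schmidt-type divergence criterion for integral functionals of one-dimensional diffusions (Cherny's theorem), noting that $y\mapsto |y-m|f_m(y)$ is non-integrable at $m$; you instead give a self-contained It\^o argument on $\log(m-\beta_v)$ (resp.\ $\log\beta_v$), trapping the necessary blow-up in the quadratic-variation term and controlling the stochastic integral via its finite bracket (DDS) and the drift via Cauchy--Schwarz, with the strong-Markov restart at $\eta$ tidying up the secondary singularity of $1/\beta_v$ when $x=0$. For $x=m=0$ the paper time-reverses the Bessel excursion at a last-exit time to obtain a diffusion on $(0,a)$ with an explicit scale function and then cites a scale-function divergence criterion; you instead use Brownian scaling to conclude $X\eqd Y_c$ with $X\geq Y_c+Z_c$, $Z_c>0$ a.s., and then a neat truncation/monotonicity argument ($\Exp[\min(X,K)]=\Exp[\min(Y_c,K)]$ forcing $\min(X,K)=\min(Y_c,K)$ a.s.) to conclude $\Pr[Y_c<\infty]=0$. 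What the paper's approach buys is brevity and delegation to known results; what yours buys is self-containment and an elementary, fully explicit mechanism for the divergence. Both are sound.
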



\begin{proof}
Note that $\tau_m<\infty$ a.s. for all $x,m\in\RP$ and $\delta\geq2$
and $y\mapsto |y-m|f_m(y)$ is not integrable at $m$. 
Hence Lemma~\ref{lem:angular_clock} follows from~\cite[Thm~2.2, Eq.~(2.5)]{cherny_BF} in all cases
except when $x=m=0$.
Assume $x=m=0$
and time-reverse $\beta$  killed at $\tau_a$ (for some large $a>0$) at the last time 
the process visits some $b\in(0,a)$ (this is a co-optional time, see~\cite[Ch.~VII.4]{ry} for details on time reversals). 
The time reversal is a diffusion on $(0,a)$ with the same volatility function as $\beta$ and the scale function given by $\bar s = 1/(s(a)-s):(0,a)\to\R$,
where 
$s(y)=-y^{2-\delta}$ (resp. $\log(y)$) if $\delta>2$ (resp. $\delta=2$).
Note that $\lim_{y\downarrow0}\bar s(y)=0$, $\lim_{y\uparrow a}\bar s(y)=\infty$ and $\bar sf_0/\bar s'=(s(a)-s)f_0/s'$ is not integrable at $0$.
Hence the lemma follows by~\cite[Thm.~2.11(ii)]{mu}.
\end{proof}

\begin{proposition}
\label{prop:transient-away-from-0}
Suppose that \eqref{ass:cov_form}, \eqref{ass:sigma_smooth} and \eqref{ass:radial-evec}
hold. 
Assume either (i) $s >0$; or (ii) $\x_0 \neq \0$ and $s=0$ hold. 
Let a standard one-dimensional Brownian motion $Z$ be given by \eqref{eqn:Z-def-rad}
and let $c_s$ be as in Lemma~\ref{l:transient-time-change}. 
The process
$\tt=(\tt_t, t \geq 0)$ on $\Sp{d-1}$, defined by
$\tt_t:=\hat\x_{c_s(t)}$,
is a strong solution of SDE~\eqref{eqn:sphere-SDE-two-drivers}
started at
$\tt_0=\hat \x_s$ and 
driven by   a $d$-dimensional  Brownian motion $(B_t, t \geq 0)$ adapted to the filtration $(\cF_{c_s(t)}, t \geq 0)$, 
independent of $(Z_t, t \geq 0)$. 
\end{proposition}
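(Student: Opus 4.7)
The plan is to (i) apply It\^o's formula to $\hat\x_u:=\x_u/\|\x_u\|$; (ii) perform the time change $c_s$ to obtain an SDE for $\tt$ on $\Sp{d-1}$; (iii) construct a $d$-dim driving Brownian motion $B$ using the auxiliary 1-dim BM $\beta$ supplied by the setup; and (iv) establish $B\perp Z$ by invoking Knight's theorem in the original time scale.

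For (i), by Lemma~\ref{l:radial-bessel} and the polarity of $0$ for $\Bes^V$ with $V\geq2$, we have $\|\x_u\|>0$ for all $u\geq s$ almost surely, so $\hat\x$ is smooth along the path of $\x$. Applying It\^o's formula to $\bx\mapsto\bx/\|\bx\|$ along the SDE $\ud\x_u=\ssym(\hat\x_u)\ud W_u$, and repeatedly using $\sigma^2(\hat\x)\hat\x=\hat\x$ and $\ssym(\hat\x)\hat\x=\hat\x$ (which follow from \eqref{ass:radial-evec} with $U=1$) together with $\trace\sigma^2=V$ from \eqref{ass:cov_form}, a direct matrix computation yields
\begin{equation*}
\ud\hat\x_u=\|\x_u\|^{-1}K(\hat\x_u)\ud W_u-\tfrac{V-1}{2}\|\x_u\|^{-2}\hat\x_u\ud u,\qquad K(\bu):=\ssym(\bu)-\bu\bu^\tra,
\end{equation*}
after the simplification $(I-\hat\x\hat\x^\tra)\ssym(\hat\x)=K(\hat\x)$. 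For (ii), substituting $u=c_s(t)$ and using $c_s'(t)=\|\x_{c_s(t)}\|^2$ (from Lemma~\ref{l:transient-time-change}) cancels the $\|\x\|^{-2}$ factor and leaves the drift $-\tfrac{V-1}{2}\tt_t\ud t$, matching \eqref{eqn:sphere-SDE-two-drivers} (since $\|\tt_t\|=1$). The martingale part becomes $\tilde N_t:=\int_s^{c_s(t)}\|\x_v\|^{-1}K(\hat\x_v)\ud W_v$, an $(\cF_{c_s(t)})$-martingale with $\ud\langle\tilde N^i,\tilde N^j\rangle_t=(\sigma^2(\tt_t)-\tt_t\tt_t^\tra)_{ij}\ud t$, the last equality via the identity $K(\bu)^2=\sigma^2(\bu)-\bu\bu^\tra$ for $\bu\in\Sp{d-1}$.

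For (iii), since $K(\tt)$ has one-dimensional kernel $\mathrm{span}\{\tt\}$, one extra dimension of Brownian randomness is needed to express $\tilde N$ as $\int K(\tt)\ud B$. Using $\beta$, define
\begin{equation*}
B^*_u:=\int_s^u\|\x_v\|^{-1}(I-\hat\x_v\hat\x_v^\tra)\ud W_v+\int_s^u\|\x_v\|^{-1}\hat\x_v\ud\beta_v,\qquad B_t:=B^*_{c_s(t)}.
\end{equation*}
A short covariation computation using $\beta\perp W$, the projection identity $(I-\hat\x\hat\x^\tra)^2=I-\hat\x\hat\x^\tra$ and the time change gives $\ud\langle B^i,B^j\rangle_t=\delta_{ij}\ud t$, so $B$ is a standard $d$-dim BM by L\'evy's characterisation. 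The identities $K(\hat\x)(I-\hat\x\hat\x^\tra)=K(\hat\x)$ and $K(\hat\x)\hat\x=0$ then imply $\int_0^tK(\tt_u)\ud B_u=\tilde N_t$, so $\tt$ satisfies~\eqref{eqn:sphere-SDE-two-drivers} driven by $B$. Since $\beta$ is $(\cF_u)$-adapted, $B$ is $(\cF_{c_s(t)})$-adapted, and $\tt_0=\hat\x_s$ by construction.

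The main subtlety is step~(iv), the independence $B\perp Z$; a direct conditioning argument is unavailable because $\x$ couples the two processes. The idea is to apply Knight's theorem in the original time scale. In the filtration $(\cF_u)$, the $(d+1)$-tuple $B^{*1},\ldots,B^{*d},Z$ consists of continuous local martingales vanishing at $0$; using $(I-\hat\x\hat\x^\tra)\hat\x=0$ and $\beta\perp W$, one checks that every pairwise cross-variation vanishes. Their quadratic variations at infinity are infinite: $\langle B^{*i}\rangle_\infty=\rho_s(\infty)=\infty$ by Lemma~\ref{l:transient-time-change}, and $\langle Z\rangle_\infty=\infty$ trivially. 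Knight's theorem (see e.g.~\cite[Thm~V.1.9]{ry}) then yields that the corresponding Dambis--Dubins--Schwarz Brownian motions are jointly independent standard BMs. Since the inverse of $u\mapsto\langle B^{*i}\rangle_u=\rho_s(u)$ is $c_s$, the DDS-BM of $B^{*i}$ equals $B^{*i}_{c_s(\cdot)}=B^i$, while the DDS-BM of $Z$ is $Z$ itself; hence $B\perp Z$. The strong-solution property of $\tt$ finally follows from Lemma~\ref{lem:Rd-not-0-SDE}\eqref{Rd-not-0-SDE:Ito}.
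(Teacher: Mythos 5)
Your proof is correct and follows the same overall architecture as the paper: apply It\^o's formula to $\hat\x$, time-change by $c_s$, build $B$ by supplementing the projected driver $W$ with an auxiliary one-dimensional BM along $\hat\x$, and use Knight's theorem for independence. Indeed, unwinding the paper's definition $B_t := A_t - \int_0^t\tt_u\,\ud\zeta_u + \int_0^t\tt_u\,\ud\gamma_u$ via the time-change formula for stochastic integrals shows it is literally your $B^*_{c_s(t)}$, so the two constructions produce the same object; you simply build it in original time rather than in time-changed coordinates.

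The one place where you genuinely deviate is the independence argument, and your version is tidier. The paper splits $Z$ at time $s$: it first argues $B \perp \cF_s$ (from $B_0=\0$ and the independence of a $(\cF_{c_s(t)})$-BM from $\cF_{c_s(0)}$), which handles $(Z_u, u\le s)$, and then applies Knight's theorem in the time-changed filtration to the pair $\bigl(Z_{c_s(\cdot)}-Z_s,\, B\bigr)$ to handle $(Z_u-Z_s, u\ge s)$. You instead apply Knight's theorem once, in the original filtration $(\cF_u)$, to the $(d+1)$-dimensional local martingale $(B^{*1},\ldots,B^{*d},Z)$ (with the tacit but harmless convention $B^*_u\equiv 0$ on $[0,s]$); the DDS time-change of each $B^{*i}$ is exactly $c_s$, giving $B^i$, while $Z$ is already a BM, so the conclusion $B\perp Z$ drops out in one shot. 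This avoids the case split on $[0,s]$ versus $[s,\infty)$ at the (immaterial) cost of not recording the slightly stronger fact $B\perp\cF_s$. Your concluding appeal to Lemma~\ref{lem:Rd-not-0-SDE}\eqref{Rd-not-0-SDE:Ito} for the strong-solution property is appropriate, though it would be slightly more precise to cite part~\eqref{Rd-not-0-SDE:(b)} as well, since it is the pathwise uniqueness of the Stratonovich SDE~\eqref{eq:Strat_SDE} that upgrades your solution to a strong one.
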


\begin{proof}
By assumption we have $r_s > 0$ a.s. Since $0$ is polar
for $\Besq^V(r^2_s)$,
$(r^{-2}_t;t\geq s)$ is a continuous semimartingale. 
Hence 
$\ud( r^{-1}_t ) = -r^{-2}_t \ud Z_t - (V-3)/(2r^3_t) \ud t$
by It\^o's formula and~\eqref{eqn:y-SDE-global}. 
By~\eqref{ass:radial-evec}, the covariation equals 
$\ud [\x , r^{-1}]_t = \ssym (\hat \x_t) \ud [W,-W^\tra]_t \ssym (\hat \x_t) \hat \x_t/r_t^2 = 
-\hat \x_t/r_t^2\ud t$,
and It\^o's product rule implies 
\begin{equation}
\label{eqn:theta-SDE}
\ud \hat \x_t = f( \hat \x_t ) r^{-2}_t \ud t + g(\hat \x_t) r^{-1}_t \ud W_t, \quad t \geq s,
\end{equation}
where we have used the notation 
\begin{equation}
\label{eqn:theta-SDE-coeffs}
f ( x) := -\frac{V-1}{2} \frac{\hat x}{\|x\|} ~~~\text{and}~~~ g( x ) := \ssym (\hat x) - \hat x \hat x^\tra,
\qquad\text{for any $x \in \R^d$.}
\end{equation}

Define continuous local martingales 
$A=(A_t; t\geq 0)$
and
$\zeta=(\zeta_t; t\geq 0)$
by
\begin{equation}
\label{eqn:tilde-W}
A_t := \int_{s}^{c_s(t)}  r_u^{-1} \ud W_u \qquad \text{and}\qquad
\zeta_t := \int_s^{c_s(t)} r_u^{-1} \ud Z_u,
\end{equation}
where $Z$ is given in~\eqref{eqn:Z-def-rad}.
Both $A$ and $\zeta$ are adapted to 
$(\cF_{c_s(t)}, t \geq 0)$.
By~\cite[Prop.~V.1.4--5]{ry} and Lemma~\ref{l:transient-time-change}
it holds that
$
[A,A^\tra]_t = I \int_s^{c_s(t)} \frac{\ud u}{r^2_u} =  I t$,
where
$I$ is the identity matrix on $\R^d$,
and 
$[\zeta,\zeta]_t=t$.
Hence, by L\'evy's characterisation theorem,
both 
$A$ and
$\zeta$
are 
$(\cF_{c_s(t)})$-Brownian motions. 
Furthermore, by~\eqref{eqn:Z-def-rad} and~\cite[Prop.~V.1.4--5]{ry}, we have that 
$\zeta_t = \int_{s}^{c_s(t)} \hat\x_u^\tra r_u^{-1}  \ud W_u  = \int_0^t\tt_u^\tra \ud A_u$ for all $t\geq0$.
Let $(\gamma'_t, t \geq 0)$ be a one-dimensional $(\cF_t)$-Brownian motion, independent of $(\x,W)$. 
Define 
$(\cF_{c_s(t)})$-Brownian motion 
$\gamma=(\gamma_t, t \geq 0)$ by $\gamma_t:=\int_s^{c_s(t)} r_u^{-1} \ud \gamma'_u$
and
note that
$[\zeta,\gamma]\equiv0$.
Define $B=(B_t, t \geq 0)$ by
$B_t := A_t - \int_0^t \tt_u \ud \zeta_u + \int_0^t \tt_u \ud \gamma_u$
and observe
$\ud [B,B^\tra]_t = (I - \tt_t\tt_t^\tra)^2\, \ud t + \tt_t\tt_t^\tra \ud t = I \ud t$ and 
$\ud [B,\zeta]_t = (I - \tt_t\tt_t^\tra) \ud[A,A^\tra]_t\tt_t + \tt_t \ud[\gamma,\zeta]_t = 0$.
In particular,
$B$
is a $d$-dimensional $(\cF_{c_s(t)})$-Brownian motion, independent of $\zeta$.

We now show $B$ is independent of $Z$. By the Markov property,
$B_t$ depends on $\cF_s = \cF_{c_s(0)}$ only
via $B_0 = \0$, so $B$ is independent of $\cF_s$. Hence $B$ is independent of $(Z_t, t \in [0,s])$.  
It remains to prove that $B$
is independent of $(Z_t-Z_s, t \geq s)$. 
Note that by~\eqref{eqn:tilde-W} and Lemma~\ref{l:transient-time-change}
it holds that 
$Z_{c_s(t)}-Z_s=\int_s^{c_s(t)} r_u r_u^{-1}\ud Z_u  = \int_0^{t} r_{c_s(v)} \ud \zeta_v$
for all $t\geq0$. Hence the covariation of 
$\cF_{c_s(t)}$-local martingales 
$M:=Z_{c_s(\cdot)}-Z_s$ and $B$ is identically equal to zero. 
Since the inverse of 
the quadratic variation 
$[M]_u=c_s(u)-s$
equals $\rho_s(s+u)$,
by Knight's theorem~\cite[Theorem~V.1.9]{ry}, the processes
$M_{\rho_s(s+\cdot)}$
and 
$B$
are independent Brownian motions. 
It only remains to note that
$M_{\rho_s(s+u)}=Z_{s+u}-Z_s$ for any $u\geq0$.

By definition we have 
$\tt_t = \hat \x_s + \int_s^{c_s (t)} \ud \hat \x_u$.
Hence the change of variable formulas for Stieltjes~\cite[Prop.~0.4.1]{ry} and stochastic~\cite[Prop.~V.1.4]{ry} integrals
and~\eqref{eqn:theta-SDE} 
imply 
\begin{equation}\label{eqn:tildetheta-SDE}
\tt_t =\tt_0+ \int_0^t (\ssym(\tt_u) - \tt_u\tt^\tra_u) \ud A_u - \frac{V-1}{2} \tt_u \ud u, \qquad t \geq 0. 
\end{equation}
Since
$(\ssym(\tt_t) - \tt_t\tt_t^\tra) \ud B_t 
 = (\ssym(\tt_t) - \tt_t\tt_t^\tra)\bigl( (I-\tt_t\tt_t^\tra)\ud A_t + \tt_t \ud \gamma_t \bigr)
= (\ssym(\tt_t) - \tt_t\tt_t^\tra) \ud A_t $,
the process $\tt$ satisfies 
SDE~\eqref{eqn:sphere-SDE-two-drivers} driven by $(B_t,t \geq 0)$ as required.  
\end{proof}

\begin{proof}[Proof of Theorem~\ref{t:well-posedness} in the transient case with $\x_0 \neq \0$.]
By Proposition~\ref{prop:transient-away-from-0} (enlarge the probability space if needed), 
the law of any solution $\x$ of SDE~\eqref{eqn:x-SDE-sym},
satisfying $\x_0 \neq \0$, 
is equal to that of 
$(r_t \tt_{\rho_0(t)}, t\geq0)$,  
where $r\sim\Bes^{V}(\|\x_0\|)$,
$\rho_0(\cdot)$ is given in~\eqref{eqn:time-change-started-at-s}
and $\tt$ is the unique solution of~\eqref{eqn:sphere-SDE-two-drivers}  with 
$\tt_0=\hat\x_0$, independent of $r$.
\end{proof}

In order to characterise the law of $\x$ in 
the case $V \geq 2$ with   $\x_0 = \0$, we need to understand 
the law of the $\hat \x_s$ (for any fixed $s>0$) and its dependence on the path of the radial process $r$.
Define
$\cF^r_\infty := \sigma ( r_t, t \geq 0)$. 
Since $r\sim\Bes^{V}(0)$ is non-negative and $r^2$
is a strong solution of SDE~\eqref{eqn:y-SDE-global}, 
we have $\cF^r_\infty = \sigma ( r_t^2, t \geq 0)=\sigma (Z_t ,t \geq 0)$.
Recall that by
Prop.~\ref{lem:sphere-in-Rd-SDE},  the process $\tt$ defined in Proposition~\ref{prop:transient-away-from-0}
has a unique stationary measure $\mu$. 

\begin{lemma}
\label{l:independent_angle_transient}
Suppose that \eqref{ass:cov_form}, \eqref{ass:sigma_smooth} and \eqref{ass:radial-evec}
hold. 
Then for any $t >0$, $\hat \x_t$ has the law $\mu$ and is independent
of  $\cF^r_\infty$.  Put differently, the conditional law takes the form
\[ \Pr [ \hat \x_t \in \cdot \mid \cF^r_\infty ] 
= \mu ( \cdot) , \as , \text{ for any  } t > 0. \]
\end{lemma}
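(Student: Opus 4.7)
The plan is to combine the rapid-spinning phenomenon at time zero with the exponential convergence to stationarity of the spherical diffusion in Proposition~\ref{lem:sphere-in-Rd-SDE}. Fix $t>0$ and let $f : \Sp{d-1} \to \R$ be bounded and Borel. For each $s \in (0, t)$, Proposition~\ref{prop:transient-away-from-0} yields $\hat \x_t = \tt_{\rho_s(t)}$, where $\tt = \tt^{(s)}$ is the solution of SDE~\eqref{eqn:sphere-SDE-two-drivers} with $\tt^{(s)}_0 = \hat \x_s$, driven by a Brownian motion $B^{(s)}$ independent of $Z$. Since $\cF^r_\infty = \sigma(Z)$ (as noted in the text preceding the lemma), $B^{(s)}$ is independent of $\cF^r_\infty$.

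The first key step is to establish that, conditional on $(\cF^r_\infty, \hat \x_s)$, the law of $\tt^{(s)}_u$ equals $P_u(\hat\x_s, \cdot)$ for every fixed $u \geq 0$, where $P_u$ is the transition function from Proposition~\ref{lem:sphere-in-Rd-SDE}. Strong existence for SDE~\eqref{eqn:sphere-SDE-two-drivers}, which follows from Lemma~\ref{lem:Rd-not-0-SDE}\eqref{Rd-not-0-SDE:(b)} via the equivalent Stratonovich form together with~\eqref{Rd-not-0-SDE:Ito}, expresses $\tt^{(s)}_u$ as a measurable functional of $(\hat\x_s, B^{(s)}|_{[0,u]})$. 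Applied at the $\cF^r_\infty$-measurable time $u = \rho_s(t)$ and integrating out $\hat \x_s$ given $\cF^r_\infty$, this gives
$$\Exp[f(\hat \x_t) \mid \cF^r_\infty] = \int_{\Sp{d-1}} \nu_s(\ud\bx) \int_{\Sp{d-1}} f(\by) \, P_{\rho_s(t)}(\bx, \ud \by) \quad \text{a.s.,}$$
where $\nu_s(\cdot) := \Pr[\hat \x_s \in \cdot \mid \cF^r_\infty]$ is the conditional distribution of $\hat \x_s$.

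The final step is to let $s \downarrow 0$. By Lemma~\ref{lem:angular_clock} with $\delta = V \geq 2$ and $x = m = 0$, $\int_0^t r_u^{-2} \ud u = \infty$ a.s., so $\rho_s(t) = \int_s^t r_u^{-2} \ud u \uparrow \infty$ a.s. The uniform total-variation convergence~\eqref{eqn:unif-ergod} then gives $\sup_{\bx \in \Sp{d-1}}\bigl| \int f \, \ud P_{\rho_s(t)}(\bx,\cdot) - \int f\, \ud\mu \bigr| \to 0$ a.s., and integrating against the probability measure $\nu_s$ yields $\Exp[f(\hat\x_t) \mid \cF^r_\infty] \to \int f \, \ud \mu$ a.s. Since the left-hand side is independent of $s$, it must equal $\int f \, \ud \mu$ a.s. As $f$ was an arbitrary bounded Borel function on $\Sp{d-1}$, this is the desired identity $\Pr[\hat \x_t \in \cdot \mid \cF^r_\infty] = \mu(\cdot)$ a.s.

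The main obstacle is the conditional-expectation identity in the second paragraph: one must verify that $B^{(s)}$ is \emph{jointly} independent of $(\cF^r_\infty, \hat\x_s)$, not merely of each component separately. This is inherent in the construction of $B^{(s)}$ in the proof of Proposition~\ref{prop:transient-away-from-0}: being an $(\cF_{c_s(\cdot)})$-Brownian motion starting at $0$, $B^{(s)}$ is independent of $\cF_{c_s(0)} = \cF_s \supseteq \sigma(\hat \x_s)$, while Knight's theorem (used in that proof) yields its independence from $Z$. The required joint independence from $\cF_s \vee \sigma(Z)$ then follows from the disjoint-interval increment structure of $W$ and the auxiliary Brownian motion $\gamma'$ used to build $B^{(s)}$.
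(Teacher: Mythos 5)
Your proof is correct and follows essentially the same route as the paper: express $\hat\x_t = \tt_{\rho_s(t)}$ via Proposition~\ref{prop:transient-away-from-0}, use the Markov property of the spherical diffusion conditionally on the radial information, then let the time-change blow up and invoke the uniform ergodicity~\eqref{eqn:unif-ergod}. The one structural difference is the final limiting step: the paper keeps the threshold deterministic but replaces the fixed start time $s$ by an $\cF^r_\infty$-measurable random time $S=S(t,u)\in(0,t)$ chosen so that $\rho_S(t)\geq u$ holds everywhere, whereas you send a deterministic $s\downarrow 0$ (along a countable sequence) and use that $\rho_s(t)\uparrow\infty$ a.s.\ together with the $s$-independence of $\Exp[f(\hat\x_t)\mid\cF^r_\infty]$. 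Both are sound; your version is perhaps slightly more economical since it avoids constructing $S$. You also correctly flag the real subtlety — that $B^{(s)}$ must be jointly independent of $\cF^r_\infty\vee\sigma(\hat\x_s)$ rather than merely of each separately, which is precisely what the paper compresses into the phrase ``$\tt_{\rho_s(t)}$ depends on $\cF^r_\infty$ only through $\rho_s(t)$ and $\tt_0=\hat\x_s$''; a fully careful justification would track how the Knight's theorem step in Proposition~\ref{prop:transient-away-from-0} delivers independence of $B$ from $\cF_s\vee\sigma(Z_{s+\cdot}-Z_s)$ jointly, not just the two separate independencies, and your appeal to the increment structure is the right heuristic, though it glosses over the fact that $B^{(s)}$ is built using $\hat\x_u$ for $u\geq s$ (which does depend on $\cF_s$ pathwise, even though the resulting law does not).
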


\begin{proof}
Fix $t >0$ and let $s \in (0,t)$. 
By
Prop.~\ref{prop:transient-away-from-0}
and
Lemma~\ref{l:transient-time-change}
we have
$\hat \x_t = \tt_{\rho_s (t)}$, 
where $\varphi$ satisfies SDE~\eqref{eqn:sphere-SDE-two-drivers}.
By \eqref{Rd-not-0-SDE:Ito}, \eqref{Rd-not-0-SDE:(b)} and \eqref{Rd-not-0-SDE:(c)} of Lemma~\ref{lem:Rd-not-0-SDE}
and Prop.~\ref{lem:sphere-in-Rd-SDE},
$\tt$ is strong Markov 
with the transition function
$P_u(\bx, \cdot)$ 
that does not depend on $s$.
Hence, for $\fA\subseteq\Sp{d-1}$, we find
\begin{align}
\label{eqn:earlier_conditioning}
 \Pr [ \hat\x_t \in \fA \mid \cF^r_\infty ] 
& = \Exp [  \Pr [ \hat\x_t \in \fA \mid \sigma(\hat\x_s)\vee\cF^r_\infty ] \mid \cF^r_\infty ] 
 = \Exp [ P_{\rho_s(t)}(\hat\x_s,\fA)   \mid \cF^r_\infty ],
\end{align}
as $\tt_{\rho_s(t)}$ depends on $\cF^r_\infty$ only through $\rho_s(t)$ and $\tt_0 = \hat\x_s$.
Crucially, \eqref{eqn:earlier_conditioning} holds for any fixed time
$s\in(0,t)$, and also for any random time $s=S\in(0,t)$ if $S$ is $\cF^r_\infty$-measurable.

By Lemma~\ref{lem:angular_clock} we have
$\lim_{s\downarrow0}\rho_s (t)=\infty$.
Hence, for sufficiently small $s$, 
an arbitrarily large time interval separates $\tt_0 = \hat\x_s$ and $\tt_{\rho_s(t)}$,
and so stationarity must be attained at the latter, regardless of $\hat\x_s$.
Formally, we apply the uniform ergodicity of $\tt$ in~\eqref{eqn:unif-ergod}.  
Lemmas~\ref{l:transient-time-change} and~\ref{lem:angular_clock}
imply that for any $u >0$,  there is an $\cF^r_\infty$-measurable random variable $S = S(t,u)$ with $S \in (0,t)$ a.s.~such that 
$\rho_S (t) \geq u$.
By \eqref{eqn:unif-ergod}, for any $\eps>0$ there exists $u>0$ 
such that 
$ |P_{\rho_S(t)}(\tt_0,\fA)  - \mu ( \fA) | \leq \eps , \as$ 
Hence, by \eqref{eqn:earlier_conditioning} applied at the random time $S$,
we have 
$ | \Pr [ \hat\x_t \in \fA \mid \cF^r_\infty ]  - \mu ( \fA ) | \leq \eps, \as$ 
Since $\eps>0$ was arbitrary, the result follows.
\end{proof}

\begin{proof}[Proof of Theorem~\ref{t:well-posedness} in the transient case with $\x_0 = \0$.]
For any $k\in\N$
and
open set $U\subset \R^k$,
define a measurable function $F_U:(0,\infty)^k\to[0,1]$,
$F_U(t_1,\ldots,t_k):= \Pr_\Psi[(\Psi_{t_1},\ldots,\Psi_{t_k})\in U]$,
where the law
$\Pr_\Psi[\cdot]$ is defined in 
Prop.~\ref{lem:sphere-in-Rd-SDE}.
By 
Lemma~\ref{l:transient-time-change},
Proposition~\ref{prop:transient-away-from-0}
and
Lemma~\ref{l:independent_angle_transient}
we have 
$\Pr[(\hat\x_{t_1},\ldots,\hat\x_{t_k})\in U |\cF^r_\infty ]=F_U(\rho_s(t_1),\ldots,\rho_s(t_k))$ a.s. 
for $0<s<t_1<\cdots<t_k$. 
Hence
$\Pr[(\hat\x_{t_1},\ldots,\hat\x_{t_k})\in U]=\Exp F_U(\rho_s(t_1),\ldots,\rho_s(t_k))$. 
Therefore the finite-dimensional distributions of  $(\hat \x_t, t> 0)$ 
are uniquely determined by $\Pr_\Psi[\cdot]$
and the law of $r$.
Moreover,
by Lemma~\ref{l:radial-bessel}, the law of $(\|\x\|,\hat{\x})$, and hence 
of $\x$, is uniquely determined by~$\Bes^V(0)$ and $\Pr_\Psi[\cdot]$.
The uniqueness in law of~\eqref{eqn:x-SDE-sym} implies that $\x$ is strong Markov and 
Thm~\ref{t:well-posedness} follows in the transient case. 
\end{proof}

\subsection{Proof of Theorem~\ref{t:well-posedness} in the recurrent case: rapid spinning of $\hat \x$}
\label{sec:excursions}
In this section we assume $V\in(1,2)$ and $U=1$. 
Hence, by Lemma~\ref{l:radial-bessel},  $r=\|\x\|$ is $\Bes^V(0)$ where $\x$ is a solution of SDE~\eqref{eqn:x-SDE-sym}.
We recall briefly the necessary elements of excursion theory 
(see~\cite[Ch.~XII]{py},~\cite[Ch.~IV]{bert} as a general reference).
Since $0$ is regular and instantaneous for $r$, there exists Markov local time 
$L=(L_t, t \geq 0)$  
at $0$.
By~\cite[Prop.~XI.1.1]{ry}, up to a constant factor, $L$ 
can be expressed as a time-change of the Brownian local time at 0,
where the time-change is a constant multiple of $(\int_0^tr^{-2(V-1)}_u\ud u; t\geq0)$.
Hence, by~\cite[Thm~2.4]{cherny_BF}, $\lim_{t\uparrow \infty}L_t=\infty$ $\Pr$-a.s.
Let $L_\la^{-1} := \inf\{t\geq0 : L_t > \la\}$ (for $\la\geq0$)
be the  right-continuous inverse of $L$ and 
$L_{\la^-}^{-1}:=\lim_{\kappa \uparrow \la} L_\kappa^{-1}$ 
(for $\la > 0$),
$L_{0^-}^{-1} := 0$.  
The process $(L^{-1}_\la, \la \geq 0)$ is a subordinator (i.e. a L\'evy process with non-decreasing 
paths). 
Furthermore, as $L$ tends to infinity, $L^{-1}$ is not killed: $\Pr[L_\la^{-1}\in\RP \forall \la\in\RP]=1$. 
Define the (countable) set of jump times by
$\La^r := \{ \la \geq 0 : L_{\la^-}^{-1} < L_\la^{-1}\}$,
set $\tau_\la^r := L_\la^{-1} - L_{\la^-}^{-1}$ and note that both
$L_\la^{-1}$ and $L_{\la^-}^{-1}$ are stopping times for any $\la\in\RP$.  
For any $w \in \cC_d=\cC(\R_+,\R^d)$, 
let $\tau_{\0}(w) := \inf\{t>0: w(t) = \0\}$ ($\inf \emptyset =\infty$)
and define $\cE_d:=\{w\in\cC_d:0 < \tau_{\0}(w) < \infty \text{ and $w(t) = \0$ for all $t \notin (0,\tau_{\0}(w))$}\}$ 
with the topology induced by 
the compact-open topology~\cite[Sec.~XII.1]{dugundji} on $\cC_d$. 
Let $\delta_d$ be the zero function in $\cC_d$. 
Since $0$ is recurrent for the strong Markov process $r$,
by~\cite[Ch.~IV, Thm.~10(i)]{bert}, 
the point process 
$e^r = (e^r_\la, \la \geq 0)$ with values in $\cE_1 \cup \{ \delta_1 \}$, 
defined by 
$e^r_\la(t):= r_{L_{\la^-}^{-1}  + t}  \1{ t \leq \tau^r_\la }$
(resp.  $e^r_\la = \delta_1$)
if $\la \in \La^r$ (resp. $\la \notin \La^r$), 
is a Poisson
point process (PPP)
with 
excursion measure $\mu_r$ on $\cE_1$.

\subsubsection{Marked Bessel excursions}
\label{subsub:Marked_Bessel_Ex}
Pick $a\in(0,\infty)$ 
and let 
$t\wedge a:=\min(t, a), t\vee a:=\max(t,a)$
for any $t\in\R$.
For any $w \in \cE_1$ 
satisfying $\tau_0(w)>a$,  
define 
$\varrho_w^a:(0, \tau_0(w))\to\R$ by the formula
\begin{equation}\label{eqn:bessel-excursion-time-change}
\varrho_w^a(t) :=\sign(t-a) \int^{t\vee a}_{t\wedge a} w(u)^{-2} \ud u, \qquad t \in (0, \tau_0(w)).
\end{equation}
Let
$\cE_1^{(a)}:=\{w\in\cE_1:w\geq0, \tau_0(w)>a \text{ and } \lim_{t \uparrow \tau_0(w)} \varrho_w^a(t) = - \lim_{t \downarrow 0} \varrho_w^a(t) = \infty\}$
and, for $d\in\N\setminus\{1\}$, define the set 
$\cE_d^{(a)}:=\{w\in\cE_d:\|w\|\in\cE_1^{(a)}\}$
and the map 
$\Phi_a:\cE_1^{(a)}\times \cC(\R,\Sp{d-1})\to \cE_d^{(a)}$, 
\[
\Phi_a(w,\theta)(t):= \begin{cases} w(t) \cdot \theta\circ \varrho_w^a (t) & t \in (0,\tau_0(w)), \\
\0 & t \in \RP\setminus(0,\tau_0(w)).
\end{cases}
\]
The topology on 
$\cE_d^{(a)}$
is induced by the compact-open topology on $\cC_d$~\cite[Sec.~XII.1]{dugundji}.
Hence the Borel $\sigma$-algebra on $\cE_d^{(a)}$ is generated by 
$\pi_t:\cE_d^{(a)}\to\R^d$,
$\pi_t(w):= w(t)$, for any $t\in\RP$~\cite[p.~57]{bill}.

\begin{lemma}\label{l:rho-2way-inf}
The following statements hold for any fixed $a\in(0,\infty)$. 
\begin{enumerate}[(i)]
\item 
\label{l:rho-2way-inf(i)}
For $w \in \cE_1^{(a)}$, 
$\varrho_w^a: (0,\tau_0(w)) \to \R$ is continuous, increasing and 
$c_w^a: \R\to (0,\tau_0(w))$, given by
$c_w^a(u):=\inf\{t\in(0,\tau_0(w)):\varrho_w^a(t)\geq u\}$,
is continuous, increasing and $c_w^a(0)=a$. 

\item 
\label{l:rho-2way-inf(iv)}
Pick $b\in(0,a)$, $w\in\cE_1^{(a)}$ and let $I_b^a(w):=\varrho_w^b(t)-\varrho_w^a(t)$,
$t\in(0,\tau_0(w))$.
Then $I_b^a(w)>0$ does not depend on $t$, satisfies
$c_w^a(u)=c_w^b(u+I_b^a(w))$ for all $u\in\R$ and $\lim_{b\to0}I_b^a(w)=\infty$.

\item 
\label{l:rho-2way-inf(iii)}
$\Phi_a:\cE_1^{(a)}\times \cC(\R,\Sp{d-1})\to \cE_d^{(a)}$ 
is a Borel isomorphism, i.e. $\Phi_a$ is a bijection with inverse 
given by 
$\Phi_a^{-1}(w)=(\|w\|, w \circ c_{\|w\|}^a/\|w \circ c_{\|w\|}^a\|)$, $w\in\cE_d^{(a)}$, 
and 
both $\Phi_a$ and $\Phi_a^{-1}$ are Borel measurable. 
Moreover, for any $s\in\R$, the map
$\cE_d^{(a)}\to\RP$, $w\mapsto c_{\|w\|}^a(s)$, is continuous.

\item 
\label{l:rho-2way-inf(v)}
Define the set 
$\Upsilon^{(a)}_d:=\{(b,w)\in (a,\infty)\times \cE^{(a)}_d: w\in\cE^{(b)}_d\}$ for any $d\in\N$.
Then the map
$Q_a:
\Upsilon^{(a)}_1\times \cC(\R,\Sp{d-1})
\to
\cE^{(a)}_1\times \cC(\R,\Sp{d-1})$,
$Q_a(b,w,\theta):=(w,\theta(\cdot+I_a^b(w))$,
is continuous and the equality 
$\Phi_b^{-1}(w)=Q_a(b,\Phi_a^{-1}(w))$
holds for any
$(b,w)\in\Upsilon^{(a)}_d$.

\item
\label{l:rho-2way-inf(vi)}
The map
$\{(b,b',w)\in (0,\infty)^2\times \cE_1: w\in\cE^{(b\vee b')}_1\}\to \R$,
$(b,b',w)\mapsto \varrho_w^{b'}(b)$, is continuous. 
\end{enumerate}
\end{lemma}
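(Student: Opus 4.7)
The plan is to prove the five parts by direct computation, exploiting that $\varrho_w^a$ is the signed integral of the positive continuous function $u\mapsto w(u)^{-2}$ on $(0,\tau_0(w))$, together with continuity of such integrals in their parameters under the compact-open topology.

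Parts~(i) and~(ii) are essentially calculus. For~(i), since $w>0$ on $(0,\tau_0(w))$ (as $w\in\cE_1^{(a)}$ is a non-negative excursion with $\tau_0(w)>a$), the integrand in~\eqref{eqn:bessel-excursion-time-change} is positive and continuous, so $\varrho_w^a$ is continuously differentiable with positive derivative, hence a continuous strictly increasing bijection onto $\R$ by the defining boundary behaviour of $\cE_1^{(a)}$; this yields the continuous increasing inverse $c_w^a$ with $c_w^a(0)=a$ since $\varrho_w^a(a)=0$. For~(ii), a case analysis on the signs of $t-a$ and $t-b$ applied to~\eqref{eqn:bessel-excursion-time-change} shows directly that $\varrho_w^b(t)-\varrho_w^a(t)=\int_b^a w(u)^{-2}\ud u$ is positive and independent of $t\in(0,\tau_0(w))$; the identity $c_w^a(u)=c_w^b(u+I_b^a(w))$ follows by applying $\varrho_w^b$ to both sides, and $\lim_{b\downarrow0}I_b^a(w)=\infty$ is equivalent to $\lim_{t\downarrow0}\varrho_w^a(t)=-\infty$, which is the defining condition of $\cE_1^{(a)}$.

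For part~(iii), the bijection is verified by direct substitution: if $z=\Phi_a(w,\theta)$ then $\|z\|=w$ (as $\|\theta(\cdot)\|\equiv1$) and $z\circ c_w^a(u)=w(c_w^a(u))\,\theta(u)$, whose normalisation recovers $\theta(u)$. The more substantive claim is Borel measurability, which I would obtain by establishing \emph{continuity} of $\Phi_a$ and $\Phi_a^{-1}$ on their respective domains. The key step is that if $w_n\to w$ in the compact-open topology on $\cE_1^{(a)}$ and $K\subset(0,\tau_0(w))$ is compact with $\min_K w=\eta>0$, then $w_n\to w$ uniformly on $K$ gives $w_n\geq\eta/2$ on $K$ for all large $n$, so $w_n^{-2}\to w^{-2}$ uniformly on $K$; this forces $K\subset(0,\tau_0(w_n))$ eventually, and yields locally uniform convergence $\varrho_{w_n}^a\to\varrho_w^a$ on $(0,\tau_0(w))$ and, by inversion, $c_{w_n}^a\to c_w^a$ locally uniformly on $\R$. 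Combined with joint continuity of the evaluation $(s,\theta)\mapsto\theta(s)$ on $\R\times\cC(\R,\Sp{d-1})$ (valid since $\R$ is locally compact Hausdorff), this proves continuity of both $\Phi_a$ and $\Phi_a^{-1}$; Borel measurability then follows because the Borel $\sigma$-algebras on the source and target spaces are generated by projections~\cite[p.~57]{bill}. The continuity of $w\mapsto c_{\|w\|}^a(s)$ on $\cE_d^{(a)}$ is a special case of the same argument.

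For part~(iv), the identity $\Phi_b^{-1}(w)=Q_a(b,\Phi_a^{-1}(w))$ is immediate from part~(ii) with the roles of $a,b$ swapped (since here $b>a$): $c_{\|w\|}^b(u)=c_{\|w\|}^a(u+I_a^b(\|w\|))$, so the second component of $\Phi_b^{-1}(w)$ is the shift by $I_a^b(\|w\|)$ of that of $\Phi_a^{-1}(w)$. Continuity of $Q_a$ decomposes into continuity of $(b,w)\mapsto I_a^b(w)=\int_a^b w(u)^{-2}\ud u$ (by the same uniform-convergence argument, applied to a compact neighbourhood of $[a,b]\subset(0,\tau_0(w))$) and continuity of the translation action on $\cC(\R,\Sp{d-1})$ in the compact-open topology. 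Part~(v) follows from the same uniform-convergence principle applied to the interval $[b\wedge b',b\vee b']$. The main technical obstacle throughout is precisely the topological bookkeeping in part~(iii): the compact-open topology on $\cE_d^{(a)}\subset\cC_d$ does not force $\tau_0(w_n)\to\tau_0(w)$, so one must restrict attention to compacta strictly inside $(0,\tau_0(w))$ on which $w$ is bounded below — this localisation is just strong enough to push integrals, inverses, and compositions through the limit.
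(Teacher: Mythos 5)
Your proposal is correct, but for the crux of the lemma---part~\eqref{l:rho-2way-inf(iii)}---you take a genuinely different and strictly stronger route than the paper. Where you prove that $\Phi_a$ and $\Phi_a^{-1}$ are \emph{continuous} (hence Borel), the paper establishes only Borel measurability: for $\Phi_a$ it shows that each projection $\pi_t\circ\Phi_a$ has open preimages of small open balls, and for $\Phi_a^{-1}$ it shows that each evaluation map $g_s:w\mapsto w(c^a_{\|w\|}(s))$ is continuous, together with the fact that the Borel $\sigma$-algebras are generated by the projections. The paper even emphasises, in the two remarks surrounding the lemma, that continuity of the individual maps $g_s$ does \emph{not} imply continuity of $\Phi_a^{-1}$ (compact-open $\neq$ product topology), and that $\Phi_a,\Phi_a^{-1}$ are in fact homeomorphisms but that ``the proof of this fact is more complicated\ldots\ and is omitted as it is not used.'' Your argument is precisely the omitted homeomorphism proof, and it does work: the key step---passing from uniform convergence of $w_n^{-2}$ on compacta $K_\delta\subset(0,\tau_0(w))$ where $w$ is bounded below, to locally uniform convergence of $\varrho_{w_n}^a$ and then, by monotone inversion, of $c_{w_n}^a$---is the piece of analysis that the paper avoids by working one $t$ (or $s$) at a time. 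Your localisation device $K_\delta := K\cap\{w\geq\delta\}$, combined with the observation that $\|\Phi_a(w,\theta)(t)\|=w(t)$ so that the discarded region $K\setminus K_\delta$ contributes at most $O(\delta)$, is exactly what makes the uniform-over-$K$ argument go through despite $\tau_0(w_n)\not\to\tau_0(w)$; this is the correct and nontrivial observation, and you have identified it.

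One minor blemish in the write-up: once you have continuity of $\Phi_a$ and $\Phi_a^{-1}$, Borel measurability is immediate (preimages of opens are open), so the appeal to the projection-generated $\sigma$-algebra is superfluous---that citation is needed in the paper's pointwise argument, not in yours. Parts~\eqref{l:rho-2way-inf(i)}, \eqref{l:rho-2way-inf(iv)}, \eqref{l:rho-2way-inf(v)} and~\eqref{l:rho-2way-inf(vi)} of your outline track the paper's reasoning closely (elementary calculus, the $a\leftrightarrow b$ swap in~\eqref{l:rho-2way-inf(iv)}, and the same local-uniform-convergence principle applied to the relevant compact interval), and are sound.
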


\begin{remark}
\noindent (a) The maps $\Phi_a$ and $\Phi_a^{-1}$ in Lemma~\ref{l:rho-2way-inf}\eqref{l:rho-2way-inf(iii)}
are homeomorphisms. The proof of this fact is more complicated than that of 
Lemma~\ref{l:rho-2way-inf}\eqref{l:rho-2way-inf(iii)} and is omitted  as 
it is not used.  \\
\noindent (b) The topology on 
$\Upsilon^{(a)}_d$ is induced by 
$(a,\infty)\times \cE^{(a)}_d$.
Parts~\eqref{l:rho-2way-inf(iii)} and~\eqref{l:rho-2way-inf(v)} of Lemma~\ref{l:rho-2way-inf} imply that the map $(b,w)\mapsto\Phi_b^{-1}(w)$,
defined 
on
$\Upsilon^{(a)}_d$,
is measurable. The map in~\eqref{l:rho-2way-inf(vi)} is measurable.
\end{remark}

\begin{proof}
Since  $w(u)>0$ for all 
$u \in (0,\tau_0(w))$, \eqref{l:rho-2way-inf(i)} holds.  
Note that $\cE_1^{(a)}\subset \cE_1^{(b)}$
and $I_b^a(w)=\int_b^a 1/w(u)^2\ud u$. Part~\eqref{l:rho-2way-inf(iv)} follows by the representation of $c_w^a$ from~\eqref{l:rho-2way-inf(i)}
and the definition of $\cE_1^{(a)}$.

For part~\eqref{l:rho-2way-inf(iii)},
note that 
$\tau_0(w)=\tau_\0(\Phi_a(w,\theta))$
for all
$w\in\cE_1^{(a)}$ and $\theta\in\cC(\R,\Sp{d-1})$.
Since 
$\theta$ is bounded and $w$ is continuous and equals $0$ on $\RP\setminus(0,\tau_0(w))$,
both $\Phi_a$ 
and its inverse are well-defined.
Since the $\sigma$-algebra on $\cE_d^{(a)}$ is generated by the projections,
the map $\Phi_a$ is Borel measurable 
if and only if $\pi_t\circ \Phi_a$ is a measurable map into $\R^d$
for every $t\in\RP$.
Since, 
for any measurable set $A$ in $\R^d$, 
$(\pi_0\circ \Phi_a)^{-1}(A)$ is either empty or the whole space 
we may assume
$t>0$.
Then, 
$(\pi_t\circ \Phi_a)^{-1}(\{\0\})=(\cE_1^{(a)}\setminus \{w\in \cE_1^{(a)}:w(t)>0\})\times \cC(\R,\Sp{d-1})$
is clearly measurable. 
It is therefore sufficient to prove that 
$(\pi_t\circ \Phi_a)^{-1}(B)$ is open for any ball $B$ centred at $b\in\R^d$ of radius $\eps'\in(0,\|b\|)$.
Pick $(w,\theta)\in (\pi_t\circ \Phi_a)^{-1}(B)$ and set
$\eps:=(\eps'-\|\Phi_a(w,\theta)(t)-b\|)/2>0$.
Then 
$I_w:=\inf_{s\in[t\wedge a, t\vee a]}w(s)>0$.
In particular,
$[t\wedge a,t\vee a]\subset(0,\tau_0(w))$.
Define 
$S_w:=\sup_{s\in[t\wedge a, t\vee a]}w(s)$.
There exists $\delta_0\in(0,1)$ such that
if $|\varrho_w^a(t)-s|<\delta_0$ then 
$\|\theta(\varrho_w^a(t))-\theta(s)\|<\eps/(3S_w+3)$.
Assume now that 
$t\neq a$
and pick
$\delta\in (0,1)$
smaller than
$\min\{\eps/3, I_w/2,\delta_0 I_w^4(4(2S_w+1)|a-t|)^{-1}\}$.
Define the compact $K_1:=[t\wedge a,t\vee a]\subset \RP$ 
(resp. $K_2:=[\varrho_w^a(t)-1,\varrho_w^a(t)+1]\subset\R$),
$\eps_1:=\delta$ (resp. 
$\eps_2:=\eps/(3S_w+3)$)
and the neighbourhood 
$N_{\eps_1}(K_1):=\{u\in\cE_1^{(a)}:\sup_{s\in K_1}|w(s)-u(s)|<\eps_1\}$ 
(resp. $N_{\eps_2}(K_2):=\{\phi\in\cC(\R,\Sp{d-1}):\sup_{s\in K_2}\|\theta(s)-\phi(s)\|<\eps_2\}$)
of $w$ (resp. $\theta$) in 
$\cE_1^{(a)}$ (resp. $\cC(\R,\Sp{d-1})$).
Pick
$(u,\phi)\in N_{\eps_1}(K_1)\times N_{\eps_2}(K_2)$
and note that 
$u(s)> I_w - \delta>I_w/2$ for all $s\in K_1$.
Hence, by~\eqref{eqn:bessel-excursion-time-change}, 
we have
$|\varrho_w^a(t)-\varrho_u^a(t)|\leq 4(2S_w+1)|a-t|I_w^{-4}\delta <\delta_0<1$,
implying 
$u(t)\|\theta(\varrho_w^a(t))-\theta(\varrho_u^a(t))\|<\eps/3$ 
and 
$\varrho_u^a(t)\in K_2$.
Hence
$u(t)\|\theta(\varrho_u^a(t))-\phi(\varrho_u^a(t))\|<\eps/3$
and the following inequalities hold
$$
\|\Phi_a(w,\theta)(t)-\Phi_a(u,\phi)(t)\|\leq
|w(t)-u(t)| + 
u(t)(\|\theta(\varrho_w^a(t))-\theta(\varrho_u^a(t))\|+
\|\theta(\varrho_u^a(t))-\phi(\varrho_u^a(t))\|)<\eps.
$$
Thus 
$
\|\Phi_a(u,\phi)(t)-b\|\leq\eps+
\|\Phi_a(w,\theta)(t)-b\|< \eps'$,
implying
$N_{\eps_1}(K_1)\times N_{\eps_2}(K_2)\subset (\pi_t\circ \Phi_a)^{-1}(B)$
and hence that $\pi_t\circ\Phi_a$ is measurable for $t\neq a$. 
If
$t=a$,
we have
$\varrho_u^a(t)=0$
for all 
$u\in\cE_1^{(a)}$.
Hence
$(u,\phi)\in\cE_1^{(a)}\times \cC(\R,\Sp{d-1})$,
such that 
$|w(t)-u(t)|<(w(t)\wedge\eps)/2$
and
$\|\theta(0)-\phi(0)\|<2\eps/w(t)$,
satisfies 
$\Phi_a(u,\phi)(t)\in B$ (where $(w,\theta), B,\eps$ are as above)  and the 
measurability of 
$\pi_t\circ\Phi_a$ follows. 

Due to the product structure of the image, the map $\Phi_a^{-1}$
is measurable if 
$\cE_d^{(a)}\to\cC(\R,\R^d\setminus\{\0\})$,
$w\mapsto  w \circ c_{\|w\|}^a$,
is measurable, 
which is equivalent to 
$g_s:\cE_d^{(a)}\to\R^d\setminus\{\0\}$,
$g_s(w):=w(c_{\|w\|}^a(s))$,
being measurable for every $s\in\R$.
The map $g_s$ is in fact continuous. If $s=0$, then $g_s(w)=w(a)$ is an 
evaluation at $a$,  
which is continuous in the compact-open topology. 
If $s\neq0$, let $B$ denote an open ball centred at 
$b\in\R^d\setminus\{\0\}$
of radius $\eps'\in(0,\|b\|)$,
pick
$w\in g_s^{-1}(B)$
and let 
$\eps:=(\eps'-\|g_s(w)-b\|)/2$.
Define
$t:=c_{\|w\|}^a(s)\neq a$ 
and let
$S_{\|w\|}:=\sup_{p\in[t\wedge a, t\vee a]}\|w(p)\|$, $I_{\|w\|}:=\inf_{p\in[t\wedge a, t\vee a]}\|w(p)\|$,
$K_1:=[0,\tau_\0(w)]$
and 
$\bar S_{\|w\|}:=\sup_{p\in K_1}\|w(p)\|$.
There exists
$\delta_0\in(0,1)$
such that 
$[t-\delta_0,t+\delta_0]\subset (0,\tau_\0(w))$
and
$\forall x\in[t-\delta_0,t+\delta_0]$
we have
$\|w(x)-w(t)\|<\eps/2$.
Choose 
$\delta\in(0,1)$ 
smaller than
$\min\{\eps/2, I_{\|w\|}/2,\delta_0 I_{\|w\|}^4(4(2S_{\|w\|}+1)|a-t|(\bar S_{\|w\|}+1)^2)^{-1}\}$,
define 
$\eps_1:=\delta \wedge (\eps/2)$ 
and pick arbitrary 
$u$ in 
$N_{\eps_1}(K_1):=\{u\in\cE_d^{(a)}:\sup_{p\in K_1}\|w(p)-u(p)\|<\eps_1\}$. 
Then 
$|\varrho_{\|w\|}^a(t)-\varrho_{\|u\|}^a(t)| <\delta_0/(\bar S_{\|w\|}+1)^2$
and hence
$\varrho_{\|u\|}^a(t)\in K_2:=[\varrho_{\|w\|}^a(t)-1,\varrho_{\|w\|}^a(t)+1]$.
As
$s=\varrho_{\|w\|}^a(t)$,
$c_{\|w\|}^a(s)=c_{\|u\|}^a(\varrho_{\|u\|}^a(t))$
and
$\sup\{\|u(c_{\|u\|}^a(q))\|^2:q\in K_2\} \leq (\bar S_{\|w\|}+1)^2$,
we have
\begin{equation}
\label{eq:c_is_cont}
|c_{\|w\|}^a(s) - c_{\|u\|}^a(s)| 
 \leq 
|\varrho_{\|w\|}^a(t)-\varrho_{\|u\|}^a(t)| (\bar S_{\|w\|}+1)^2
<\delta_0.
\end{equation}
Hence,
$ \|g_s(w)-g_s(u)\|\leq 
\|w(c_{\|w\|}^a(s)) - w(c_{\|u\|}^a(s))\|+
\|w(c_{\|u\|}^a(s)) - u(c_{\|u\|}^a(s))\|\leq \eps/2 + \eps/2 =\eps$
and the inclusion
$N_{\eps_1}(K_1)\subset g_s^{-1}(B)$,
implying the continuity of $g_s$,
follows.
Since $\delta_0$ could be arbitrarily small,
the bound in~\eqref{eq:c_is_cont} 
also implies the continuity of $w\mapsto c_{\|w\|}^a(s)$.

The equality in part~\eqref{l:rho-2way-inf(v)} follows from~\eqref{l:rho-2way-inf(iv)} and \eqref{l:rho-2way-inf(iii)}.
What remains to be proved is that 
$(b,w,\theta)\mapsto \theta(\cdot+I_a^b(w))$ is continuous 
at an arbitrary point 
$(b_0,w_0,\theta_0)\in\Upsilon^{(a)}_1\times \cC(\R,\Sp{d-1})$.
Since for any $t\in\R$  we have 
$\|\theta_0(t+I_a^{b_0}(w_0))-\theta(t+I_a^b(w))\| 
\leq 
\|\theta_0(t+I_a^{b_0}(w_0))-\theta_0(t+I_a^{b}(w_0))\|+ 
\|\theta_0(t+I_a^{b}(w_0))-\theta_0(t+I_a^{b}(w))\|+ 
\|\theta_0(t+I_a^{b}(w))-\theta(t+I_a^{b}(w))\|$,
the uniform continuity of $\theta_0$
on any compact, together with the proximity of 
$(b_0,w_0)$
and
$(b,w)$,
yields  a uniform control on compacts of the first two terms. 
The third term is controlled by the proximity of $\theta_0$ and $\theta$
in $\cC(\R,\Sp{d-1})$.
The estimates, analogous to the ones in the proof of~\eqref{l:rho-2way-inf(iii)}, are omitted. 

Pick $(b_0,b_0',w_0)$ in the domain of the map in~\eqref{l:rho-2way-inf(vi)}
and let  $(b,b',w)$ be an arbitrary element close to it. 
If $b_0=b_0'$, then $\varrho^{b_0'}_w(b_0)=0$ and $w_0(b_0)>0$. Then
$b$ and $b'$ must be very close to $b_0$ (and hence each other) and
$w$ must be positive in the neighbourhood of $b_0$. Hence the continuity 
of the map in~\eqref{l:rho-2way-inf(vi)} follows. If $b_0<b_0'$, then 
$-\varrho^{b_0'}_w(b_0)=\int_{b_0}^{b_0'}\ud u/w_0^2(u)$ and 
$w_0$ is bounded away from zero on compact interval 
$K\supset [b_0,b_0']$. 
Moreover, we may assume that 
$b<b'$,
$K\supset [b,b']$ 
and that 
$w$ is uniformly close to $w_0$
on $K$.
Hence 
$|\varrho^{b_0'}_w(b_0)- \varrho^{b'}_w(b)|$
is arbitrarily small and the continuity follows. The remaining case $b_0'<b_0$
is analogous.
\end{proof}

\begin{remark}
The continuity of the functions $g_s$, $s\in\R$, in the proof of Lemma~\ref{l:rho-2way-inf}\eqref{l:rho-2way-inf(iii)}
above does not imply the continuity of the map $\Phi_a^{-1}$.
\end{remark}

Define
$\cE_d^+:=\cup_{a>0}\cE_d^{(a)}\subset \cE_d$ (for $d\in\N$)
with the topology induced by that of  $\cC_d$.

\begin{proposition}
\label{prop:beta_Psi}
The excursion measure of $r$ satisfies $\mu_r(\cE_1\setminus \cE^+_1)=0$. 
Let $\Pr_\Psi$ be the law on $\cC(\R,\Sp{d-1})$ 
from Prop.~\ref{lem:sphere-in-Rd-SDE}.
Then there exists a unique $\sigma$-finite atomless Borel measure 
$\nu$
on 
$\cE^+_d$,
satisfying 
$\nu(A\cap \cE^{(a)}_d)= \mu_r\otimes\Pr_\Psi[\Phi_a^{-1}(A\cap\cE^{(a)}_d)]$
for all $a>0$ and Borel measurable $A\subseteq \cE_d^+$.
\end{proposition}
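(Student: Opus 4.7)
The plan is to first establish the full-measure statement $\mu_r(\cE_1\setminus\cE_1^+)=0$, then to push $\mu_r\otimes\Pr_\Psi$ forward under the Borel isomorphism $\Phi_a$ of Lemma~\ref{l:rho-2way-inf}\eqref{l:rho-2way-inf(iii)} to define a finite Borel measure $\nu^{(a)}$ on $\cE_d^{(a)}$ for each $a>0$, verify that the family $\{\nu^{(a)}\}_{a>0}$ is consistent as $a$ varies, and glue these into a unique $\sigma$-finite atomless measure $\nu$ on the increasing union $\cE_d^+=\bigcup_{n\in\N}\cE_d^{(1/n)}$.

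For the full-measure part, a path $w\in\cE_1$ lies in $\cE_1^+$ iff for all sufficiently small $\eps>0$, $\int_0^\eps w(u)^{-2}\ud u=\infty$ and $\int_{\tau_0(w)-\eps}^{\tau_0(w)}w(u)^{-2}\ud u=\infty$. Since $V\in(1,2)$, the Pitman--Yor splitting-at-the-maximum representation (recalled in~\eqref{eq:split_at_max}) says that under $\mu_r$, conditionally on the maximum $M=m$, the excursion consists of two independent $\Bes^{4-V}(0)$ pieces $\beta,\beta'$ joined back to back and killed at their first hitting times of $m$. Since $4-V\in[2,3)$, Lemma~\ref{lem:angular_clock} with $x=m=0$ and $\delta=4-V$ gives $\int_0^t\beta_u^{-2}\ud u=\infty$ a.s.~for any $t>0$, and the same holds for the time-reversed leg $\beta'$; this forces both required divergences almost surely in the conditional law. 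Integrating over the $\sigma$-finite marginal of $M$ under $\mu_r$ yields $\mu_r(\cE_1\setminus\cE_1^+)=0$.

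For the construction, set $\nu^{(a)}:=(\Phi_a)_*(\mu_r\otimes\Pr_\Psi)$ on $\cE_d^{(a)}$; this is a finite Borel measure since $\mu_r(\cE_1^{(a)})=\mu_r(\{\tau_0>a\})<\infty$ and $\Pr_\Psi$ is a probability. For $0<a<b$, the inclusion $\cE_d^{(b)}\subset\cE_d^{(a)}$ follows from the fact that $w$ is bounded away from $0$ on the compact interval $[a,b]\subset(0,\tau_0(w))$ together with $\int_b^{\tau_0}w^{-2}\leq\int_a^{\tau_0}w^{-2}$. By Lemma~\ref{l:rho-2way-inf}\eqref{l:rho-2way-inf(v)}, on $\cE_d^{(b)}$ we have $\Phi_b^{-1}(w)=(\|w\|,\theta(\cdot+I_a^b(\|w\|)))$ whenever $\Phi_a^{-1}(w)=(\|w\|,\theta)$: the two descriptions differ only by a $\sigma(\|w\|)$-measurable temporal shift of the angular mark. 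Shift-invariance of $\Pr_\Psi$ (a direct consequence of Prop.~\ref{lem:sphere-in-Rd-SDE} and the explicit finite-dimensional distributions in Remark~\ref{rem:stat-meas}(c), based on stationarity of $\mu$ under $P_t$) implies that, after conditioning on $\|w\|$ and applying Fubini, the pushforward of $\mu_r\otimes\Pr_\Psi$ under $\Phi_a$ restricted to $\cE_d^{(b)}$ equals that under $\Phi_b$; hence $\nu^{(a)}|_{\cE_d^{(b)}}=\nu^{(b)}$.

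This consistency permits defining $\nu(A):=\lim_{n\to\infty}\nu^{(1/n)}(A\cap\cE_d^{(1/n)})$ for any Borel $A\subseteq\cE_d^+$; the resulting Borel measure satisfies the defining identity of the proposition and is $\sigma$-finite via the covering by $\cE_d^{(1/n)}$. Uniqueness is immediate from the defining identity. Atomlessness follows from that of $\mu_r\otimes\Pr_\Psi$ (the excursion measure $\mu_r$ disintegrates over the atomless law of $\tau_0$ into atomless bridge laws, and $\Pr_\Psi$ is the law of a non-degenerate continuous diffusion, hence atomless) and the bijectivity of each $\Phi_a$. The main obstacle is the consistency step: applying shift-invariance of $\Pr_\Psi$ at the data-dependent shift $I_a^b(\|w\|)$ must be carried out by first conditioning on the $\mu_r$-factor and then invoking Fubini, with care over measurability of the shift map; once this is in hand, extension, $\sigma$-finiteness, and atomlessness are routine.
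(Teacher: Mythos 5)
Your proposal is correct and follows essentially the same approach as the paper: the full-measure statement $\mu_r(\cE_1\setminus\cE_1^+)=0$ is established via the Pitman--Yor splitting-at-the-maximum decomposition together with Lemma~\ref{lem:angular_clock} applied to the $\Bes^{4-V}(0)$ legs (note $4-V\in(2,3)$, not $[2,3)$, though this is immaterial), and $\nu$ is built as a consistent family of pushforwards $(\Phi_a)_*(\mu_r\otimes\Pr_\Psi)$ whose compatibility on overlaps reduces, after conditioning on the radial excursion and applying Fubini, to the stationarity of $\Pr_\Psi$ under the $\|w\|$-measurable time shift $I_a^b(\|w\|)$. The only material difference is cosmetic: the paper routes through Kingman's Marking and Mapping theorems, which also deliver atomlessness for free, whereas you argue atomlessness directly by disintegrating $\mu_r$ over the lifetime (or, equivalently, the maximum) and noting each conditional bridge law is atomless; both are sound, and atomlessness of $\nu$ then follows since each $\Phi_a$ is a bijection.
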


\begin{remark}
By Prop.~\ref{prop:beta_Psi}, 
$e^r$
is a PPP on 
$\cE_1^+\cup\{\delta_1\}$
and
$\nu$ 
induces a PPP on
$\cE_d^+\cup\{\delta_d\}$.
\end{remark}

\begin{proof}
In order to establish
$\mu_r(\cE_1\setminus \cE^+_1)=0$, note that 
by~\cite{py}, 
the excursion measure $\mu_r$ has the following representation:
any excursion 
$e^r_\la$ 
has a finite maximum  and this maximum is attained at a unique time.  
Furthermore, conditional on the
maximum being at some level $M>0$, the excursion has the same law as the path
formed by taking two independent $\Bes^{4-\delta}(0)$ processes, both run up
until their first hitting time of the level $M$, and placing them end-to-end.
Since $2<4-\delta<3$, by Lemma~\ref{lem:angular_clock}, any excursion in the support of 
$\mu_r$ is in $\cE_1^+$. 

Let $\Psi = (\Psi^\la,\la \geq 0)$ be a family of independent stationary diffusions $\Psi^\la=(\Psi^\la_t,t\in \R)$
with the law $\Pr_\Psi$ from Prop.~\ref{lem:sphere-in-Rd-SDE}.  
Assume that $r$ is independent of $\Psi$.
By the Marking and Mapping theorems of~\cite{kingman_PPP}
(the latter applies since $\Phi_a$ is measurable and bijective by Lemma~\ref{l:rho-2way-inf}\eqref{l:rho-2way-inf(iii)}),
the point process $e^{r,\Psi,a} = (e^{r,\Psi,a}_\la, \la \geq 0)$, defined by
$e^{r,\Psi,a}_\la := \delta_d$,
if $\tau_\la^r\leq a$, and 
$e^{r,\Psi,a}_\la := \Phi_a(e^r_\la,\Psi^\la)$,
if $\tau_\la^r> a$, 
is a PPP
in $\cE_d^{(a)} \cup \{\delta_d\}$ 
with excursion measure 
$\mu_r\otimes \Pr_\Psi[\Phi_a^{-1}(\cdot)]$
on $\cE_d^{(a)}$ of finite total mass 
$\mu_r\otimes \Pr_\Psi[\Phi_a^{-1}(\cE_d^{(a)})]=\mu_r(\cE_1^{(a)})<\infty$.
Moreover, 
by~\cite[p.~13]{kingman_PPP},
$\mu_r\otimes \Pr_\Psi[\Phi_a^{-1}(\cdot)]$
is atomless. 
Hence any measure
$\nu$ 
satisfying the identity in the proposition for all $a\in(0,\infty)$
is also atomless, $\sigma$-finite and unique. 
The next claim implies the proposition.

\noindent \textbf{Claim.} 
$\mu_r\otimes \Pr_\Psi[\Phi_a^{-1}(A)]=\mu_r\otimes \Pr_\Psi[\Phi_b^{-1}(A)]$
for any $0<b<a$ and measurable $A\subseteq \cE_d^{(a)}$.

Consider $Q:\cE_1^{(a)}\times\cC(\R,\Sp{d-1})\to\cE_1^{(a)}\times\cC(\R,\Sp{d-1})$,
$Q(w,\theta):=Q_b(a,w,\theta)$, where $Q_b$ is defined in Lemma~\ref{l:rho-2way-inf}\eqref{l:rho-2way-inf(v)}.
Hence
$Q=\Phi_a^{-1}\circ \Phi_b|_{\cE_1^{(a)}\times\cC(\R,\Sp{d-1})}$ 
is a Borel isomorphism. 
It suffices to show that 
$Q$ is measure preserving, i.e. 
$\mu_r\otimes \Pr_\Psi[B]=\mu_r\otimes \Pr_\Psi[Q(B)]$ for any 
measurable $B\subseteq\cE_1^{(a)}\times\cC(\R,\Sp{d-1})$.
The measure
$(\mu_r/\mu_r(\cE_1^{(b)}))\otimes \Pr_\Psi$,
restricted to $\cE_1^{(b)}\times\cC(\R,\Sp{d-1})$,
is the probability law of the random element $(X,Y):=(e^r_{\la_b},\Psi^{\la_b})$, 
where
$\la_b$ is the time of the first jump 
of size greater than $b$
of the subordinator $L^{-1}$. 
In particular, we need to show 
$\Pr[(X,Y)\in B] =\Pr[Q^{-1}(X,Y)\in B]$. 
Since 
$Q^{-1}(w,\theta)=(w,\theta(\cdot-I_b^a(w)))$,
$I_b^a(w)$ depends only on $w$ by Lemma~\ref{l:rho-2way-inf}\eqref{l:rho-2way-inf(iv)}
and, by Prop.~\ref{lem:sphere-in-Rd-SDE},
the process $Y$  is stationary,
it holds that 
$\Pr[(X,Y)\in B|\sigma(X)]= \Pr[Q^{-1}(X,Y)\in B|\sigma(X)]$,
implying the claim. 
\end{proof}

\subsubsection{Proof of Theorem~\ref{t:well-posedness}}
\label{subsub:Proof_of_weak_uniquenes_rec}
Let $(\x,W)$ be a solution of SDE~\eqref{eqn:x-SDE-sym} with $\x_0=\0$, adapted to $(\cF_t, t \geq 0)$.
Since we are only interested in the law of the solution, we may assume that we are in the canonical setting,
i.e. the probability space is $\Omega=\cC(\RP,\R^n)$ (for some $n\in\N$) and the filtration satisfies the usual
conditions with respect to the probability measure $\Pr$ on $\Omega$.
Define the point process
$e^\x=(e^\x_\ell,\ell\geq0)$
of excursions of  $\x$ away from $\0$ by
$e^\x_\ell:=\delta_d$ if $\ell\in\RP\setminus\La^r$, and 	
$e^\x_\ell:\RP\to\R^d$, where 
\begin{equation}
\label{eq:Def_PPP_e_x}
e^\x_\ell(u):= \begin{cases} \x_{L_{\ell-}^{-1} + u}  & u \in (0,\tau^r_\ell), \\
\0 & u \in \RP\setminus(0,\tau_\ell^r),
\end{cases}
\end{equation}
if $\ell \in \La^r$ (the notation introduced earlier in Section~\ref{sec:excursions} will be used throughout Section~\ref{subsub:Proof_of_weak_uniquenes_rec}).
The point process
$\|e^\x\| = (\|e^\x_\ell\|,\ell \geq 0)$
with excursions 
$\|e^\x_\ell(u)\|= r_{L^{-1}_{\ell-}+u} \1{u \leq \tau^r_\ell}$, $u \in \RP$,
for any $\ell\in\La^r$, 
is clearly equal to the PPP
$e^r$
defined above. 
Since $\x_t=\0$ if and only if $r_t=0$, $e^\x$ takes values in 
$\cE_d^+ \cup \{\delta_d\}$.
The key step in the proof of Theorem~\ref{t:well-posedness} 
is to show that $e^\x$ is indeed a PPP with excursion measure from Proposition~\ref{prop:beta_Psi}. 

For the rest of the section, 
fix an arbitrary 
$(\cF_t)$-stopping time $\tau$ with $\Pr[\tau<\infty]=1$.
Then $L^{-1}_{L_\tau}$ is an $(\cF_t)$-stopping time.
Define 
$\tilde r=(\tilde r_u,u\geq0)$
by
$\tilde r_u:=r_{L^{-1}_{L_\tau}+u}$. 
By the strong Markov property of $r$, 
the process $\tilde r$ is strong Markov with respect to the filtration
$(\cF_{L^{-1}_{L_\tau}+u},u\geq0)$, has the same law as $r$ and is independent of
$\cF_{L^{-1}_{L_\tau}}$.
The (Markov) local time $(\tilde L_u,u\geq0)$ 
of 
$\tilde r$ at $0$ satisfies
$\tilde L_u=L_{L^{-1}_{L_\tau}+u} - L_\tau$.
The inverse local time $\tilde L^{-1}=(\tilde L^{-1}_\mu,\mu\geq0)$ is a subordinator
satisfying
$\tilde L^{-1}_\mu=L^{-1}_{L_\tau+\mu} -L^{-1}_{L_\tau}$,
independent of $\cF_{L^{-1}_{L_\tau}}$.
Pick $a>0$ and define recursively the stopping times: 
$\mu_a^{0}:=0$ and $\mu_a^{n}:=\inf\{t>\mu_a^{n-1}:\tau^r_{t+{L_\tau}}>a\}$ for 
any $n\in\N$. Here $\tau^r_{t+L_\tau}=\tau^{\tilde r}_{t}:=\tilde L^{-1}_t-\tilde L^{-1}_{t-}$ is the
jump of the subordinator $\tilde L^{-1}$ and $\mu_a^{n}$ is the epoch of local time corresponding to 
the $n$-th excursion of $\tilde r$, lasting longer than $a$. 
For any $u\in\RP$, the equality
$e^r_{L_{u+L^{-1}_{L_\tau}}} = e^{\tilde r}_{\tilde L_u}$
holds,
where 
$(e^{\tilde r}_\mu,\mu\geq0)$ 
is given by 
$e^{\tilde r}_\mu:=\tilde r_{\tilde L^{-1}_{\mu-}+u} \1{u \leq \tau^{\tilde r}_\mu}$, $u \in \RP$.
Finally, for any $b\in(0,a)$, let 
$N_b(t):=\sup\{m\in\N: \tilde L^{-1}_{\mu_b^m-}<t \}$ (with convention $\sup\emptyset :=0$)
be the number of excursions of $\tilde r$ started before time $t\in\RP$ with length at least $b$.
Note that all the random elements defined in this paragraph 
depend on the choice of the stopping time $\tau$.

\begin{theorem}
\label{thm:master_identity_excursion}
Suppose that \eqref{ass:cov_form}, \eqref{ass:sigma_smooth} and \eqref{ass:radial-evec}
hold, with $U=1$ and $V \in (1,2)$.  
For any $a>0$, $n\in\N$ 
and finite $(\cF_t)$-stopping time $\tau$,
the regular conditional distribution 
of the random element $e^\x_{L_\tau+\mu^n_a}$ (defined in~\eqref{eq:Def_PPP_e_x} with $\ell=L_\tau+\mu^n_a$) in $\cE^{(a)}_d$,
given $\cF_{L^{-1}_{L_\tau}}$,
takes the form
$$\Pr[e^\x_{L_\tau+\mu^n_a}\in \cdot|\cF_{L^{-1}_{L_\tau}}] = \mu_r\otimes\Pr_\Psi[\Phi_a^{-1}(\cdot)]/\mu_r(\cE^{(a)}_1)\qquad\text{a.s.}$$
Here the law $\Pr_\Psi$ on $\cC(\R,\Sp{d-1})$ is defined in Prop.~\ref{lem:sphere-in-Rd-SDE} and $\mu_r$
is the excursion measure of the PPP $e^r$.
In particular,
the excursion 
$e^\x_{L_\tau+\mu^n_a}$
is independent of 
$\cF_{L^{-1}_{L_\tau}}$
and its law on $\cE^{(a)}_d$,
$\mu_r\otimes\Pr_\Psi[\Phi_a^{-1}(\cdot)]/\mu_r(\cE^{(a)}_1)$,
depends neither on $n\in\N$ nor on the stopping time $\tau$. 
\end{theorem}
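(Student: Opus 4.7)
The plan is to write $e^\x_{L_\tau+\mu^n_a}=\Phi_a(\|\tilde e\|,\theta)$, where $\tilde e:=e^\x_{L_\tau+\mu^n_a}$, $T:=L^{-1}_{L_\tau}+\tilde L^{-1}_{\mu^n_a-}$ is the start of the excursion and $\theta(u):=\hat\x_{T+c^a_{\|\tilde e\|}(u)}$ for $u\in\R$, and to establish that $(\|\tilde e\|,\theta)$ is independent of $\cF_{L^{-1}_{L_\tau}}$ with product law $\big(\mu_r(\cdot\cap\cE^{(a)}_1)/\mu_r(\cE^{(a)}_1)\big)\otimes\Pr_\Psi$ on $\cE^{(a)}_1\times\cC(\R,\Sp{d-1})$. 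By Lemma~\ref{l:rho-2way-inf}\eqref{l:rho-2way-inf(iii)}, $\Phi_a^{-1}(\tilde e)=(\|\tilde e\|,\theta)$, so pushing this product law through $\Phi_a$ yields the claim of the theorem. Since the strong Markov property of $\x$ is not yet available, the argument must rely only on the strong Markov property of the radial process $r$ and on Proposition~\ref{prop:transient-away-from-0}, which is established without any Markov property for $\x$.

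For the radial marginal I would appeal to the strong Markov property of the Bessel process $r$ at the $(\cF_t)$-stopping time $L^{-1}_{L_\tau}$: the shifted process $\tilde r$ is independent of $\cF_{L^{-1}_{L_\tau}}$ and has the same law as $r$, so by It\^o excursion theory applied to $\tilde r$, its excursion at the epoch $\mu^n_a$ is distributed according to $\mu_r(\cdot\cap\cE^{(a)}_1)/\mu_r(\cE^{(a)}_1)$ and independent of $\cF_{L^{-1}_{L_\tau}}$.

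For the angular marginal, after enlarging the probability space to carry an auxiliary independent Brownian motion, I would apply Proposition~\ref{prop:transient-away-from-0} to the shifted solution of SDE~\eqref{eqn:x-SDE-sym} started at time $T+s$, for each fixed $s\in(0,\tau^{\tilde r}_{\mu^n_a})$ (so that $\x_{T+s}\neq\0$). This produces a $d$-dimensional Brownian motion $B^{(s)}$, independent of the one-dimensional Brownian motion $Z$ in~\eqref{eqn:Z-def-rad}, such that the time-changed angular process is a strong solution of SDE~\eqref{eqn:sphere-SDE-two-drivers}; re-anchoring the time-change from $s$ to $a$ via Lemma~\ref{l:rho-2way-inf}\eqref{l:rho-2way-inf(iv)} shows that $\theta$ restricted to $[\varrho^a_{\|\tilde e\|}(s),\infty)$ solves SDE~\eqref{eqn:sphere-SDE-two-drivers} starting from $\hat\x_{T+s}$ at time $\varrho^a_{\|\tilde e\|}(s)$. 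The main obstacle is to promote this family of one-sided solutions (indexed by $s$) to a single two-sided process on $\R$ with law $\Pr_\Psi$. By the definition of $\cE^{(a)}_1$ one has $\varrho^a_{\|\tilde e\|}(s)\to -\infty$ as $s\downarrow 0$ and $\to +\infty$ as $s\uparrow\tau^{\tilde r}_{\mu^n_a}$, almost surely on the radial excursion. Combining this divergence (Lemma~\ref{lem:angular_clock}) with the uniform ergodicity~\eqref{eqn:unif-ergod} in the spirit of Lemma~\ref{l:independent_angle_transient} shows that, conditional on $\cF_{L^{-1}_{L_\tau}}$ and $\|\tilde e\|$, the one-point law of $\theta(u)$ is exactly $\mu$ for every $u\in\R$, irrespective of the initial angular value $\hat\x_{T+s}$. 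The strong Markov property of the sphere diffusion (Lemma~\ref{lem:Rd-not-0-SDE}\eqref{Rd-not-0-SDE:(c)}) and the uniqueness of its stationary measure (Proposition~\ref{lem:sphere-in-Rd-SDE}) then identify the finite-dimensional distributions of $\theta$ with those of $\Pr_\Psi$ in Remark~\ref{rem:stat-meas}(c), and show that $\theta$ is independent of $\cF_{L^{-1}_{L_\tau}}$ and of $\|\tilde e\|$.

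Putting the two marginals together, $(\|\tilde e\|,\theta)$ is independent of $\cF_{L^{-1}_{L_\tau}}$ with the required product law; applying $\Phi_a$ delivers the asserted conditional distribution of $e^\x_{L_\tau+\mu^n_a}$. Independence from $n$ and from the stopping time $\tau$ is then automatic, since neither marginal depends on these parameters. The hardest step is the rapid-spinning limit $s\downarrow 0$: it requires combining the divergence of the clock from Lemma~\ref{lem:angular_clock} with the uniform-in-initial-condition total-variation bound~\eqref{eqn:unif-ergod}, while ensuring that the limiting procedure is carried out consistently on the regular conditional distribution of $\theta$ given the radial data and $\cF_{L^{-1}_{L_\tau}}$.
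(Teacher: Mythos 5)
Your high-level plan coincides with the paper's: write $e^\x_{L_\tau+\mu^n_a}=\Phi_a(e^r_{L_\tau+\mu^n_a},\theta)$, use the strong Markov property of the Bessel process $r$ for the radial marginal, and use rapid spinning plus the uniform ergodicity~\eqref{eqn:unif-ergod} to show that the angular component $\theta$ has law $\Pr_\Psi$ conditionally on $\cF_{L^{-1}_{L_\tau}}\vee\cF^r_\infty$. This is exactly the structure of the paper's Lemma~\ref{prop:excursion-angular} followed by a tower-property argument.

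The gap is in the step where you ``apply Proposition~\ref{prop:transient-away-from-0} to the shifted solution started at time $T+s$''. Two things go wrong. First, Proposition~\ref{prop:transient-away-from-0} (and Lemma~\ref{l:transient-time-change} on which it relies) is proved under the standing assumption $V\geq 2$, i.e.\ $0$ polar for $r$; it is not available in the recurrent case. Its proof applies Knight's theorem to a genuinely global time change whose inverse is defined on all of $\RP$; in the recurrent regime the relevant local martingale $M$ must be stopped at the end of the current excursion and one has $[M]_\infty<\infty$ a.s., so the construction of the independent spherical driver must be redone with the stopped version of Knight's theorem. This is precisely what the paper's Lemma~\ref{prop:excursion-angular} does, with the stopped martingale $M_u:=Z_{\eta_a(u)}-Z_{\eta_a(0)}$, $[M]_\infty=L^{-1}_{L_\tau+\mu^n_a}-\eta_a(0)$. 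Second, and more fundamentally: the start of the excursion $T=L^{-1}_{L_\tau}+\tilde L^{-1}_{\mu^n_a-}$ is \emph{not} an $(\cF_t)$-stopping time (at time $T$ one cannot tell whether the excursion will exceed $a$), so $T+s$ is not a stopping time for $s<a$. Your limiting procedure $s\downarrow 0$ therefore leaves the realm where the Markov machinery applies. The paper avoids this by anchoring once and for all at the genuine $(\cF_t)$-stopping time $\eta_a(0)=T+a$, so that $\theta^{a,n}$ is a one-sided solution on $\RP$ started from $\hat\x_{\eta_a(0)}$, and then obtaining the rapid-spinning limit \emph{not} by shrinking the anchor $s$, but by \emph{nesting} excursion scales: on $\{N_b(\tilde L^{-1}_{\mu^n_a-})=k-1\}$, the identity $\theta^{a,n}_t=\theta^{b,k}_{t+I_b^a(e^r_{L_\tau+\mu^n_a})}$ holds (Lemma~\ref{l:rho-2way-inf}\eqref{l:rho-2way-inf(iv)}--\eqref{l:rho-2way-inf(iii)}), and letting $b\downarrow 0$ sends the shift $I_b^a(e^r_{L_\tau+\mu^n_a})\to\infty$ while each $\theta^{b,k}$ is itself anchored at a bona fide stopping time $\eta_b(0)$. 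That nesting device is the missing idea; without it, or some equivalent substitute, your passage to the limit $s\downarrow 0$ is not justified.
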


\begin{remark}
Theorem~\ref{thm:master_identity_excursion} would follow trivially if we knew that
$\x$ was strong Markov. However, this cannot be assumed \textit{a priori}.
Once the uniqueness in law of SDE~\eqref{eqn:x-SDE-sym} has been established,
the strong Markov property of $\x$ follows. 
\end{remark}

As
$e^\x_{L_\tau+\mu_a^n}\in\cE_d^{(a)}$, 
we can define the process
$\theta^{a,n}$ 
with paths in $\cC(\R,\Sp{d-1})$
by
$(e^r_{L_\tau+\mu_a^n},\theta^{a,n}):=\Phi_a^{-1}(e^\x_{L_\tau+\mu_a^n})$.
The key step in the proof of Theorem~\ref{thm:master_identity_excursion}
is given by the following lemma.

\begin{lemma}
\label{prop:excursion-angular}
Under assumptions (and notation) of 
Theorem~\ref{thm:master_identity_excursion},
the regular conditional distribution of $\theta^{a,n}$ takes the form
$\Pr[\theta^{a,n}\in \cdot|\cF_{L^{-1}_{L_\tau}}\vee \cF^r_\infty]=\Pr_\Psi[\cdot]$ a.s.
(recall $\cF^r_\infty=\sigma(r_t,t\geq0)$).
\end{lemma}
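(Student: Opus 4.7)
The plan is to identify the conditional law of $\theta^{a,n}$ given $\cG:=\cF_{L^{-1}_{L_\tau}}\vee\cF^r_\infty$ via its finite-dimensional distributions. By Remark~\ref{rem:stat-meas}(c), $\Pr_\Psi$ is determined by its finite-dimensional marginals
$\int_{\fA_1}\mu(\ud\bx_1)\int_{\fA_2}P_{t_2-t_1}(\bx_1,\ud\bx_2)\cdots\int_{\fA_k}P_{t_k-t_{k-1}}(\bx_{k-1},\ud\bx_k)$, so it suffices to match the conditional joint law of $(\theta^{a,n}_{t_1},\ldots,\theta^{a,n}_{t_k})$ given $\cG$ to this expression for every $t_1<\cdots<t_k$ in $\R$ and Borel $\fA_i\subseteq\Sp{d-1}$. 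Set $\ell:=L_\tau+\mu^n_a$, $T_-:=L^{-1}_{\ell-}$ and $T_\ell:=L^{-1}_\ell$; both are $(\cF_t)$-stopping times with $\tau^r_\ell=T_\ell-T_->a$, and $\theta^{a,n}_t=\hat\x_{T_-+c^a_{e^r_\ell}(t)}$.

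Since $r$ is strictly positive on $(T_-,T_\ell)$, the It\^o computations~\eqref{eqn:theta-SDE}--\eqref{eqn:tildetheta-SDE} carry over to this open interval with pivot $s=T_-+a$. Combined with Knight's theorem exactly as in the proof of Proposition~\ref{prop:transient-away-from-0}, this yields that the forward piece $(\theta^{a,n}_t,t\geq0)$ is a strong solution of SDE~\eqref{eqn:sphere-SDE-two-drivers} started at $\hat\x_{T_-+a}$ and driven by a $d$-dimensional BM $B^+$ built from the $W$-increments after $T_-+a$ together with an auxiliary independent BM. The argument of Proposition~\ref{prop:transient-away-from-0} further shows $B^+$ is independent of $Z$ and of $\cF_{T_-+a}\supseteq\cF_{L^{-1}_{L_\tau}}$, giving $B^+\perp\cG$ since $\cF^r_\infty=\sigma(Z_t,t\geq0)$.

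To identify $\hat\x_{T_-+a}$ as having law $\mu$ conditional on $\cG$, adapt the rapid-spinning argument of Lemma~\ref{l:independent_angle_transient}. For $\eps\in(0,a)$, applying the Markov property of the time-changed angular process pivoted at $T_-+\eps$ together with the reasoning of~\eqref{eqn:earlier_conditioning} gives
\begin{equation*}
\Pr[\hat\x_{T_-+a}\in\fA\mid\cG]=\Exp\bigl[P_{\rho_\eps(a)}(\hat\x_{T_-+\eps},\fA)\bigm|\cG\bigr]\quad\text{a.s.,}\qquad \rho_\eps(a):=\int_\eps^a e^r_\ell(u)^{-2}\,\ud u.
\end{equation*}
By the Pitman--Yor decomposition of $e^r_\ell$ into two back-to-back Bessel-$(4-V)$ processes started at $0$ (used in the proof of Proposition~\ref{prop:beta_Psi}), and since $4-V\in(2,3)$, Lemma~\ref{lem:angular_clock} yields $\rho_\eps(a)\to\infty$ a.s.~as $\eps\downarrow0$. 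As $\rho_\eps(a)$ is $\cG$-measurable, the uniform ergodicity~\eqref{eqn:unif-ergod} forces $\Pr[\hat\x_{T_-+a}\in\fA\mid\cG]=\mu(\fA)$, so together with the preceding paragraph the conditional law of $(\theta^{a,n}_t,t\geq0)$ given $\cG$ is the stationary spherical diffusion started at $\mu$.

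For $t<0$, invoke Lemma~\ref{l:rho-2way-inf}\eqref{l:rho-2way-inf(v)} with roles reversed: writing $\tilde\theta^{a'}$ for the angular component of $\Phi_{a'}^{-1}(e^\x_\ell)$ for $a'\in(0,a)$, we have $\theta^{a,n}_t=\tilde\theta^{a'}_{t+I_{a'}^a(e^r_\ell)}$, where $I_{a'}^a(e^r_\ell):=\int_{a'}^a e^r_\ell(u)^{-2}\,\ud u$ is $\cG$-measurable, strictly positive, and tends to $+\infty$ as $a'\downarrow0$ (again by Lemma~\ref{lem:angular_clock}). For fixed $t_1<\cdots<t_k$ and $\cG$-a.e.\ $\omega$, pick $a'=a'(\omega)$ small enough that all $t_i+I_{a'}^a(\omega)>0$; applying the two previous paragraphs with $a'$ in place of $a$ identifies the conditional joint law of $(\tilde\theta^{a'}_{t_i+I_{a'}^a})_{i=1}^k$ given $\cG$ as the stationary spherical diffusion at these (positive, $\cG$-measurable) times, which by stationarity depends only on the increments $t_j-t_i$ and therefore matches the required finite-dimensional distribution. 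The main obstacle is the rapid-spinning divergence of $\rho_\eps(a)$ and $I_{a'}^a$ for the \emph{conditioned} excursion process $e^r_\ell$ rather than an unconditioned Bessel process, resolved by the Pitman--Yor splitting-at-the-maximum decomposition which reduces the question to Lemma~\ref{lem:angular_clock} applied to unconditioned Bessel-$(4-V)$ processes of dimension greater than $2$. A secondary delicate point is that $B^+$ must be independent of the whole of $\cG$ and not merely of $Z$, which follows from combining the strong Markov property of $X$ at the $(\cF_t)$-stopping time $L^{-1}_{L_\tau}\leq T_-+a$ with the Knight's-theorem orthogonality of $B^+$ and the BM $Z$ driving the radial part.
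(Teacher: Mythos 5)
Your high-level strategy matches the paper's: (i) use the stopping-time anchor $\eta_a(0)=T_-+a$ to show the forward piece $(\theta^{a,n}_t)_{t\geq0}$ solves SDE~\eqref{eqn:sphere-SDE-two-drivers} driven by a Brownian motion $B$ built via Knight's theorem and independent of $\cG:=\cF_{L^{-1}_{L_\tau}}\vee\cF^r_\infty$, and (ii) exploit rapid spinning of the conditioned excursion together with the uniform ergodicity~\eqref{eqn:unif-ergod} to pin down the marginals. Your reduction to Lemma~\ref{lem:angular_clock} via the Pitman--Yor splitting-at-the-maximum decomposition is also exactly how the paper justifies rapid spinning inside an excursion (it is how $\mu_r(\cE_1\setminus\cE_1^+)=0$ is shown in Prop.~\ref{prop:beta_Psi}, which in turn feeds Lemma~\ref{l:rho-2way-inf}\eqref{l:rho-2way-inf(iv)}). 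So far so good.

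The genuine gap is in your pivot step. You write ``applying the Markov property of the time-changed angular process pivoted at $T_-+\eps$'' for $\eps\in(0,a)$, and likewise anchor at $T_-+a'$ with $a'=a'(\omega)<a$ for negative times. But $T_-=L^{-1}_{(L_\tau+\mu^n_a)-}$ is the start of the $n$-th excursion of $\tilde r$ of length exceeding $a$, so $T_-+\eps$ is \emph{not} an $(\cF_t)$-stopping time for $\eps<a$: to know at time $T_-+\eps$ that one is in the right excursion, one has to look ahead by $a-\eps$ to verify its length exceeds $a$. Consequently the Knight's-theorem construction of a driving Brownian motion independent of $\cG$, and hence the Markov/strong-Markov structure you invoke, is not available at the pivot $T_-+\eps$ as stated. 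The paper avoids precisely this by never pivoting at a non-stopping time: it works with the processes $\theta^{b,k}$, anchored at $\eta_b(0)=L^{-1}_{(L_\tau+\mu^k_b)-}+b$ (first moment the $k$-th $\tilde r$-excursion reaches age $b$), which \emph{is} a stopping time, and then decomposes over the $\cF^r_\infty$-measurable events $\{N_b(\tilde L^{-1}_{\mu^n_a-})=k-1\}$ on which $\theta^{a,n}_t=\theta^{b,k}_{t+I_b^a(e^r_\ell)}$. Your random $a'(\omega)$ in the last paragraph has the same difficulty and, in addition, is never reduced to a limiting argument over deterministic anchors. Both issues are repairable by exactly the $N_b$-decomposition, but it is the crux of the proof and cannot be elided.
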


\begin{proof}
Since $\cC(\R,\Sp{d-1})$ is  Polish, the regular conditional distribution  $\Pr[\theta^{a,n}\in \cdot|\cF_{L^{-1}_{L_\tau}}\vee \cF^r_\infty]$ exists. 
Moreover, as every trajectory of 
$\theta^{a,n}$ is continuous,
it is sufficient to prove that 
$\Pr$-a.s. the finite-dimensional distributions at rational times coincide with those of
$\Pr_\Psi$.
Since the set of all finite subsets of the rationals is countable
and the Borel $\sigma$-algebra on $\Sp{d-1}$ is generated by a countable family of open balls, 
by a diagonalisatoin argument
it suffices to prove that the finite-dimensional distributions 
at a given set of (rational) times (evaluated on the products of 
the finite intersections of generating sets) coincide $\Pr$-a.s. 
We establish this in two steps.
First, we show that the process
$(\theta^{a,n}_t,t\geq0)$
solves SDE~\eqref{eqn:sphere-SDE-two-drivers}, started at $\theta^{a,n}_0=\hat \x_{a+L^{-1}_{(L_\tau+\mu_a^n)-}}$
and driven by a Brownian motion $B$ independent of $\cF^r_\infty$.
Second, we use this to prove the equality of the finite-dimensional marginals of the two measures.

Since, 
for $s\in\RP$, 
the map 
$w\mapsto c^a_w(s)$ 
on $\cE^{(a)}_d$ is continuous (and hence measurable) 
by Lemma~\ref{l:rho-2way-inf}\eqref{l:rho-2way-inf(iii)},
we may define a non-negative random variable 
$\eta_a(s):=c^a_{e^r_{L_\tau+\mu_a^n}}(s)+L^{-1}_{(L_\tau+\mu_a^n)-}$.
Since 
$\eta_a(0)-L^{-1}_{L_\tau}$
is the first time an excursion of $\tilde r$
lasts longer than $a$,
after $n-1$ such excursions 
have occurred,
$\eta_a(0)$
is a finite $(\cF_t)$-stopping time.  
The definition of $c^a_w$
implies that 
$\eta_a(s)=\eta_a(0)+\inf\{t\in(0,\infty):\int_{\eta_a(0)}^{\eta_a(0)+t}r_u^{-2}\ud u \geq s\}$
is also an $(\cF_t)$-stopping time for any $s>0$.  In fact for 
$0\leq s\leq u$ 
it holds that 
$\eta_a(s)\leq \eta_a(u)<L^{-1}_{L_\tau+\mu_a^n}$.
Put differently, $(\eta_a(s),s\geq0)$ 
is a stochastic time-change and
we can define the filtration 
$(\cG_s, s \geq 0)$ by 
$\cG_s:=\cF_{\eta_a(s)}$.

Since 
$r^{-1}_{\eta_a(0) +\cdot}$  is continuous and $(\cF_{\eta_a(0)+t})$-adapted
on the stochastic interval $(0, L^{-1}_{L_\tau+\mu_a^n}-\eta_a(0))$, 
we can define continuous local martingales 
$A=(A_s; s\geq 0)$
and
$\zeta=(\zeta_s; s\geq 0)$
by
\begin{equation*}
A_s := \int_{\eta_a(0)}^{\eta_a(s)}   r_u^{-1} \ud W_u \qquad \text{and}\qquad
\zeta_s := \int_{\eta_a(0)}^{\eta_a(s)}   r_u^{-1} \ud Z_u,
\end{equation*}
where $Z$ is given in~\eqref{eqn:Z-def-rad}.
Both $A$ and $\zeta$ are adapted to 
$(\cG_s, s \geq 0)$.
As in the proof of Proposition~\ref{prop:transient-away-from-0},
it follows that
$A$ and $\zeta$
are 
$(\cG_s)$-Brownian motions.
Apply~\cite[Prop.~V.1.4]{ry}
and~\eqref{eqn:Z-def-rad}
to 
$\zeta$
to obtain
$\zeta_s = \int_0^s(\hat \x_{\eta_a(u)})^\tra r_{\eta_a(u)}^{-1}\ud W_{\eta_a(u)}$. 
Similarly we get
$A_s = \int_0^s   r_{\eta_a(u)}^{-1}\ud W_{\eta_a(u)}$.
Since by definition
$\hat \x_{\eta_a(u)}=\theta^{a,n}_u$ for all $u\in\RP$,
we find
$\zeta_s = \int_0^s(\theta^{a,n}_u)^\tra \ud A_u$ for all $s\geq0$.
Without loss of generality there exists 
a one-dimensional $(\cF_t)$-Brownian motion,
$\bar \gamma=(\bar \gamma_t, t \geq 0)$, independent of $(\x,W)$. 
Define a
$(\cG_s)$-Brownian motion
$\gamma=(\gamma_t, t \geq 0)$ by $\gamma_s:=\int_{\eta_a(0)}^{\eta_a(s)}   r_u^{-1} \ud \bar \gamma_u$.
Then, 
as in the proof of Proposition~\ref{prop:transient-away-from-0},
the process 
$B=(B_t, t \geq 0)$, 
$B_s := A_s - \int_0^s \theta^{a,n}_u \ud \zeta_u + \int_0^s \theta^{a,n}_u \ud \gamma_u$,
is a $d$-dimensional 
$(\cG_s)$-Brownian motion, independent of $\zeta$.

\noindent \textbf{Claim.} 	
$B$ is independent of $Z$ and hence (by Lemma~\ref{l:radial-bessel}) of $r$.\\
\noindent \textit{Proof of Claim.}
Recall that 
$\eta_a(0)$ and $L^{-1}_{L_\tau+\mu_a^n}$ are $(\cF_t)$-stopping times. 
Since 
$B_0=\0$,
$B$
is independent of 
$\cG_0=\cF_{\eta_a(0)}$
and hence of 
$(Z_s,0\leq s \leq \eta_a(0))$.
$B$ is  measurable with respect to 
$\bigvee_{s\in\RP}\cG_s\subseteq  \cF_{L^{-1}_{L_\tau+\mu_a^n}}$
and hence independent  of the Brownian motion 
$(Z_{u+L^{-1}_{L_\tau+\mu_a^n}}-Z_{L^{-1}_{L_\tau+\mu_a^n}},u\geq0)$.
We now prove that 
$B$
is independent of the stopped Brownian motion
$(\bar Z_s, s\geq0)$,
$\bar Z_s:=Z_{(s+\eta_a(0))\wedge L^{-1}_{L_\tau+\mu_a^n}}-Z_{\eta_a(0)}$.
Define the $\cG_s$-local martingale 
$M=(M_u, u \geq 0 )$,
$M_u:=Z_{\eta_a(u)}-Z_{\eta_a(0)}$,
and note that 
$M_u = \int_0^u r_{\eta_a(v)} (\theta^{a,n}_v)^\tra\ud A_v =\int_0^u r_{\eta_a(v)}\ud \zeta_v$.
Hence the covariation of $M$ and $B$ is identically equal to zero. 
Furthermore, the quadratic variation 
$[M]_u
= c^a_{e^r_{L_\tau+\mu_a^n}}(u) - a$ 
of $M$  
converges, i.e. $[M]_\infty:=\lim_{u\uparrow \infty} [M]_u=L^{-1}_{L_\tau+\mu_a^n}-\eta_a(0)$,
with inverse given by $v\mapsto \varrho^a_{e^r_{L_\tau+\mu_a^n}}(a+v)$, $v\in[0,[M]_\infty)$.
Since the limit
$M_\infty:=\lim_{u\uparrow\infty} M_u=Z_{L^{-1}_{L_\tau+\mu_a^n}}-Z_{\eta_a(0)}$ exists, 
we can define the processes $(M_{\varrho^a_{e^r_{L_\tau+\mu_a^n}}(a+t)},0\leq t \leq [M]_\infty)$, which is independent 
of $B£$ by~\cite[Thm~V.1.9]{ry}. The claim follows by noting that 
$M_{\varrho^a_{e^r_{L_\tau+\mu_a^n}}(a+t)}=\bar Z_t$ for any $t\in[0,[M]_\infty]$.

By Lemma~\ref{l:radial-bessel},
the process
$r^{-2}_{\eta_a(0) +\cdot}$ is a continuous semimartingale 
on the stochastic interval $(0,\tau^r_{L_\tau+\mu_a^n}-a)$.
In particular, an analogous calculation to the one that established~\eqref{eqn:theta-SDE} implies
\begin{equation*}
\hat \x_{\eta_a(0) +t} = \hat \x_{\eta_a(0) } + \int_{\eta_a(0)}^{\eta_a(0) +t} f( \hat \x_u ) r^{-2}_u  \ud u 
+\int_{\eta_a(0) }^{\eta_a(0)+t}  g(\hat \x_u) r^{-1}_u \ud W_u, \quad t \in (0,\tau_{L_\tau+\mu_a^n}^r-a),
\end{equation*}
with $f,g$ in~\eqref{eqn:theta-SDE-coeffs}.
Applying the stochastic time-change
$(c^a_{e^r_{L_\tau+\mu_a^n}}(u) - a ,u\geq0)$ with~\cite[Prop.~V.1.4]{ry}
and noting that 
$\eta_a(u)=\eta_a(0)+c^a_{e^r_{L_\tau+\mu_a^n}}(u) - a$
and
$\hat \x_{\eta_a(u)}=\theta^{a,n}_u$ 
for all $u\in\RP$,
implies that
$(\theta^{a,n}_u,u\geq0)$
satisfies the SDE in~\eqref{eqn:sphere-SDE-two-drivers}, started at $\theta^{a,n}_0=\hat \x_{a+L^{-1}_{(L_\tau+\mu_a^n)-}}$
driven by the Brownian motion $A$ defined above. 
It is easy to see from the definition of the Brownian motion $B$ above that
$\int_0^t (\ssym(\theta^{a,n}_u) - \theta^{a,n}_u(\theta^{a,n}_u)^\tra) \ud B_u 
= \int_0^t (\ssym(\theta^{a,n}_u) - \theta^{a,n}_u(\theta^{a,n}_u)^\tra) \ud A_u$
for all $t\geq0$.
Hence 
$(\theta^{a,n}_u,u\geq0)$
satisfies SDE~\eqref{eqn:sphere-SDE-two-drivers} driven by $B$. 
By the Claim, $r$ and $\theta^{a,n}$ are independent. 

The second step in the proof of the lemma analyses the conditional law of
$\theta^{a,n}$.
The number of excursions longer than $b$ started before the start of the $n$-the excursion of $\tilde r$ of 
length at least $a$, i.e. 
$N_b(\tilde L^{-1}_{\mu_a^n-})$,
is 
$\cF^r_\infty$ measurable. 
Fix $t\in\R$
and note that by Lemma~\ref{l:rho-2way-inf}\eqref{l:rho-2way-inf(iv)}
we have
$\lim_{b\downarrow 0}t+I_b^a(e^r_{L_\tau+\mu_a^n})=\infty$.
On the event 
$\{N_b(\tilde L^{-1}_{\mu_a^n-})=k-1\}$,
by Lemma~\ref{l:rho-2way-inf}\eqref{l:rho-2way-inf(iv)}--\eqref{l:rho-2way-inf(iii)}, it holds that 
$\theta^{a,n}_t=\theta^{b,k}_{t+I_b^a(e^r_{L_\tau+\mu_a^n})}$. 
Pick an arbitrary measurable subset $\fA\subseteq \Sp{d-1}$. 
Then it holds that 
\begin{equation*}
\Pr[\theta^{a,n}_t\in \fA|\cF_{L^{-1}_{L_\tau}}\vee \cF^r_\infty]=
\sum_{k\in\N}\1{N_b(\tilde L^{-1}_{\mu_a^n-})=k-1} \Pr[\theta^{b,k}_{t+I_b^a(e^r_{L_\tau+\mu_a^n})}\in \fA |\cF_{L^{-1}_{L_\tau}}\vee \cF^r_\infty].
\end{equation*}
For all $b\in(0,a)$ such that $I_b^a(e^r_{L_\tau+\mu_a^n}) > - t$, 
the first step of the proof implies 
\begin{equation}
\label{eq:Bound_Law}
|\Pr[\theta^{a,n}_t\in \fA|\cF_{L^{-1}_{L_\tau}}\vee \cF^r_\infty]-\mu(\fA)|\leq 
\int_{\Sp{d-1}}|P_{t+I_b^a(e^r_{L_\tau+\mu_a^n})}(x,\fA)-\mu(\fA)| \Pr_b[\ud x],
\end{equation}
where  
$\Pr_b[\ud x]:=\sum_{k\in\N}\1{N_b(\tilde L^{-1}_{\mu_a^n-})=k-1} \Pr[\theta^{b,k}_0\in \ud x |\cF_{L^{-1}_{L_\tau}}\vee \cF^r_\infty]$
is a probability measure  on $\Sp{d-1}$, 
$P$ 
is the transition function from Prop.~\ref{lem:sphere-in-Rd-SDE}
and $\mu$ denotes its stationary measure.  
By~\eqref{eqn:unif-ergod} in Prop.~\ref{lem:sphere-in-Rd-SDE}, Lemma~\ref{l:rho-2way-inf}\eqref{l:rho-2way-inf(iv)} and~\eqref{eq:Bound_Law}, for any $\epsilon>0$
there exists $b\in(0,a)$ such that 
$|\Pr[\theta^{a,n}_t\in \fA|\cF_{L^{-1}_{L_\tau}}\vee \cF^r_\infty]-\mu(\fA)|\leq \epsilon$.
Hence we must have
$\Pr[\theta^{a,n}_t\in \fA |\cF_{L^{-1}_{L_\tau}}\vee \cF^r_\infty]=\mu(\fA)=\Pr_\Psi[\{f\in\cC(\R,\Sp{d-1}):f(t)\in \fA\}]$.
An analogous argument shows that finite-dimensional distributions of $\Pr_\Psi[\cdot]$
and $\Pr[\theta^{a,n}_t\in \cdot |\cF_{L^{-1}_{L_\tau}}\vee \cF^r_\infty]$ coincide. 
This proves the lemma.
\end{proof}


\begin{proof}[Proof of Theorem~\ref{thm:master_identity_excursion}] 
Pick an arbitrary measurable set $B$ in $\cE^{(a)}_d$ and define a subset $A:=\Phi_a^{-1}(B)$
of $\cE^{(a)}_1\times\cC(\R,\Sp{d-1})$. 
A standard argument, based on the Monotone-Class Theorem, implies that the function
$F_A:\cE^{(a)}_1\to[0,1]$, given by $F_A(\epsilon):=\int_{\cC(\R,\Sp{d-1})}\1{A}(\epsilon,f)\Pr_\Psi[\ud f]$,
is measurable. 
Hence Lemma~\ref{prop:excursion-angular}, the tower property and the definition of the map $\Phi_a^{-1}$ imply 
$\Pr[e^\x_{L_\tau+\mu^n_a}\in B|\cF_{L^{-1}_{L_\tau}}]= 
\Pr[(e^r_{L_\tau+\mu^n_a},\theta^{a,n})\in A|\cF_{L^{-1}_{L_\tau}}]= \Exp[ F_A(e^r_{L_\tau+\mu^n_a})|\cF_{L^{-1}_{L_\tau}}]$.
Since $r$ is strong Markov, we get 
$\Pr[e^\x_{L_\tau+\mu^n_a}\in B|\cF_{L^{-1}_{L_\tau}}]= \Exp[ F_A(e^r_{L_\tau+\mu^n_a})]$.
Since the law of the excursion $e^r_{L_\tau+\mu^n_a}$ is given by 
$\mu_r(\cdot)/\mu_r(\cE^{(a)}_1)$, the theorem follows. 
\end{proof}

Pick $v\in (0,\infty)$ and a measurable $B\subseteq \R^d$.
Let $B_v:=\{\hat \by: \by\in B\setminus\{\0\}, \|\by\|=v\}$ be the intersection
$B\cap(v\Sp{d-1})$ projected onto the unit sphere. For any $b\in\R$, define the measurable set
$\fA_v^b(B):=\{f\in\cC(\R,\Sp{d-1}): f(b)\in B_v\}$.

\begin{proposition}
\label{prop:time_excurion}
Pick $k\in\N$ and indices $0=:i_0<i_1<i_2<\cdots<i_{k-1}<i_k$.
Define $n:=i_k$ and 
choose measurable sets $B_1,\ldots,B_n\subseteq\R^d$
and
times $0<u_1<u_2<\cdots<u_n$. 
For $0\leq i<j\leq n$, let 
$F_{i,j}:(\RP\times(0,\infty))^{j-i}\to[0,1]$
be $F_{i,j}(b_p,v_p;i+1\leq p\leq j):=\Pr_\Psi[\cap_{p=i+1}^j\fA_{v_p}^{b_p}(B_p)]$.
Define 
$a_j:=u_j-\tilde L^{-1}_{\tilde L_{u_j}-}$ for any $j\in\{1,\ldots,n\}$
(recall that 
$\tilde L$ depends on $\tau$).
Then, on the event 
$E_k:=\{\tilde L_{ u_{i_0+1}}=\tilde L_{ u_{i_1}}
<\tilde L_{ u_{i_1+1}}=\tilde L_{ u_{i_2}}
<\tilde L_{ u_{i_2+1}}=\tilde L_{ u_{i_3}}
<\cdots
<\tilde L_{ u_{i_{k-1}+1}}=\tilde L_{ u_{i_k}}\}$,
it holds that 
\begin{multline}
\label{eq:time_master_formula}
\Pr\left[e^{\x}_{L_\tau+\tilde L_{u_j}} (a_j)\in B_j  \quad\text{for $j\in\{1,\ldots,n\}$}\left\vert \cF_{L^{-1}_{L_\tau}}\vee\cF_\infty^r\right.\right] \\
= \prod_{l=0}^{k-1} F_{i_l,i_{l+1}}\left(\varrho^{a_{i_l+1}}_{e^{\tilde r}_{\tilde L_{u_p}}}(a_p),e^{\tilde r}_{\tilde L_{u_p}}(a_p); i_l+1\leq p\leq  i_{l+1}\right).
\end{multline}
\end{proposition}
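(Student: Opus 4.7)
The plan is to use the skew-product decomposition $\Phi$ of Section~\ref{subsub:Marked_Bessel_Ex} to extract the angular components of the $k$ distinguished excursions of $\x$ on $E_k$, to show that these angular components are conditionally i.i.d.\ with law $\Pr_\Psi$ given $\cF_{L^{-1}_{L_\tau}}\vee\cF^r_\infty$, and then to integrate them out to recover the product formula.

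First I would carry out the skew-product identification. For each $l\in\{0,\ldots,k-1\}$, set $\ell_l:=L_\tau+\tilde L_{u_{i_{l+1}}}$. On $E_k$ the excursion $e^\x_{\ell_l}$ contains $u_p$ for every $p\in\{i_l+1,\ldots,i_{l+1}\}$ and has length strictly greater than $a_{i_{l+1}}\geq a_{i_l+1}$, so $e^\x_{\ell_l}\in\cE_d^{(a_{i_l+1})}$. By Lemma~\ref{l:rho-2way-inf}\eqref{l:rho-2way-inf(iii)}, define $(e^r_{\ell_l},\theta_l):=\Phi_{a_{i_l+1}}^{-1}(e^\x_{\ell_l})\in\cE_1^{(a_{i_l+1})}\times\cC(\R,\Sp{d-1})$. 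By the definition of $\Phi_{a_{i_l+1}}$, for $p\in\{i_l+1,\ldots,i_{l+1}\}$,
$$
e^\x_{\ell_l}(a_p)=e^r_{\ell_l}(a_p)\cdot\theta_l\bigl(\varrho^{a_{i_l+1}}_{e^r_{\ell_l}}(a_p)\bigr).
$$
On $E_k$ one has $e^r_{\ell_l}=e^{\tilde r}_{\tilde L_{u_p}}$ for every such $p$, and hence the event $\{e^\x_{\ell_l}(a_p)\in B_p\}$ coincides with $\{\theta_l\in\fA^{\varrho^{a_{i_l+1}}_{e^{\tilde r}_{\tilde L_{u_p}}}(a_p)}_{e^{\tilde r}_{\tilde L_{u_p}}(a_p)}(B_p)\}$, whose indexing data is $\cF^r_\infty$-measurable by Lemma~\ref{l:rho-2way-inf}\eqref{l:rho-2way-inf(vi)}.

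The heart of the argument is to show that, conditionally on $\cF_{L^{-1}_{L_\tau}}\vee\cF^r_\infty$, the family $(\theta_0,\ldots,\theta_{k-1})$ is i.i.d.\ with law $\Pr_\Psi$. I would proceed by induction on $l$, iterating Lemma~\ref{prop:excursion-angular}. Take $\tau_0:=\tau$, and for $l\geq1$ put $\tau_l:=L^{-1}_{L_{L^{-1}_{L_\tau}+u_{i_{l-1}+1}}}$; this is a finite $(\cF_t)$-stopping time (excursion endpoints of $r$ are $(\cF_t)$-stopping times) and agrees with $L^{-1}_{\ell_{l-1}}$ on $E_k$. Then $\theta_0,\ldots,\theta_{l-1}$ are $\cF_{\tau_l}\subseteq\cF_{L^{-1}_{L_{\tau_l}}}$-measurable, and there is an $\cF^r_\infty$-measurable index $n'_l\in\N$ such that $\theta_l=\theta^{a_{i_l+1},n'_l}$ in the notation of Lemma~\ref{prop:excursion-angular} applied at $\tau_l$, with consistency ensured by Lemma~\ref{l:rho-2way-inf}\eqref{l:rho-2way-inf(v)}. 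Splitting according to $\{n'_l=n\}$ (a countable $\cF^r_\infty$-measurable partition) and applying Lemma~\ref{prop:excursion-angular} to each $(\tau_l,a_{i_l+1},n)$ separately gives $\Pr[\theta_l\in\cdot\mid\cF_{L^{-1}_{L_{\tau_l}}}\vee\cF^r_\infty]=\Pr_\Psi[\cdot]$ a.s., which delivers the required conditional independence.

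With the joint conditional law in hand, the conclusion follows by Fubini: conditioning on $\cF_{L^{-1}_{L_\tau}}\vee\cF^r_\infty$ and using the definitions of $\fA^b_v$ and $F_{i,j}$,
$$
\Pr\bigl[\bigcap_{p=i_l+1}^{i_{l+1}}\{\theta_l\in\fA^{\varrho^{a_{i_l+1}}_{e^{\tilde r}_{\tilde L_{u_p}}}(a_p)}_{e^{\tilde r}_{\tilde L_{u_p}}(a_p)}(B_p)\}\bigm| \cF_{L^{-1}_{L_\tau}}\vee\cF^r_\infty\bigr]
$$
equals $F_{i_l,i_{l+1}}\bigl(\varrho^{a_{i_l+1}}_{e^{\tilde r}_{\tilde L_{u_p}}}(a_p),e^{\tilde r}_{\tilde L_{u_p}}(a_p);i_l+1\leq p\leq i_{l+1}\bigr)$, and conditional independence across $l$ yields the product in~\eqref{eq:time_master_formula}. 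The main obstacle is precisely the joint-conditional-independence step: Lemma~\ref{prop:excursion-angular} only covers a single $\theta^{a,n}$ for a fixed deterministic $n$, so extending to $k$ angular components requires the iterative stopping-time construction together with careful verification that each $\tau_l$ is an $(\cF_t)$-stopping time and that the re-indexed $n'_l$ is $\cF^r_\infty$-measurable and compatible with the original labelling of excursions.
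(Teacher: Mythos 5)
Your proposal uses the same underlying mechanism as the paper's proof: decompose the distinguished excursions via $\Phi^{-1}$ into a radial excursion plus an angular mark, show the angular marks $(\theta_0,\ldots,\theta_{k-1})$ are conditionally $\Pr_\Psi$-distributed and mutually independent given $\cF_{L^{-1}_{L_\tau}}\vee\cF^r_\infty$ by iterating Lemma~\ref{prop:excursion-angular} at a nested sequence of $(\cF_t)$-stopping times, then integrate out. The paper organizes this as an induction on $k$: the base case $k=1$ pins down the conditional law of one angular mark evaluated at several times $u_p$ inside the same excursion, and the inductive step restarts at $L^{-1}_{L_\rho}$ with $\rho := L^{-1}_{L_\tau}+u_{i_k}$, using the strong Markov property of $r$ and the tower property. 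Your framing makes the conditional i.i.d.\ structure explicit and closes with Fubini, and your stopping times $\tau_l$ agree on $E_k$ with the paper's restart times; so the two proofs are the same modulo presentation.

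One detail you gloss over is worth flagging. Lemma~\ref{prop:excursion-angular} is stated for a fixed deterministic anchor $a>0$ and index $n\in\N$, but your anchor $a_{i_l+1}$ is a continuously distributed random variable (albeit $\cF^r_\infty$-measurable). Decomposing over $\{n'_l=n\}$ takes care of the index, not the anchor. The paper's base case resolves this by fixing a deterministic $b\in(0,a_1)$, working on $E_1^b:=E_1\cap\{a_1>b\}$, using $Q_b$ from Lemma~\ref{l:rho-2way-inf}\eqref{l:rho-2way-inf(v)} together with the stationarity (shift-invariance) of $\Pr_\Psi$ to pass from the $\Phi_b$-decomposition to the $\Phi_{a_1}$-decomposition, and then letting $b\downarrow 0$ since $E_1^b\nearrow E_1$. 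Your citation of Lemma~\ref{l:rho-2way-inf}\eqref{l:rho-2way-inf(v)} shows you have the right tool in mind, but the use of stationarity and the $b\downarrow0$ limit still need to be carried out to make the step rigorous.
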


\begin{remark}
\label{rem:length_excursion}
In~\eqref{eq:time_master_formula}, 
for any 
$p\in\{i_l+1,\ldots, i_{l+1}\}$, 
it holds that
$\tilde L_{u_p}=\tilde L_{u_{i_l+1}}$
and hence $e^{\tilde r}_{\tilde L_{u_p}}$ refers to a single excursion. 
Note also that $E_k$ depends on the sequence $i_1<\cdots<i_k$ and not just on the index $k$. 
This information is suppressed from the notation for brevity. 
\end{remark}

\begin{proof}
A moment's reflection reveals that $F_{i,j}$, defined in the proposition, is measurable  and
$E_k\in\cF_\infty^r$. 
Note that 
$a_j$ 
is $\cF^r_\infty$-measurable
and
$a_j>0$ $\Pr$-a.s. 
for any $j\in\{1,\ldots,n\}$. 
Moreover, on $E_k$,
by Remark~\ref{rem:length_excursion}
the triplet 
$(a_{i_l+1}, a_p,e^{\tilde r}_{\tilde L_{u_p}})$
is in the domain of the map in 
Lemma~\ref{l:rho-2way-inf}\eqref{l:rho-2way-inf(vi)}
for all  
$l\in\{0,\ldots,k-1\}$ and $p\in\{i_l+1,\ldots, i_{l+1}\}$.
Hence
we may 
define $\cF_\infty^r$-measurable random variables
$t_l^p:=\varrho^{a_{i_l+1}}_{e^{\tilde r}_{\tilde L_{u_p}}}(a_p)$
and
$v_l^p:=e^{\tilde r}_{\tilde L_{u_p}}(a_p)$.
In fact, on $E_k$, $v_l^p>0$ and $t_l^p\geq 0$ $\Pr$-a.s.
Hence the right-hand side of~\eqref{eq:time_master_formula}
is well-defined on $E_k$ and $\cF_\infty^r$-measurable. 

Assume first that $k=1$, i.e. $i_1=n$, $E_1=\{\tilde L_{u_1} = \tilde L_{u_n}\}$
and
$a_j=u_j-\tilde L^{-1}_{\tilde L_{u_1}-}$ for $j\in\{1,\ldots,n\}$.
Pick $b>0$ and let
$E_1^b:=E_1\cap\{a_1>b\}$.
By~\eqref{l:rho-2way-inf(iii)} and~\eqref{l:rho-2way-inf(v)}
of
Lemma~\ref{l:rho-2way-inf},
the map 
$Q_b:
\Upsilon^{(b)}_1\times \cC(\R,\Sp{d-1})
\to
\cE^{(b)}_1\times \cC(\R,\Sp{d-1})$
is measurable.
Hence,
on 
$E_1^b$,
we may define a random element 
$Q_b(a_1,\Phi_b^{-1}(e^{\x}_{L_{u_j+L^{-1}_{L_\tau}}}))= \Phi_{a_1}^{-1}(e^{\x}_{L_{u_j+L^{-1}_{L_\tau}}})$.
Recall that $N_{a_1}(\tilde L^{-1}_{u_1})$
is the number of excursions or $\tilde r$ that started prior to 
$\tilde L^{-1}_{u_1}$
with length of at least $a_1$. 
Clearly, 
$N_{a_1}(\tilde L^{-1}_{u_1})$
is $\cF_\infty^r$-measurable. 
Hence,
conditional on 
$\cF_{L^{-1}_{L_\tau}}\vee\cF_\infty^r$,
the law of 
$\theta^{a_1, N_{a_1}(\tilde L^{-1}_{u_1})}$
equals 
$\Pr_\Psi[\cdot]$
by
Lemma~\ref{prop:excursion-angular},
where
$\Phi_{a_1}^{-1}(e^{\x}_{L_{u_j+L^{-1}_{L_\tau}}})=(e^{\tilde r}_{\tilde L_{u_j}},\theta^{a_1, N_{a_1}(\tilde L^{-1}_{u_1})})$.
On $E_1^b$, the left-hand side of~\eqref{eq:time_master_formula} is 
\begin{multline*}
\Pr\left[\theta^{a_1, N_{a_1}(\tilde L^{-1}_{u_1})}\in \fA_{v_0^j}^{t_0^j}(B_j) \text{ for $j\in\{1,\ldots,n\}$} \left\vert \cF_{L^{-1}_{L_\tau}}\vee\cF_\infty^r\right.\right]
= F_{0,n}(t_0^p, v_0^p; 1\leq p\leq n).
\end{multline*}
Since this identity is independent of $b$ and 
$E_1^b\nearrow E_1$ as $b\downarrow0$, 
the proposition holds for $k=1$ and any $i_1=n\in\N$.

We proceed by induction:  
assume that~\eqref{eq:time_master_formula} holds for some $k\in\N$
and  any increasing sequence of indices of length at most $k$.
Pick an event $E_{k+1}$. Put differently, choose 
a sequence of indices 
$0=i_0<i_1<\cdots<i_k<i_{k+1}=n$.
The $(\cF_t)$-stopping time
$\rho:=L^{-1}_{L_\tau}+u_{i_k}$
satisfies 
$L^{-1}_{L_\tau}<\rho\leq L^{-1}_{L_\rho}$.
Since 
$L^{-1}_{L_\rho}$
is an $(\cF_t)$-stopping time,
the $\sigma$-algebra 
$\cF_{L^{-1}_{L_\rho}}$
is well-defined and
contains   
$\cF_{L^{-1}_{L_\tau}}$.
For the sequence
$0< i_1<\cdots<i_k$, define the 
event $E_k$ as in the statement of the proposition.
Note that 
$E_{k+1}=E_k\cap E'_{k+1}$, where $E'_{k+1}:=\{\tilde L_{u_{i_k}}<\tilde L_{u_{i_k+1}}=\tilde L_{u_{i_{k+1}}}\}$,
and $E_{k+1},E_k, E'_{k+1}\in\cF_\infty^r$.
Define a $\Bes^V(0)$ process
$r'=(r'_u,u\geq0)$
by
$r'_u:=r_{L^{-1}_{L_\rho}+u}$.
Then its Markov (resp. inverse) local time 
$L'=(L'_u,u\geq0)$ 
(resp. $L'^{-1}=(L'^{-1}_\mu,\mu\geq0)$)
equals
$L'_u=L_{L^{-1}_{L_\rho}+u}-L_\rho$ 
(resp.
$L'^{-1}_\mu=L^{-1}_{L_\rho+\mu} -L^{-1}_{L_\rho}$)
and
$L'^{-1}$
is a subordinator
independent of 
$\cF_{L^{-1}_{L_\rho}}$.

Pick $j\in\{i_k+1,\ldots,i_{k+1}\}$.
On $E'_{k+1}$ the inequality
$u_j+L^{-1}_{L_\tau}>L^{-1}_{L_\rho}$
holds.
Hence we can define positive times
$u'_j:=u_j+L^{-1}_{L_\tau}-L^{-1}_{L_\rho}$
that clearly satisfy 
$r'_{u'_j}=\tilde r_{u_j}$.
Furthermore, we have
$$
L'_{u'_j}=L_{u_j+L^{-1}_{L_\tau}}-L_\rho
\quad\text{ and }\quad
L'^{-1}_{L'_{u'_j}-}=L^{-1}_{(L_{u_j+L^{-1}_{L_\tau}})-}-  L^{-1}_{L_\rho}.
$$
Hence we find 
$
a_j=u_j+L^{-1}_{L_\tau}  -  L^{-1}_{(L_{u_j+L^{-1}_{L_\tau}})-}  = u'_j- L'^{-1}_{L'_{u'_j}-}
\text{ for all $j\in\{i_k+1,\ldots,i_{k+1}\}$.}
$
Let 
$e^{r'}=(e^{r'}_\mu,\mu\geq0)$ 
be the PPP
given by 
$e^{r'}_\mu(u):=r'_{L'^{-1}_{\mu-}+u} \1{u \leq \tau^{r'}_\mu}$, $u \in \RP$,
where 
$\tau^{r'}_\mu:=L'^{-1}_\mu-L'^{-1}_{\mu-}$
is the size of the jump of the subordinator $L'^{-1}$ at the moment of local time $\mu$. 
It holds that 
$e^{\tilde r}_{\tilde L_{u_j}}=e^r_{L_{u_j+L^{-1}_{L_\tau}}} =e^r_{L_{u'_j+L^{-1}_{L_\rho}}} =e^{r'}_{L'_{u'_j}}$,
and  hence
$t^j_k=\varrho^{a_{i_k+1}}_{e^{r'}_{L'_{u'_j}}}(a_j)$,
$v^j_k = e^{r'}_{L'_{u'_j}}(a_j)$,
for all $j\in\{i_k+1,\ldots,i_{k+1}\}$.
Trivially it holds that 
$e^{\x}_{L_{u_j+L^{-1}_{L_\tau}}}=e^{\x}_{L_{u'_j+L^{-1}_{L_\rho}}}$,
so me may apply the basis of the induction (i.e. $k=1$) to the stopping time 
$\rho$
on the event $E'_{k+1}$
as follows:
\begin{multline*}
 \Pr\left[e^{\x}_{L_{u'_j+L^{-1}_{L_\rho}}}(a_j)\in B_j,  \quad j\in\{i_k+1,\ldots,i_{k+1}\}\left\vert \cF_{L^{-1}_{L_\rho}}\vee\cF_\infty^r\right.\right] \\
=
F_{i_k,i_{k+1}}\left(\varrho^{a_{i_k+1}}_{e^{r'}_{L'_{u'_j}}}(a_j),e^{r'}_{L'_{u'_j}}(a_j); i_k+1\leq j\leq  i_{k+1}\right).
\end{multline*}
Hence
$\Pr[e^{\x}_{L_{u_j+L^{-1}_{L_\tau}}}(a_j)\in B_j,\>  j\in\{i_k+1,\ldots,i_{k+1}\}\vert \cF_{L^{-1}_{L_\rho}}\vee\cF_\infty^r] 
=
F_{i_k,i_{k+1}}(t^j_k,v^j_k; i_k+1\leq j\leq  i_{k+1})$
on $E'_{k+1}$.
Define the event $D_k:=\cap_{j=1}^{i_k}\{e^{\x}_{L_{u_j+L^{-1}_{L_\tau}}}(a_j)\in B_j\}\cap E_k\in\cF_{L^{-1}_{L_\rho}}$.
On the event $E_{k+1}$, 
\begin{multline*}
\Exp\left[
\1{D_k}
\Pr\left[e^{\x}_{L_{u_j+L^{-1}_{L_\tau}}}(a_j)\in B_j,  \quad j\in\{i_k+1,\ldots,i_{k+1}\}\left\vert \cF_{L^{-1}_{L_\rho}}\vee\cF_\infty^r\right.\right] 
\left\vert \cF_{L^{-1}_{L_\tau}}\vee\cF_\infty^r\right.\right] \\
= 
\Pr\left[ D_k \left\vert \cF_{L^{-1}_{L_\tau}}\vee\cF_\infty^r\right.\right] 
F_{i_k,i_{k+1}}(t^j_k,v^j_k; i_k+1\leq j\leq  i_{k+1})
 \end{multline*}
equals the left-hand side in~\eqref{eq:time_master_formula}.
The proposition follows 
by
the induction hypothesis.
\end{proof}

\begin{corollary}
\label{cor:almost_strong_Markov}
Let $\x$ be a solution of SDE~\eqref{eqn:x-SDE-sym} started at $\0$ and adapted to $(\cF_t,t\geq0)$.
\begin{enumerate}
\item[(a)] Let $\tau$ be a finite $(\cF_t)$-stopping time. Then the process 
$\tilde \x=(\tilde \x_t, t\geq0)$, defined by 
$\tilde \x_t:=\x_{L^{-1}_{L_\tau}+t}$, 
is independent of $\cF_{L^{-1}_{L_\tau}}$
and has the same law as $\x$.
\item[(b)] Let $\y$ be a solution of SDE~\eqref{eqn:x-SDE-sym} started at $\0$. Then the laws on $\cC_d$ of $\x$ and $\y$ coincide. 
\end{enumerate}
\end{corollary}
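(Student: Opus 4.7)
My plan is to deduce both parts directly from Proposition~\ref{prop:time_excurion}, which already provides an explicit formula for the joint law of $\x$ at finitely many positive times conditional on $\cF_{L^{-1}_{L_\tau}}\vee\cF^r_\infty$, expressed solely in terms of the shifted radial process $\tilde r := (r_{L^{-1}_{L_\tau}+t},t\geq0)$ and the universal law $\Pr_\Psi$. For part~(a), I would first observe that $L^{-1}_{L_\tau}$ lies in the closed zero set of $r$ (by right-continuity of $L^{-1}$ and the fact that $L$ is constant on each excursion interval), so $\tilde\x_0=\0$ a.s. The strong Markov property of $r$ applied at the $(\cF_t)$-stopping time $L^{-1}_{L_\tau}$ then yields that $\tilde r$ is $\Bes^V(0)$ and independent of $\cF_{L^{-1}_{L_\tau}}$.

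Next, I would fix $n\in\N$, times $0<u_1<\cdots<u_n$ and measurable sets $B_1,\dots,B_n\subseteq\R^d$. Since for each fixed $u_j>0$ one has $\Pr[\tilde r_{u_j}=0]=0$, almost surely every $u_j$ belongs to some open excursion interval of $\tilde r$, and the events $E_k$ from Proposition~\ref{prop:time_excurion} (indexed by the sequences $0=i_0<i_1<\cdots<i_k=n$) form a countable, $\sigma(\tilde r)$-measurable partition of a probability-one event. Unpacking the definitions of $\tilde L$, $L^{-1}_{L_\tau}$ and $a_j$ identifies $e^{\x}_{L_\tau+\tilde L_{u_j}}(a_j)=\tilde\x_{u_j}$, so summing the formula of Proposition~\ref{prop:time_excurion} over the partition yields
\begin{equation*}
\Pr\bigl[\tilde\x_{u_j}\in B_j\ \forall j\bigm|\cF_{L^{-1}_{L_\tau}}\vee\cF^r_\infty\bigr]=\Phi(\tilde r)\qquad\text{a.s.,}
\end{equation*}
where $\Phi$ is a measurable functional of $\tilde r$ alone, determined by the data $(u_j,B_j)$ and $\Pr_\Psi$. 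Since $\Phi(\tilde r)$ is $\sigma(\tilde r)$-measurable and $\tilde r$ is independent of $\cF_{L^{-1}_{L_\tau}}$ with $\tilde r\eqd r$, conditioning on $\cF_{L^{-1}_{L_\tau}}$ and applying the tower property give $\Pr[\tilde\x_{u_j}\in B_j\ \forall j\mid\cF_{L^{-1}_{L_\tau}}]=\Exp\Phi(r)$, a non-random quantity independent of~$\tau$. Applying the same formula with $\tau=0$ shows this equals $\Pr[\x_{u_j}\in B_j\ \forall j]$, so $\tilde\x$ is independent of $\cF_{L^{-1}_{L_\tau}}$ and has the same finite-dimensional distributions as $\x$. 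Continuity of the sample paths of $\x$ and $\tilde\x$ then upgrades this to equality of laws on $\cC_d$.

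For part~(b), I would apply the computation above with $\tau=0$ to each of $\x$ and $\y$. The resulting formula $\Exp\Phi(r)$ depends only on the law of the radial process (which by Lemma~\ref{l:radial-bessel} is $\Bes^V(0)$ for both $\|\x\|$ and $\|\y\|$), on $\sigma^2$ and on the universal law $\Pr_\Psi$ from Proposition~\ref{lem:sphere-in-Rd-SDE}; hence the finite-dimensional distributions of $\x$ and $\y$ agree, and continuity yields equality of laws on $\cC_d$. The genuine technical work has been carried out in Proposition~\ref{prop:time_excurion} and the preceding excursion-theoretic development of Section~\ref{subsub:Proof_of_weak_uniquenes_rec}; the only obstacles in the present argument are the routine bookkeeping checks that the partition by the events $E_k$ is almost sure and $\sigma(\tilde r)$-measurable, and that $e^{\x}_{L_\tau+\tilde L_{u_j}}(a_j)=\tilde\x_{u_j}$.
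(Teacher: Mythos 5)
Your proposal is correct and follows essentially the same route as the paper: sum the conditional formula of Proposition~\ref{prop:time_excurion} over the partition by the events $E_k^{i_1,\ldots,i_{k-1}}$ to obtain a measurable functional of $\tilde r$ alone, then use the strong Markov property of $r$ at $L^{-1}_{L_\tau}$ (yielding independence of $\tilde r$ from $\cF_{L^{-1}_{L_\tau}}$ and $\tilde r \eqd r$) together with the tower property, and take $\tau\equiv 0$ to match the finite-dimensional distributions; for part~(b), apply the $\tau\equiv 0$ case to both solutions. The only minor presentational difference is that you spell out explicitly that $\tilde\x_0=\0$, that $\Pr[\tilde r_{u_j}=0]=0$ ensures the $E_k$'s cover an a.s. event, and that $e^\x_{L_\tau+\tilde L_{u_j}}(a_j)=\tilde\x_{u_j}$ — checks which the paper treats implicitly — so no substantive gap.
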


\begin{proof}
(a) 
If we prove that
for any 
$0<u_1<u_2<\cdots<u_n$
and
measurable sets $B_1,\ldots,B_n\subseteq\R^d$,
the equality
$\Pr[\tilde \x_{u_1}\in B_1,\ldots, \tilde \x_{u_n}\in B_n\vert \cF_{L^{-1}_{L_\tau}}]= 
\Pr[\x_{u_1}\in B_1,\ldots, \x_{u_n}\in B_n]$ 
holds $\Pr$-a.s., 
part~(a) follows
by a diagonalisation argument (cf. first paragraph in the proof of Lemma~\ref{prop:excursion-angular}),
since $\tilde \x_0=\x_0$
and all the trajectories of $\tilde \x$ are continuous.
Recall that $L_{L^{-1}_{L_\tau}+u}=L_\tau+\tilde L_u$.
Hence,
for all $u\geq0$,
$\tilde \x_u  = e^\x_{L_\tau+\tilde L_u} ( u - \tilde L^{-1}_{\tilde L_u-})$
and in particular (take $\tau\equiv0$)
$\x_u  = e^\x_{L_u} ( u - L^{-1}_{L_u-})$.
Note that the set 
$E_k$ in Proposition~\ref{prop:time_excurion} 
is determined by $k\in\{1,\ldots,n\}$ and 
the indices $i_1<\ldots<i_{k-1}$ (with $i_0=0$
and
$i_k=n$) and should be denoted by
$E_k^{i_1,\ldots,i_{k-1}}$. 
Furthermore,
$E_k^{i_1,\ldots,i_{k-1}}\cap E_{k'}^{i'_1,\ldots,i'_{k'-1}}\neq\emptyset$ 
if and only if $k=k', i_1=i'_1,\ldots, i_{k-1}=i'_{k'-1}$, in which case the two sets
clearly coincide. Put differently, this finite family of sets is  pairwise disjoint. 
Since the union of $E_k^{i_1,\ldots,i_{k-1}}$
equals the entire probability space,
we can define a path functional 
$$
F(\tilde \x):=\sum_{k,i_1<\cdots<i_{k-1}} \1{E_k^{i_1,\ldots,i_{k-1}}} 
\prod_{l=0}^{k-1} F_{i_l,i_{l+1}}\left(\varrho^{a_{i_l+1}}_{e^{\tilde r}_{\tilde L_{u_p}}}(a_p),e^{\tilde r}_{\tilde L_{u_p}}(a_p); i_l+1\leq p\leq  i_{l+1}\right).
$$
Note that $F$ 
is defined $\Pr$-a.s. on $\Omega$ and is measurable.
Furthermore, 
$F$ 
is a function only of the radial component $\tilde r= \|\tilde \x\|$ of 
$\tilde \x$. 
By Proposition~\ref{prop:time_excurion}, we get
$\Pr[\tilde \x_{u_1}\in B_1,\ldots, \tilde \x_{u_n}\in B_n\vert \cF_{L^{-1}_{L_\tau}}\vee \cF_\infty^r]=F(\tilde \x)$.
An identical argument applied to $\x$  (with $\tau\equiv0$)
yields
$\Pr[\x_{u_1}\in B_1,\ldots, \x_{u_n}\in B_n\vert\cF_0\vee \cF_\infty^r]=F(\x)$. 
By the strong Markov property of $r$, 
the process 
$\tilde r$, and therefore
$F(\tilde \x)$, is independent of 
$\cF_{L^{-1}_{L_\tau}}$.
Hence 
$\Pr[\tilde \x_{u_1}\in B_1,\ldots, \tilde \x_{u_n}\in B_n\vert \cF_{L^{-1}_{L_\tau}}]= \Exp[F(\tilde \x)]$
a.s. Since the laws of $r$ and $\tilde r$ coincide, we have
$\Exp[F(\tilde \x)]= \Exp[F(\x)]=\Pr[\x_{u_1}\in B_1,\ldots, \x_{u_n}\in B_n]$.
This concludes the proof of (a). 

\noindent (b) As before it is sufficient to show
$\Pr[\x_{u_1}\in B_1,\ldots, \x_{u_n}\in B_n]= \Pr'[\y_{u_1}\in B_1,\ldots, \y_{u_n}\in B_n]$ 
for any 
$0<u_1<u_2<\cdots<u_n$
and
measurable sets $B_1,\ldots,B_n\subseteq\R^d$,
where 
$\Pr'[\cdot]$ is
the probability measure on the space where $\y$ is defined. 
Proposition~\ref{prop:time_excurion} implies this statement, using the same argument as in part (a) 
as the processes $\|\x\|$ and $\|\y\|$ have the same law.
\end{proof}

\begin{corollary}\label{thm:law-of-excursions}
Let $\x$ be a solution of SDE~\eqref{eqn:x-SDE-sym} started at $\0$. 
The point process $e^\x$ on $\cE_d^+\cup\{\delta_d\}$, defined in~\eqref{eq:Def_PPP_e_x},  
is a PPP with excursion measure characterised in Proposition~\ref{prop:beta_Psi}.
\end{corollary}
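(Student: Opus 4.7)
The plan is, for each $a>0$, to identify the restriction of $e^\x$ to $\cE_d^{(a)}$ in law with the marked-and-mapped PPP $e^{r,\Psi,a}$ constructed in the proof of Proposition~\ref{prop:beta_Psi}, and then to let $a\downarrow 0$, using the consistency already established there.

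First I would fix $a>0$ and analyse the enumerated point process $(\mu^n_a, e^\x_{\mu^n_a})_{n\geq 1}$ on $\RP\times \cE_d^{(a)}$. Iterating Theorem~\ref{thm:master_identity_excursion} with $\tau=L^{-1}_{\mu^{n-1}_a}$ (a finite $(\cF_t)$-stopping time, being a stopping time for the subordinator $L^{-1}$) shows that $(e^\x_{\mu^n_a})_{n\geq 1}$ is i.i.d.\ with common law $\mu_r\otimes \Pr_\Psi[\Phi_a^{-1}(\cdot)]/\mu_r(\cE^{(a)}_1)$. Simultaneously, classical subordinator theory for $L^{-1}$ gives that the arrival epochs $(\mu^n_a)_{n\geq 1}$ form a Poisson process of rate $\mu_r(\cE^{(a)}_1)$ on the local-time axis, all of whose data is $\cF^r_\infty$-measurable.

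Next I would establish the required joint PPP structure via a conditioning argument on $\cF^r_\infty$. Writing $(e^r_{\mu^n_a},\theta^{a,n}):=\Phi_a^{-1}(e^\x_{\mu^n_a})$, Lemma~\ref{prop:excursion-angular} gives, for each fixed $n$ and $\tau=L^{-1}_{\mu^{n-1}_a}$, that $\theta^{a,n}$ has conditional law $\Pr_\Psi$ given $\cF_{L^{-1}_{\mu^{n-1}_a}}\vee\cF^r_\infty$; since each $\theta^{a,k}$ is a Borel functional of the single excursion $e^\x_{\mu^k_a}$ and therefore $\cF_{L^{-1}_{\mu^{n-1}_a}}$-measurable for $k<n$, induction yields that $(\theta^{a,n})_{n\geq 1}$ is i.i.d.\ $\Pr_\Psi$ conditional on $\cF^r_\infty$. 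Because the radial point process $(\mu^n_a,e^r_{\mu^n_a})_n$ is itself a PPP of intensity $\ud\ell\otimes\mu_r|_{\cE^{(a)}_1}$ by classical It\^o excursion theory, the joint law of $(\mu^n_a,\Phi_a(e^r_{\mu^n_a},\theta^{a,n}))_n$ coincides with that of $(\mu^n_a,\Phi_a(e^r_{\mu^n_a},\Psi^n))_n$ for an external i.i.d.\ sequence $(\Psi^n)$ of common law $\Pr_\Psi$, independent of $e^r$. The Marking and Mapping theorems then identify this as a PPP on $\RP\times\cE_d^{(a)}$ of intensity $\ud\ell\otimes\mu_r\otimes\Pr_\Psi[\Phi_a^{-1}(\cdot)]=\ud\ell\otimes\nu|_{\cE_d^{(a)}}$, by Proposition~\ref{prop:beta_Psi}.

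Finally, letting $a\downarrow 0$ and invoking the consistency proved in the Claim within the proof of Proposition~\ref{prop:beta_Psi} identifies $e^\x$ on $\cE_d^+\cup\{\delta_d\}$ as a PPP with excursion measure $\nu$. The main technical obstacle is the i.i.d.\ conditional structure of the angular marks $(\theta^{a,n})$: $\x$ is not a priori strong Markov, so one cannot simply restart it at the start of each excursion. Instead, the conditional form of Lemma~\ref{prop:excursion-angular} applied at the stopping times $L^{-1}_{\mu^{n-1}_a}$, together with the measurability of the earlier angular marks $\theta^{a,k}$ ($k<n$) with respect to $\cF_{L^{-1}_{\mu^{n-1}_a}}$, is what enables the Marking theorem to apply.
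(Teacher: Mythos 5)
Your proof is correct, but it is a genuinely different route from the paper's. The paper establishes Corollary~\ref{thm:law-of-excursions} via Corollary~\ref{cor:almost_strong_Markov}(a) (itself resting on the finite-dimensional-distribution machinery of Proposition~\ref{prop:time_excurion}): first a ``restart'' property --- $\x_{L^{-1}_\la+\cdot}$ is independent of $\cF_{L^{-1}_\la}$ with the same law as $\x$ --- and then, for each measurable $\fA\subseteq\cE_d^{(a)}$, this restart property yields stationary independent increments of the counting process $N^\fA$, hence that $N^\fA$ is a $(\cG_\la)$-Poisson process, and the PPP structure follows by disjointness. You instead bypass Proposition~\ref{prop:time_excurion} and Corollary~\ref{cor:almost_strong_Markov} entirely, iterating Theorem~\ref{thm:master_identity_excursion} and Lemma~\ref{prop:excursion-angular} directly along the stopping times $L^{-1}_{\mu^{n-1}_a}$ to get the conditional-on-$\cF^r_\infty$ i.i.d.\ structure of the angular marks $(\theta^{a,n})_n$, and then invoke Marking and Mapping once for the radial excursion PPP. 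Both routes need the same core inputs (Theorem~\ref{thm:master_identity_excursion} and Lemma~\ref{prop:excursion-angular}); your argument is more economical if the sole goal is Corollary~\ref{thm:law-of-excursions}, whereas the paper's detour through Proposition~\ref{prop:time_excurion} and Corollary~\ref{cor:almost_strong_Markov} produces auxiliary statements that the paper reuses elsewhere (notably in Corollary~\ref{cor:marginal}). Two points deserve slightly more care in your write-up: the consistency step as $a\downarrow 0$ should note that a point process whose restriction to each $\cE_d^{(a)}$ (all of finite $\nu$-measure) is a PPP with intensity $\nu\rvert_{\cE_d^{(a)}}$ is itself a PPP with intensity $\nu$ on $\cE_d^+$; and the observation that conditional i.i.d.\ with the fixed law $\Pr_\Psi$ given $\cF^r_\infty$ in fact implies unconditional i.i.d.\ \emph{and} independence of the whole sequence $(\theta^{a,n})_n$ from $\cF^r_\infty$, which is exactly the form of independence the Marking theorem requires.
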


\begin{proof}
Let $\x$ be adapted to $(\cF_t,t\geq0)$.
Pick $\la\in\RP$
and recall that 
$L^{-1}_\la$ is an $(\cF_t)$-stopping time. 
Define 
$\tilde \x=(\tilde \x_t, t\geq0)$
by 
$\tilde \x_t:=\x_{L^{-1}_{\la}+t}$. 

\noindent \noindent \textbf{Claim 1.} The process $\tilde \x$ is independent of
$\cF_{L^{-1}_{\la}}$
and its law is equal to that of $\x$.

\noindent \textit{Proof of Claim 1.} 
Define an $(\cF_t)$-stopping time 
$\tau:=\inf\{t\geq0: L_t\geq \la\}$.
Since the local time $L$ is continuous and 
$\lim_{t\uparrow\infty}L_t=\infty$ a.s., it holds that 
$\Pr[L_\tau = \la]=\Pr[\tau<\infty]=1$.
In particular, 
$L^{-1}_{\la}=L^{-1}_{L_\tau}$
and, by Corollary~\ref{cor:almost_strong_Markov}(a), 
the claim follows. 

Define the filtration $(\cG_\la,\la\geq0)$ by
$\cG_\la:=\cF_{L^{-1}_\la}$.
Pick 
$a>0$ and a measurable set 
$\fA\in\cE_d^{(a)}$.

\noindent \textbf{Claim 2.}  The counting process $N^\fA=(N^\fA_\la,\la\geq0)$, where $N^\fA_\la$ equals the cardinality of the
set $\{s\in(0,\la]: e^\x_s\in\fA\}$, is a $(\cG_\la)$-Poisson process with intensity $\mu_r\otimes\Pr_\Psi[\Phi_a^{-1}(\fA)]$. 

Before proving the claim, note that it implies that $e^\x$ is a PPP with excursion measure $\nu$
from Proposition~\ref{prop:beta_Psi}.
Indeed, for disjoint sets 
$\fA_1,\ldots, \fA_n$ in $\cE_d^{(a)}$,
the respective counting processes 
$N^{\fA_1},\ldots, N^{\fA_n}$
are, by Claim~2, 
$(\cG_\la)$-Poisson processes that
cannot jump simultaneously. Hence they must be independent.  For any collection of disjoint sets
$\fA_1\times (s_1,t_1],\ldots, \fA_n\times (s_n,t_n]$ in $\cE_d^+\times \RP$ satisfying 
$0<\nu(\fA_j)<\infty$ for all $j\in\{1,\ldots,n\}$, 
by Proposition~\ref{prop:beta_Psi} there exists $a>0$ such that 
all the sets are contained in $\cE_d^{(a)}\times \RP$. 
Furthermore, the numbers of points of $e^\x$ in each of the sets is given by $n$
independent Poisson rvs $N^{\fA_j}_{t_j}-N^{\fA_j}_{s_j}$
with intensities $(t_j-s_j)\nu(\fA_j)$.

\smallskip

\noindent \textit{Proof of Claim 2.} 
It is clear from the definition of $N^\fA$ that it is adapted to $(\cG_\la,\la\geq0)$.
Pick $\la,\mu\in\RP$. It is sufficient to prove that $N^\fA_{\mu+\la}-N^\fA_{\la}$
is independent of $\cG_\la$ and has the same law as $N^\fA_{\mu}$. 
The number of excursions of $\x$ 
in $\fA$
completed during the time interval $(L^{-1}_\la,L^{-1}_{\la+\mu}]$
is by construction equal to the number $\tilde N^\fA_\mu$ of excursions 
in $\fA$ of $\tilde \x$ from Claim~1, completed in the time interval
$(0,\tilde L_\mu^{-1}]$. 
Recall that 
$\tilde L_\mu^{-1}=L^{-1}_{\la+\mu}-L^{-1}_\la$
is the inverse local time at the origin of $\tilde r = \|\tilde \x\|$, and hence of $\tilde \x$.
Since, by Claim~1, $\tilde \x$ is independent of $\cG_\la$, 
so is $\tilde N^\fA_\mu=N^\fA_{\mu+\la}-N^\fA_{\la}$.
Since, by Claim~1, the laws of $\x$ and $\tilde \x$ coincide, so do the laws of 
$N^\fA_\mu$ 
and
$\tilde N^\fA_\mu$. This concludes the proof of Claim 2. 
\end{proof}

\section{Invariance principle}
\label{sec:invariance}


%

\subsection{Invariance principle with discontinuous coefficients}
\label{sec:invariance_conditional}

Recall that
$\cD_d = \cD(\RP ; \R^d)$ is a space of functions $x : \RP \to \R^d$ that are right-continuous
and have left limits
(i.e. $x(t):=\lim_{s\downarrow t}x(s)$ for any $t\in\RP$,
$x(t-):=\lim_{s\uparrow t}x(s)$ exists in $\R^d$ for any
$t>0$
and, by convention, $x(0-):=x(0)$). 
We endow $\cD_d$ with the Skorohod metric (see e.g.~\cite[\S 3.5]{ek}). 
By~\cite[Prop~3.5.3, p.~119]{ek},
the induced topology on the continuous functions $\cC_d = \cC(\RP ; \R^d)$ 
coincides with the compact-open topology.
Theorem~\ref{thm:conditional_invariance} may be viewed as an extension of~\cite[Thm~7.4.1, p.~354]{ek} to 
a setting with discontinuous coefficients. It is key in establishing 
Theorem~\ref{thm:invariance}.

\begin{theorem}
\label{thm:conditional_invariance}
Let $a = (a_{ij}):\R^d\to\R^d\otimes\R^d$ be a bounded 
function that is continuous on $\R^d \setminus \{ \0 \}$,
with image contained in the set of 
symmetric, non-negative definite  matrices in 
$\R^d\otimes\R^d$.
Suppose that the $\cC_d$ martingale problem for 
$(G,v)$
is well-posed,
where
$Gf := \frac{1}{2} \sum a_{ij} \partial_i \partial_j f$ (for a smooth $f:\R^d\to\R$ with compact support)  
and a distribution $v$ on $\R^d$. 
For $n \in \N$, let $Z_n$ be a process with sample paths in $\cD_d$ and let
$A_n = (A_n^{ij} )$ be a symmetric 
$\R^d\otimes\R^d$-valued process
started at zero,
such that $A_n^{ij}$ has sample paths in $\cD_1$ and $A_n (t) - A_n (s)$
is non-negative definite for all $t > s \geq 0$. Set $\cF_t^n: = \sigma ( Z_n (s), A_n (s), s \leq t )$.
Suppose that $Z_n^i$ 
and 
$Z_n^i Z_n^j - A_n^{ij}$
are $\cF_t^n$-adapted local martingales
for each  $i, j \in \{1,\ldots, d\}$. 
Let $\tau_n^r := \inf \{ t \geq 0 : \| Z_n (t) \| \geq r \text{ or } \| Z_n (t-) \| \geq r \}$
(with convention $\inf \emptyset:=\infty$)
and 
suppose that for every $r >0$, $T>0$, and $i,j \in \{1,\ldots,d\}$,
\begin{align}
\label{ip1}
\lim_{n \to \infty} \Exp \left[ \sup_{0 \leq t \leq T \wedge \tau_n^r} \left\| Z_n (t) - Z_n (t-) \right\|^2 \right] & = 0 ;\\
\label{ip2}
\lim_{n \to \infty} \Exp \left[ \sup_{0 \leq t \leq T \wedge \tau_n^r} \left| A^{ij}_n (t) - A^{ij}_n (t-) \right| \right] & = 0 ;
\end{align}
and, as $n \to \infty$,
\begin{equation}
\label{ip3}
\sup_{0 \leq t \leq T \wedge \tau_n^r} \left| A_n^{ij}(t) - \int_0^t a_{ij} ( Z_n (s) ) \ud s \right| \toP 0,
\end{equation}
where $\toP$ denotes convergence in probability and $t\wedge s=\min\{r,s\}$ for $s,t\in[0,\infty]$. 
Assume $\sup_{n\in\N} \Exp \|Z_n(0)\|^2 <\infty$. 
Suppose that 
$Z_n (0)$ 
and 
$\| Z_n \|$
converge weakly to a probability law $v$ on $\R^d$
and the law of a Bessel process of dimension greater than one, respectively.
Then $Z_n$ converges weakly to the solution of the martingale problem for $(G,v)$. 
\end{theorem}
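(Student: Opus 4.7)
The plan is to argue by tightness, extraction of subsequential weak limits, identification of each limit as a solution of the martingale problem for $(G,v)$, and appeal to the assumed well-posedness to conclude weak convergence of the full sequence. The overall scheme mirrors~\cite[Thm~7.4.1]{ek}, but the genuine novelty is handling the discontinuity of $a$ at $\0$ via quantitative control on the time $Z_n$ spends in a neighbourhood of the origin, coming from the Bessel hypothesis on $\|Z_n\|$.

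First I would establish tightness of $\{Z_n\}$ in $\cD_d$ using the local-martingale property of $Z_n^i$ and $Z_n^iZ_n^j-A_n^{ij}$, the uniform bound on $a$, assumption \eqref{ip3}, and $\sup_n\Exp\|Z_n(0)\|^2<\infty$, via a standard martingale tightness criterion in the spirit of Aldous, e.g.~\cite[Thm~3.8.6]{ek}. Condition \eqref{ip1} then guarantees that every subsequential weak limit is concentrated on $\cC_d$, so, after passing to a subsequence and using Skorohod's representation, we may assume $Z_{n_k}\to Z$ a.s.\ in the compact-open topology with $Z$ continuous; by $Z_n(0)\Rightarrow v$, $Z(0)$ has law $v$.

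The main step is to show $Z$ solves the martingale problem for $(G,v)$. Fix $f\in\cC_c^\infty(\R^d)$. An It\^o calculation, using the martingale properties of $Z_n^i$ and $Z_n^iZ_n^j-A_n^{ij}$ and \eqref{ip1}--\eqref{ip2} to control jump contributions, shows that
\begin{equation*}
M_n^f(t):=f(Z_n(t))-f(Z_n(0))-\tfrac{1}{2}\sum_{i,j}\int_0^t a_{ij}(Z_n(s))\,\partial_i\partial_j f(Z_n(s))\,ds
\end{equation*}
differs from an $(\cF_t^n)$-martingale by an error vanishing in $L^1$ on compacts, where \eqref{ip3} has been invoked to replace the bracket $A_n^{ij}$ by $\int_0^\cdot a_{ij}(Z_n(s))\,ds$. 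The hard part is passing to the limit inside this integral, since $a$ is discontinuous at $\0$ and hence $\bx\mapsto a_{ij}(\bx)\partial_i\partial_j f(\bx)$ is not a continuous functional of the path. For each $\eps>0$ pick a continuous cutoff $\chi_\eps:\RP\to[0,1]$ equal to $1$ on $[0,\eps/2]$ and to $0$ on $[\eps,\infty)$, and split the integrand as $a_{ij}\partial_i\partial_j f\cdot(1-\chi_\eps(\|\cdot\|))+a_{ij}\partial_i\partial_j f\cdot\chi_\eps(\|\cdot\|)$. The first piece is bounded and continuous in $\bx$, so its integral along $Z_n$ converges weakly to the analogous integral along $Z$ by continuous mapping and bounded convergence. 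The second piece is dominated in absolute value by $\|a\|_\infty\|\partial_i\partial_j f\|_\infty\int_0^t\chi_\eps(\|Z_n(s)\|)\,ds$, and $\|Z_n\|\Rightarrow r$ together with continuity of $x\mapsto\int_0^t\chi_\eps(x(s))\,ds$ on $\cC([0,t];\RP)$ yields $\int_0^t\chi_\eps(\|Z_n(s)\|)\,ds\Rightarrow\int_0^t\chi_\eps(r_s)\,ds$. Since $r$ is Bessel of dimension $>1$, its zero set has Lebesgue measure zero a.s., so the right-hand side tends to $0$ in $L^1$ as $\eps\downarrow 0$ by dominated convergence; this is precisely the mechanism exploited in the proof of Lemma~\ref{lem:a_integral_convergence}. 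Interchanging $n\to\infty$ and $\eps\downarrow 0$ yields convergence of the integrals, and hence that $f(Z(t))-f(Z(0))-\int_0^t Gf(Z(s))\,ds$ is a martingale for the canonical filtration of $Z$.

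Thus $Z$ solves the $\cC_d$-martingale problem for $(G,v)$. By well-posedness the law of $Z$ is unique, so every subsequential weak limit of $\{Z_n\}$ coincides with it, and the entire sequence converges weakly to the solution. The single obstacle worth calling out is the uniform control of the occupation measure of $Z_n$ near $\0$; without the Bessel hypothesis on $\|Z_n\|$ (or an analogous quantitative input) the discontinuity of $a$ at the origin could prevent passage to the limit in the quadratic variation.
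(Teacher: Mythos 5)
Your high-level scheme (tightness, subsequential limits, identification via martingale problem, well-posedness to pin down the limit) coincides with the paper's, and you have correctly identified the crucial mechanism: split the coefficient with a cutoff near $\0$, control the ``bad'' piece by the occupation time of $Z_n$ near the origin, and then exploit the Bessel hypothesis on $\|Z_n\|$ to send that occupation time to zero. Your pointwise-convergence argument (the zero set of a Bessel of dimension~$>1$ has null Lebesgue measure) is actually a slightly slicker way to see that $\int_0^t \chi_\eps(r_s)\,ds \to 0$ than the paper's explicit occupation-time-formula/local-time bound in Lemma~\ref{lem:a_integral_convergence}, though the interchange of $n\to\infty$ and $\eps\downarrow 0$ that you gloss over with ``interchanging'' is exactly where the technical work of that lemma lives.

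There is, however, a genuine gap earlier on: you claim tightness of the \emph{unstopped} sequence $\{Z_n\}$, but the hypotheses \eqref{ip1}--\eqref{ip3} only provide control (of jumps and of $A_n^{ij} - \int_0^\cdot a_{ij}(Z_n)$) up to the stopping time $\tau_n^r$. Beyond $\tau_n^r$ you have no quantitative control of $A_n$ whatsoever, and in particular no $L^1$ estimate. The tightness criterion you cite, \cite[Thm~3.8.6]{ek}, requires an $L^1$ bound of the form $\lim_{\delta\to 0}\limsup_n \Exp\gamma_n(\delta)=0$, and the required uniform integrability is simply unavailable once the process leaves the ball of radius $r$. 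Moreover, $Z_n^i Z_n^j - A_n^{ij}$ is only a \emph{local} martingale; the paper introduces the additional stopping time $\eta_n$ precisely so that the doubly stopped process $\tilde Z_n^r$ satisfies a true martingale property (via a class~(DL) argument). The paper's resolution is to work with $\tilde Z_n^r = Z_n(\cdot\wedge\eta_n\wedge\tau_n^r)$ throughout, prove relative compactness and identify the limit for each $r$, and only at the end assemble the convergence $Z_n \Rightarrow Z$ by sending $r\to\infty$. That assembly requires one nontrivial ingredient you are missing entirely: continuity of the map $y\mapsto (\tau^r(y),\, y(\cdot\wedge\tau^r(y)))$ at the limiting path, which fails in general and holds only $\Pr$-a.s.\ for all but countably many $r$ (Lemmas~\ref{lem:first_hitting_time}--\ref{lem:continuity_of_mapa}). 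Without the stopping scheme and the ``all but countably many $r$'' bookkeeping, the appeal to continuous mapping for the hitting-time functionals, and hence the whole identification step, does not go through.

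Finally, the sentence about ``an It\^o calculation'' producing an error vanishing in $L^1$ is fine as a heuristic, but $Z_n$ has jumps, and you cannot directly apply the continuous-semimartingale It\^o formula. The paper avoids this by passing to the continuous limit $Z^{r_0}$ first (Lemma~\ref{lem:ip2} establishes that $Z^{r_0,i}Z^{r_0,j}-\int_0^\cdot a_{ij}(Z^{r_0})$ is a true martingale) and only then applies It\^o's formula for continuous semimartingales to conclude that $Z^{r_0}$ solves the stopped martingale problem.
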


The underlying idea for the proof of Theorem~\ref{thm:conditional_invariance}.
is standard: show that every subsequence 
of $(Z_n)_{n\in\N}$ has a further subsequence converging weakly to the law given by the solution of the martingale
problem $(G,v)$ (cf. proof of~\cite[Thm~7.4.1, p.~354]{ek}). 
Since $a$ in Theorem~\ref{thm:conditional_invariance} is bounded, $a_i := \sup_{x\in\R^d} a_{ii} (x)$ is finite for each $i\in \{1,\ldots, d\}$.
Since 
$A_n^{ii}(t)\geq A_n^{ii}(t-)$ for all $t\geq0$ and $i\in\{1,\ldots,d\}$,
$$\eta_n := \inf \left\{ t \geq 0 : \max_{1\leq i\leq d}\{A_n^{ii}(t)-a_i t\} \geq  1 \right\}$$
is an
$(\cF_t^n)$-stopping time. 
Since 
$\eta_n\geq \inf \{ t \geq 0 :\max_{1\leq i\leq d}| A_n^{ii}(t) - \int_0^t a_{ii} ( Z_n (s) ) \ud s | \geq 1\}$
and~\eqref{ip3} holds for any $T,r>0$,
we have that 
\begin{equation}
\label{inv:eta} 
\eta_n \toP \infty\qquad\text{as $n\to\infty$.}
\end{equation}
Define for given $r>0$, $n\in\N$ and $i, j \in \{1,\ldots, d\}$ 
the processes $\tilde Z_n^r$ and 
$\tilde A_n^{ij}$
by
\begin{equation}
\label{eq:tilda_procs}
\tilde Z_n^r (t) := Z_n ( t \wedge \eta_n \wedge \tau_n^r )\qquad \text{and}\qquad
\tilde A_n^{ij} (t ) := A_n^{ij} ( t \wedge \eta_n \wedge \tau_n^r ) ,
\end{equation}
respectively ($\tilde A_n^{ij}$ depends on $r$ but this is suppressed from the notation 
as it is clear from the context). 
Observe that for any $T>0$ and 
$(\cF_t^n)$-stopping time $\tau$
less than $T$, the modulus of any component of 
$\tilde Z_n^r (\tau)-\tilde Z_n^r(0)$
is bounded above by an integrable random variable:
\begin{equation}
\label{eq:Compact_bound}
\|\tilde Z_n^r(\tau)-\tilde Z_n^r(0)\| \leq 2r+\sup_{0 \leq t \leq T \wedge \tau_n^r} \left\| Z_n (t) - Z_n (t-) \right\|.
\end{equation}
Since 
$\tilde Z_n^r (0)=Z_n(0)$
is integrable by assumption,
the local martingale $\tilde Z_n^r$ is of class (DL) and therefore a martingale~\cite[Ch.~IV, Prop.~1.7]{ry}.
An analogous argument, relying on~\eqref{ip1}--\eqref{ip2}, the inequality
$|\tilde Z_n^{r,i} \tilde Z_n^{r,j}|\leq  (\tilde Z_n^{r,i})^2 +(\tilde Z_n^{r,j})^2$ and the square integrability
of $\|Z_n(0)\|$, shows that 
$\tilde Z_n^{r,i} \tilde Z_n^{r,j} - \tilde A_n^{ij}$
is also a martingale. 
Furthermore, since $A_n^{ii}(0)=0$ for all indices $i\in \{1,\ldots, d\}$, for any $t\geq0$ 
we have
\begin{equation}
\label{eq:inv1}
\tilde A_n^{ii} ( t ) \leq a_i t + 1 + \sup_{0 \leq s \leq t \wedge \tau_n^r} \left( A_n^{ii} (s) - A_n^{ii} (s-) \right) .
\end{equation}


\begin{lemma}
\label{lem:ip1}
For each $r >0$, the sequence of the laws of processes $(\tilde Z^r_n)_{n\in\N}$ 
on $\cD_d$ 
is relatively compact
in the metric space of 
all probability measures on $\cD_d$ with the Prohorov metric.\footnote{See~\cite[\S~3.1, p.~96]{ek} for the definition 
and properties of the Prohorov metric on the set of probability measures defined on a Borel $\sigma$-algebra on a metric space. In this context we 
use the Skorohod metric $d$ on $\cD_d$, cf.~\cite[\S~3.5, p.~116]{ek}. The induced topology is the one of weak convergence of 
probability measures~\cite[Thm~3.3.1, p.~108]{ek}.}
\end{lemma}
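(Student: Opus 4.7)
The strategy is to establish tightness of $(\tilde Z^r_n)_{n\in\N}$ in the Skorohod space $\cD_d$, which by Prohorov's theorem~\cite[\S~3.2, p.~104]{ek} is equivalent to relative compactness, via Aldous's criterion. It suffices to verify two conditions: (i) compact containment, $\lim_{K\to\infty}\sup_n\Pr[\sup_{t\leq T}\|\tilde Z^r_n(t)\| > K] = 0$ for every $T > 0$; and (ii) the Aldous increment estimate,
\[
\lim_{\delta\downarrow 0}\limsup_{n\to\infty}\Pr\bigl[\|\tilde Z^r_n(\tau_n+\delta) - \tilde Z^r_n(\tau_n)\| > \eps\bigr] = 0,\qquad \eps > 0,
\]
uniformly over $(\cF^n_t)$-stopping times $\tau_n \leq T$.

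Both conditions will be extracted from the $L^2$-martingale structure of $\tilde Z^r_n$ noted after~\eqref{eq:Compact_bound}: since $\tilde Z^r_n$ and $\tilde Z^{r,i}_n\tilde Z^{r,j}_n - \tilde A^{ij}_n$ are martingales, optional stopping at a bounded stopping time $\sigma$ yields the polarization identity
\[
\Exp\|\tilde Z^r_n(\sigma) - \tilde Z^r_n(0)\|^2 = \sum_{i=1}^d\Exp\bigl[\tilde A^{ii}_n(\sigma)\bigr].
\]
Taking expectations in~\eqref{eq:inv1} and using~\eqref{ip2} gives $\sup_n\Exp[\tilde A^{ii}_n(T+1)] < \infty$. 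Combined with $\sup_n\Exp\|Z_n(0)\|^2 < \infty$ and Doob's $L^2$-inequality, this yields $\sup_n\Exp\sup_{t\leq T}\|\tilde Z^r_n(t)\|^2 < \infty$, whence Markov's inequality gives~(i).

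For~(ii), set $\phi_n(t) := \int_0^t a_{ii}(Z_n(s))\ud s$ and $B^{ii}_n(t) := A^{ii}_n(t) - \phi_n(t)$. Since $\phi_n$ is $a_i$-Lipschitz, telescoping gives
\[
\tilde A^{ii}_n(\tau_n+\delta) - \tilde A^{ii}_n(\tau_n) \leq a_i\delta + 2\sup_{t \leq (T+\delta)\wedge\eta_n\wedge\tau^r_n}\bigl|B^{ii}_n(t)\bigr|.
\]
By~\eqref{ip3} with horizon $T+\delta$ the supremum on the right tends to $0$ in probability; it is dominated by the uniformly integrable random variable $\tilde A^{ii}_n(T+\delta) + a_i(T+\delta)$, where the uniform integrability of $\tilde A^{ii}_n(T+\delta)$ follows from~\eqref{eq:inv1} together with~\eqref{ip2}. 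Hence the supremum converges also in $L^1$, and applying optional stopping to $\tilde Z^{r,i}_n\tilde Z^{r,j}_n - \tilde A^{ij}_n$ at $\tau_n$ and $\tau_n+\delta$ gives
\[
\limsup_{n\to\infty}\Exp\|\tilde Z^r_n(\tau_n+\delta) - \tilde Z^r_n(\tau_n)\|^2 \leq \delta\sum_{i=1}^d a_i,
\]
so~(ii) follows from Markov's inequality and $\delta\downarrow 0$.

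The only delicate point is the passage from the convergence in probability in~\eqref{ip3} to $L^1$-convergence of the error term for the increments of $\tilde A^{ii}_n$, which is precisely what the domination argument above furnishes; everything else is a standard application of optional stopping and Doob's inequality.
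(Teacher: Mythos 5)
Your proof is correct, but it follows a genuinely different route from the paper's. The paper invokes the conditional-increment criterion of Ethier--Kurtz~\cite[Thm~3.8.6]{ek}: it defines the pathwise quantity $\gamma_n(\delta) = \sup_{0\leq t\leq T\wedge\tau^r_n}\sum_i(\tilde A^{ii}_n(t+\delta)-\tilde A^{ii}_n(t))$, observes via the martingale property that $\gamma_n(\delta)$ dominates the conditional second moment of the increments, and shows $\lim_\delta\limsup_n\Exp\gamma_n(\delta)=0$. You instead use Aldous's stopping-time criterion, controlling $\Exp\|\tilde Z^r_n(\tau_n+\delta)-\tilde Z^r_n(\tau_n)\|^2$ directly by optional stopping at stopping times $\tau_n\leq T$. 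For the compact containment condition the paper uses the crude pathwise bound~\eqref{eq:Compact_bound} directly, whereas you derive a uniform $L^2$ bound on $\sup_{t\leq T}\|\tilde Z^r_n(t)\|$ via Doob's inequality; both work, and yours is only marginally heavier. The essential technical content, however, coincides in both arguments: to tame the increments of $\tilde A^{ii}_n$ over a time window of length $\delta$ one compares $A^{ii}_n$ with $\int a_{ii}(Z_n)\ud s$ using~\eqref{ip3}, and upgrades convergence in probability to $L^1$ by establishing uniform integrability from~\eqref{ip2} and the a priori bound~\eqref{eq:inv1}. So the two proofs differ chiefly in the choice of tightness criterion (a deterministic-$\sup$ conditional-expectation bound versus an optional-stopping bound), not in the substantive estimates; for martingale-type processes the two criteria are essentially interchangeable, and your version has the modest advantage of reducing the core estimate to an unconditional (scalar) expectation at the cost of quantifying over stopping times.
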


\begin{proof}
We prove the lemma by establishing the sufficient condition 
for the relative  compactness of the sequence $(\tilde Z^r_n)_{n\in\N}$ 
given in~\cite[Thm~3.8.6, pp.~137--138]{ek}.
Fix an arbitrary $T>0$  and
let $B_K$ denote a closed ball of radius $K>2r+1$ in $\R^d$. 
Note that the bound in~\eqref{eq:Compact_bound}
and the Markov inequality 
imply
\begin{align*}
\Pr\left[\tilde Z_n^r(t)\in B_K\> \text{ for all $t\in[0,T]$} \right]
\geq &
\Pr\left[2r +\|Z_n(0)\| + \sup_{0 \leq t \leq T \wedge \tau_n^r} \left\| Z_n (t) - Z_n (t-) \right\| \leq K \right] \\ 
\geq & 1 - \frac{C_0}{K-2r}\qquad \text{for all $n\in\N$,}
\end{align*}
where $C_0>0$ 
depends on the quantities
$\sup_{n \in \N} \Exp \left[ \sup_{0 \leq t \leq T \wedge \tau_n^r} \left\| Z_n (t) - Z_n (t-) \right\|^2 \right]$ 
and
$\sup_{n\in\N} \Exp \|Z_n(0)\|^2$,
which are finite by assumption. 
As $K$ is independent of $n$ and can be arbitrarily large, the compact containment condition~\cite[Eq.~(7.9), p.~129]{ek}
holds
for $(\tilde Z_n^r)_{n\in\N}$. Hence condition~(a)
of~\cite[Thm~3.7.2]{ek}, 
also assumed in~\cite[Thm~3.8.6, pp.~137--138]{ek}, 
holds. 

Since 
$\tilde Z_n^{r,i}$
and
$(\tilde Z_n^{r,i})^2- \tilde A_n^{ii}$ are martingales for all 
$i\in \{1,\ldots, d\}$,
it holds that 
$$
\Exp \left[  \left\| \tilde Z^r_n (t+h) - \tilde Z^r_n (t) \right\|^2 \Big\vert \cF_t^n \right]  = 
\Exp \left[\sum_{i=1}^d \left( \tilde A^{ii}_n ( t + h) - \tilde A_n^{ii} (t) \right) \Big\vert \cF_t^n \right]    
$$
for any $t,h\geq0$.
With this in mind, 
define
\[ \gamma_n (\delta) := \sup_{0 \leq t \leq T\wedge \tau_n^r} \sum_{i=1}^d \left( \tilde A^{ii}_n ( t + \delta) - \tilde A_n^{ii} (t) \right) \]
for any $\delta >0$.
In order to compare $\gamma_n(\delta)$ with the corresponding quantity for the limiting process, let
$$
\Gamma_n(\delta):=\gamma_n (\delta) - \sup_{t\in[0,T\wedge \tau_n^r]} \sum_{i=1}^d \int_t^{t+\delta} a_{ii} ( \tilde Z^r_n (s) ) \ud s. 
$$
Now we have from~\eqref{ip3} that
\[ \sup_{0 \leq t \leq T\wedge \tau_n^r} \left| \tilde A_n^{ii} (t+\delta) - \int_0^{t+\delta} a_{ii} ( \tilde Z^r_n (s) ) \ud s \right| 
\text{ and }
\sup_{0 \leq t \leq T\wedge \tau_n^r} \left| \tilde A_n^{ii} (t) - \int_0^{t} a_{ii} ( \tilde Z^r_n (s) ) \ud s \right| \]
both tend to zero in probability, implying that
$\Gamma_n(\delta)$ also tends to zero in probability:
\begin{equation}
\label{eq:Gamma_prob_lim_zero}
\left|\Gamma_n(\delta)\right| 
\leq \sup_{t\in[0 ,T\wedge \tau_n^r]} \sum_{i=1}^d \left| \tilde A_n^{ii} (t+\delta) - \tilde A_n^{ii} (t) - \int_t^{t+\delta} a_{ii} ( \tilde Z^r_n (s) ) \ud s \right| \toP 0.
\end{equation}
Since the upper bound in~\eqref{eq:inv1} is non-decreasing in $t$, we get 
\begin{equation*}
\left|\Gamma_n(\delta)\right| \leq \sum_{i=1}^d \left( 3a_i (T+\delta) + 2 + 2\sup_{s \in[0, (T+\delta)  \wedge \tau_n^r]} \left(A_n^{ii} (s) - A_n^{ii} (s-)\right) \right).
\end{equation*}
By~\eqref{ip2}
the right-hand side of this inequality converges in $L^1$ as $n\to\infty$. 
Thus the sequence $(\Gamma_n(\delta))_{n\in\N}$ must be uniformly integrable 
and hence by~\eqref{eq:Gamma_prob_lim_zero} converges to zero in $L^1$.
By adding and subtracting the relevant term we find
\begin{align*}
\limsup_{n \to \infty} \Exp \gamma_n (\delta)  \leq \limsup_{n\to\infty} \Exp \left|\Gamma_n(\delta)\right| 
+ \limsup_{n\to\infty} \Exp \sup_{t\in[0 ,T\wedge \tau_n^r]}  \sum_{i=1}^d \int_t^{t+\delta} a_{ii} ( \tilde Z^r_n (s) ) \ud s 
\leq \delta \sum_{i=1}^d  a_i.
\end{align*}
Hence it clearly holds that
$\lim_{\delta \to 0}  \limsup_{n \to \infty} \Exp \gamma_n (\delta) = 0$
and the
relative  compactness of $\tilde Z^r_n$ now follows from~\cite[Thm~3.8.6, p.~137--138]{ek}
(see also~\cite[Remark~8.7(b), p.~138]{ek}).
\end{proof}

For any path $x\in\cD_d$, we define the time $\tau^r(x)$ of its first contact with the complement of the open ball of radius $r$
in $\R^d$ (centred at the origin) by
\begin{equation}
\label{eq:contact_time_def}
\tau^r(x):=\inf\{t\geq0\>:\> \|x(t)\|\geq r \quad \text{or} \quad \|x(t-)\|\geq r\},
\end{equation}
where $\inf\emptyset = \infty $.
If it is clear from the context which path $x$ we are considering,
to simplify the notation we sometimes write $\tau^r$ for $\tau^r(x)$.
Note that if $x$ is continuous, then 
$\tau^r(x)=\inf\{t\geq0: \|x(t)\|\geq r\}$.
The following lemma is important in 
the proof of Theorem~\ref{thm:conditional_invariance}.

\begin{lemma}
\label{lem:first_hitting_time}
Let $\Pr$ be a probability measure on $\cD_d$. Then 
the complement in $\RP$ of the set 
$\{r\in\RP: \Pr[\lim_{s\to r}\tau^s=\tau^r]=1\}$
is at most countable, with $\tau^r$ defined in~\eqref{eq:contact_time_def}. 
\end{lemma}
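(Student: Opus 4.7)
The plan is to exploit the pathwise monotonicity of $r \mapsto \tau^r(x)$ and reduce the probabilistic claim to the classical fact that a monotone function on $\RP$ has at most countably many discontinuities.

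First I would observe that for every fixed $x \in \cD_d$, the map $r \mapsto \tau^r(x)$ is non-decreasing on $\RP$ with values in $[0,\infty]$: if $r_1 < r_2$, then the condition ``$\|x(t)\| \geq r_2$ or $\|x(t-)\| \geq r_2$'' implies the corresponding condition with $r_1$, so $\tau^{r_1}(x) \leq \tau^{r_2}(x)$. By monotonicity the one-sided limits
\begin{equation*}
\tau^{r-}(x) := \lim_{s \uparrow r} \tau^s(x), \qquad \tau^{r+}(x) := \lim_{s \downarrow r} \tau^s(x)
\end{equation*}
exist in $[0,\infty]$, and $\tau^{r-}(x) \leq \tau^r(x) \leq \tau^{r+}(x)$. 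Consequently, $\lim_{s \to r} \tau^s(x) = \tau^r(x)$ if and only if $\tau^{r-}(x) = \tau^{r+}(x)$.

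Next I would introduce a bounded, continuous, strictly increasing function $h : [0,\infty] \to \R$ on the one-point compactification (for instance $h(t) = \arctan(t)$ with $h(\infty) = \pi/2$), and define $\psi(r) := \Exp[h(\tau^r)]$. Since $r \mapsto h(\tau^r(x))$ is non-decreasing and uniformly bounded for each $x$, $\psi$ is non-decreasing and bounded on $\RP$, hence has at most countably many discontinuities. Let $D \subset \RP$ denote this (at most countable) discontinuity set.

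For any $r \notin D$, monotone convergence applied to the monotone approximations $h(\tau^s) \uparrow h(\tau^{r-})$ as $s \uparrow r$ and $h(\tau^s) \downarrow h(\tau^{r+})$ as $s \downarrow r$ gives
\begin{equation*}
\Exp[h(\tau^{r-})] = \psi(r-) = \psi(r) = \psi(r+) = \Exp[h(\tau^{r+})].
\end{equation*}
Combined with the almost-sure inequalities $h(\tau^{r-}) \leq h(\tau^r) \leq h(\tau^{r+})$, equality of expectations forces equality almost surely, and strict monotonicity of $h$ then yields $\tau^{r-} = \tau^r = \tau^{r+}$ $\Pr$-a.s. Hence $\Pr[\lim_{s \to r} \tau^s = \tau^r] = 1$ for every $r \in \RP \setminus D$, which is exactly the claim.

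There is no serious obstacle in this argument; the only minor care is to handle the value $\infty$ consistently (which is why $h$ is taken continuous on the compactified interval $[0,\infty]$) and to justify the monotone-convergence step, which is immediate from the pathwise monotonicity of $r \mapsto h(\tau^r(x))$.
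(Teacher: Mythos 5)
Your proof is correct, and it takes a genuinely different and more elementary route than the paper's. The paper first establishes, in Lemma~\ref{lem:hitting_time}, that $r\mapsto\tau^r(x)$ is \emph{deterministically} left-continuous with right limits, and then argues by contradiction: it introduces the sets $A_{\eps,\delta}^r:=\{s\in[0,r]:\Pr[\tau^{s+}>\tau^s+\eps]\geq\delta\}$, supposes one of them is uncountable, discards its (countably many) isolated points, and constructs from an accumulation of jump radii an infinite family of pairwise disjoint events each of probability at least $\delta$, contradicting finiteness of $\Pr$. That argument relies delicately on the structure of the jump times from Lemma~\ref{lem:hitting_time} (in particular that an infinite accumulation of $\eps$-jumps forces $\tau^r=\infty$) and yields a strictly stronger conclusion, recorded in the remark after the proof, that $A_{\eps,\delta}^r$ is in fact locally finite. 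Your approach instead collapses the problem to the real line: the function $\psi(r)=\Exp[h(\tau^r)]$, with $h$ a bounded strictly increasing homeomorphism of $[0,\infty]$, is monotone and bounded, hence has at most countably many discontinuities; at any continuity point, dominated convergence plus $\tau^{r-}\leq\tau^r\leq\tau^{r+}$ and strict monotonicity of $h$ force $\tau^{r-}=\tau^r=\tau^{r+}$ a.s. This is shorter, cleaner, does not require the left-continuity result of Lemma~\ref{lem:hitting_time} as a prerequisite, and is exactly what the stated lemma needs; the trade-off is that it does not recover the local-finiteness refinement that the paper's argument provides as a by-product.
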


To prove Lemma~\ref{lem:first_hitting_time} 
we first need to establish properties of the function 
$r\mapsto \tau^r$.

\begin{lemma}
\label{lem:hitting_time} Fix $x\in\cD_d$. The function  
$r\mapsto \tau^r(x)$, mapping $\RP$ into $[0,\infty]$, is non-decreasing, has right limits and is left continuous. 
Put differently, for any $r\in\RP$ 
the limit $\lim_{s\downarrow r} \tau^s=:\tau^{r+}$ exists in $[0,\infty]$
and, for $r>0$, 
it holds that $\lim_{s\uparrow r} \tau^s=\tau^r$. 
Furthermore, for any 
$r\in\RP$ the following hold: 
\begin{enumerate}
\item[(i)] if $\tau^r=\infty$ then $\lim_{s\to r}\tau^s=\tau^r$;
\item[(ii)] if $\tau^r<\infty$ then for any $\varepsilon>0$ there are at most finitely many $s\in[0,r]$ such that  $\tau^{s+}>\tau^s+\varepsilon$. 
\end{enumerate}
\end{lemma}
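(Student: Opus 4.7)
The plan is to extract the underlying structure by introducing the auxiliary function $f:\RP\to[0,\infty)$ given by $f(t):=\max(\|x(t)\|,\|x(t-)\|)$, so that $\tau^r(x)=\inf\{t\geq 0: f(t)\geq r\}$ for every $r\in\RP$. The central technical step will be that each super-level set $A_r:=\{t\geq 0: f(t)\geq r\}$ is closed in $[0,\infty)$, i.e. that $f$ is upper semicontinuous. I would verify this by a short case analysis: if $t_n\in A_r$ and $t_n\to t$, one considers separately the cases $t_n\downarrow t$ and $t_n\uparrow t$, each combined with whether $\|x(t_n)\|\geq r$ or $\|x(t_n-)\|\geq r$ holds infinitely often. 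In the first alternative, right-continuity (for $t_n\downarrow t$) or the definition of the left limit of a c\`adl\`ag path (for $t_n\uparrow t$) directly supplies $\|x(t)\|\geq r$ or $\|x(t-)\|\geq r$. In the second alternative, for each such $n$ I would pick an auxiliary point $u_n$ on the appropriate side of $t_n$, with $\|x(u_n)\|$ within $1/n$ of $\|x(t_n-)\|$, so that $u_n\to t$ and the first alternative applies with $u_n$ in place of $t_n$. Consequently, whenever $\tau^r<\infty$, the infimum in its definition is attained, so $f(\tau^r)\geq r$.

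Monotonicity of $r\mapsto \tau^r$ is then immediate from $A_s\supseteq A_r$ for $s\leq r$, and existence of the right limit $\tau^{r+}$ for $r\in\RP$ follows formally. For left continuity at $r>0$, fix $s_n\uparrow r$ and set $t_n:=\tau^{s_n}$, which is non-decreasing in $n$ and bounded above by $\tau^r$ (the case $\tau^r=\infty$ is handled analogously). Writing $t_0:=\lim_n t_n$, either $t_n=t_0$ eventually, in which case $f(t_0)\geq s_n\to r$ directly, or $t_n<t_0$ for all $n$; in the latter case, re-running the case analysis used for the closedness of $A_r$ (now with $t_n\uparrow t_0$ from below, using $f(t_n)\geq s_n\to r$) will force $\|x(t_0-)\|\geq r$, hence $f(t_0)\geq r$. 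Either way $t_0\in A_r$, so $t_0\geq\tau^r$; combined with $t_0\leq\tau^r$ this establishes left continuity. Part~(i) then follows at once: if $\tau^r=\infty$, monotonicity gives $\tau^s=\infty$ for all $s\geq r$, while left continuity provides the approach from below.

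For part~(ii), assume $\tau^r<\infty$. The function $s\mapsto \tau^s$ on $[0,r]$ is non-decreasing, left continuous, and has right limits, hence its only discontinuities are jumps $\tau^{s+}-\tau^s\geq 0$. For any strictly increasing sequence $s_1<s_2<\cdots<s_k<r$ of such jump points, monotonicity forces $\tau^{s_{i+1}}\geq \tau^{s_i+}$, so that $\sum_{i=1}^k(\tau^{s_i+}-\tau^{s_i})\leq \tau^{s_k+}-\tau^{s_1}\leq \tau^r$. Therefore at most $\lfloor \tau^r/\varepsilon\rfloor$ jumps of size exceeding $\varepsilon$ can occur inside $[0,r)$, and adding the possible single jump at $s=r$ yields (ii). The main obstacle in the whole argument will be the closedness of $A_r$: one must carefully exploit the c\`adl\`ag structure on both sides of the limit point and the symmetric way in which $\|x(t)\|$ and $\|x(t-)\|$ enter the definition of $\tau^r$, so that the "left-limit" alternative can always be reduced to the "value" alternative via auxiliary sequences $u_n$.
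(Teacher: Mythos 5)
Your proof is correct, and it takes a route that is recognizably close to the paper's in its analytic content but differs in organization. The paper proves left continuity directly by contradiction: assuming $\alpha:=\lim_{s\uparrow r}\tau^s<\tau^r$, it picks points $t_k$ slightly to the right of $\tau^{s_k}$ where $\max\{\|x(t_k)\|,\|x(t_k-)\|\}\geq s_k$, passes to a monotone subsequence $t_k\to\beta'<\tau^r$, and uses right-continuity/left-limits to conclude $\max\{\|x(\beta')\|,\|x(\beta'-)\|\}\geq r$, a contradiction. You instead isolate the upper semicontinuity of $f(t)=\max(\|x(t)\|,\|x(t-)\|)$ — equivalently, the closedness of the super-level sets $A_r$ — as a preliminary lemma. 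This buys you attainment of the infimum at $\tau^{s_n}$, letting you set $t_n=\tau^{s_n}$ outright (so the sequence is automatically monotone) and push the same c\`adl\`ag facts through one clean USC step. It is the same underlying argument packaged as a reusable structural observation rather than an ad hoc contradiction. For part~(ii), both proofs use the disjoint intervals $[\tau^s,\tau^{s+})$; your version spells out the telescoping sum and correctly separates off the endpoint $s=r$, which is a small improvement — the paper's assertion that these intervals are contained in $[0,\tau^r]$ is not literally true at $s=r$ when $\tau^{r+}>\tau^r$, and your "add the possible single jump at $s=r$" fixes this. The only cosmetic gap in your write-up is that the attainment step $f(\tau^{s_n})\geq s_n$ presupposes $\tau^{s_n}<\infty$, which is not stated; but since $\tau^{s_n}=\infty$ for some $n$ forces $\tau^s=\infty$ for all $s\geq s_n$ (hence $\tau^r=\infty$ and $t_0=\infty$ trivially), this is harmless.
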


\begin{remark}
The topology on $[0,\infty]$ is that of the one-point compactification of $\RP$.
If $\tau^r(x)=\infty$, then the function 
$s\mapsto \tau^s(x)$ defined on $[0,r]$ may have an infinite number of jumps greater than any given positive constant.
If $\tau^r(x)<\infty$, then the inequality $\tau^{r+}(x)>\tau^r(x)$ may hold invalidating the limit in Lemma~\ref{lem:hitting_time}(i).
\end{remark}

\begin{proof}[Proof of Lemma~\ref{lem:hitting_time}]
It is clear from definition~\eqref{eq:contact_time_def} that 
$\tau^s\leq\tau^r$ for any
$0\leq s\leq r$.
Hence, for any $r\in\RP$ the limit 
$\tau^{r+}$
exists in $[0,\infty]$. 
Now fix
$r>0$.
The monotonicity  implies that there exists 
$\alpha:=\lim_{s\uparrow r} \tau^s\in[0,\infty]$
satisfying
$\alpha\leq\tau^r$. 
Assuming 
$\alpha<\tau^r$, 
for any
$\beta\in(\alpha,\tau^r)$
it holds that $\tau^s<\beta$ for all $s\in[0,r)$.
For any sequence $(s_k)_{k\in\N}$ in $[0,r)$,
such that $s_k\uparrow r$, by~\eqref{eq:contact_time_def} there exists a sequence 
$(t_k)_{k\in\N}$ such that 
$t_k\in(\tau^{s_k},\beta)$ and
\begin{equation}
\label{eq:upper_cadlag_bound}
\max\{\|x(t_k)\|,\|x(t_k-)\|\}\geq s_k\qquad\text{for all $k\in\N$.}
\end{equation}
By passing to a subsequence (again denoted by 
$(t_k)_{k\in\N}$),
we may assume that the limit
$\beta':=\lim_{k\to\infty} t_k$ exists in
$[0,\beta]$.
Moreover, by passing to a further subsequence, we may assume that
$(t_k)_{k\in\N}$ is monotonic, i.e.  
either 
$t_k\uparrow\beta'$ 
or
$t_k\downarrow\beta'$. 
Since $x$ is right continuous with left limits, 
in the case 
$t_k\uparrow\beta'$ 
we find 
$\|x(\beta'-)\|=
\lim_{k\to\infty}\|x(t_k)\|=
\lim_{k\to\infty}\|x(t_k-)\|$.
Hence~\eqref{eq:upper_cadlag_bound} yields 
$$
\|x(\beta'-)\|= 
\lim_{k\to\infty}
\max\{\|x(t_k)\|,\|x(t_k-)\|\}\geq 
\lim_{k\to\infty} s_k=r.
$$
Similarly,
if 
$t_k\downarrow\beta'$ 
we get
$
\lim_{k\to\infty}\|x(t_k)\|=
\lim_{k\to\infty}\|x(t_k-)\|=\|x(\beta')\|\geq r.
$
Hence the assumption 
$\alpha<\tau^r$
implies 
$\max\{\|x(\beta')\|,\|x(\beta'-)\|\}\geq r$
for some $\beta'\leq \beta<\tau^r$,
which is a contradiction. Therefore $\alpha=\tau^r$ and the left continuity follows. 
Note that this argument does not require $\tau^r<\infty$.

It follows from the left continuity and monotonicity that $\tau^r=\infty$ implies the limit in~(i).
Assume 
$\tau^r<\infty$ and
pick $\varepsilon>0$. 
The intervals in the family 
$\{[\tau^s,\tau^{s+}):s\in[0,r]\}$
are disjoint and contained in the bounded interval
$[0,\tau^r]$.
Hence there can only be finitely many $s\in[0,r]$ satisfying 
the condition in~(ii). 
\end{proof}

\begin{proof}[Proof of Lemma~\ref{lem:first_hitting_time}]
Let $A_{\varepsilon,\delta}^r:=\{s\in[0,r]: \Pr[\tau^{s+}>\tau^s+\varepsilon]\geq \delta\}$
for arbitrary $\varepsilon,\delta>0$, $r\in\RP$. 

\noindent \textbf{Claim.} $A_{\varepsilon,\delta}^r$ is at most countable. 
\smallskip 

Note first  that the Claim implies the lemma. 
By Lemma~\ref{lem:hitting_time}, the following equivalence holds for any $r\in\RP$:  
$\lim_{s\to r}\tau^s=\tau^r \iff \tau^{r+}=\tau^r$. Hence it suffices to show the set 
$$\{r\in\RP: \Pr[\tau^{r+}>\tau^r]>0\} = \cup_{n=1}^\infty\cup_{k=1}^\infty\cup_{i=1}^\infty A_{\varepsilon_k,\delta_i}^{s_n}$$
is at most countable, which clearly holds by the claim, where 
$(\varepsilon_k)_{k\in\N}$, $(\delta_i)_{i\in\N}$ and $(s_n)_{n\in\N}$ are 
monotone sequences satisfying
$\varepsilon_k\downarrow 0$, $\delta_i\downarrow 0$ and $s_n\uparrow \infty$.

\medskip

\noindent \textit{Proof of Claim.} Assume that 
$A_{\varepsilon,\delta}^r$
is uncountable
and let 
$I$ 
be  the set of its isolated points
(i.e. $x\in I$ if and only if $x\in A_{\varepsilon,\delta}^r$ and there exists a neighbourhood $U$ of $x$ in $\RP$ such that  
$\{x\}= U\cap A_{\varepsilon,\delta}^r$). 
Then 
$I$ is at most countable. To see this, note that for each $x\in I$ there exists  a rational number $q_x\leq x$, such that
$[q_x,x)\cap A_{\varepsilon,\delta}^r=\emptyset$ (for $x\in I\cap\Q$ we may take $q_x:=x$). For any distinct points $x,y\in I$, it clearly holds 
$q_x\neq q_y$. Hence the cardinality of  $I$  is at most that of $\Q$ and the uncountable set $A_{\varepsilon,\delta}^r\setminus I$
has no isolated points. 

Consider $r_1:=\sup\{y\in A_{\varepsilon,\delta}^r\setminus I\}\leq r$. 
There exists a strictly increasing sequence $(p^1_i)_{i\in\N}$ in $A_{\varepsilon,\delta}^r\setminus I$
with limit $p^1_i\uparrow r_1$. It is also clear that any
$x\in\{\tau^{p^1_i +}>\tau^{p^1_i}+\varepsilon\}\subset \cD_d$ satisfies 
$\tau^{p^1_i}(x)<\infty$.
Hence the event 
$
B^{r_1}:=\{\tau^{p^1_i +}>\tau^{p^1_i}+\varepsilon\} \> \text{i.o.}
$
satisfies: 
$\Pr[B^{r_1}]\geq\delta$
and, for each path $x\in B^{r_1}$, the function $s\mapsto \tau^s(x)$ 
has infinitely many jumps of size at least $\varepsilon$ on the interval $[0,r_1]$. 
Furthermore, since these jumps occur along a subsequence of 
$(p^1_i)_{i\in\N}$,
Lemma~\ref{lem:hitting_time} implies 
for any $x\in B^{r_1}$
that 
$\tau^s(x)<\infty$ for all $s\in[0,r_1)$ and $\tau^{r_1}(x)=\infty$. 

Since $(A_{\varepsilon,\delta}^r\setminus I)\subseteq [0,r_1]$, 
it holds that 
$(A_{\varepsilon,\delta}^r\setminus I)\subseteq A_{\varepsilon,\delta}^{r_1}$
making 
$A_{\varepsilon,\delta}^{r_1}$
uncountable. 
Furthermore, since 
$A_{\varepsilon,\delta}^{r_1}\setminus\{r_1\}=\cup_{s<r_1}A_{\varepsilon,\delta}^{s}$, 
there exists $r'<r_1$ such that 
$A_{\varepsilon,\delta}^{r'}$
is uncountable. 
We can now repeat the construction above, 
with 
$A_{\varepsilon,\delta}^{r}$
substituted by
$A_{\varepsilon,\delta}^{r'}$,
to define the event 
$B^{r_2}$
(for some
$r_2\in (0, r']$)
with properties analogous to those of 
$B^{r_1}$.
In particular 
$\Pr[B^{r_2}]\geq\delta$
and,
since each 
$x\in B^{r_2}$
satisfies 
$\tau^{r_2}(x)=\infty$,
it must hold 
$B^{r_1}\cap B^{r_2}=\emptyset$.
As before, there exists 
$r''< r_2$
such that 
$A_{\varepsilon,\delta}^{r''}$
is uncountable. 
By the same construction there exists 
$r_3\in(0, r'']$ 
and an event 
$B^{r_3}$
satisfying 
$\Pr[B^{r_3}]\geq\delta$
and 
$B^{r_3}\cap(B^{r_1}\cup B^{r_2})=\emptyset$,
since 
$x\in B^{r_3}$
satisfies 
$\tau^{r_3}(x)=\infty$
while for any 
$x\in B^{r_1}\cup B^{r_2}$
we have 
$\tau^{r_3}(x)<\infty$.
We can thus inductively construct a sequence of pairwise disjoint events $(B^{r_n})_{n\in\N}$ in $\cD_d$
each of which has probability at least $\delta>0$. This contradicts the fact that 
the total mass of $\Pr$ is equal to one. 
\end{proof}

\begin{remark}
The proof of the Claim, contained in the proof of Lemma~\ref{lem:first_hitting_time},
shows that 
$A_{\varepsilon,\delta}^r$
is in fact locally finite.
\end{remark}

In order to apply Lemma~\ref{lem:first_hitting_time}
in the proof of Theorem~\ref{thm:conditional_invariance}, we need another fact about 
the metric space $(\cD_d,d)$, where the metric $d:\cD_d\times\cD_d\to\RP$ that induces the Skorohod topology
is defined in~\cite[Eq.~(5.2), p.~117]{ek} 
(see also~\cite[\S~3.5]{ek}). 

\begin{lemma}
\label{lem:continuity_of_mapa}
Pick $r>0$.
Assume that $x\in\cD_d$ satisfies $\lim_{s\to r}\tau^s(x)=\tau^r(x)$ 
(see~\eqref{eq:contact_time_def} for definition of $\tau^r(x)$). 
Then the function $\cD_d\to[0,\infty]$,  given by 
$y\mapsto \tau^r(y)$, is continuous at $x$. 
If in addition it holds that either 
$x(\tau^r(x)-)<r$ 
or 
$x(\tau^r(x))\leq r$, then the map
$\cD_d\to\cD_d$, given by $y\mapsto y(\cdot\wedge \tau^r(y))$, 
is continuous at $x$. 
\end{lemma}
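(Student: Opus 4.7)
The plan is to use the standard characterisation of Skorohod convergence: $y_n \to x$ in $\cD_d$ if and only if, for every $T > 0$, there exist strictly increasing continuous bijections $\lambda_n$ of $[0, T]$ with $\sup_{t \leq T}|\lambda_n(t) - t| \to 0$ and $\sup_{t \leq T}\|y_n(\lambda_n(t)) - x(t)\| \to 0$; see~\cite[\S~3.5]{ek}. I will combine this characterisation with the structural results on $r \mapsto \tau^r(x)$ from Lemma~\ref{lem:hitting_time}.

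For the first claim, the continuity of $\tau^r$ at $x$, I establish both semi-continuities. For lower semi-continuity ($\liminf_n \tau^r(y_n) \geq \tau^r(x)$), the left-continuity of $s \mapsto \tau^s(x)$ from Lemma~\ref{lem:hitting_time} supplies, for any $\delta > 0$, an $s < r$ with $\tau^s(x) > \tau^r(x) - \delta$. On $[0, \tau^s(x))$ both $\|x(t)\|$ and $\|x(t-)\|$ remain strictly below $s$, so on any compact sub-interval $[0, \tau^s(x) - \eta]$ they are uniformly bounded away from $r$; Skorohod closeness then forces $\|y_n\|$ to stay below $r$ on the corresponding interval for large $n$. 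For upper semi-continuity ($\limsup_n \tau^r(y_n) \leq \tau^r(x)$), the assumption $\lim_{s \downarrow r}\tau^s(x) = \tau^r(x)$ yields, for any $\delta > 0$, some $s > r$ with $\tau^s(x) < \tau^r(x) + \delta$; thus $\|x(\cdot)\|$ or $\|x(\cdot-)\|$ attains a value $\geq s$ at some $t^* \in [\tau^r(x), \tau^r(x) + \delta)$, and Skorohod closeness on a compact neighbourhood of $t^*$ forces $\|y_n\|$ to exceed $r$ near $\lambda_n^{-1}(t^*)$ for large $n$. The case $\tau^r(x) = \infty$ follows by applying the lower-semi-continuity argument on every compact $[0, T]$.

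For the second claim, set $\tau := \tau^r(x)$ and $\tau_n := \tau^r(y_n)$; Part~1 gives $\tau_n \to \tau$ and the case $\tau = \infty$ is routine since $y_n(\cdot \wedge \tau_n) = y_n(\cdot)$ on every fixed compact for large $n$. For $\tau < \infty$, the definition of $\tau^r$ together with Lemma~\ref{lem:hitting_time} always forces $\|x(\tau-)\| \leq r$, so the additional hypothesis excludes exactly the pathological configuration $\|x(\tau-)\| = r$ and $\|x(\tau)\| > r$ (continuous grazing followed by an upward overshoot). Under the permitted configurations---(a) $\|x(\tau-)\| < r$ with an upward jump at $\tau$, or (b) $\|x(\tau)\| \leq r$ (so either a downward jump at $\tau$, or a continuous landing with $\|x(\tau)\| = r$)---I build a time-change $\mu_n$ from $\lambda_n$ by redefining it on a shrinking interval around $\tau$ so that $\mu_n(\tau) = \tau_n$. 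Because $|\tau_n - \tau| \to 0$, both the uniform and slope controls on $\mu_n - \mathrm{id}$ are preserved, and the uniform estimate $\sup_{t \leq T}\|y_n(\mu_n(t) \wedge \tau_n) - x(t \wedge \tau)\| \to 0$ follows from the Skorohod closeness of $y_n$ to $x$ and the geometric control on $x$ at $\tau$ supplied by (a) or (b).

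\emph{Main obstacle.} The delicate point is the upper semi-continuity in Part~1 when $\tau^r(x)$ is realised only through a left-limit, i.e.\ $\|x(\tau^r(x)-)\| = r$ with a downward jump at $\tau^r(x)$: Skorohod convergence does not transfer left-limits cleanly, so one must approximate $\|x(\tau-)\|$ by values $\|x(t)\|$ for $t < \tau$ close to $\tau$ and transfer this information to $y_n$ via the uniform-on-compacts estimate. The matching issue in Part~2---lining up the jump of $x$ at $\tau$ with that of $y_n$ at $\tau_n$ without destroying the time-change---is precisely the obstruction removed by the additional hypothesis.
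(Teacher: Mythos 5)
Your Part~1 (continuity of $y\mapsto\tau^r(y)$) is essentially the paper's argument recast as upper and lower semi-continuity; the substance---use of the left-continuity of $s\mapsto\tau^s(x)$ from Lemma~\ref{lem:hitting_time} and the hypothesis $\tau^{r+}(x)=\tau^r(x)$, transferred to $y_n$ via the Skorohod estimate---is the same, and this part is fine.

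Part~2 has a genuine gap. Modifying $\lambda_n$ to a new time-change $\mu_n$ with $\mu_n(\tau)=\tau_n$ does \emph{not} in general preserve the uniform estimate, because it can misalign the jump of $y_n$ (located at the time $\lambda_n(\tau)$, not at $\tau_n$) with the jump of $x$ at $\tau$. Concretely, the additional hypothesis together with $\lim_{s\to r}\tau^s(x)=\tau$ forces $\|x(\tau)\|\geq r$, and splits the discontinuous case into $\|x(\tau)\|>r$ (where one can show $\tau_n=\lambda_n(\tau)$ eventually, so no modification is needed) and $\|x(\tau)\|=r$ with $x(\tau)\neq x(\tau-)$. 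In the latter case $\tau_n>\lambda_n(\tau)$ can occur, and then for $t<\tau$ close enough to $\tau$ your $\mu_n$ maps $t$ past $\lambda_n(\tau)$, so $y_n(\mu_n(t))\approx x(\tau)$ while $x(t)\approx x(\tau-)$: a gap of order $\|x(\tau)-x(\tau-)\|$ that does not vanish. A concrete example in $d=2$: take $x(t)=(t/2)\,\be_1$ for $t<r$ and $x(t)=t\,\be_2$ for $t\geq r$, and $y_n$ jumping at $\lambda_n(r)$ from $\approx(r/2)\be_1$ to $(r-1/n)\be_2$, then increasing linearly; here $\tau_n=\lambda_n(r+1/n)>\lambda_n(r)$ while $x(\tau-)=(r/2)\be_1$ and $x(\tau)=r\be_2$, so $\sup_{t<\tau}\|y_n(\mu_n(t))-x(t)\|$ stays bounded below by roughly $r\sqrt{5}/2$. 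Both configurations in your list ((a) ``upward jump'' and (b) ``$\|x(\tau)\|\leq r$'') formally include this example, so the case analysis in the proposal is incomplete and the claim that ``the uniform estimate follows from the Skorohod closeness and the geometric control'' is not justified. Note also that your listed sub-case in (b), a downward jump with $\|x(\tau)\|<r$, cannot occur under $\lim_{s\to r}\tau^s(x)=\tau$, while the sub-case that actually causes the trouble (a jump landing exactly on the sphere of radius $r$) is not mentioned.

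The paper avoids this by \emph{not} modifying the time-change: it compares $y_n(\lambda_n(\cdot)\wedge\tau_n)$ with $x(\cdot\wedge\tau)$ directly. In the delicate subcase ($\tau_n\uparrow\tau$, $x$ discontinuous at $\tau$) the extra hypothesis yields $\|x(\tau-)\|<r$, hence a uniform gap $\sup_{[0,\tau)}\|x\|<r-\delta$, which forces $\lambda_n(\tau)\leq\tau_n\leq\tau$; the unmodified $\lambda_n$ then makes the estimate go through because the extra stretch of $y_n$ between its jump time $\lambda_n(\tau)$ and the stopping time $\tau_n$ is $\approx x(\tau)$, matching the constant value of the stopped path of $x$. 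To repair your proposal you would need to drop the requirement $\mu_n(\tau)=\tau_n$ and instead show, as the paper does, that the original $\lambda_n$ already suffices once the inequalities relating $\lambda_n(\tau)$, $\tau_n$ and $\tau$ are pinned down under the permitted configurations.
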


\begin{remark}
\begin{enumerate}
\item[(a)] The lemma implies that if 
$x\in\cC_d$ 
satisfies 
$\lim_{s\to r}\tau^s(x)=\tau^r(x)$, 
the map $\cD_d\to\cD_d\times[0,\infty]$, given by $y\mapsto (y(\cdot\wedge \tau^r(y)),\tau^r(y))$, 
is continuous at $x$. 
\item[(b)] It is easy to construct $x\in\cC_d$, such that both 
$y\mapsto \tau^r(y)$ 
and
$y\mapsto y(\cdot\wedge \tau^r(y))$ are discontinuous at $x$. 
The key feature of such a function $x$ is that 
$\tau^{r+}(x)>\tau^r(x)$ (see Lemma~\ref{lem:hitting_time} for the definition of  $\tau^{r+}(x)$).
\item[(c)] If $x\in\cD_d\setminus\cC_d$, then the additional assumption in the lemma is necessary for the
continuity of $y\mapsto y(\cdot\wedge \tau^r(y))$ to hold at $x$. 
To see this, 
for any $r>0$ and $\eps\in[0,1)$, 
consider
$x_\eps(t):=(t+\eps) \2{0\leq t< r}+(r+1)\2{r\leq t<\infty}$.
Then $x_0$ clearly satisfies the first assumption in the lemma but not the
second one.  Note that for any $\eps\in(0,1)$ 
we have
$d(x_0,x_\eps)\leq \eps$ 
and
$|x_0(t\wedge\tau^r(x_0))-x_\eps(t\wedge\tau^r(x_\eps))|\geq \2{r\leq t<\infty}$.
\end{enumerate}
\end{remark}

\begin{proof}
Let $x\in\cD_d$ satisfy 
$\lim_{s\to r}\tau^s(x)=\tau^r(x)$.
We first prove that for any sequence $(x_n)_{n\in\N}$ in $\cD_d$, such that $d(x_n,x)\to0$, it holds that 
$\tau^r(x_n)\to\tau^r(x)$.
Note that $d(x_n,x)\to0$ 
and 
the definition of $d$ in~\cite[Eq.~(5.2), p.~117]{ek}
imply that there exists a sequence $(\lambda_n)_{n\in\N}$ of  strictly increasing, Lipschitz continuous, surjective functions 
$\lambda_n:\RP\to\RP$ satisfying
\begin{equation}
\label{eq:Skor_metric}
\sup\{\|x_n(\lambda_n(t))-x(t)\|, |\lambda_n(t)-t|:t\in[0,T]\}\to0
\qquad\text{for any $T>0$.}
\end{equation}
If $\tau^r(x)=\infty$, then 
for any $T>0$ $\exists\delta>0$ such that  
$\sup_{t\in[0,T]}\{\|x(t)\|,\|x(t-)\|\}<r-\delta$.
By~\eqref{eq:Skor_metric}, 
for all sufficiently large $n\in\N$ 
we have 
$\sup_{s\in[0,\lambda_n(T)]}\{\|x_n(s)\|\}<r-\delta/2$,
implying
$\tau^r(x_n)\geq T-1$. Since $T$ was arbitrary, it holds that $\tau^r(x_n)\to\infty$.

Assume now that 
$\tau^r(x)<\infty$
and that 
$(\tau^r(x_n))_{n\in\N}$ does not converge to 
$\tau^r(x)$.
By passing to a subsequence (again denoted by $(x_n)_{n\in\N}$), we may assume that
$\exists \varepsilon>0$ such that 
$|\tau^r(x_n)-\tau^r(x)|>\varepsilon$
for all 
$n\in\N$.
Pick $T>\tau^r(x)+\varepsilon$  and note that without loss of generality 
we may assume (for all $n\in\N$)
that 
either
$\tau^r(x_n)>\tau^r(x)+\varepsilon$
or
$\tau^r(x_n)<\tau^r(x)-\varepsilon$.
Consider first the former case. 
By Lemma~\ref{lem:hitting_time}, 
our assumption is equivalent to 
$\tau^{r+}(x)=\tau^r(x)$.
Hence $\exists \delta>0$ and an interval 
$[t_0,s_0]$ contained in $(\tau^r(x),\tau^r(x)+\varepsilon)$, such that 
$\inf_{t\in[t_0,s_0]}\|x(t)\|>r+\delta$.
As 
$[t_0,s_0]\subset[0,T]$, by~\eqref{eq:Skor_metric}
there exists $n\in\N$
and 
$t\in(t_0,s_0)$ such that 
$\lambda_n(t)<s_0$
and
$\|x_n(\lambda_n(t))\|\geq \|x(t)\| - \|x(t)-x_n(\lambda_n(t))\|>r+\delta/2$, 
contradicting
$\tau^r(x_n)>\tau^r(x)+\varepsilon>\lambda_n(t)$.

Consider now the case 
$\tau^r(x_n)<\tau^r(x)-\varepsilon$
for all $n\in\N$.
Then for a sequence $\delta_n\downarrow0$ we have 
$\sup_{s\in[0,\tau^r(x)-\varepsilon)}\|x_n(s)\|>r-\delta_n$.
Hence there exists a sequence $(t_n)_{n\in\N}$
in
$(0,\tau^r(x)-\varepsilon)$ 
such that 
$\|x_n(t_n)\|\to r$.
By~\eqref{eq:Skor_metric}
it holds that 
$\lambda_n^{-1}(t_n)<\tau^r(x)-\varepsilon/2$ for all sufficiently large 
(and thus wlog all) $n\in\N$.
Furthermore, the triangle inequality and~\eqref{eq:Skor_metric}
imply 
$| \|x(\lambda_n^{-1}(t_n))\|-r|\leq \|x(\lambda_n^{-1}(t_n))- x_n(t_n) \| + | \|x_n(t_n)\|- r|\to 0$,
since 
$\lambda_n^{-1}(t_n), t_n\in[0,T]$ for all $n\in\N$.
By passing to a convergent subsequence, there exists 
$\alpha\leq \tau^r(x)-\varepsilon/2$ such that
either 
$\lambda_n^{-1}(t_n)\uparrow \alpha$
or 
$\lambda_n^{-1}(t_n)\downarrow \alpha$.
Hence we either get
$\|x(\alpha-)\|=r$ 
or
$\|x(\alpha)\|=r$,
contradicting the fact that 
$\alpha<\tau^r(x)$.
This implies the continuity of the map
$y\mapsto \tau^r(y)$ at $x$. 

Consider the map
$y\mapsto y(\cdot\wedge \tau^r(y))$
in the case
$\tau^r(x)=\infty$.
Then 
$x(\cdot\wedge\tau^r(x))=x$ 
and, as we have already established, 
$\tau^r(x_n)\to\infty$.
By the definition of the metric $d$ (see~\cite[Eq.~(5.2), p.~117]{ek}), 
we have
$d(x_n(\cdot\wedge \tau^r(x_n)),x(\cdot\wedge\tau^r(x)))
\leq 
d(x_n,x)+ 
d(x_n,x_n(\cdot\wedge \tau^r(x_n)))
\leq
d(x_n,x)+ e^{-\tau^r(x_n)}\to 0$.

In the case 
$\tau^r(x)<\infty$,
we have already seen that 
$\tau^r(x_n)\to\tau^r(x)$.
By definition~\cite[Eq.~(5.2), p.~117]{ek},
for any $y\in\cD_d$, $t\in\RP$ and a sequence $(t_n)_{n\in\N}$ converging to $t$ we have
$$d(y(\cdot\wedge t_n),y(\cdot\wedge t))
\leq \|y(t)-y(t_n)\| + |t-t_n| \sup_{s\in[0,t+1]}\|y(s)\|$$
for all large $n\in\N$.
Recall that $y$ is bounded on compact intervals.
Hence 
if either $t_n\downarrow t$ or $t_n\to t$ and $y$ is continuous at $t$,
then
$d(y(\cdot\wedge t_n),y(\cdot\wedge t))\to0$.\footnote{Note that if $t_n\uparrow t$,
$d(y(\cdot\wedge t_n),y(\cdot\wedge t))$
may be bounded from below by a positive constant $\forall n\in\N$.}
Therefore the estimate 
\begin{align*}
d(x_n(\cdot\wedge \tau^r(x_n)),x(\cdot\wedge\tau^r(x)))
\leq &
d(x_n(\cdot\wedge \tau^r(x_n)),x(\cdot\wedge\tau^r(x_n))) \\
& +
d(x(\cdot\wedge \tau^r(x_n)),x(\cdot\wedge\tau^r(x))) \\
\leq &
d(x_n,x)
+
d(x(\cdot\wedge \tau^r(x_n)),x(\cdot\wedge\tau^r(x))) 
\end{align*}
implies the lemma, except when 
$\tau^r(x_n)\uparrow\tau^r(x)$
and 
$x(\tau^r(x)-)\neq x(\tau^r(x))$.

Assuming 
$\tau^r(x_n)\uparrow\tau^r(x)<\infty$
and 
$x(\tau^r(x)-)\neq x(\tau^r(x))$,
by 
$\lim_{s\to r}\tau^s(x)=\tau^r(x)$
it holds that
$x(\tau^r(x)-)<x(\tau^r(x))$. 
Furthermore, since by assumption it either holds that
$x(\tau^r(x)-)<r$ of $x(\tau^r(x))\leq r$, 
we must have 
$x(\tau^r(x)-)<r$.
Hence there exists $\delta>0$
such that 
$\sup_{t\in[0,\tau^r(x))} \|x(t)\|<r-\delta$.
Therefore by~\eqref{eq:Skor_metric} $\exists N\in\N$ such that for all $n\geq N$ and $t\in[0,\tau^r(x))$
we have 
$
\|x_n(\lambda_n(t))\|
\leq 
\|x(t)\|+ \|x_n(\lambda_n(t))-x(t)\|< r-\delta/2.
$
Thus we obtain
$\lambda_n(\tau^r(x))\leq \tau^r(x_n)$
for all $n\geq N$.
As 
$\lambda_n$ is increasing,
for every 
$t\in[0,\tau^r(x)]$
it holds that 
$
\|x_n(\lambda_n(t)\wedge\tau^r(x_n))-x(t\wedge \tau^r(x))\|
=
\|x_n(\lambda_n(t))-x(t)\|$.
Furthermore, 
since 
$\tau^r(x_n)\in[\lambda_n(\tau^r(x)), \tau^r(x)]$, 
for all $t\in(\tau^r(x),\lambda_n^{-1}(\tau^r(x_n))]$
we have
\begin{align*}
\|x_n(\lambda_n(t)\wedge\tau^r(x_n))-x(t\wedge \tau^r(x))\| 
& =  
\|x_n(\lambda_n(t))-x(\tau^r(x))\| \\
& \leq \|x(t)-x(\tau^r(x))\|  +
\|x_n(\lambda_n(t))-x(t)\|.
\end{align*}
Hence, for any $T>\tau^r(x)$, 
it holds that
\begin{multline*}
\sup_{t\in[0,T]}\|x_n(\lambda_n(t)\wedge\tau^r(x_n))-x(t\wedge \tau^r(x))\| \\
=
\sup_{t\in[0,\tau^r(x)]}\|x_n(\lambda_n(t))-x(t)\|  +
\sup_{t\in(\tau^r(x),T\wedge\lambda^{-1}_n(\tau^r(x_n))]}\|x_n(\lambda_n(t))-x(\tau^r(x))\| \\
\leq \sup_{t\in[0,T]}\|x_n(\lambda_n(t))-x(t)\|+ \sup_{t\in(\tau^r(x),\lambda_n^{-1}(\tau^r(x))]}\|x(t)-x(\tau^r(x))\|, 
\end{multline*}
where the inequality  uses 
the assumption 
$\tau^r(x_n)\leq\tau^r(x)$.
The first summand in the bound tends to zero by~\eqref{eq:Skor_metric} and the second 
by the right  continuity of $x$ and $\lambda_n^{-1}(\tau^r(x))\to\tau^r(x)$. 
Hence 
$d(x_n(\cdot\wedge\tau^r(x_n)),x(\cdot\wedge \tau^r(x)) )\to0$
by~\cite[Prop.~3.5.3, p.~119]{ek}
and the lemma follows. 
\end{proof}

The next task in the proof of Theorem~\ref{thm:conditional_invariance}
is to construct a limiting process.

\begin{lemma}
\label{lem:a_integral}
Fix $r_0 > 0$.
There exists a process $Z^{r_0}$ with paths a.s. in $\cC_d$, such that 
for all but countably many $r \in(0,r_0)$
it holds that 
\begin{equation}
\label{eqn:joint_convergence} 
( Z_{n_k} ( \, \cdot \, \wedge \tau_{n_k}^r ) ,  \tau_{n_k}^r ) \Rightarrow (Z^{r_0} ( \, \cdot \, \wedge \tau^r ) , \tau^r ), 
\end{equation}
where 
$\tau^r_n=\tau^r(Z_n)$
is given in Theorem~\ref{thm:conditional_invariance}, 
$\tau^r=\tau^r(Z^{r_0})$
is defined in~\eqref{eq:contact_time_def} and 
$\Rightarrow$ denotes the weak convergence of probability measures on $\cD_d\times[0,\infty]$.
Furthermore, 
the law of $\| Z^{r_0} ( \, \cdot \, \wedge \tau^r ) \|$ equals that of a Bessel process (of dimension greater than one) stopped at level $r$.
In particular
it holds  that
$(Z^{r_0} ( \, \cdot \, \wedge \tau^r ) , \tau^r )\in \cD_d\times\RP$ a.s.
\end{lemma}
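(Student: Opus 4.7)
The plan is to first extract a weakly convergent subsequence of the auxiliary processes $\tilde Z^{r_0}_n$ defined in~\eqref{eq:tilda_procs}, then to argue that the limit $Z^{r_0}$ has continuous paths, to transfer the convergence to $Z_n(\cdot\wedge\tau^{r_0}_n)$ via~\eqref{inv:eta}, and finally to apply the continuous mapping theorem to pass from stopping at level $r_0$ to stopping at an arbitrary admissible level $r<r_0$.

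First I would apply Lemma~\ref{lem:ip1} to obtain relative compactness of $(\tilde Z^{r_0}_n)_{n\in\N}$ on $\cD_d$, extracting a subsequence $(n_k)_{k\in\N}$ along which $\tilde Z^{r_0}_{n_k}\Rightarrow Z^{r_0}$ for some process $Z^{r_0}$ with paths in $\cD_d$. Assumption~\eqref{ip1} (applied with $r=r_0$) forces the maximal jump of $\tilde Z^{r_0}_n$ on any compact time interval to vanish in $L^2$ and hence in probability, so the standard criterion for Skorohod-convergent sequences (jumps in the limit being accumulation points of jumps in the pre-limit) yields $Z^{r_0}\in\cC_d$ almost surely. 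Since~\eqref{inv:eta} gives $\Pr[\eta_n\leq T]\to 0$ for every $T>0$, the processes $\tilde Z^{r_0}_n$ and $Z_n(\cdot\wedge\tau^{r_0}_n)$ coincide on $[0,T]$ with probability tending to one, so the same subsequence satisfies $Z_{n_k}(\cdot\wedge\tau^{r_0}_{n_k})\Rightarrow Z^{r_0}$ as well.

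Next, for each $r\in(0,r_0)$ I would consider the map $\Phi_r:\cD_d\to\cD_d\times[0,\infty]$, $y\mapsto(y(\cdot\wedge\tau^r(y)),\tau^r(y))$. A short verification using $\tau^r_n\leq\tau^{r_0}_n$ together with the fact that $\max\{\|Z_n(\tau^{r_0}_n)\|,\|Z_n(\tau^{r_0}_n-)\|\}\geq r_0>r$ shows that $\Phi_r(Z_n(\cdot\wedge\tau^{r_0}_n))=(Z_n(\cdot\wedge\tau^r_n),\tau^r_n)$, so the left-hand side of~\eqref{eqn:joint_convergence} is indeed $\Phi_r$ applied to the pre-limit of the previous paragraph. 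By Lemma~\ref{lem:continuity_of_mapa} and Remark~(a) following it, $\Phi_r$ is continuous at every $x\in\cC_d$ satisfying $\lim_{s\to r}\tau^s(x)=\tau^r(x)$. Since $Z^{r_0}\in\cC_d$ a.s., applying Lemma~\ref{lem:first_hitting_time} to its law produces a countable set $N\subseteq(0,r_0)$ outside of which this continuity condition holds with probability one; for each $r\in(0,r_0)\setminus N$ the continuous mapping theorem then yields~\eqref{eqn:joint_convergence}. The identification of $\|Z^{r_0}(\cdot\wedge\tau^r)\|$ as a Bessel process stopped at $r$ is obtained by the same reasoning, applied to $y\mapsto\|y(\cdot\wedge\tau^r(y))\|$: the function $\tau^r$ depends only on $\|y\|$ and $\|Z_n\|$ converges weakly to a Bessel process $\beta$ of dimension $>1$ by assumption, so a further application of the continuous mapping theorem (possibly after thinning $N$ by another countable set coming from the law of $\beta$) identifies the radial law. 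Since Bessel of dimension $>1$ hits every level almost surely, $\tau^r(\beta)<\infty$ a.s., which in turn gives $(Z^{r_0}(\cdot\wedge\tau^r),\tau^r)\in\cD_d\times\RP$ a.s.

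The main obstacle is the interplay between the Skorohod tightness from Lemma~\ref{lem:ip1} and the genuine discontinuities of $\Phi_r$: even at a \emph{continuous} path $x\in\cC_d$, the map $\Phi_r$ can fail to be continuous whenever $\tau^{r+}(x)>\tau^r(x)$, a phenomenon that cannot be removed by regularising $x$ alone. Lemma~\ref{lem:first_hitting_time} is exactly what circumvents this obstruction by showing that, under any fixed law on $\cD_d$, the set of "bad" levels $r$ is at most countable; this is the source of the "all but countably many $r$" qualifier in the statement, and the reason one cannot assert~\eqref{eqn:joint_convergence} for every $r\in(0,r_0)$ simultaneously.
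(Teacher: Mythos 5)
Your proposal is correct and follows essentially the same route as the paper: extract a weakly convergent subsequence via Lemma~\ref{lem:ip1}, deduce continuity of $Z^{r_0}$ from the vanishing-jump condition~\eqref{ip1} (the paper cites the criterion in~\cite[Thm~3.10.2]{ek}, which is the same fact you describe), transfer the convergence to $Z_{n_k}(\cdot\wedge\tau^{r_0}_{n_k})$ using~\eqref{inv:eta} (the paper bounds the Skorohod distance by $e^{-\eta_{n_k}}$ whereas you note the processes agree on $[0,T]$ with high probability — equivalent reasoning), and then invoke Lemmas~\ref{lem:first_hitting_time} and~\ref{lem:continuity_of_mapa} together with the mapping theorem, once for $Z^{r_0}$ and once for the limiting Bessel process, to get~\eqref{eqn:joint_convergence} and the radial identification. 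Your explicit observation that $\Phi_r\bigl(Z_n(\cdot\wedge\tau^{r_0}_n)\bigr)=(Z_n(\cdot\wedge\tau^r_n),\tau^r_n)$ since $\tau^r_n\le\tau^{r_0}_n$ is a step the paper leaves implicit, and your handling of the two countable exceptional sets (one for $Z^{r_0}$, one for $\beta$) matches the paper's "again by Lemmas~\ref{lem:first_hitting_time} and~\ref{lem:continuity_of_mapa}".
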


\begin{proof}
Lemma~\ref{lem:ip1}  implies the existence of 
a convergent subsequence 
$(\tilde Z^{r_0}_{n_k})_{k\in\N}$
of the sequence $(\tilde Z^{r_0}_n)_{n\in\N}$ defined in~\eqref{eq:tilda_procs}. Denote its limit  by $Z^{r_0}$.
By~\eqref{inv:eta} and the definition of the metric $d:\cD_d\times\cD_d\to\RP$ in~\cite[Eq.~(5.2), p.~117]{ek}, which induces the Skorohod topology, 
it holds that 
$$d(\tilde Z^{r_0}_{n_k},Z_{n_k} ( \, \cdot \, \wedge \tau_{n_k}^{r_0}))\leq e^{-\eta_{n_k}}\toP0\qquad \text{as $k\to\infty$.}$$
It hence follows that the sequence
$(Z_{n_k} ( \, \cdot \, \wedge \tau_{n_k}^{r_0}))_{k\in\N}$ also converges weakly to $Z^{r_0}$. 
Furthermore, by~\cite[Thm~3.10.2, p.~148]{ek} and assumption~\eqref{ip1}, the process $Z^{r_0}$ is continuous,
i.e. the support of its law is contained in $\cC_d$.

Pick $r \in (0,r_0)$.
It follows from Lemmas~\ref{lem:first_hitting_time} and~\ref{lem:continuity_of_mapa} and the mapping theorem (see~\cite[p.~20]{bill})
that the joint convergence in~\eqref{eqn:joint_convergence}
holds for all but countably many $r < r_0$. 
Furthermore, from~\eqref{eqn:joint_convergence} we have that
$\| Z_{n_k} ( \, \cdot \, \wedge \tau_{n_k}^r ) \| \Rightarrow \| Z^{r_0} ( \, \cdot \, \wedge \tau^r ) \|$
for all but countably many $r < r_0$. By assumption in Theorem~\ref{thm:conditional_invariance}, 
the weak limit of $\| Z_{n_k} \|$ is a Bessel process. Hence, 
again by Lemmas~\ref{lem:first_hitting_time} and~\ref{lem:continuity_of_mapa}, the fact that a Bessel process
has continuous trajectories and the mapping theorem~\cite[p.~20]{bill},
the law of $\| Z^{r_0} ( \, \cdot \, \wedge \tau^r ) \|$ equals that of a Bessel process stopped at level $r$
for all but countably many $r < r_0$.
The final statement in the lemma is equivalent to saying that a Bessel process of dimension greater than one 
reaches every positive level with probability one. This is immediate in the transient case. In the recurrent
case it follows from the fact that the height of excursions away from zero is not bounded. 
\end{proof}


Define the function 
$F_{i,j}:\cD_d\times\RP\to\R$ by the formula 
$F_{i,j} (y , T) := \int_0^T a_{ij} (y (s) ) \ud s$
for any
$i,j \in \{1,\ldots, d\}$,
where
$a_{ij}$ is a coefficient in the generator $G$ in 
Theorem~\ref{thm:conditional_invariance}.

\begin{lemma}
\label{lem:a_integral_convergence}
Fix $r_0>0$. Then for all but countably many $r\in(0,r_0)$, 
the sequence of processes
$F_{i,j} ( Z_{n_k} , \, \cdot \, \wedge \tau_{n_k}^r)=(F_{i,j} ( Z_{n_k} , t \wedge \tau_{n_k}^r);t\geq 0)$ 
converges weakly to the process 
$F_{i,j} (Z^{r_0} , \, \cdot \, \wedge \tau^r)=(F_{i,j} ( Z^{r_0} , t \wedge \tau^r);t\geq 0)$ as $k \to \infty$
for any
$i,j \in \{1,\ldots, d\}$,
\end{lemma}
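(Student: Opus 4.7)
The plan is to deduce the result from Lemma~\ref{lem:a_integral} by a continuous-mapping argument; the only real obstacle is the potential discontinuity of $a_{ij}$ at $\0$, which we defuse by showing that the limit process $Z^{r_0}$ spends no Lebesgue time at the origin.

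Define $\Phi:\cD_d\times\RP\to\cC_1$ by $\Phi(y,T)(t):=\int_0^{t\wedge T}a_{ij}(y(s))\,\ud s$, and consider $(y,T)\in\cC_d\times\RP$ with $\mathrm{Leb}\{s\geq 0:y(s)=\0\}=0$. If $(y_n,T_n)\to(y,T)$ in $\cD_d\times\RP$, then continuity of the limit $y$ upgrades Skorohod convergence to locally uniform convergence (\cite[Prop.~3.5.3]{ek}). For each $s$ with $y(s)\neq\0$, continuity of $a_{ij}$ off $\0$ gives $a_{ij}(y_n(s))\to a_{ij}(y(s))$, and such $s$ have full Lebesgue measure in every compact interval by assumption. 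Boundedness of $a_{ij}$ together with dominated convergence then yield, for any $S>0$,
\[
\sup_{t\in[0,S]}|\Phi(y_n,T_n)(t)-\Phi(y,T)(t)|\;\leq\; \int_0^S|a_{ij}(y_n(s))-a_{ij}(y(s))|\,\ud s \;+\; \|a_{ij}\|_\infty|T_n-T|\;\to\; 0,
\]
so $\Phi$ is continuous at every such $(y,T)$.

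By Lemma~\ref{lem:a_integral}, for all but countably many $r\in(0,r_0)$ we have the joint weak convergence $(Z_{n_k}(\cdot\wedge\tau_{n_k}^r),\tau_{n_k}^r)\Rightarrow(Z^{r_0}(\cdot\wedge\tau^r),\tau^r)$, with $Z^{r_0}$ a.s.\ continuous, $\tau^r<\infty$ a.s., and $\|Z^{r_0}(\cdot\wedge\tau^r)\|$ distributed as a Bessel process of dimension greater than one stopped at level $r$. For any $s>0$ the law of $\|Z^{r_0}(s\wedge\tau^r)\|$ admits a continuous density on $\RP$, so $\Pr[Z^{r_0}(s\wedge\tau^r)=\0]=0$; Fubini's theorem then gives $\Exp[\mathrm{Leb}\{s\in[0,T]:Z^{r_0}(s\wedge\tau^r)=\0\}]=0$ for every $T>0$, so this Lebesgue measure vanishes a.s. Hence the law of $(Z^{r_0}(\cdot\wedge\tau^r),\tau^r)$ is concentrated on the continuity set of $\Phi$ identified above.

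The continuous mapping theorem (\cite[Thm~2.7]{bill}) now yields
\[
F_{i,j}(Z_{n_k},\cdot\wedge\tau_{n_k}^r)=\Phi(Z_{n_k}(\cdot\wedge\tau_{n_k}^r),\tau_{n_k}^r)\;\Rightarrow\;\Phi(Z^{r_0}(\cdot\wedge\tau^r),\tau^r)=F_{i,j}(Z^{r_0},\cdot\wedge\tau^r),
\]
using $Z_{n_k}(s)=Z_{n_k}(s\wedge\tau_{n_k}^r)$ for $s\leq\tau_{n_k}^r$. The only delicate step is thus the continuity of $\Phi$ off the singular stratum at $\0$ combined with the Bessel occupation-time identity; every other ingredient is packaged in Lemma~\ref{lem:a_integral}.
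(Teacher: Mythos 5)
Your proof is correct and takes a genuinely different, more elementary route than the paper's. The paper decomposes $a_{ij}$ via a smooth partition of unity into a piece $a_{ij}\phi_1^\eps$ vanishing near $\0$ (handled by the standard continuous-mapping theorem) and a remainder $a_{ij}\phi_2^\eps$ supported near $\0$, which it controls quantitatively using the occupation-times formula and the bi-continuous local time of the stopped Bessel process; it then assembles these estimates in a triangle-inequality/contradiction argument with a uniformly continuous test function $h$ on $\cC_1$. You instead observe that the single map $\Phi(y,T)(t)=\int_0^{t\wedge T}a_{ij}(y(s))\,\ud s$ is itself continuous at every continuous path $y$ with $\mathrm{Leb}\{s:y(s)=\0\}=0$, which needs only locally uniform convergence (continuity of the limit upgrades Skorohod convergence), continuity of $a_{ij}$ off the origin, boundedness of $a_{ij}$, and dominated convergence. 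Checking that the limit law is supported on this continuity set then requires only absolute continuity of the Bessel marginals (so no atom at $0$ at each fixed time) plus Fubini --- you never need the Bessel local time, its bi-continuity, or the occupation-times formula. The paper's remark preceding its proof suggests the authors believed the local-time machinery was necessary because the limit radial process ``may visit zero infinitely many times,'' but your argument shows that the weaker fact that it spends zero Lebesgue time at $\0$ already suffices. The one small imprecision in your write-up is the claim that $\|Z^{r_0}(s\wedge\tau^r)\|$ ``admits a continuous density on $\RP$'' --- the stopped process has an atom at $r$ --- but the conclusion $\Pr[Z^{r_0}(s\wedge\tau^r)=\0]=0$ is correct since $\Pr[\mathrm{BES}_{s\wedge\tau_r}=0]\leq\Pr[\mathrm{BES}_s=0]=0$, so this does not affect the argument.
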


\begin{remark}
In the proof of~\cite[Thm~7.4.1, p.~355]{ek},
the statement of the lemma is used implicitly 
and follows directly from the continuity assumption on $a_{ij}$ in~\cite[Thm~7.4.1, p.~355]{ek}  (which implies that $F_{i,j}$ is itself continuous
at any continuous path) 
and the analogue of the the weak limit in~\eqref{eqn:joint_convergence}.
In our case the coefficient $a_{ij}$ is discontinuous at the origin and the process 
$\|Z^{r_0}\|$ may visit zero infinitely many times. Hence we must rely on the more 
detailed information about the limit law  
$\| Z^{r_0} ( \, \cdot \, \wedge \tau^r ) \|$.
In particular, we use the fact that the Bessel process of dimension greater than one is a continuous semimartingale
and apply the occupation times formula to quantify the amount of time it spends around zero. 
\end{remark}

\begin{proof}
Let $\eps >0$ and take smooth functions $\phi_1^\eps, \phi_2^\eps:\RP\to[0,1]$ satisfying 
$\phi_1^\eps (u) = 1$ for all $u \geq \eps$, 
$\phi_1^\eps (u) = 0$ for all $u \leq \eps/2$
and
$\phi_1^\eps (u) + \phi_2^\eps (u) = 1$ for all $u \in\RP$.
Let 
\[ F_{i,j}^{k,\eps} (x, T):= \int_0^T a_{ij} ( x(s) ) \phi_k^\eps ( \| x (s) \| ) \ud s,\qquad\text{where $k\in\{1,2\}$.}\] 
Then since $a_{ij}$ is continuous on $\R^d \setminus \{ \0 \}$ and $\phi_1^\eps$ is continuous and vanishes in a neighbourhood of 0,
we have that $F_{i,j}^{1,\eps}:\cD_d\times\RP\to\R$
is continuous at any point
$(x, T)\in\cC_d\times \RP$.
Hence~\eqref{eqn:joint_convergence} in Lemma~\ref{lem:a_integral} 
implies the convergence 
$F_{i,j}^{1,\eps} ( Z_{n_k} , \cdot \wedge \tau_{n_k}^r) \Rightarrow F_{i,j}^{1,\eps} (Z^{r_0} , \cdot \wedge \tau^r)$
for all but countably many $r < r_0$.

Consider now $F_{i,j}^{2,\eps} :\cD_d\times\RP\to\R$. Since $a_{ij}$ is globally bounded, there exists a constant $C>0$
such that 
\begin{equation}
\label{eq:F_2_first_bound}
|F_{i,j}^{2,\eps} ( x , T)|  \leq C \int_0^T \phi_2^\eps(\| x (s)\|) \ud s \qquad \forall (x, T)\in\cD_d\times \RP.
\end{equation}
By Lemma~\ref{lem:a_integral}, we may assume that 
$\| Z^{r_0} ( \, \cdot \wedge \tau^r ) \|$ is a Bessel process (of dimension greater than one) stopped at level $r$. 
The random field $(L_t(a))_{t,a\in\RP}$ of Bessel local times exists by~\cite[Ch.~VI, Thm~(1.7)]{ry} since the process is a continuous semimartingale
with the local martingale component equal to Brownian motion. 
Furthermore, it is well known that  $(L_t(a))_{t,a\in\RP}$ has a bi-continuous modification, i.e. the map 
$(t,a)\mapsto L_t(a)$ is a.s. continuous on $\RP^2$.
Then, by the occupation times formula~\cite[p.~224]{ry} and~\eqref{eq:F_2_first_bound} we get
\begin{equation}
\label{eq:LT_bound1}
\sup_{t\in\RP}|F_{i,j}^{2,\eps} ( Z^{r_0} , t \wedge \tau^r )| \leq C \int_0^{\tau^r} \phi_2^\eps( \| Z^{r_0} (s) \|) \ud s
= C \int_0^\eps \phi_2^\eps(a) L_{\tau^r}(a) \ud a ,
\end{equation}
since the quadratic variation of 
$\| Z^{r_0} ( \, \cdot \wedge \tau^r ) \|$
is dominated by that of
the Brownian motion and the support of 
$\phi_2^\eps$ is contained in $[0,\varepsilon]$.
Since 
$(x,t)\mapsto \int_0^t  \phi_2^\eps ( \| x (s) \| ) \ud s$ 
is continuous on 
$\cD_d\times\RP$,
Lemma~\ref{lem:a_integral}
and
the mapping theorem~\cite[p.~20]{bill} 
imply 
\begin{equation}
\label{eq:LT_bound2}
\sup_{t\in\RP}|F_{i,j}^{2,\eps}  ( Z_{n_k} , t \wedge \tau_{n_k}^r )| \leq C \int_0^{\tau_{n_k}^r} \phi_2^\eps ( \| Z_{n_k} (s)\|)  \ud s
\Rightarrow C \int_0^{\tau^r}\phi_2^\eps ( \| Z^{r_0} (s) \|)  \ud s . 
\end{equation}

If the convergence in the lemma fails, there exists a bounded uniformly continuous map
$h:\cC_1\to\R$ (with the uniform topology on $\cC_1$) and $\epsilon_0$ such that 
\begin{equation}
\label{eq:Contradictory_Statement}
\lvert \Exp h\circ F_{i,j}   (Z^{r_0} , \, \cdot \, \wedge \tau^r)-
\Exp h\circ F_{i,j}    ( Z_{n_k} , \, \cdot \, \wedge \tau_{n_k}^r)\rvert>\epsilon_0 \qquad \forall k\in\N,
\end{equation}
where we have passed to a subsequence without changing the notation. 
Then there exists
$\delta>0$
such  that if
$x,y\in\cC_1$ satisfy 
$\sup_{t\in\RP}|x(t)-y(t)|<\delta$,
then 
$|h(x)-h(y)|<\epsilon_0/6$.
Fix a monotone sequence 
$\varepsilon_n\downarrow0$
and note that we may assume that $\delta/C$ is not an atom of 
$\int_0^{\eps_n} \phi_2^{\eps_n}(a) L_{\tau^r}(a) \ud a$ 
for any $n\in\N$, where $C$ is the constant in~\eqref{eq:LT_bound1} and~\eqref{eq:LT_bound2}. 
Note that by the inequality in~\eqref{eq:LT_bound2} and the fact that $F_{i,j}   =F_{i,j}^{1,\varepsilon}+F_{i,j}^{2,\varepsilon}$ we have 
$$
\lvert \Exp h\circ F_{i,j} ( Z_{n_k} , \, \cdot \, \wedge \tau_{n_k}^r) -  \Exp h\circ F_{i,j}^{1,\eps} ( Z_{n_k} , \, \cdot \, \wedge \tau_{n_k}^r)\rvert
\leq \epsilon_0/6 + C_h \Pr\left[ \int_0^{\tau_{n_k}^r} \phi_2^\eps ( \| Z_{n_k} (s)\|)  \ud s >\delta/C \right]
$$
any $\varepsilon>0$ and some constant $C_h>0$.
By the dominated convergence theorem there exists $\eps_n$ such that 
\begin{equation}
\label{eq:OTF_1}
\Pr\left[\int_0^{\eps_n} \phi_2^{\eps_n}(a) L_{\tau^r}(a) \ud a >\delta/C\right]< \frac{\epsilon_0}{12 C_h}.
\end{equation}
By Lemma~\ref{lem:a_integral} and since $\delta/C$ is not an atom of 
$\int_0^{\eps_n} \phi_2^{\eps_n}(a) L_{\tau^r}(a) \ud a$, there exists 
$k_0\in\N$ such that 
for all $k\geq k_0$
we have
$$
\Pr\left[ \int_0^{\tau_{n_k}^r} \phi_2^{\eps_n} ( \| Z_{n_k} (s)\|)  \ud s >\delta/C \right] < 
\Pr\left[\int_0^{\eps_n} \phi_2^{\eps_n}(a) L_{\tau^r}(a) \ud a >\delta/C\right]+ \frac{\epsilon_0}{12 C_h}
<\frac{\epsilon_0}{6 C_h}.
$$  
Hence it holds that 
\begin{equation}
\label{eq:bound_A1}
\lvert \Exp h\circ F_{i,j}  ( Z_{n_k} , \, \cdot \, \wedge \tau_{n_k}^r) -  \Exp h\circ F_{i,j}^{1,\eps_n} ( Z_{n_k} , \, \cdot \, \wedge \tau_{n_k}^r)\rvert < \epsilon_0/3 \qquad \forall k\geq k_0.
\end{equation}
Since we already know 
$F_{i,j}^{1,\eps} ( Z_{n_k} , \cdot \wedge \tau_{n_k}^r) \Rightarrow F_{i,j}^{1,\eps} (Z^{r_0} , \cdot \wedge \tau^r)$,
there exists $k_1\geq k_0$, such that 
\begin{equation}
\label{eq:bound_A2}
\lvert \Exp h\circ F_{i,j}^{1,\eps_n} ( Z_{n_k} , \, \cdot \, \wedge \tau_{n_k}^r) -  \Exp h\circ F_{i,j}^{1,\eps_n} ( Z^{r_0}  , \, \cdot \, \wedge \tau^r)\rvert < \epsilon_0/3 \qquad \forall k\geq k_1.
\end{equation}
Similarly, by~\eqref{eq:LT_bound1} and~\eqref{eq:OTF_1}, we get 
\begin{equation*}
\lvert \Exp h\circ F_{i,j} ( Z^{r_0} , \, \cdot \, \wedge \tau^r) -  \Exp h\circ F_{i,j}^{1,\eps_n} ( Z^{r_0} , \, \cdot \, \wedge \tau^r)\rvert < \epsilon_0/6 + 
C_h \Pr\left[\int_0^{\eps_n} \phi_2^{\eps_n}(a) L_{\tau^r}(a) \ud a >\delta/C\right]< \frac{\epsilon_0}{3}.
\end{equation*}
This inequality, coupled with~\eqref{eq:bound_A1},~\eqref{eq:bound_A2} and the triangle inequality, contradicts
the statement in~\eqref{eq:Contradictory_Statement}, which proves the lemma. 
\end{proof}

Lemma~\ref{lem:a_integral_convergence} is key in proving that the processes in~\eqref{eq:Prod_martingale}
are true martingales, which will in turn imply that the limit $Z^{r_0}$
is a solution of the  stopped martingale problem. We establish the martingale property in
the next lemma. 

\begin{lemma}
\label{lem:ip2}
Fix $r_0>0$ and pick $r\in(0,r_0)$. 
Then the components of the process  
$Z^{r_0}( \, \cdot \, \wedge \tau^r)$ are martingales. Moreover,
for any $i,j \in \{1,\ldots, d\}$, the following process is a martingale:
\begin{equation}
\label{eq:Prod_martingale}
Z^{r_0,i} ( \, \cdot \, \wedge \tau^r) Z^{r_0,j} ( \, \cdot \, \wedge \tau^r) - \int_0^{ \cdot \, \wedge \tau^r} a_{ij} (Z^{r_0} (s) ) \ud s  
\end{equation}
\end{lemma}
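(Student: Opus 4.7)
The strategy is to pass the martingale relation from the prelimit processes $\tilde Z_n^r$ and $\tilde Z_n^{r,i}\tilde Z_n^{r,j}-\tilde A_n^{ij}$ (which are true martingales by the discussion following~\eqref{eq:Compact_bound}) to the subsequential limit $Z^{r_0}$ of Lemma~\ref{lem:a_integral}, using weak convergence together with uniform integrability (UI).

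\textbf{A priori bounds (the key observation).} Fix $T>0$. For each $t\in[0,T]$, a direct inspection of $\tilde Z_n^r(t)=Z_n(t\wedge\tau_n^r\wedge\eta_n)$ shows that $\|\tilde Z_n^r(t)\|\leq r+J_n$, where $J_n:=\sup_{0\leq s\leq T\wedge\tau_n^r}\|Z_n(s)-Z_n(s-)\|$: indeed for $t<\tau_n^r\wedge\eta_n$ one has $\|Z_n(t)\|<r$, while at the stopping time $t=\tau_n^r$ one uses $\|Z_n(\tau_n^r)\|\leq\|Z_n(\tau_n^r-)\|+J_n\leq r+J_n$ (by left continuity of $\|Z_n(\cdot-)\|$ on $[0,\tau_n^r)$). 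Hypothesis~\eqref{ip1} implies $\Exp[J_n^2]\to 0$, so the family $\{J_n^2\}_n$ is UI (being $L^1$-convergent), and therefore $\{\tilde Z_n^{r,i}(t)\tilde Z_n^{r,j}(t)\}_n$ is UI, being dominated by $r^2+J_n^2$. Similarly, from~\eqref{eq:inv1} and Cauchy--Schwarz $|\tilde A_n^{ij}|\leq\tfrac12(\tilde A_n^{ii}+\tilde A_n^{jj})$, the family $\{\tilde A_n^{ij}(t)\}_n$ is UI, dominated by $\tfrac12(a_i+a_j)T+1+J'_n$ with $J'_n\to 0$ in $L^1$ by~\eqref{ip2}.

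\textbf{Weak limits at continuity points.} By Lemma~\ref{lem:first_hitting_time} applied to the law of $Z^{r_0}$ and the fact that $Z^{r_0}$ has continuous paths, the set of $t>0$ at which $Z^{r_0}(\cdot\wedge\tau^r)$ is not a.s.\ continuous is at most countable. Fix $0\leq s_1<\cdots<s_k\leq s<t$ in the complement of this exceptional set, along with a bounded continuous $H\colon(\R^d)^k\to\R$. Lemma~\ref{lem:a_integral} together with~\eqref{inv:eta} (which makes $d(\tilde Z_n^r,Z_n(\cdot\wedge\tau_n^r))\to 0$ in probability) yields the joint weak convergence
\[
\bigl(\tilde Z_n^r(s_1),\dots,\tilde Z_n^r(s_k),\tilde Z_n^r(s),\tilde Z_n^r(t),\tau_n^r\bigr)\Rightarrow\bigl(Z^{r_0}(s_1\wedge\tau^r),\dots,Z^{r_0}(s_k\wedge\tau^r),Z^{r_0}(s\wedge\tau^r),Z^{r_0}(t\wedge\tau^r),\tau^r\bigr).
\]
Combining~\eqref{ip3} with Lemma~\ref{lem:a_integral_convergence} and the identity $\tilde A_n^{ij}(u)=A_n^{ij}(u\wedge\tau_n^r)$ on the high-probability event $\{\eta_n\geq u\}$ gives $\tilde A_n^{ij}(t)\Rightarrow\int_0^{t\wedge\tau^r}a_{ij}(Z^{r_0}(u))\,du$ (and the same at time $s$), jointly with the above.

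\textbf{Passage to the limit.} Because $\tilde Z_n^{r,i}$ and $\tilde Z_n^{r,i}\tilde Z_n^{r,j}-\tilde A_n^{ij}$ are $(\cF_t^n)$-martingales,
\[
\Exp\bigl[(\tilde Z_n^{r,i}(t)-\tilde Z_n^{r,i}(s))H(\tilde Z_n^r(s_1),\dots,\tilde Z_n^r(s_k))\bigr]=0
\]
and the analogous identity with $\tilde Z_n^{r,i}(u)\tilde Z_n^{r,j}(u)-\tilde A_n^{ij}(u)$ replacing $\tilde Z_n^{r,i}(u)$ both hold for every $n$. The integrands converge in distribution by the previous step, and are uniformly integrable: $H$ is bounded, $\tilde Z_n^{r,i}(u)$ is bounded by $r+J_n$ (hence UI), and the products $\tilde Z_n^{r,i}(u)\tilde Z_n^{r,j}(u)$ and $\tilde A_n^{ij}(u)$ are UI by the first step. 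Vitali's convergence theorem then yields convergence of the expectations, so
\[
\Exp\bigl[(Z^{r_0,i}(t\wedge\tau^r)-Z^{r_0,i}(s\wedge\tau^r))H(\dots)\bigr]=0
\]
and the corresponding identity for $Z^{r_0,i}Z^{r_0,j}-\int_0^{\cdot\wedge\tau^r}a_{ij}(Z^{r_0}(u))\,du$. Since this holds for all continuity points $s<t$ and all bounded continuous $H$, and both candidate processes are continuous (so right-continuous in $t$) and integrable (by the UI bounds transferred to the limit), the standard approximation of a general $(s,t)$ by continuity points from below and above, combined with right-continuity of the natural filtration of $Z^{r_0}(\cdot\wedge\tau^r)$, upgrades the identity to all $s<t$. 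This is exactly the martingale property asserted in the lemma.

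\textbf{Main obstacle.} The only nontrivial point is the UI of the quadratic term $\tilde Z_n^{r,i}(t)\tilde Z_n^{r,j}(t)$; the hypotheses of the theorem only give an $L^2$ moment a priori, which by itself is not enough to pass the product to the limit in expectation. The resolution is the observation that the double stopping in $\tilde Z_n^r$ forces the process to stay in the ball of radius $r+J_n$, converting the $L^2$ bound into the desired UI via $J_n\to 0$ in $L^2$.
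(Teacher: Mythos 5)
Your proposal follows essentially the same route as the paper: both pass the martingale identities for the prelimit processes $\tilde Z_n^{r,i}$ and $\tilde Z_n^{r,i}\tilde Z_n^{r,j}-\tilde A_n^{ij}$ to the weak limit $Z^{r_0}$ via the joint weak convergence from Lemmas~\ref{lem:a_integral} and~\ref{lem:a_integral_convergence} (the latter handling the quadratic-variation integral), and use uniform integrability to upgrade convergence in distribution to convergence of the relevant expectations (your Vitali argument and the paper's Skorohod-representation argument are interchangeable). The one minor variation is that for UI of the linear term the paper reads off the $L^2$ bound $\Exp[(\tilde Z_{n_k}^{r_0,i}(t))^2]=\Exp[Z_{n_k}^i(0)^2]+\Exp[\tilde A_{n_k}^{ii}(t)]$ directly from the prelimit martingale identity, whereas you use a ball estimate; for UI of the quadratic term the paper uses essentially the same ball estimate you do.

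Three points need attention. \emph{First}, your a priori bound $\|\tilde Z_n^r(t)\|\le r+J_n$ fails on the event $\{\tau_n^r=0\}=\{\|Z_n(0)\|\ge r\}$, on which $\tilde Z_n^r\equiv Z_n(0)$ and this can be arbitrarily large. The correct estimate, which is exactly~\eqref{eq:Compact_bound}, is $\|\tilde Z_n^r(t)\|\le\|Z_n(0)\|+2r+J_n$; the $\|Z_n(0)\|$ term cannot simply be dropped from the UI argument. \emph{Second}, the appeal to Lemma~\ref{lem:first_hitting_time} to isolate a countable exceptional set of \emph{times} $t$ is misplaced: since $Z^{r_0}(\cdot\wedge\tau^r)$ has a.s.\ continuous paths, every coordinate projection $\pi_t$ is a.s.\ a continuity point and there is no exceptional set of $t$'s at all; the countable exceptional set in Lemma~\ref{lem:first_hitting_time} lives in the \emph{radius} variable $r$. \emph{Third}, and most substantively, because Lemmas~\ref{lem:a_integral} and~\ref{lem:a_integral_convergence} only deliver the weak convergences for all but countably many $r\in(0,r_0)$, your argument as written yields the martingale property only for $r$ in that co-countable set, whereas the lemma asserts it for \emph{every} $r\in(0,r_0)$. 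You are missing the paper's closing step: pick a sequence $r_n\uparrow r_0$ in the good set with $r<r_n$; the level-$r$ processes appearing in the lemma are precisely the level-$r_n$ processes stopped at $\tau^r\le\tau^{r_n}$, and a stopped martingale is a martingale, so the identity extends to the fixed $r$.
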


\begin{proof}
Recall that 
the sequence $(\tilde Z^{r_0}_n)_{n\in\N}$, 
defined in~\eqref{eq:tilda_procs},
is relatively compact
by Lemma~\ref{lem:ip1}.  
Furthermore, the process
$Z^{r_0}$
was defined as a weak limit of a convergent subsequence 
$(\tilde Z^{r_0}_{n_k})_{k\in\N}$.
For any $i,j\in\{1,\ldots,d\}$ the processes 
$\tilde Z_{n_k}^{r_0,i}$ and $\tilde A_{n_k}^{ij}$
(see~\eqref{eq:tilda_procs} for definition)
give rise to martingales 
$\tilde Z_{n_k}^{r_0,i} \tilde Z_{n_k}^{r_0,j} - \tilde A_{n_k}^{ij}$
(see the argument following the display in~\eqref{eq:Compact_bound}).
Hence, for any index $i\in\{1,\ldots,d\}$ and $k\in\N$, we have that
\begin{align*} 
\Exp [ (\tilde Z_{n_k}^{r_0,i} (t) )^2 ]
=  \Exp [ Z_{n_k}^i(0)^2 ]  + \Exp \left[ \tilde A_{n_k}^{ii} (t) \right]\qquad \text{for all $t\geq0$.} 
\end{align*}
Thus by~\eqref{ip2},~\eqref{eq:inv1} and the assumption on the square integrability of
$Z_{n_k}(0)$ in Theorem~\ref{thm:conditional_invariance}, we have that 
$\sup_{k\in\N} \Exp [ \| \tilde Z_{n_k}^{r_0} (t) \|^2 ] < \infty$ 
and hence the family $(\|\tilde Z_{n_k}^{r_0}(t)\|)_{k\in\N}$ is uniformly integrable 
for every $t\geq0$.

To prove that the components of
$Z^{r_0}$
are martingales with respect to the natural filtration 
$(\sigma(Z^{r_0}_u:u\in[0,s]), s \in \RP)$,
note first that each $\sigma$-algebra 
$\sigma(Z^{r_0}_u:u\in[0,s])$ is generated by the $\pi$-system of events of the form 
$\{Z^{r_0}(s_1)\in A_1,\ldots,Z^{r_0}(s_p)\in A_p\}$ for any
$p\in\N$ and $s_1,\ldots,s_p\in[0,s]$, where $A_1,\ldots, A_p$ are rectangular boxes in $\R^d$.
Hence it is sufficient to show that for any $0\leq s_1 < \ldots s_p \leq s <t$ 
and a non-negative, bounded, continuous $f:\R^d\otimes\R^p \to\R$
it holds that
\begin{equation}
\label{eq:MArt_Cond}
\Exp [\left(Z^{r_0,i}(t)-Z^{r_0,i}(s)\right)f(Z^{r_0}(s_1),\ldots,Z^{r_0}(s_p)) ]=0.
\end{equation}
By the Skorohod representation theorem~\cite[Thm 3.1.8, p.~102]{ek}
we may assume that the zero mean random variables 
$\left(\tilde Z_{n_k}^{r_0,i}(t)-\tilde Z_{n_k}^{r_0,i}(s)\right)f(\tilde Z_{n_k}^{r_0}(s_1),\ldots,\tilde Z_{n_k}^{r_0}(s_p))$
converge almost surely as $k\to\infty$ to the random variable  in~\eqref{eq:MArt_Cond}.
Furthermore, since $f$ is bounded, this sequence is uniformly integrable by the argument in the 
first paragraph of this proof. This implies the convergence in $L^1$
and hence the identity in~\eqref{eq:MArt_Cond}. 
Since $Z^{r_0}$ is a martingale, so is $Z^{r_0}( \cdot \, \wedge \tau^r)$ for any $r\in(0,r_0)$.

Consider now the process in~\eqref{eq:Prod_martingale}. We start by establishing the following fact.\\
\noindent \textbf{Claim.}
For any $i,j \in \{1,\ldots, d\}$ and all but countably many $r\in(0,r_0)$ it holds that 
$$\tilde Z_{n_k}^{r_0,i}(\cdot\wedge\tau^r_{n_k}) \tilde Z_{n_k}^{r_0,j}(\cdot\wedge\tau^r_{n_k})-\tilde A_{n_k}^{ij}(\cdot\wedge\tau^r_{n_k}) 
\Rightarrow 
Z^{r_0,i} ( \cdot \wedge \tau^r) Z^{r_0,j} ( \cdot \wedge \tau^r) - \int_0^{ \cdot \, \wedge \tau^r} a_{ij} (Z^{r_0} (s) ) \ud s, $$ 
where the stopping times 
$\tau^r_n=\tau^r(Z_n)$
and
$\tau^r=\tau^r(Z^{r_0})$
are as in Lemma~\ref{lem:a_integral}.

\medskip

\noindent \textit{Proof of Claim.}
By definition it holds that 
$\tilde Z_{n_k}^{r_0}\Rightarrow Z^{r_0}$. Hence, as in the proof of Lemma~\ref{lem:a_integral},
since $Z^{r_0}$ has continuous trajectories  
it follows from Lemmas~\ref{lem:first_hitting_time} and~\ref{lem:continuity_of_mapa} 
and the mapping theorem~\cite[p.~20]{bill}
that 
$\tilde Z_{n_k}^{r_0}(\cdot\wedge\tau^r_{n_k})\Rightarrow Z^{r_0}( \cdot \wedge \tau^r)$.
Thus it holds that  
$\tilde Z_{n_k}^{r_0,i}(\cdot\wedge\tau^r_{n_k}) \tilde Z_{n_k}^{r_0,j}(\cdot\wedge\tau^r_{n_k})
\Rightarrow 
Z^{r_0,i} ( \cdot \wedge \tau^r) Z^{r_0,j} (  \cdot \wedge \tau^r)$. 

To prove the claim it therefore suffices to show that 
$\tilde A_{n_k}^{ij}(\cdot\wedge\tau^r_{n_k}) 
\Rightarrow 
\int_0^{ \cdot \, \wedge \tau^r} a_{ij} (Z^{r_0} (s) ) \ud s$.
With this in mind, we note  that   
\begin{equation}
\label{eq:decomposition}
\tilde A_{n_k}^{ij}(\cdot\wedge\tau^r_{n_k}) = U_k + V_k +F_{i,j} ( Z_{n_k} , \, \cdot \, \wedge \tau_{n_k}^r),
\end{equation}
where 
$ U_k:=\tilde A_{n_k}^{ij}(\cdot\wedge\tau^r_{n_k}) - A_{n_k}^{ij}(\cdot\wedge\tau^r_{n_k}) \toP0$
by~\eqref{inv:eta}--\eqref{eq:tilda_procs}
and
$V_k:=A_{n_k}^{ij}(\cdot\wedge\tau^r_{n_k}) - F_{i,j} ( Z_{n_k} , \, \cdot \, \wedge \tau_{n_k}^r)\toP 0$
by the assumption in~\eqref{ip3}. 
The representation of 
$\tilde A_{n_k}^{ij}(\cdot\wedge\tau^r_{n_k})$
in~\eqref{eq:decomposition},~\cite[Cor.~3.3.3, p.~110]{ek} and
Lemma~\ref{lem:a_integral_convergence} imply 
\begin{equation}
\label{eq:Int_Conv}
\tilde A_{n_k}^{ij}(\cdot\wedge\tau^r_{n_k}) 
\Rightarrow 
\int_0^{ \cdot \, \wedge \tau^r} a_{ij} (Z^{r_0} (s) ) \ud s,
\end{equation}
and the claim follows. 

\medskip

Since $\tilde Z_{n_k}^{r_0,i} \tilde Z_{n_k}^{r_0,j}-\tilde A_{n_k}^{ij}$
is a martingale by the argument following~\eqref{eq:Compact_bound},
the stopped process 
$M_{k}:=\tilde Z_{n_k}^{r_0,i}(\cdot\wedge\tau^r_{n_k}) \tilde Z_{n_k}^{r_0,j}(\cdot\wedge\tau^r_{n_k})-\tilde A_{n_k}^{ij}(\cdot\wedge\tau^r_{n_k})$
is also a martingale for every $k\in\N$. Hence the process in~\eqref{eq:Prod_martingale} will be a martingale by the analogous
argument to the one that established the martingale property of $Z^{r_0,i}$ above, if we prove that for any $t\geq0$ the family of random variables
$\{M_k(t):k\in\N\}$ is uniformly integrable. 
With this in mind, note that 
$2|\tilde A^{ij}_{n_k}|\leq \tilde A^{ii}_{n_k}+\tilde A^{jj}_{n_k}$ 
since the matrix
$\tilde A_{n_k}$  is non-negative definite.
The elementary inequality 
$2|\tilde Z_{n_k}^{r_0,i} \tilde Z_{n_k}^{r_0,j}|\leq  (\tilde Z_{n_k}^{r_0,i})^2 +(\tilde Z_{n_k}^{r_0,j})^2$ 
implies 
$$
|M_k(t)|
\leq 
\tilde Z_{n_k}^{r_0,i}(t\wedge\tau^r_{n_k})^2
+
\tilde Z_{n_k}^{r_0,j}(t\wedge\tau^r_{n_k})^2
+
\tilde A^{ii}_{n_k}(t\wedge\tau^r_{n_k})+\tilde A^{jj}_{n_k}(t\wedge\tau^r_{n_k}). 
$$
Since the sequence
$(\tilde A^{ii}_{n_k}(t\wedge\tau^r_{n_k})+\tilde A^{jj}_{n_k}(t\wedge\tau^r_{n_k}))_{k\in\N}$
is bounded in $L^1$
by~\eqref{ip2} and~\eqref{eq:inv1}, 
$\{M_k(t):k\in\N\}$ 
will be uniformly integrable 
if 
$\{\tilde Z_{n_k}^{r_0,i}(t\wedge\tau^r_{n_k})^2:k\in\N\}$
is uniformly integrable 
for all $i\in\{1,\ldots,d\}$.
Note that by~\eqref{eq:Compact_bound}, for any $r\in(0,r_0)$, we have that 
\begin{equation*}
\tilde Z_{n_k}^{r_0,i}(t\wedge \tau^r_{n_k})^2 \leq 3 \left( \sup_{n\in\N}\|Z_n(0)\|^2 +  4r_0^2 +\sup_{0 \leq s \leq t \wedge \tau_{n_k}^r} \left\| Z_{n_k} (s) - Z_{n_k} (s-) \right\|^2 \right).
\end{equation*}
The right-hand side converges in $L^1$ by~\eqref{ip1}.
Hence
$\{\tilde Z_{n_k}^{r_0,i}(t\wedge\tau^r_{n_k})^2:k\in\N\}$
is uniformly integrable and the lemma follows for all but countably many $r\in(0,r_0)$. 
Note however that there exist $r_n\uparrow r_0$ such that the martingale properties in the
lemma hold for all $r_n$. Since a stopped martingale is a martingale, the lemma follows for
all $r\in(0,r_0)$. 
\end{proof}

\begin{proof}[Proof of Theorem~\ref{thm:conditional_invariance}.]
By Lemma~\ref{lem:ip2} and It\^o's formula for continuous semimartingales,
the process $Z^{r_0}$ constructed in the proof of Lemma~\ref{lem:a_integral}
solves the stopped martingale problem (see~\cite[p.~216]{ek} for
the precise definition) $(G,v,\{x\in\R^d : \|x\|<r\})$ for any $r\in(0,r_0)$. 
Since the martingale problem $(G,v)$ is well-posed, by~\cite[Thm~4.6.1, p.~216]{ek}
there exists a unique solution to the stopped martingale problem. 
Furthermore, if 
$Z$ 
is a solution 
of the martingale
problem $(G,v)$ on $\cD_d$, then 
$Z (\cdot \wedge \tau^r(Z))$ 
must be a solution to the stopped martingale problem 
by the optional sampling theorem 
(cf.~\cite[pp.~216--217]{ek}), 
where $\tau^r(Z)$ is defined in~\eqref{eq:contact_time_def}.
In particular (since $r_0>0$ is arbitrary) for all but countably many $r>0$, any subsequence of
$Z_n ( \cdot \, \wedge \tau_n^r )$, where $\tau_n^r$ is defined in Lemma~\ref{lem:a_integral},
has by Lemma~\ref{lem:a_integral} a further subsequence 
that converges weakly to the law of the process
$Z (\cdot \wedge \tau^r(Z))$. 
It hence follows that the entire sequence must be convergent,
$Z_n ( \cdot \, \wedge \tau_n^r )\Rightarrow Z (\cdot \wedge \tau^r(Z))$,
for all but at most countably many $r>0$.

In order to prove that this implies 
$Z_n \Rightarrow Z $, 
note that 
$\tau^r(Z)\to\infty$ a.s. as $r\to\infty$, since the paths of $Z$ are in
$\cD_d$ (in fact in $\cC_d$), and 
it holds that 
$$d(Z, ,Z( \, \cdot \, \wedge \tau^r(Z))\leq e^{-\tau^r(Z)}\to0\qquad \text{a.s. as $r\to\infty$,}$$
where 
$d:\cD_d\times\cD_d\to\RP$, defined  in~\cite[Eq.~(5.2), p.~117]{ek}, is the Skorohod metric. 
Pick any uniformly continuous and bounded map
$h:\cD_d\to\R$.
This class of maps is convergence determining~\cite[Prop.~3.4.4, p.~112]{ek}.
Pick $\eps>0$ and let $\delta\in(0,1)$
satisfy: if $d(x,y)<\delta$ then $|h(x)-h(y)|<\eps/6$.
Let
$C_h>0$ satisfy
$\sup_{x\in\cD_d} |h(x)|<C_h$.
By
Lemmas~\ref{lem:first_hitting_time} and~\ref{lem:continuity_of_mapa} and the mapping theorem (see~\cite[p.~20]{bill}),
there exists $r>0$
such that 
$\tau_n^r \Rightarrow \tau^r(Z)$
and 
$\Pr[\tau^r(Z)\leq \log(1/\delta)]<\eps/(12C_h)$.
Without loss of generality we may assume that 
$\log(1/\delta)$ is not an atom of $\tau^r(Z)$.
Hence we may 
choose $N_0\in\N$ such that for all $n\geq N_0$ we have 
$\Pr[\tau^r_n\leq \log(1/\delta)]<\eps/(6C_h)$
and 
$|\Exp h(Z_n(\cdot \wedge \tau^r_n)) - \Exp h(Z (\cdot \wedge \tau^r(Z)))| <\eps/6$.
This implies the inequalities 
\begin{align*}
|\Exp h(Z_n) - \Exp h(Z)| 
\leq &
|\Exp h(Z_n) - \Exp h(Z_n(\cdot \wedge \tau^r_n))| +
|\Exp h(Z_n(\cdot \wedge \tau^r_n)) - \Exp h(Z (\cdot \wedge \tau^r(Z)))| \\
&  +
|\Exp h(Z (\cdot \wedge \tau^r(Z))) - \Exp h(Z) | \\
\leq &  \Pr[\tau^r_n>  \log(1/\delta)] \frac{\eps}{6} + \frac{\eps}{3} + 
\Pr[\tau^r(Z)>  \log(1/\delta)] \frac{\eps}{6} + \frac{\eps}{6} + \frac{\eps}{6} \leq \eps.  
\end{align*}
\end{proof}

\subsection{Proof of Theorem~\ref{thm:invariance}}
\label{subsec:Proof_of_Thms_51_and_24}

Recall the definition of the scaled process $\tX_n=(\tX_n(t),t\geq0)$ in~\eqref{eq:scaled_walk} in terms of the chain $X=(X_m,m\in\ZP)$,
$\tX_n (t) = n^{-1/2} X_{\lfloor nt \rfloor}$ for $t \in \RP$.
Theorem~\ref{thm:invariance} now follows from Theorem~\ref{thm:conditional_invariance} 
and the main result of~\cite{gmw}:

\begin{lemma}
\label{lem:radial_invariance}
Suppose that \eqref{ass:moments}--\eqref{ass:cov_form} 
hold. Without loss of generality assume that $U =1$. Then $\| \tX_n \|$ converges weakly to the $V$-dimensional Bessel process
started at $0$.
\end{lemma}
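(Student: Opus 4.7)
The plan is to verify that assumptions~\eqref{ass:moments}--\eqref{ass:cov_form}, together with the normalisation $U=1$, place us exactly within the hypotheses of the radial invariance principle proved in~\cite{gmw}, and then to quote that result. The first step would be to recall the statement in~\cite{gmw}: for a zero-mean Markov chain on $\R^d$ with uniformly non-degenerate increments whose one-step covariance $M(\bx)$ stabilises along radial rays to a limit $\sigma^2(\hat\bx)$ having constant trace $V$ and constant radial form $U$ on $\Sp{d-1}$, the rescaled norm $n^{-1/2}\|X_{\lfloor n\cdot\rfloor}\|$ converges weakly on $\cD_1$ to a Bessel process of dimension $V/U$ started at $0$.

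Next I would match the hypotheses term by term. The uniform fourth moment~\eqref{ass:moments} is what allows~\cite{gmw} to apply a discrete It\^o-type expansion to $\|X_{m+1}\|^2-\|X_m\|^2$ and control the resulting remainder. Under~\eqref{ass:zero_drift}, the conditional mean of this expansion equals $\trace M(X_m)$, which by~\eqref{ass:cov_limit}--\eqref{ass:cov_form} converges to $V$ as $\|X_m\|\to\infty$, producing the correct Bessel drift $V$. The corresponding conditional quadratic variation reduces, after projecting onto $\hat X_m$, to $4\bra \hat X_m,M(X_m)\hat X_m\ket\|X_m\|^2\to 4U\|X_m\|^2$, matching the quadratic variation of the martingale part of $\Bes^{V/U}$; the uniform ellipticity~\eqref{ass:unif_ellip} prevents degeneracy of these estimates near the origin. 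Since $U=1$ and $V>U$ by~\eqref{ass:cov_form}, the limit dimension is $V>1$, and $n^{-1/2}\|X_0\|\to 0$ gives the prescribed starting point.

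The main technical subtlety would be the polar/critical case $2U=V$, in which~\cite{gmw} needs not merely $\eps(r)\to 0$ but the polynomial decay $\eps(r)=O(r^{-\delta})$ to summably control the accumulated approximation error in the radial increments over the diffusive time window $[0,nT]$; this is the reason the extra hypothesis is already built into~\eqref{ass:cov_form}. Since~\eqref{ass:moments}--\eqref{ass:cov_form} match the hypotheses of the main theorem of~\cite{gmw} in every regime, the conclusion follows immediately by invoking it.
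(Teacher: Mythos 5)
Your proposal matches the paper's treatment exactly: the paper gives no independent proof of this lemma but simply records it as the main result of~\cite{gmw}, and your plan is likewise to verify that~\eqref{ass:moments}--\eqref{ass:cov_form} (with $U=1$) place the walk within the hypotheses of that paper and then quote its conclusion. The additional heuristic commentary you supply about the discrete It\^o expansion and the role of each assumption is accurate but lives inside~\cite{gmw} rather than in this paper, so it does not change the route taken.
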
 

Define
$ A_n (t) = \frac{1}{n} \sum_{m=0}^{\lfloor nt \rfloor -1} M ( X_m )$ ,
where $M(\bx)$ is the covariance matrix of the increment of the chain at $\bx\in\X$
and, as before,  we take $\sum_{m=0}^{-1}=0$. 
Define $Z_n := \tX_n$ 
and note that 
$Z^i_n  Z^j_n  - A^{ij}_n$
is a local martingale for all $i,j\in\{1,\ldots,d\}$.
By Lemma~\ref{lem:radial_invariance} 
we have
$\|Z_n\|\Rightarrow \Bes^V(0)$
as $n\to\infty$.
Let 
$a(\bx):=\sigma^2(\hat \bx)$
be a non-negative definite matrix valued function on 
$\R^d$, where 
$\sigma^2$ satisfies~\eqref{ass:cov_limit}--\eqref{ass:radial-evec}.
Let the generator 
$G$
be defined as in Theorem~\ref{thm:conditional_invariance}
for this coefficient $a$.
Then  the $\cC_d$ martingale problem 
for $(G,\delta_{\0})$ is well-posed 
by Theorem~\ref{t:well-posedness},
where
$\delta_{\0}$
denotes the delta measure on $\R^d$ concentrated at the origin. 
In order to apply 
Theorem~\ref{thm:conditional_invariance},
it remains to establish the assumptions~\eqref{ip1}, \eqref{ip2} and \eqref{ip3}
for $Z_n$ and $A_n$.
Condition~\eqref{ip1}  
follows
from~\cite[Lem.~2]{gmw}.
Since by assumption  
$|M_{ij}(\by)|\leq \sup_{\bx\in\X:\|x\|\geq r} \| M (\bx) \| <\infty$
for a sufficiently large $r>0$ and any $\by\in\X$ with $\|\by\|\geq r$, 
condition~\eqref{ip2} follows from 
$\lim_{n \to \infty} \frac{1}{n} \Exp \max_{0 \leq m \leq \lfloor n T \rfloor} \left| M_{ij} (X_m ) \right| = 0$.
Finally, 
condition~\eqref{ip3} is verified by~\cite[Lem.~5]{gmw} for the coordinate functional $\phi:\R^d\otimes\R^d\to\R$, $\phi(B)=B_{ij}$.
Thus Theorem~\ref{thm:conditional_invariance} applies, implying Theorem~\ref{thm:invariance}.

\appendix




\section{Laplace-Beltrami operator on $(\Sp{d-1},g)$}
\label{sec:manifolds}
We use the definitions and notation from Sections~\ref{subsubsec:RiemannianGeom}. 

\begin{lemma}\label{lem:BL-G-in-lc}  In the local coordinates on $H_q^\pm$, the Laplace-Beltrami operator on
  $(\Sp{d-1},g)$, equals
\[
\Delta_g f  = \sum_{i,j\in [q]} g^{ij} \Big(E_i (E_j(f))
-\sum_{k\in[q]}\Gamma_{ij}^k E_k( f)\Big).
\]
\end{lemma}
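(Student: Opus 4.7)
The plan is to express the three coordinate-free operators $\grad$, $\nabla$, $\divg$ in the local frame $\{E_i\}_{i \in [q]}$ dual to $\{E^*_i\}_{i \in [q]}$ and assemble them using $\Delta_g f = \divg(\grad f)$, as stated at the end of Section~\ref{subsubsec:RiemannianGeom}. The core of the calculation is a standard index manipulation, familiar from Riemannian geometry texts, but since the paper writes everything intrinsically in terms of a generic frame rather than a coordinate frame, a self-contained derivation is warranted.

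First, I would identify $\grad f$ in the local frame on $H_q^\pm$. Writing $\grad f = \sum_{k \in [q]} a^k E_k$, the defining identity $g(\grad f, E_j) = df(E_j) = E_j(f)$ combined with $g(E_k, E_j) = g_{kj}$ yields the linear system $\sum_k a^k g_{kj} = E_j(f)$, whose solution (using the fact that $(g^{ij})$ is the matrix inverse of $(g_{ij})$, established in Lemma~\ref{lem:inverse_metric}) is $a^k = \sum_{j \in [q]} g^{kj} E_j(f)$. Hence $\grad f = \sum_{i,j \in [q]} g^{ij} E_j(f) E_i$.

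Next, I would compute $\divg$ on a generic vector field $X = \sum_k X^k E_k$. Using the Leibniz rule for $\nabla$ and $\nabla_{E_i} E_k = \sum_\ell \Gamma_{ik}^\ell E_\ell$ (which follows from the coordinate expression for the Levi-Civita Christoffel symbols given in Lemma~\ref{lem:inverse_metric}; this requires verifying the torsion-free and metric-compatibility conditions in the frame, which both reduce to checking the standard Koszul-type identity $\Gamma_{ij}^k = \Gamma_{ji}^k$ and $E_i(g_{jk}) = g_{\ell k} \Gamma_{ij}^\ell + g_{j\ell} \Gamma_{ik}^\ell$), one obtains
\[
\nabla_{E_i} X = \sum_{k \in [q]} E_i(X^k) E_k + \sum_{k,\ell \in [q]} X^k \Gamma_{ik}^\ell E_\ell,
\]
and therefore, evaluating $E^*_i$ on the $E_i$-component and summing over $i$,
\[
\divg(X) = \sum_{i \in [q]} E_i(X^i) + \sum_{i,k \in [q]} X^k \Gamma_{ik}^i.
\]

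Substituting $X^k = \sum_{j} g^{kj} E_j(f)$ and expanding via the Leibniz rule gives
\[
\Delta_g f = \sum_{i,j} g^{ij} E_i(E_j f) + \sum_{i,j} E_i(g^{ij}) E_j(f) + \sum_{i,j,k} g^{kj} E_j(f) \Gamma_{ik}^i.
\]
The first term is already in the desired form. To finish, I would reduce the remaining two sums to $-\sum_{i,j,k} g^{ij} \Gamma_{ij}^k E_k(f)$, and this is where the main (though entirely routine) algebraic obstacle lies. The two ingredients are: (a) differentiating $\sum_m g^{im} g_{mj} = \delta^i_j$ to obtain $E_k(g^{ij}) = -\sum_{m,n} g^{im} g^{jn} E_k(g_{mn})$; and (b) the trace identity $\sum_i \Gamma_{ki}^i = \tfrac{1}{2} \sum_{i,\ell} g^{i\ell} E_k(g_{i\ell})$, which follows from the coordinate formula for $\Gamma$ in Lemma~\ref{lem:inverse_metric} after noting that the last two terms in that formula cancel upon summing in $i = k$ (swap of dummy indices in the symmetric tensor $g^{i\ell}$). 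Substituting (a) and (b) into the middle and last terms above, a short rename of dummy indices produces exactly $-\sum_{i,j,k \in [q]} g^{ij} \Gamma_{ij}^k E_k(f)$, completing the identification of $\Delta_g f$ with the expression in the lemma.
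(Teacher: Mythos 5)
Your proof is correct. The overall framework is the same as the paper's — express $\grad$ and $\divg$ in the coordinate frame $\{E_i\}_{i\in[q]}$ using $\nabla_{E_i}E_j=\sum_k\Gamma_{ij}^k E_k$, then assemble $\Delta_g=\divg\circ\grad$ — but the intermediate route differs. The paper first derives the Voss--Weyl form $\divg X = (\det G)^{-1/2}\sum_i E_i(X_i\sqrt{\det G})$ (via the derivative of the determinant), obtains $\Delta_g f = (\det G)^{-1/2}\sum_{i,j}E_i(\sqrt{\det G}\,g^{ij}E_j(f))$, and then proves the translation identity $\sum_{i,j}g^{ij}\Gamma_{ij}^k = -(\det G)^{-1/2}\sum_i E_i(\sqrt{\det G}\,g^{ik})$ to convert to the Christoffel form of the lemma. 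You instead stay entirely in the Christoffel picture: substitute $\grad f$ into the Christoffel form of $\divg$, and show the two lower-order sums coalesce to $-\sum_{i,j,k}g^{ij}\Gamma_{ij}^k E_k(f)$ using (a) the differentiated inverse relation and (b) the contracted-Christoffel identity. Your identity (b) in fact does not need the determinant trick at all — summing the Christoffel formula over $i=k$, the $E_i(g_{k\ell})-E_\ell(g_{ki})$ piece cancels by antisymmetry against the symmetric $g^{i\ell}$, leaving $\tfrac12\sum_{i,\ell}g^{i\ell}E_k(g_{i\ell})$ immediately — so your route is somewhat more economical, avoiding the detour through $\sqrt{\det G}$. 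Both are standard textbook computations; the paper's version has the side benefit of exhibiting the familiar Voss--Weyl form along the way, while yours is more streamlined.

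One small remark: you note that $\nabla_{E_i}E_j=\sum_k\Gamma_{ij}^k E_k$ needs to be justified. The paper handles this by citing~\cite[Cor.~4.3.1]{jost}, since $\{E_i\}$ is a coordinate frame (holonomic) on $H_q^\pm$. Your proposed check (symmetry $\Gamma_{ij}^k=\Gamma_{ji}^k$ and the Koszul metric-compatibility identity) is exactly what that citation packages.
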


\begin{proof}
We first establish the formula for $\Delta_g$ in the local coordinates on $H_q^\pm$.
Note that $E_i$, for $i\in[q]$, defined in Section~\ref{subsubsec:RiemannianGeom}, is a vector field on
$H_q^\pm$~\cite[p.~248]{iw}. Put differently, $E_i$ is a smooth section of the (product) bundle $TH_q^\pm$.
Since the Levi-Civita connection
$\nabla$ on $\Sp{d-1}$, constructed in~\cite[Thm~4.3.1]{jost},
is a local operator,
the equality 
$\nabla_{E_i}E_j = \sum_{k\in[q]} \Gamma_{ij}^k E_k$
for all $i,j\in[q]$
follows from~\cite[Cor.~4.3.1]{jost}.
Any vector field $X$ on $\Sp{d-1}$, restricted  to 
$H_q^\pm$, takes the form
$X = \sum_{j\in[q]}X_jE_j$, where $X_j$, $j\in[q]$, are a smooth function on $H_q^\pm$.
By the product rule~\cite[Def.~4.1.1(ii)]{jost} 
we get 
$\nabla_{E_i} X = \sum_{j\in[q]} (E_i( X_j) E_j + X_j \sum_{k\in[q]} \Gamma_{ij}^k E_k)$.
By the definition of $\divg X$ given above, 
this implies 
$\divg X =  \sum_{i\in[q]} (E_i (X_i) + \sum_{j\in[q]} \Gamma_{ij}^i X_j)$.

Note that
$(\det(A+H)- \det A -\det A \trace(A^{-1}H))/\|H\|\to0$ as $\|H\|\to0$
for any invertible square matrix $A$ (here $H$ is a square matrix of the 
same dimension as $A$), 
i.e.  the derivative of the determinant at a non-singular matrix $A$
takes the form $D\det(A)H= \trace(A^{-1}H)\det A$.
It hence follows that
$\sum_{i\in[q]} \Gamma_{ij}^i = \frac{1}{2} \sum_{i,\ell\in[q]} g^{i\ell} E_j( g_{i\ell})
=\frac{1}{2} \trace (G^{-1} E_j(G)) = (1/\sqrt{\det G}) E_j( \sqrt{\det G})$,
where $G$ (resp. $E_j(G)$) denotes the matrix 
$(g_{i\ell})_{i,\ell\in[q]}$
(resp.
$(E_j(g_{i\ell}))_{i,\ell\in[q]}$),
implying 
$\divg X = (1/\sqrt{\det G}) \sum_{i\in[q]} E_i (X_i \sqrt{\det G}).$
In Section~\ref{subsubsec:RiemannianGeom} we defined 
$\grad f=\tilde g^{-1} (df)$ for any $f\in\cC^\infty(\Sp{d-1},\R)$.
Hence, in the local coordinates, we obtain
$\grad f = \sum_{i,j\in[q]} g^{ij} E_j( f) E_i$
and 
$\Delta_g f = (1/\sqrt{\det G}) \sum_{i,j\in[q]} E_i (\sqrt{\det G}g^{ij} E_j( f))$.
Since 
$(g_{ij})_{i,j\in[q]}$ 
and 
$(g^{ij})_{i,j\in[q]}$ 
are inverses,
differentiation implies
$E_k(g^{im}) =-\sum_{\ell,j\in[q]} g^{ij}g^{\ell m} E_k (g_{\ell j})
=-\sum_{\ell\in[q]} g^{\ell m} \Gamma_{\ell k}^i - \sum_{j\in[q]}g^{ij} \Gamma_{jk}^m$
for all $i,m,k\in[q]$,
where the second equality follows from the identity
$E_k (g_{j\ell}) = (E_k (g_{j\ell})+E_j (g_{k\ell}) - E_\ell (g_{kj}))/2
+ (E_k( g_{j\ell})+E_\ell (g_{kj}) - E_j (g_{k\ell}))/2$.
In particular, 
we get
$E_i (g^{ki}) = - \sum_{j\in[q]}(g^{ij}\Gamma_{ji}^k +g^{kj}\Gamma_{ji}^i)$.
By the formula above for $E_i (\sqrt{\det G})$, the following identity 
holds for all $k\in[q]$,
$\sum_{i,j\in[q]}g^{ij}\Gamma_{ij}^k = - (1/\sqrt{\det G})\sum_{i\in[q]} E_i(\sqrt{\det G} g^{ik})$,
yielding 
the formula for $\Delta_g$. 
\end{proof}

\section*{Acknowledgements}
NG and AW were supported in part by the EPSRC grant EP/J021784/1.
AM is supported by the EPSRC grant EP/P003818/1 and a Fellowship at The Alan Turing Institute, 
sponsored by the Programme on Data-Centric Engineering  funded by Lloyd's Register Foundation.

\end{document}